\newcommand{\arxiv}[1]{\href{http://arxiv.org/abs/#1}{\tt arXiv:\nolinkurl{#1}}}
\newcommand{\arXiv}[1]{\href{http://arxiv.org/abs/#1}{\tt arXiv:\nolinkurl{#1}}}
\newcommand{\googlebooks}[1]{(preview at \href{http://books.google.com/books?id=#1}{google books})}
\definecolor{dark-red}{rgb}{0.7,0.25,0.25}
\definecolor{dark-blue}{rgb}{0.15,0.15,0.55}
\definecolor{medium-blue}{rgb}{0,0,.8}
\definecolor{DarkGreen}{RGB}{0,150,0}
\definecolor{plum}{RGB}{153,51,255}
\definecolor{salmon}{RGB}{250,178,102}
\definecolor{wString}{named}{orange}
\definecolor{xString}{named}{red}
\definecolor{yString}{named}{blue}
\definecolor{zString}{named}{DarkGreen}
\definecolor{AString}{named}{red}
\definecolor{BString}{named}{blue}
\definecolor{dString}{named}{orange}
\definecolor{cString}{named}{DarkGreen}
\theoremstyle{plain}
\newtheorem{thm}{Theorem}[section]
\newtheorem*{thm*}{Theorem}
\newtheorem{thmalpha}{Theorem}
\newtheorem{cor}[thm]{Corollary}
\newtheorem*{cor*}{Corollary}
\newtheorem*{conj*}{Conjecture}
\newtheorem{lem}[thm]{Lemma}
\newtheorem{prop}[thm]{Proposition}
\newtheorem*{quest*}{Question}
\newtheorem*{claim*}{Claim}
\theoremstyle{definition}
\newtheorem{defn}[thm]{Definition}
\newtheorem{nota}[thm]{Notation}
\newtheorem{ex}[thm]{Example}
\newtheorem{sub-ex}[thm]{Sub-Example}
\newtheorem{rem}[thm]{Remark}
\newtheorem*{rem*}{Remark}
\DeclareMathOperator{\coev}{coev}
\DeclareMathOperator{\ev}{ev}
\DeclareMathOperator{\Hom}{Hom}
\renewcommand{\hom}{\Hom}
\DeclareMathOperator{\id}{id}
\DeclareMathOperator{\Tr}{Tr}
\DeclareMathOperator{\tr}{tr}
\newcommand{\comment}[1]{}
\newcommand{\be}{\begin{enumerate}[label=(\arabic*)]}
\newcommand{\ee}{\end{enumerate}}
\newcommand{\Z}{\mathbb{Z}}
\newcommand{\ssZ}{{\scriptscriptstyle \cZ}}
\def\semicolon{;}
\def\applytolist#1{
    \expandafter\def\csname multi#1\endcsname##1{
        \def\multiack{##1}\ifx\multiack\semicolon
            \def\next{\relax}
        \else
            \csname #1\endcsname{##1}
            \def\next{\csname multi#1\endcsname}
        \fi
        \next}
    \csname multi#1\endcsname}
\def\calc#1{\expandafter\def\csname c#1\endcsname{{\mathcal #1}}}
\def\bbc#1{\expandafter\def\csname bb#1\endcsname{{\mathbb #1}}}
\def\bfc#1{\expandafter\def\csname bf#1\endcsname{{\mathbf #1}}}
\def\sfc#1{\expandafter\def\csname s#1\endcsname{{\sf #1}}}
\newcommand{\Mod}{{\sf Mod}}
\newcommand{\Bimod}{{\sf Bimod}}
\renewcommand{\Vec}{{\sf Vec}}
\newcommand{\noshow}[1]{}
\newcommand{\MR}[1]{}
\tikzset{
	super thick/.style={line width=3pt},
	more thick/.style={line width=1pt},
}
\tikzstyle{shaded}=[fill=red!10!blue!20!gray!30!white]
\tikzstyle{unshaded}=[fill=white]
\tikzstyle{empty box}=[circle, draw, thick, fill=white, opaque, inner sep=2mm]
\tikzstyle{annular}=[scale=.7, inner sep=1mm, baseline]
\tikzstyle{rectangular}=[scale=.75, inner sep=1mm, baseline=-.1cm]
\tikzstyle{mid>}=[decoration={markings, mark=at position 0.5 with {\arrow{>}}}, postaction={decorate}]
\tikzstyle{mid<}=[decoration={markings, mark=at position 0.5 with {\arrow{<}}}, postaction={decorate}]
\tikzstyle{over}=[double, draw=white, super thick, double=]
\tikzstyle{end>}=[decoration={markings, mark=at position 1 with {\arrow{>}}}, postaction={decorate}]
\newcommand{\roundNbox}[6]{
	\draw[rounded corners=5pt, very thick, #1] ($#2+(-#3,-#3)+(-#4,0)$) rectangle ($#2+(#3,#3)+(#5,0)$);
	\coordinate (ZZa) at ($#2+(-#4,0)$);
	\coordinate (ZZb) at ($#2+(#5,0)$);
	\node at ($1/2*(ZZa)+1/2*(ZZb)$) {#6};
}
\newcommand{\roundNboxSize}[7]{
	\draw[rounded corners=5pt, very thick, #1] ($#2+(-#3,-#3)+(-#4,0)$) rectangle ($#2+(#3,#3)+(#5,0)$);
	\coordinate (ZZa) at ($#2+(-#4,0)$);
	\coordinate (ZZb) at ($#2+(#5,0)$);
	\node[scale=#7] at ($1/2*(ZZa)+1/2*(ZZb)$) {#6};
}
\newcommand{\nbox}[6]{
	\draw[thick, #1] ($#2+(-#3,-#3)+(-#4,0)$) rectangle ($#2+(#3,#3)+(#5,0)$);
	\coordinate (ZZa) at ($#2+(-#4,0)$);
	\coordinate (ZZb) at ($#2+(#5,0)$);
	\node at ($1/2*(ZZa)+1/2*(ZZb)$) {#6};
}
\newcommand{\loopIso}[1]{
	\fill[unshaded] ($ #1 - (.3,.3) $) rectangle ($ #1 + (.1,.3) $);
	\draw ($ #1 + (-.3,.2) $) arc (90:270:.2cm);
	\draw ($ #1 + (0,-.3) $)  .. controls ++(90:.2cm) and ++(0:.2cm) .. ($ #1 + (-.3,.2) $);
	\draw[super thick, white] ($ #1 + (0,.3) $)  .. controls ++(270:.2cm) and ++(0:.2cm) .. ($ #1 + (-.3,-.2) $);
	\draw ($ #1 + (0,.3) $)  .. controls ++(270:.2cm) and ++(0:.2cm) .. ($ #1 + (-.3,-.2) $);
}
\newcommand{\loopIsoReverse}[1]{
	\fill[unshaded] ($ #1 - (.3,.3) $) rectangle ($ #1 + (.1,.3) $);
	\draw ($ #1 + (-.3,.2) $) arc (90:270:.2cm);
	\draw ($ #1 + (0,.3) $)  .. controls ++(270:.2cm) and ++(0:.2cm) .. ($ #1 + (-.3,-.2) $);
	\draw[super thick, white]	 ($ #1 + (0,-.3) $)  .. controls ++(90:.2cm) and ++(0:.2cm) .. ($ #1 + (-.3,.2) $);
	\draw ($ #1 + (0,-.3) $)  .. controls ++(90:.2cm) and ++(0:.2cm) .. ($ #1 + (-.3,.2) $);
}
\newcommand{\loopIsoInverse}[1]{
	\fill[unshaded] ($ #1 - (.1,.3) $) rectangle ($ #1 + (.3,.3) $);
	\draw ($ #1 + (.3,.2) $) arc (90:-90:.2cm);
	\draw ($ #1 + (0,-.3) $)  .. controls ++(90:.2cm) and ++(180:.2cm) .. ($ #1 + (.3,.2) $);
	\draw[super thick, white] ($ #1 + (0,.3) $)  .. controls ++(270:.2cm) and ++(180:.2cm) .. ($ #1 + (.3,-.2) $);
	\draw ($ #1 + (0,.3) $)  .. controls ++(270:.2cm) and ++(180:.2cm) .. ($ #1 + (.3,-.2) $);
}
\newcommand{\loopIsoInverseReverse}[1]{
	\fill[unshaded] ($ #1 - (.1,.3) $) rectangle ($ #1 + (.3,.3) $);
	\draw ($ #1 + (.3,.2) $) arc (90:-90:.2cm);
	\draw ($ #1 + (0,.3) $)  .. controls ++(270:.2cm) and ++(180:.2cm) .. ($ #1 + (.3,-.2) $);
	\draw[super thick, white] ($ #1 + (0,-.3) $)  .. controls ++(90:.2cm) and ++(180:.2cm) .. ($ #1 + (.3,.2) $);
	\draw ($ #1 + (0,-.3) $)  .. controls ++(90:.2cm) and ++(180:.2cm) .. ($ #1 + (.3,.2) $);
}
\newcommand{\braiding}[3]{
	\fill[unshaded] ($ #1 - (.1,#3) $) rectangle ($ #1 + (.1,#3) + (#2,0) $);
	\draw ($ #1 + (#2,-#3) $) to[out=90, in=-90] ($ #1 + (0,#3) $);
	\draw[white, line width=4] ($ #1 + (0,-#3) $) to[out=90, in=-90] ($ #1 + (#2,#3) $);
	\draw ($ #1 + (0,-#3) $) to[out=90, in=-90] ($ #1 + (#2,#3) $);
}
\newcommand{\braidingInverse}[3]{
	\fill[unshaded] ($ #1 - (.1,#3) $) rectangle ($ #1 + (.1,#3) + (#2,0) $);
	\draw ($ #1 + (0,-#3) $) to[out=90, in=-90] ($ #1 + (#2,#3) $);
	\draw[white, line width=4] ($ #1 + (#2,-#3) $) to[out=90, in=-90] ($ #1 + (0,#3) $);
	\draw ($ #1 + (#2,-#3) $) to[out=90, in=-90] ($ #1 + (0,#3) $);
}
\newcommand{\Mbox}[4]{
	\pgfmathsetmacro{\planeWidth}{#2};
	\pgfmathsetmacro{\planeDepth}{#3};
	
	\draw[very thick, unshaded] ($ #1 + (-\planeDepth,\planeDepth) $) -- #1 -- ($ #1 + (\planeWidth,0) $) -- ($ #1 + (\planeWidth,0) + (-\planeDepth,\planeDepth) $) -- ($ #1 + (-\planeDepth,\planeDepth) $);

	\node at ($#1+1/2*(#2,0)+1/2*(-#3,#3)$) {\rotatebox{-75}{#4}};
}
\newcommand{\CMbox}[6]{
	\coordinate (#1) at #2;
	%coordinate (box) is lower left corner of face of box.
	\pgfmathsetmacro{\boxWidth}{#3};
	\pgfmathsetmacro{\boxHeight}{#4};
	\pgfmathsetmacro{\boxDepth}{#5};
	\draw[unshaded, very thick] ($(#1) + (\boxWidth,\boxHeight) $) -- ($(#1) + (\boxWidth,\boxHeight) - (\boxDepth,-\boxDepth) $) -- ($(#1) + (0,\boxHeight) - (\boxDepth,-\boxDepth) $) -- ($(#1) - (\boxDepth,-\boxDepth) $);
	\draw[unshaded, very thick] ($(#1) + (0,\boxHeight) $)  -- ($(#1) + (\boxWidth,\boxHeight) $) -- ($(#1) + (\boxWidth,0) $) -- (#1) -- ($(#1) - (\boxDepth,-\boxDepth) $);
	\draw[very thick] ($(#1) + (0,\boxHeight) - (\boxDepth,-\boxDepth) $) -- ($(#1) + (0,\boxHeight) $) -- (#1);
	\node at ($(#1) + 1/2*(\boxWidth,\boxHeight) $) {#6}; 
}
\newcommand{\CMboxSize}[7]{
	\coordinate (#1) at #2;
	%coordinate (box) is lower left corner of face of box.
	\pgfmathsetmacro{\boxWidth}{#3};
	\pgfmathsetmacro{\boxHeight}{#4};
	\pgfmathsetmacro{\boxDepth}{#5};
	\draw[unshaded, very thick] ($(#1) + (\boxWidth,\boxHeight) $) -- ($(#1) + (\boxWidth,\boxHeight) - (\boxDepth,-\boxDepth) $) -- ($(#1) + (0,\boxHeight) - (\boxDepth,-\boxDepth) $) -- ($(#1) - (\boxDepth,-\boxDepth) $);
	\draw[unshaded, very thick] ($(#1) + (0,\boxHeight) $)  -- ($(#1) + (\boxWidth,\boxHeight) $) -- ($(#1) + (\boxWidth,0) $) -- (#1) -- ($(#1) - (\boxDepth,-\boxDepth) $);
	\draw[very thick] ($(#1) + (0,\boxHeight) - (\boxDepth,-\boxDepth) $) -- ($(#1) + (0,\boxHeight) $) -- (#1);
	\node[scale=#7] at ($(#1) + 1/2*(\boxWidth,\boxHeight) $) {#6}; 
}
\newcommand{\CMboxScale}[7]{
	\coordinate (#1) at #2;
	%coordinate (box) is lower left corner of face of box.
	\pgfmathsetmacro{\boxWidth}{#3};
	\pgfmathsetmacro{\boxHeight}{#4};
	\pgfmathsetmacro{\boxDepth}{#5};
	\draw[unshaded, very thick] ($(#1) + (\boxWidth,\boxHeight) $) -- ($(#1) + (\boxWidth,\boxHeight) - (\boxDepth,-\boxDepth) $) -- ($(#1) + (0,\boxHeight) - (\boxDepth,-\boxDepth) $) -- ($(#1) - (\boxDepth,-\boxDepth) $);
	\draw[unshaded, very thick] ($(#1) + (0,\boxHeight) $)  -- ($(#1) + (\boxWidth,\boxHeight) $) -- ($(#1) + (\boxWidth,0) $) -- (#1) -- ($(#1) - (\boxDepth,-\boxDepth) $);
	\draw[very thick] ($(#1) + (0,\boxHeight) - (\boxDepth,-\boxDepth) $) -- ($(#1) + (0,\boxHeight) $) -- (#1);
	\node[scale=#7] at ($(#1) + 1/2*(\boxWidth,\boxHeight) $) {#6}; 
}
\newcommand{\halfDottedEllipse}[3]{
	\draw[thick] #1 arc(-180:0:{#2} and {#3});
	\draw[thick, dotted] ($ #1 + 2*(#2,0)$) arc(0:180:{#2} and {#3});
}
\newcommand{\upTube}[3]{
	\fill[unshaded] #1 -- ($#1 + (0,#3)$) -- ($#1 + (#2,#3)$) -- ($#1 + (#2,0)$);
	\draw[thick] #1 arc(-180:0:{.5*#2} and {.25*#2});
	\draw[thick, unshaded] #1 -- ($#1 + (0,#3)$) ;
	\draw[thick] {($#1 + .5*(#2,0) + (0,#3)$)} ellipse ({.5*#2} and {.25*#2});
	\draw[thick, unshaded] ($#1 + (#2,0)$) -- ($#1 + (#2,#3)$);
}
\newcommand{\upTubeWithString}[4]{
	\fill[unshaded]  ($#1 + (#2,0)$) -- ($#1 + (#2,#3)$) -- ($#1 + (0,#3)$)  -- #1 arc(-180:0:{.5*#2} and {.25*#2});
	\draw[thick] #1 arc(-180:0:{.5*#2} and {.25*#2});
	\draw[thick, unshaded] #1 -- ($#1 + (0,#3)$) ;
	\draw[thick] {($#1 + .5*(#2,0) + (0,#3)$)} ellipse ({.5*#2} and {.25*#2});
	\draw[thick, unshaded] ($#1 + (#2,0)$) -- ($#1 + (#2,.5)$);
	\draw[thick, #4] ($#1 + .5*(#2,0) + .25*(0,-#2) $) -- ($#1 + .5*(#2,0) + (0,#3) + .25*(0,-#2)$);
}
\newcommand{\downTube}[3]{
	\fill[unshaded] #1 -- ($#1 + (0,-#3)$) arc(-180:0:{.5*#2} and {.25*#2}) -- ($#1 + (#2,0)$);
	\draw[thick, unshaded] #1 -- ($#1 + (0,-#3)$) ;
	\halfDottedEllipse{($#1 + (0,-#3)$)}{{.5*#2}}{{.25*#2}}
	\draw[thick, unshaded] ($#1 + (#2,0)$) -- ($#1 + (#2,-#3)$);
}
\newcommand{\downTubeWithString}[4]{
	\draw[thick, unshaded] #1 -- ($#1 + (0,-#3)$) ;
	\halfDottedEllipse{($#1 + (0,-#3)$)}{{.5*#2}}{{.25*#2}}
	\draw[thick, unshaded] ($#1 + (#2,0)$) -- ($#1 + (#2,-#3)$);
	\draw[thick, #4] ($#1 + .5*(#2,0) $) -- ($#1 + .5*(#2,0)  + (0,-#3)+ .25*(0,-#2)$);
}
\newcommand{\straightTubeWithCap}[3]{
	\coordinate (ZZq) at #1;
	\pgfmathsetmacro{\tubeLength}{#3};
	\pgfmathsetmacro{\tubeWidth}{#2};
	\pgfmathsetmacro{\buffer}{.05};	

	\fill[unshaded] ($ (ZZq) + (-\tubeLength,0) + 2*(0,-\buffer) $) -- ($ (ZZq) + 2*(0,-\buffer) $) arc(-90:90:{\tubeWidth+\buffer} and {2*(\tubeWidth+\buffer)}) -- ($ (ZZq) + (-\tubeLength,0) + 4*(0,\tubeWidth) + 2*(0,\buffer) $) arc(90:270:{\tubeWidth+\buffer} and {2*(\tubeWidth+\buffer)}) ;
	\draw[unshaded, thick]  ($ (ZZq) + (-\tubeLength,0) $) -- (ZZq) arc(-90:90:{\tubeWidth} and {2*\tubeWidth}) -- ($ (ZZq) + (-\tubeLength,0) + 4*(0,\tubeWidth) $) ;
	\draw[thick] ($ (ZZq) + (-\tubeLength,0) $) arc(270:90:{2*\tubeWidth});
}
\newcommand{\straightTubeNoString}[3]{
	\coordinate (ZZq) at #1;
	\pgfmathsetmacro{\tubeLength}{#3};
	\pgfmathsetmacro{\tubeWidth}{#2};
	\pgfmathsetmacro{\buffer}{.05};	

	\fill[unshaded] ($ (ZZq) + (-\tubeLength,0) + 2*(0,-\buffer) $) -- ($ (ZZq) + 2*(0,-\buffer) $) arc(-90:90:{\tubeWidth+\buffer} and {2*(\tubeWidth+\buffer)}) -- ($ (ZZq) + (-\tubeLength,0) + 4*(0,\tubeWidth) + 2*(0,\buffer) $) arc(90:270:{\tubeWidth+\buffer} and {2*(\tubeWidth+\buffer)}) ;
	\draw[unshaded, thick]  ($ (ZZq) + (-\tubeLength,0) $) -- (ZZq) arc(-90:90:{\tubeWidth} and {2*\tubeWidth}) -- ($ (ZZq) + (-\tubeLength,0) + 4*(0,\tubeWidth) $) ;
	\draw[thick] ($ (ZZq) + (-\tubeLength,0) $) arc(-90:90:{\tubeWidth} and {2*\tubeWidth}) arc(90:270:{\tubeWidth} and {2*\tubeWidth});
}
\newcommand{\straightTubeWithString}[4]{
	\coordinate (ZZq) at #1;
	\pgfmathsetmacro{\tubeLength}{#3};
	\pgfmathsetmacro{\tubeWidth}{#2};
	\pgfmathsetmacro{\buffer}{.05};	

	\fill[unshaded] ($ (ZZq) + (-\tubeLength,0) + 2*(0,-\buffer) $) -- ($ (ZZq) + 2*(0,-\buffer) $) arc(-90:90:{\tubeWidth+\buffer} and {2*(\tubeWidth+\buffer)}) -- ($ (ZZq) + (-\tubeLength,0) + 4*(0,\tubeWidth) + 2*(0,\buffer) $) arc(90:270:{\tubeWidth+\buffer} and {2*(\tubeWidth+\buffer)}) ;
	\draw[unshaded, thick]  ($ (ZZq) + (-\tubeLength,0) $) -- (ZZq) arc(-90:90:{\tubeWidth} and {2*\tubeWidth}) -- ($ (ZZq) + (-\tubeLength,0) + 4*(0,\tubeWidth) $) ;
	\draw[thick] ($ (ZZq) + (-\tubeLength,0) $) arc(-90:90:{\tubeWidth} and {2*\tubeWidth}) arc(90:270:{\tubeWidth} and {2*\tubeWidth});
	\draw[thick, #4] ($(ZZq) + (\tubeWidth,0) + 2*(0,\tubeWidth) $) -- ($ (ZZq) + (-\tubeLength,0) + 2*(0,\tubeWidth) + (\tubeWidth,0)$);
}
\newcommand{\straightTubeTwoStrings}[4]{
	\fill[unshaded] ($ #1 + (-#2,-.1) $) -- ($ #1 + (0,-.1) $) arc(-90:90:.15cm and .3cm) -- ($ #1 + (-#2,.5) $) ;
	\draw[unshaded, thick]  ($ #1 + (-#2,0) $) -- #1 arc(-90:90:.1cm and .2cm) -- ($ #1 + (-#2,.4) $) ;
	\draw[thick] ($ #1 + (-#2,0) $) arc(-90:90:.1cm and .2cm) arc(90:270:.1cm and .2cm);
	\draw[thick, #3] ($#1 + (.08,.27) $) -- ($ #1 + (-#2,.27) + (.08,0)$);
	\draw[thick, #4] ($#1 + (.08,.13) $) -- ($ #1 + (-#2,.13) + (.08,0)$);
}
\newcommand{\invisibleTube}[1]{
	\fill[unshaded] #1 arc (-90:90:.1cm) arc (270:180:1cm) arc (0:180:.1cm) arc (180:270:1.2cm);
}
\newcommand{\invisibleTubeWithString}[2]{
	\fill[unshaded] #1 arc (-90:90:.1cm) arc (270:180:1cm) arc (0:180:.1cm) arc (180:270:1.2cm);
	\draw[thick, #2] ($#1 + (0,.1) $) arc (270:190:1.1cm) -- ($#1 + (-1.1,1.3) $);
	\draw[thick, #2] ($#1 + (0,.1) $) -- ($#1 + (0,.1) + (.1,0)$);
}
\newcommand{\pairOfPants}[2]{
	%pair of pants:
	\draw[thick] #1 .. controls ++(90:.8cm) and ++(270:.8cm) .. ($ #1 + (.7,1.5) $);
	\draw[thick] ($ #1 + (2,0) $) .. controls ++(90:.8cm) and ++(270:.8cm) .. ($ #1 + (2,0) + (-.7,1.5) $);
	\draw[thick] ($ #1 + (.6,0) $).. controls ++(90:.8cm) and ++(90:.8cm) .. ($ #1 + (1.4,0) $); 
	\halfDottedEllipse{($ #1 + (.7,1.5) $)}{.3}{.1}
	\halfDottedEllipse{#1}{.3}{.1}
	\halfDottedEllipse{($ #1 + (1.4,0) $)}{.3}{.1}
}
\newcommand{\topPairOfPants}[2]{
	%pair of pants:
	\draw[thick] #1 .. controls ++(90:.8cm) and ++(270:.8cm) .. ($ #1 + (.7,1.5) $);
	\draw[thick] ($ #1 + (2,0) $) .. controls ++(90:.8cm) and ++(270:.8cm) .. ($ #1 + (2,0) + (-.7,1.5) $);
	\draw[thick] ($ #1 + (.6,0) $).. controls ++(90:.8cm) and ++(90:.8cm) .. ($ #1 + (1.4,0) $); 
	\draw[thick] ($ #1 + (1,1.5) $) ellipse (.3cm and .1cm);
	\halfDottedEllipse{#1}{.3}{.1}
	\halfDottedEllipse{($ #1 + (1.4,0) $)}{.3}{.1}

}
\newcommand{\invertedPairOfPants}[2]{
	%pair of pants:
	\draw[thick] #1 .. controls ++(270:.8cm) and ++(90:.8cm) .. ($ #1 + (.7,-1.5) $);
	\draw[thick] ($ #1 + (2,0) $) .. controls ++(270:.8cm) and ++(90:.8cm) .. ($ #1 + (2,0) + (-.7,-1.5) $);
	\draw[thick] ($ #1 + (.6,0) $).. controls ++(270:.8cm) and ++(270:.8cm) .. ($ #1 + (1.4,0) $); 
%	\draw[thick, dotted] ($ #1 + (1,-1.5) $) ellipse (.3cm and .1cm);
	\halfDottedEllipse{($ #1 + (.7,-1.5) $)}{.3}{.1}
%	\halfDottedEllipse{($ #1 + (1.4,0) $)}{.3}{.1}

}
\newcommand{\topCylinder}[2]{
	\draw[thick] #1 -- ($ #1 + (0,1) $);
	\draw[thick] ($ #1 + (.6,0) $) -- ($ #1 + (.6,1) $);
	\draw[thick] ($ #1 + (.3,1) $) ellipse (.3cm and .1cm);
}
\newcommand{\bottomCylinder}[3]{
	\draw[thick] #1 -- ($ #1 + (0,#3) $);
	\draw[thick] ($ #1 + 2*(#2,0) $) -- ($ #1 + 2*(#2,0) + (0,#3) $);
	\halfDottedEllipse{#1}{#2}{{1/3*#2}}	
}
\newcommand{\emptyCylinder}[3]{
	\draw[thick] #1 -- ($ #1 + (0,#3) $);
	\draw[thick] ($ #1 + 2*(#2,0) $) -- ($ #1 + 2*(#2,0) + (0,#3) $);	
}
\newcommand{\RightSlantCylinder}[2]{
	\draw[thick] #1 .. controls ++(90:.8cm) and ++(270:.8cm) .. ($ #1 + (.7,1.5) $);
	\draw[thick] ($ #1 + (.6,0) $).. controls ++(90:.8cm) and ++(270:.8cm) .. ($ #1 + (1.3,1.5) $); 
	\halfDottedEllipse{($ #1 + (.7,1.5) $)}{.3}{.1}
	\halfDottedEllipse{#1}{.3}{.1}
}
\newcommand{\LeftSlantCylinder}[2]{
	\draw[thick] #1 .. controls ++(90:.8cm) and ++(270:.8cm) .. ($ #1 + (-.7,1.5) $);
	\draw[thick] ($ #1 + (.6,0) $).. controls ++(90:.8cm) and ++(270:.8cm) .. ($ #1 + (-.1,1.5) $); 
	\halfDottedEllipse{($ #1 + (-.7,1.5) $)}{.3}{.1}
	\halfDottedEllipse{#1}{.3}{.1}
}
\newcommand{\inverseBraid}[3]{

	\coordinate (ZZz) at #1;
	\pgfmathsetmacro{\tubeHeight}{#3};
	\pgfmathsetmacro{\tubeRadius}{#2};
	\pgfmathsetmacro{\buffer}{.2};	

	%crossing
	\draw[thick] (ZZz) .. controls ++(90:.7cm) and ++(270:.7cm) .. ($ (ZZz) + (\tubeHeight,\tubeHeight) - 2*(\tubeRadius,0) $);
	\draw[thick] ($ (ZZz) + 2*(\tubeRadius,0) $) .. controls ++(90:.7cm) and ++(270:.7cm) .. ($ (ZZz) + (\tubeHeight,\tubeHeight) $);

	\fill[unshaded] ($ (ZZz) +(\tubeHeight,0) + (\buffer,0) $) .. controls ++(90:.8cm) and ++(270:.8cm) .. ($ (ZZz) + (0,\tubeHeight) + 2*(\tubeRadius,0) + (\buffer,0) $) -- ($ (ZZz) + (0,\tubeHeight) - (\buffer,0) $)  .. controls ++(270:.8cm) and ++(90:.8cm) .. ($ (ZZz) +  (\tubeHeight,0) - 2*(\tubeRadius,0) - (\buffer,0) $);
	\draw[thick] ($ (ZZz) +(\tubeHeight,0) $) .. controls ++(90:.7cm) and ++(270:.7cm) .. ($ (ZZz) + 2*(\tubeRadius,0) + (0,\tubeHeight) $);
	\draw[thick] ($ (ZZz) + (\tubeHeight,0) - 2*(\tubeRadius,0) $) .. controls ++(90:.7cm) and ++(270:.7cm) .. ($ (ZZz) + (0,\tubeHeight) $);
}
\newcommand{\plane}[3]{
	%#1 is coordinate of lower left corner of plane
	\pgfmathsetmacro{\planeWidth}{#2};
	\pgfmathsetmacro{\planeDepth}{#3};
	
	\draw[thick] ($ #1 + (-\planeDepth,\planeDepth) $) -- #1 -- ($ #1 + (\planeWidth,0) $) -- ($ #1 + (\planeWidth,0) + (-\planeDepth,\planeDepth) $) -- ($ #1 + (-\planeDepth,\planeDepth) $);
}
  \newcommand{\tikzmath}[2][]
     {\vcenter{\hbox{\begin{tikzpicture}[#1]#2
                     \end{tikzpicture}}}
     }
\newcommand{\hashdef}[2]{\@namedef{#1}{#2}}
\newcommand{\hashlookup}[1]{\@nameuse{#1}}
\begin{document}
\title{Categorified trace for module tensor categories\\ over braided tensor categories}
\author{Andr\'{e} Henriques, David Penneys, and James Tener}
\date{\today}
\maketitle
\begin{abstract}
Given a braided pivotal category $\cC$ and a pivotal module tensor category $\cM$, we define a functor $\Tr_\cC:\cM \to \cC$, called the associated categorified trace.
By a result of Bezrukavnikov, Finkelberg and Ostrik, 
the functor $\Tr_\cC$ comes equipped with natural isomorphisms $\tau_{x,y}:\Tr_\cC(x \otimes y) \to \Tr_\cC(y \otimes x)$, which we call the traciators.
This situation lends itself to a diagramatic calculus of `strings on cylinders', where the traciator corresponds to wrapping a string around the back of a cylinder.
We show that $\Tr_\cC$ in fact has a much richer graphical calculus in which the tubes are allowed to branch and braid.  
Given algebra objects $A$  and $B$, we prove that $\Tr_\cC(A)$ and $\Tr_\cC(A \otimes B)$ are again algebra objects.
Moreover, provided certain mild assumptions are satisfied, $\Tr_\cC(A)$ and $\Tr_\cC(A \otimes B)$ are semisimple whenever $A$ and $B$ are semisimple.
%This is the submitted version of \arxiv{???}.
\end{abstract}

\tableofcontents

%%%%%%%%%%%%%%%%%%%%%%%%%%%%%%%%%%%%%%%%%%%%%%%%%%
%  Introduction
\settocdepth{section}							%PUT ME BACK
%auto-ignore
%this ensures the arxiv doesn't try to start TeXing here.
%!TEX root =../InternalTrace.tex

%%%%%%%%%%%%%%%%%%%%%%%%%%%%%%%%%%%%%%%%%%%%%%%%%%
%%%%%%%%%%%%%%%%%%%%%%%%%%%%%%%%%%%%%%%%%%%%%%%%%%
%%%%%%%%%%%%%%%%%%%%%%%%%%%%%%%%%%%%%%%%%%%%%%%%%%
\section{Introduction}\label{sec:Introduction}

A \emph{tensor category} is a linear category $\cM$, equipped with a functor $\otimes:\cM\times\cM\to \cM$
along with extra data encoding the ideas of associativity and unitality. 
If $x$ is an object of $\cM$, then its dual $x^*$ is characterized, assuming it exists, by adjunctions
$\Hom(y,x\otimes z)\cong\Hom(x^*\otimes y,z)$
and
$\Hom(y\otimes x,z)\cong\Hom(y,z\otimes x^*)$.
The category $\cM$ is \emph{pivotal} if it comes equipped with certain isomorphisms $\varphi_x:x\to x^{**}$ from every object to its double dual.
It is interesting to note that, in a pivotal category, the functor $x\mapsto \Hom(1,x)$ satisfies the following cyclic invariance property:
\[
\begin{split}
\Hom(1,x\otimes y) \,\cong\, \Hom(x^*\otimes 1&,y) \,\cong\, \Hom(1\otimes x^*,y)\\
&\cong\, \Hom(1,y\otimes x^{**}) \,\cong\, \Hom(1,y\otimes x).
\end{split}
\]
We think of $\Hom(1,-)$ as a vector space valued trace $\Tr:\cM\to \mathsf{Vec}$.
This is our prototypic example of a \emph{categorified trace}.

Given a tensor category $\cC$, and two objects $x$, $y$ of some module category $\cM$, the internal hom $\underline{\Hom}(x, y)$ is the object of $\cC$ that represents the exact functor
$c\mapsto \cM(c\cdot x, y)$ \cite[Def.\,3.4]{MR1976459}.
If in addition to being a module category $\cM$ is also a tensor category in its own right, then we may consider the functor 
\[
\Tr_\cC:=\underline{\Hom}(1_\cM, -):\cM\to\cC,
\]
and ask whether it has a similar cyclic invariance property.

The appropriate compatibility between the $\cC$-module structure and the tensor structure of $\cM$ can only be formulated when $\cC$ is braided,
and the resulting notion is what we call a \emph{module tensor category} (Definition \ref{def: central functor}).
We write $\Phi:\cC\to \cM$ for the functor that sends $c\in\cC$ to $c\cdot 1_\cM\in\cM$.
By definition, equipping $\cM$ with the structure of a module tensor category over $\cC$ is the same thing as equipping the functor $\Phi$ with a factorization
\[
\cC\xrightarrow{\,\,\,\Phi^{\scriptscriptstyle \cZ}\,\,\,} \cZ(\cM)\xrightarrow{\,\,\,\,\,\,\,\,\,\,} \cM\,,
\]
where $\Phi^{\scriptscriptstyle \cZ}$ is a braided functor to the Drinfel'd center of $\cM$ \cite[Def.\,1]{MR2074589} \cite[Def.\,2.4]{MR3039775}.
The trace functor can be alternatively described as the right adjoint of $\Phi$:
%\begin{figure}[!ht]
$$
\begin{tikzpicture}[scale=1.5]
\useasboundingbox (0,-.15) rectangle (3,1.15-.1);
\node[inner sep=5] (x) at (0,0) {$\cC$};
\node[inner sep=5] (y) at (3,0) {$\cM$};
\node[inner sep=5] (up) at (1.5,1-.1) {$\cZ(\cM)$};
\draw[->              , shorten >=2, shorten <=2.5] (x.east)+(0,.03) coordinate(a) --node[above, scale=1.1]{$\scriptstyle \Phi$} (y.west|-a);
\draw[->, dashed, shorten >=2, shorten <=2.5] (y.west)+(0,-.08) coordinate(b) --node[below, scale=1.1]{$\scriptstyle \Tr_\cC$} (x.east|-b);
\draw[->] (x) --node[above, pos=.45, yshift=2, scale=1.1]{$\scriptstyle \Phi^\cZ$} (up);
\draw[->] (up) -- (y);
\end{tikzpicture}
$$

A \emph{categorified trace} (Definition \ref{def:  categorified trace}), also known as a commutator functor  \cite[\S 6]{MR2506324}, \cite[Def. 2.1]{MR3250042}, is a functor $\Tr:\cM\to\cC$ equipped 
with natural isomorphisms
\[
\tau_{x,y}:\Tr(x \otimes y) \to \Tr(y \otimes x),
\]
which we call the \emph{traciators}, subject to the axiom
$\tau_{x, y \otimes z} \,=\, \tau_{z \otimes x, y} \circ \tau_{x \otimes y, z}$.
We denote a categorified trace graphically by lifting objects and morphisms up from the plane (corresponding to $\cM$) onto a cylinder in $3$-space (corresponding to $\cC$)
$$
f=
\begin{tikzpicture}[baseline=-.1cm, scale=.7]
	\plane{(0,-1)}{3}{2}
	\draw[thick, xString] (-1,0) -- (.5,0);
	\draw[thick, yString] (.5,0) -- (2,0);
	\Mbox{(.4,-.3)}{1}{.6}{$f$}
	\node at (2.35,0) {$\scriptstyle y$};
	\node at (-1.35,0) {$\scriptstyle x$};
\end{tikzpicture}
%\in \cM(x,y)
\,\,\,\longmapsto
\quad
\Tr(f)
=
\begin{tikzpicture}[baseline=-.1cm]

	%cylinder
	\draw[thick] (-.3,-1) -- (-.3,1);
	\draw[thick] (.3,-1) -- (.3,1);
	\draw[thick] (0,1) ellipse (.3cm and .1cm);
	\halfDottedEllipse{(-.3,-1)}{.3}{.1}
	
	\draw[thick, xString] (0,-1.1) -- (0,0);
	\draw[thick, yString] (0,.9) -- (0,0);
	\nbox{unshaded}{(0,0)}{.3}{-.1}{-.1}{$f$}
	\node at (-.13,.68) {$\scriptstyle y$};
	\node at (-.13,-.77) {$\scriptstyle x$};
\end{tikzpicture}
$$
Following \cite{MR3095324}\footnote{See Remark \ref{rem: rant No1} for a subtle difference between our notion of categorified trace and the one used in~\cite{MR3095324}.}, the traciator is represented graphically by a strand wrapping around the cylinder:
$$
\tau_{x,y} \,\,=\,\,
\begin{tikzpicture}[xscale=1.1, baseline=-.1cm,scale=.9]
	\draw[thick] (-.3,-1) -- (-.3,1);
	\draw[thick] (.3,-1) -- (.3,1);
	\draw[thick] (0,1) ellipse (.3cm and .1cm);
	\halfDottedEllipse{(-.3,-1)}{.3}{.1}
	
	\draw[thick, yString] (.1,-1.1) .. controls ++(85:.2cm) and ++(225:.2cm) .. (.3,-.2);		
	\draw[thick, yString] (-.1,.9) .. controls ++(265:.2cm) and ++(45:.2cm) .. (-.3,.2);
	\draw[thick, yString, dotted] (-.3,.2) -- (.3,-.2);	
	\draw[thick, xString] (-.1,-1.1)  .. controls ++(90:.2) and ++(270:.2) ..  (.1,.9);
	\node at (.11,-1.3) {$\scriptstyle y$};
	\node at (-.1,-1.285) {$\scriptstyle x$};
\end{tikzpicture}
$$
When $\cC$ and $\cM$ are pivotal and $\Phi^{\scriptscriptstyle \cZ}$ is a pivotal functor, then $\Tr_\cC$ is a categorified trace.
In fact, this only depends on $\Phi$ factoring through the Drinfel'd center, and not on it being a tensor functor, as proven in \cite[Prop.\,5]{MR2506324} and \cite[Prop.\,2.5]{MR3250042}.

The fact that $\Phi$ is a tensor functor contributes to the structure of $\Tr_\cC$ in a different way.
Adjoints of tensor functors are lax monoidal \cite{MR0360749}, and so we have unit and multiplication maps $i:1_\cC\to\Tr_\cC(1_\cM)$ and $\mu_{x,y} : \Tr_\cC(x) \otimes \Tr_\cC(y) \to \Tr_\cC(x \otimes y)$
which we represent graphically as follows:
$$
i\,\,=\,\,
\begin{tikzpicture}[baseline=-.25cm,scale=.9]
	\draw[thick] circle (.3 and .15) (.3,0) to[in=-90, out=-90, looseness=2.5] (-.3,0);
\end{tikzpicture}
\qquad\qquad
\mu_{x,y} \,\,=\,\,
\begin{tikzpicture}[baseline=-.3cm,scale=.9]
	\topPairOfPants{(-1,-1)}{}
	\draw[thick, xString] (-.73,-1.1) .. controls ++(90:.8cm) and ++(270:.8cm) .. (-.1,.4);		
	\draw[thick, yString] (.73,-1.1) .. controls ++(90:.8cm) and ++(270:.8cm) .. (.1,.4);		
	\node[below] at (-.73,-1.09+.03) {$\scriptstyle x$};
	\node[below] at (.73,-1.08+.03) {$\scriptstyle y$};
\end{tikzpicture}
$$

The novelty of this paper is the rich interplay between the above two structures.
Using everything we have, we can assign a morphism to any picture of strands on tubes.
These, in turn, can be used to formulate a number of non-trivial identities, such as
$$
\quad\,\,\,
\begin{tikzpicture}[baseline=.1cm, scale=.7]

	%cylinder
	\draw[thick] (-.3,.5) -- (-.3,1.5);
	\draw[thick] (.3,.5) -- (.3,1.5);
	\draw[thick] (0,1.5) ellipse (.3cm and .1cm);
	
	\draw[thick, xString] (-.1,.43) .. controls ++(90:.6cm) and ++(270:.6cm) .. (.1,1.42);		
	\draw[thick, yString] (.1,.43) .. controls ++(90:.2cm) and ++(225:.2cm) .. (.3,.95);		
	\draw[thick, yString] (-.1,1.42) .. controls ++(270:.2cm) and ++(45:.2cm) .. (-.3,1.05);
	\draw[thick, yString, dotted] (.3,.95) -- (-.3,1.05);	

	%pair of pants:
	\draw[thick] (-1,-1) .. controls ++(90:.8cm) and ++(270:.8cm) .. (-.3,.5);
	\draw[thick] (1,-1) .. controls ++(90:.8cm) and ++(270:.8cm) .. (.3,.5);
	\draw[thick] (-.4,-1) .. controls ++(90:.8cm) and ++(90:.8cm) .. (.4,-1); 
	\halfDottedEllipse{(-.3,.5)}{.3}{.1}
	\halfDottedEllipse{(-1,-1)}{.3}{.1}
	\halfDottedEllipse{(.4,-1)}{.3}{.1}
	
	\draw[thick, xString] (-.8,-1.07) .. controls ++(90:.8cm) and ++(270:.8cm) .. (-.1,.42);		
	\draw[thick, yString] (.8,-1.07) .. controls ++(90:.8cm) and ++(270:.8cm) .. (.1,.42);		
\end{tikzpicture}
=
\begin{tikzpicture}[baseline=-.8cm,scale=.7]

	%cylinder
	\draw[thick] (-1,-2) -- (-1,-3);
	\draw[thick] (-.4,-2) -- (-.4,-3);
	\halfDottedEllipse{(-1,-3)}{.3}{.1}
	
	\draw[thick, xString] (-.7,-2.1) -- (-.7,-3.1);

	%cylinder
	\draw[thick] (1,-2) -- (1,-3);
	\draw[thick] (.4,-2) -- (.4,-3);
	\halfDottedEllipse{(.4,-3)}{.3}{.1}

	\draw[thick, yString] (.7,-3.1) .. controls ++(90:.2cm) and ++(225:.2cm) .. (1,-2.55);		
	\draw[thick, yString] (.7,-2.1) .. controls ++(270:.2cm) and ++(45:.2cm) .. (.4,-2.45);
	\draw[thick, yString, dotted] (.4,-2.45) -- (1,-2.55);		

	%crossing
	\draw[thick] (1,-2) .. controls ++(90:.7cm) and ++(270:.7cm) .. (-.4,0);
	\draw[thick] (.4,-2) .. controls ++(90:.7cm) and ++(270:.7cm) .. (-1,0);
	\draw[thick, yString] (-.7,-.07) .. controls ++(270:.6cm) and ++(90:.8cm) .. (.7,-2.1);		

	\fill[unshaded] (-1.2,-2) .. controls ++(90:.8cm) and ++(270:.8cm) .. (.2,0) -- (1.2,0)  .. controls ++(270:.8cm) and ++(90:.8cm) .. (-.2,-2);
	\draw[thick] (-1,-2) .. controls ++(90:.7cm) and ++(270:.7cm) .. (.4,0);
	\draw[thick] (-.4,-2) .. controls ++(90:.7cm) and ++(270:.7cm) .. (1,0);
	\draw[thick, xString] (.7,-.07) .. controls ++(270:.6cm) and ++(90:.8cm) .. (-.7,-2.1);	

	\halfDottedEllipse{(-1,-2)}{.3}{.1}
	\halfDottedEllipse{(.4,-2)}{.3}{.1}

	%pair of pants:
	\draw[thick] (-1,0) .. controls ++(90:.8cm) and ++(270:.8cm) .. (-.3,1.5);
	\draw[thick] (1,0) .. controls ++(90:.8cm) and ++(270:.8cm) .. (.3,1.5);
	\draw[thick] (-.4,0) .. controls ++(90:.8cm) and ++(90:.8cm) .. (.4,0); 
	\draw[thick] (0,1.5) ellipse (.3cm and .1cm);
	\halfDottedEllipse{(-1,0)}{.3}{.1}
	\halfDottedEllipse{(.4,0)}{.3}{.1}
	
	\draw[thick, yString] (-.7,-.07) .. controls ++(90:.8cm) and ++(270:.8cm) .. (-.1,1.42);		
	\draw[thick, xString] (.7,-.07) .. controls ++(90:.8cm) and ++(270:.8cm) .. (.1,1.42);		
\end{tikzpicture}
\quad\,\,\,\,\,\,\longleftrightarrow\,\,\,\,\quad
\raisebox{.78cm}{
\xymatrix{
\operatorname{Tr}_\mathcal{C}(x) \otimes \Tr_\cC(y)  \ar[r]^(.55){\scriptscriptstyle \mu} \ar[d]_{\scriptscriptstyle \beta \,\circ\, (\id \otimes\, \theta)} &\operatorname{Tr}_\mathcal{C}(x \otimes y) \ar[d]^{\scriptscriptstyle \tau}
%\operatorname{Tr}_\mathcal{C}(x) \otimes \operatorname{Tr}_\mathcal{C}(y)  \ar[r]^(.55)\mu \ar[d]_{\beta \;\!\circ (\operatorname{id}_{\operatorname{Tr}_\mathcal{C}(x)} \!\otimes\, \theta_{\operatorname{Tr}_\mathcal{C}(y)})} &\operatorname{Tr}_\mathcal{C}(x \otimes y) \ar[d]^{\tau}
\\
\operatorname{Tr}_\mathcal{C}(y) \otimes \operatorname{Tr}_\mathcal{C}(x)  \ar[r]^(.55){\scriptscriptstyle \mu} &\operatorname{Tr}_\mathcal{C}(y \otimes x)
}}
$$
and
$$
\begin{tikzpicture}[baseline=1.1cm, scale=.8]

	\coordinate (a) at (-1,-1);
	\coordinate (a1) at ($ (a) + (1.4,0)$);
	\coordinate (a2) at ($ (a1) + (1.4,0)$);
	\coordinate (b) at ($ (a) + (.7,1.5)$);
	\coordinate (b2) at ($ (b) + (1.4,0)$);
	\coordinate (c) at ($ (b) + (0,1)$);
	\coordinate (c2) at ($ (c) + (1.4,0)$);
	\coordinate (d) at ($ (c) + (.7,1.5)$);	
	
	\pairOfPants{(a)}{}
	\pairOfPants{(c)}{}
	\topCylinder{($ (a) + (1.4,4) $)}{}
	\draw[thick] (b) -- ($ (b) + (0,1) $);
	\draw[thick] ($ (b) + (.6,0) $) -- ($ (b) + (.6,1) $);
	\draw[thick] ($ (b) + (1.4,0) $)  -- ($ (b) + (1.4,1) $);
	\draw[thick] ($ (b) + (2,0) $) -- ($ (b) + (2,1) $);
	\LeftSlantCylinder{($ (a) + (2.8,0) $)}{}

	%lower traciator	
	\draw[thick, wString] ($ (b) + (.15,-.08) $) .. controls ++(90:.4cm) and ++(270:.4cm) .. ($ (b) + 2*(.15,0) + (0,.9) $);
	\draw[thick, xString] ($ (b) + 2*(.15,0) + (0,-.1) $) .. controls ++(90:.4cm) and ++(270:.4cm) .. ($ (b) + 3*(.15,0) + (0,.92) $);		
	\draw[thick, yString] ($ (b) + 3*(.15,0) + (0,-.08) $) .. controls ++(90:.2cm) and ++(225:.1cm) .. ($ (b) + 4*(.15,0) + (0,-.08) + (0,.45)$);
	\draw[thick, yString] ($ (b) + (.15,1) + (0,-.08) $) .. controls ++(270:.2cm) and ++(45:.1cm) .. ($ (b) + (0,1) + (0,-.08) + (0,-.45)$);

	%upper traciator
	\draw[thick, wString] ($ (d) + 2*(.12,0) + (0,-.1)$) .. controls ++(90:.4cm) and ++(270:.4cm) .. ($ (d) + (.12,0) + (0,-.08) + (0,1) $);
	\draw[thick, xString] ($ (d) + 3*(.12,0) + (0,-.08) $) .. controls ++(90:.4cm) and ++(270:.4cm) .. ($ (d) + 2*(.12,0) + (0,.9) $);		
	\draw[thick, yString] ($ (d) + (.12,0) + (0,-.08) $) .. controls ++(90:.2cm) and ++(-45:.1cm) .. ($ (d) + (0,-.08) + (0,.45)$);
	\draw[thick, yString] ($ (d) + 4*(.12,0) + (0,1) + (0,-.08) $) .. controls ++(270:.2cm) and ++(135:.1cm) .. ($ (d) + 5*(.12,0) + (0,-.08) + (0,-.45) + (0,1)$);
	\draw[thick, zString] ($ (d) + 4*(.12,0) + (0,-.08) $) .. controls ++(90:.4cm) and ++(270:.4cm) .. ($ (d) + 3*(.12,0) + (0,-.08) + (0,1)$);

	%lower strands
	\draw[thick, wString] ($ (a) + 2*(.15,0) + (0,-.1)$) .. controls ++(90:.8cm) and ++(270:.8cm) .. ($ (b) + (.15,0) + (0,-.08) $);	
	\draw[thick, xString] ($ (a1) + 2*(.15,0) + (0,-.1)$) .. controls ++(90:.8cm) and ++(270:.8cm) .. ($ (b) + 2*(.15,0) + (0,-.1) $);	
	\draw[thick, yString] ($ (a1) + 3*(.15,0) + (0,-.08)$) .. controls ++(90:.8cm) and ++(270:.8cm) .. ($ (b) + 3*(.15,0) + (0,-.08) $);	
	\draw[thick, zString] ($ (a2) + 2*(.15,0) + (0,-.1)$) .. controls ++(90:.8cm) and ++(270:.6cm) .. ($ (b2) + 2*(.15,0) + (0,-.1) $) -- ($ (c2) + 2*(.15,0) + (0,-.1) $);	
	
	%upper pants strands
	\draw[thick, yString] ($ (c) + (.15,0) + (0,-.08)$) .. controls ++(90:.8cm) and ++(270:.8cm) .. ($ (d) + (.12,0) + (0,-.08) $);
	\draw[thick, wString] ($ (c) + 2*(.15,0) + (0,-.1)$) .. controls ++(90:.8cm) and ++(270:.8cm) .. ($ (d) + 2*(.12,0) + (0,-.08) $);	
	\draw[thick, xString] ($ (c) + 3*(.15,0) + (0,-.08)$) .. controls ++(90:.8cm) and ++(270:.8cm) .. ($ (d) + 3*(.12,0) + (0,-.08) $);	
	\draw[thick, zString] ($ (c2) + 2*(.15,0) + (0,-.1)$) .. controls ++(90:.8cm) and ++(270:.6cm) .. ($ (d) + 4*(.12,0) + (0,-.08) $);	

\end{tikzpicture}
=\,\,
\begin{tikzpicture}[baseline=1.1cm, scale=.8]	

	\coordinate (a1) at (-1,-1);
	\coordinate (a2) at ($ (a1) + (1.4,0)$);
	\coordinate (a3) at ($ (a1) + (2.8,0)$);
	\coordinate (b1) at ($ (a1) + (0,1)$);
	\coordinate (b2) at ($ (b1) + (1.4,0) $);
	\coordinate (b3) at ($ (b2) + (1.4,0) $);
	\coordinate (c1) at ($ (b1) + (.7,1.5)$);
	\coordinate (c2) at ($ (b2) + (.7,1.5)$);
	\coordinate (d1) at ($ (c1) + (0,1)$);
	\coordinate (d2) at ($ (c2) + (0,1)$);
	
	\RightSlantCylinder{(b1)}{}
	\pairOfPants{(b2)}{}
	\draw[thick] (c1) -- ($ (c1) + (0,1) $);
	\draw[thick] ($ (c1) + (.6,0) $) -- ($ (c1) + (.6,1) $);
	\draw[thick] (c2) -- ($ (c2) + (0,1) $);
	\draw[thick] ($ (c2) + (.6,0) $) -- ($ (c2) + (.6,1) $);
	\topPairOfPants{(d1)}{}
	
	\draw[thick] (a1) -- ($ (a1) + (0,1) $);
	\draw[thick] ($ (a1) + (.6,0) $) -- ($ (a1) + (.6,1) $);
	\halfDottedEllipse{(a1)}{.3}{.1}	

	\draw[thick] (a2) -- ($ (a2) + (0,1) $);
	\draw[thick] ($ (a2) + (.6,0) $) -- ($ (a2) + (.6,1) $);
	\halfDottedEllipse{(a2)}{.3}{.1}	

	\draw[thick] (a3) -- ($ (a3) + (0,1) $);
	\draw[thick] ($ (a3) + (.6,0) $) -- ($ (a3) + (.6,1) $);
	\halfDottedEllipse{(a3)}{.3}{.1}	

	%lower traciator
	\draw[thick, xString] ($ (a2) + (.15,0) + (0,-.1)$) .. controls ++(90:.4cm) and ++(270:.4cm) .. ($ (a2) + 3*(.15,0) + (0,-.08) + (0,1) $);		
	\draw[thick, yString] ($ (a2) + 3*(.15,0) + (0,-.08) $) .. controls ++(90:.2cm) and ++(225:.1cm) .. ($ (a2) + 4*(.15,0) + (0,-.08) + (0,.45)$);
	\draw[thick, yString] ($ (a2) + (.15,1) + (0,-.08) $) .. controls ++(270:.2cm) and ++(45:.1cm) .. ($ (a2) + (0,1) + (0,-.08) + (0,-.45)$);

	%upper traciator
	\draw[thick, xString] ($ (c2) + 2*(.15,0) + (0,-.1)$) .. controls ++(90:.4cm) and ++(270:.4cm) .. ($ (c2) + (.15,0) + (0,-.08) + (0,1) $);
	\draw[thick, zString] ($ (c2) + 3*(.15,0) + (0,-.08) $) .. controls ++(90:.4cm) and ++(270:.4cm) .. ($ (c2) + 2*(.15,0) + (0,.9) $);		
	\draw[thick, yString] ($ (c2) + (.15,0) + (0,-.08) $) .. controls ++(90:.2cm) and ++(-45:.1cm) .. ($ (c2) + (0,-.08) + (0,.45)$);
	\draw[thick, yString] ($ (c2) + 3*(.15,0) + (0,1) + (0,-.08) $) .. controls ++(270:.2cm) and ++(135:.1cm) .. ($ (c2) + 4*(.15,0) + (0,-.08) + (0,-.45) + (0,1)$);
	
	%lower pants
	\draw[thick, wString] ($ (a1) + 2*(.15,0) + (0,-.1)$) -- ($ (b1) + 2*(.15,0) + (0,-.1)$) .. controls ++(90:.8cm) and ++(270:.8cm) .. ($ (c1) + 2*(.15,0) + (0,-.1) $) -- ($ (d1) + 2*(.15,0) + (0,-.1) $) ;	
	\draw[thick, yString] ($ (b2) + (.15,0) + (0,-.08)$) .. controls ++(90:.8cm) and ++(270:.8cm) .. ($ (c2) + (.15,0) + (0,-.08) $);	
	\draw[thick, xString] ($ (b2) + 3*(.15,0) + (0,-.08)$) .. controls ++(90:.8cm) and ++(270:.8cm) .. ($ (c2) + 2*(.15,0) + (0,-.1) $);	
	\draw[thick, zString] ($ (a3) + 2*(.15,0) + (0,-.1)$) -- ($ (b3) + 2*(.15,0) + (0,-.1)$) .. controls ++(90:.8cm) and ++(270:.8cm) .. ($ (c2) + 3*(.15,0) + (0,-.08) $);	

	%upper pants
	\draw[thick, wString] ($ (d1) + 2*(.15,0) + (0,-.1)$) .. controls ++(90:.8cm) and ++(270:.8cm) .. ($ (d1) + (.12,0) + (0,-.08) + (.7,1.5) $);	
	\draw[thick, xString] ($ (d2) + (.15,0) + (0,-.08)$) .. controls ++(90:.8cm) and ++(270:.8cm) .. ($ (d1) + 2*(.12,0) + (0,-.08) + (.7,1.5) $);	
	\draw[thick, zString] ($ (d2) + 2*(.15,0) + (0,-.1)$) .. controls ++(90:.8cm) and ++(270:.6cm) .. ($ (d1) + 3*(.12,0) + (0,-.08) + (.7,1.5) $);	
	\draw[thick, yString] ($ (d2) + 3*(.15,0) + (0,-.08)$) .. controls ++(90:.8cm) and ++(270:.6cm) .. ($ (d1) + 4*(.12,0) + (0,-.08) + (.7,1.5) $);
	
\end{tikzpicture}
\quad\longleftrightarrow
\raisebox{2.2cm}{
\xymatrix @C=0cm{
\scriptstyle\Tr_\cC(w) \otimes \Tr_\cC(x\otimes y) \otimes \Tr_\cC(z) \ar[rrr]^*+{\scriptscriptstyle \id\otimes \tau \otimes \id} \ar[d]_{\scriptscriptstyle\mu\otimes \id}
&&&
\scriptstyle\Tr_\cC(w) \otimes \Tr_\cC(y \otimes x) \otimes \Tr_\cC(z) \ar[d]^{\scriptscriptstyle\id\otimes \mu}
\\
\scriptstyle\Tr_\cC(w \otimes x \otimes y)\otimes \Tr_\cC(z) \ar[d]_{\scriptscriptstyle\tau\otimes \id}
&&&
\scriptstyle\Tr_\cC(w) \otimes \Tr_\cC(y \otimes x \otimes z)\ar[d]^{\scriptscriptstyle\id\otimes \tau^{-1}}
\\
\scriptstyle\Tr_\cC(y \otimes w \otimes x)\otimes \Tr_\cC(z) \ar[d]_{\scriptscriptstyle\mu}
&&&
\scriptstyle\Tr_\cC(w) \otimes \Tr_\cC(x \otimes z\otimes y)\ar[d]^{\scriptscriptstyle\mu}
\\
\scriptstyle\Tr_\cC(y \otimes w \otimes x \otimes z) \ar[rrr]^{\scriptscriptstyle\tau^{-1}}
&&&
\scriptstyle\Tr_\cC(w\otimes x \otimes z\otimes y) 
}}
$$
which combine  the traciator and multiplication maps with the braiding and the twist of $\cC$ (see Lemmas~\ref{lem:TwistMultiplicationAndTraciators} and \ref{lem:MultiplicationAssocaitive}).
We summarize the situation in Figure \ref{fig: big figure}.

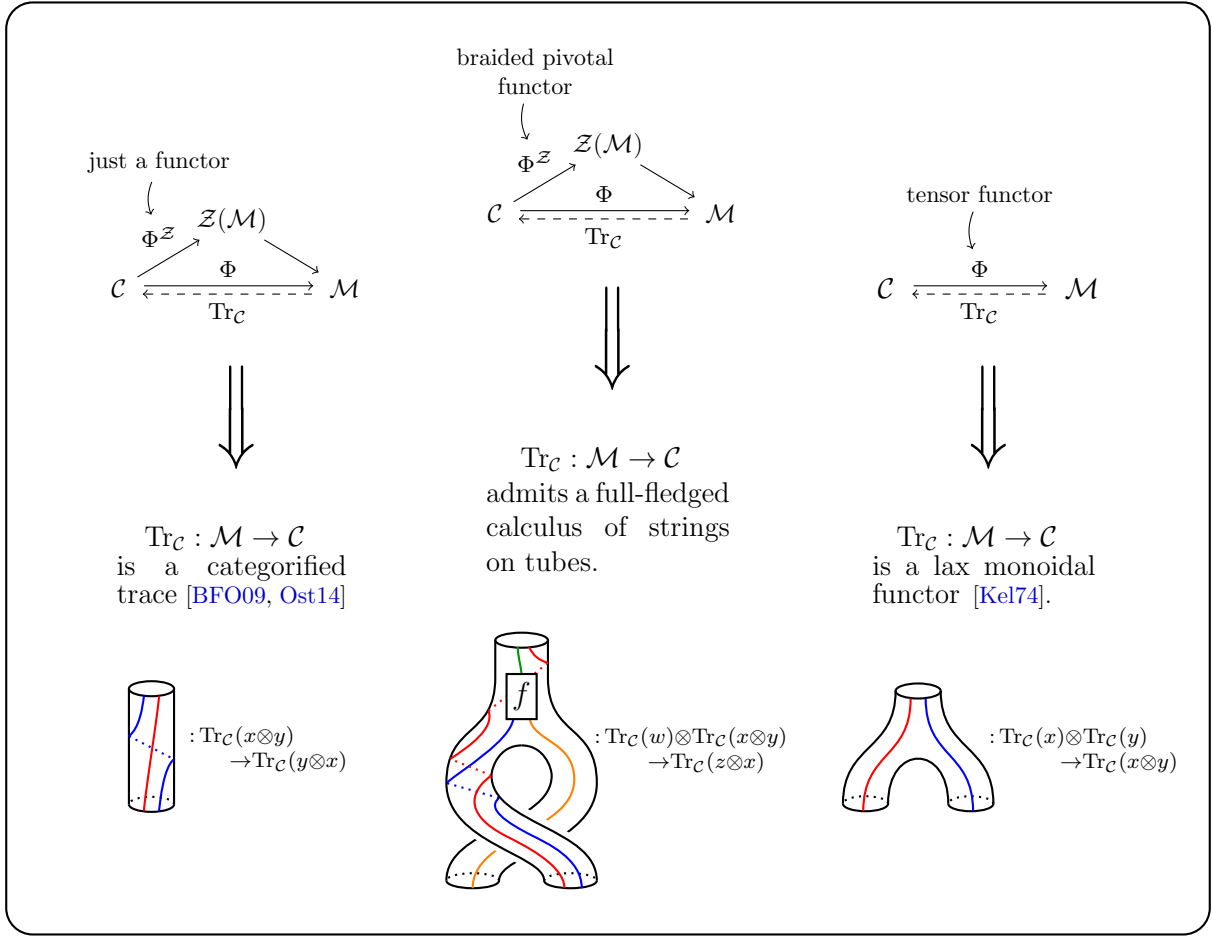
\begin{figure}[!ht]
$$
\begin{tikzpicture}
\useasboundingbox (-6.5,2.9) rectangle (9.5,-9.3);
\draw [thick, rounded corners=10] (-6.5,2.8) rectangle (9.5,-9.6);
\node[inner sep=5, scale=.8] (x) at (0,0) {$ \cC$};
\node[inner sep=5, scale=.8] (y) at (3,0) {$ \cM$};
\node[inner sep=3, scale=.8] (up) at (1.5,.9) {$ \cZ(\cM)$};
\draw[->              , shorten >=2, shorten <=2.5] (x.east)+(0,.03) coordinate(a) --node[above, scale=1.1, yshift=-1]{$\scriptstyle \Phi$} (y.west|-a);
\draw[->, dashed, shorten >=2, shorten <=2.5] (y.west)+(0,-.08) coordinate(b) --node[below, scale=1.1, yshift=1]{$\scriptstyle \Tr_\cC$} (x.east|-b);
%\draw[->, shorten >=2, shorten <=2.5] (x) --node[above, scale=1.1, yshift=-1]{$\scriptstyle \Phi$} (y);
\draw[->] (x) --node(w)[above, pos=.35, yshift=2, scale=1.1]{$\scriptstyle \Phi^\cZ$} (up);
\draw[->] (up) -- (y);
\path (w)+(0,1.2) node(z)[scale=.9]{\parbox{2.3cm}{\footnotesize \centerline{braided pivotal} \centerline{functor}}};
\draw[->] (z) to[bend right=20] (w);

\node[inner sep=5, scale=.8] (x1) at (-5,0-1) {$ \cC$};
\node[inner sep=5, scale=.8] (y1) at (3-5,0-1) {$ \cM$};
\node[inner sep=3, scale=.8] (up1) at (1.5-5,.9-1) {$ \cZ(\cM)$};
\draw[->              , shorten >=2, shorten <=2.5] (x1.east)+(0,.03) coordinate(a1) --node[above, scale=1.1, yshift=-1]{$\scriptstyle \Phi$} (y1.west|-a1);
\draw[->, dashed, shorten >=2, shorten <=2.5] (y1.west)+(0,-.08) coordinate(b1) --node[below, scale=1.1, yshift=1]{$\scriptstyle \Tr_\cC$} (x1.east|-b1);
%\draw[->, shorten >=2, shorten <=2.5] (x1) --node[above, scale=1.1, yshift=-1]{$\scriptstyle \Phi$} (y1);
\draw[->] (x1) --node(w1)[above, pos=.35, yshift=2, scale=1.1]{$\scriptstyle \Phi^\cZ$} (up1);
\draw[->] (up1) -- (y1);
\path (w1)+(0,1) node(z1)[scale=.9]{\footnotesize just a functor};
\draw[->] (z1) to[bend right=20] (w1);

\node[inner sep=5, scale=.9] (x2) at (.2+5,0-1) {$\cC$};
\node[inner sep=5, scale=.9] (y2) at (3-.2+5,0-1) {$\cM$};
\draw[->              , shorten >=2, shorten <=2.5] (x2.east)+(0,.03) coordinate(a2) --node(w2)[above, scale=1.1, yshift=-1]{$\scriptstyle \Phi$} (y2.west|-a2);
\draw[->, dashed, shorten >=2, shorten <=2.5] (y2.west)+(0,-.08) coordinate(b2) --node[below, scale=1.1, yshift=1]{$\scriptstyle \Tr_\cC$} (x2.east|-b2);
%\draw[->, shorten >=2, shorten <=2.5] (x2) --node(w2)[above, scale=1.1, yshift=0]{$\scriptstyle \Phi$} (y2);
\path (w2)+(0,1) node(z2)[scale=.9]{\footnotesize tensor functor};
\draw[->] (z2) to[bend right=20] (w2);

\pgftransformyshift{8}

\node[scale=2, rotate=-90, xscale=1.1] at (1.5,-1.95) {$\Longrightarrow$};
\node[scale=2, rotate=-90, xscale=1.1] at (1.5-5,-2-1) {$\Longrightarrow$};
\node[scale=2, rotate=-90, xscale=1.1] at (1.5+5,-2-1) {$\Longrightarrow$};

\pgftransformyshift{5}

\node[below, scale=.9] at (1.5-5,-4.5) {\parbox{3.4cm}{\centerline{$\Tr_\cC:\cM\to \cC\,\,\,$}  is a categorified trace \footnotesize \cite{MR2506324, MR3250042}}};

\node[below, scale=.9] at (1.5,-3.45) {\parbox{3.6cm}{\centerline{$\Tr_\cC:\cM\to \cC\,\,\,$}  admits a full-fledged calculus of strings on tubes.}};

\node[below, scale=.9] at (1.5+5,-4.5) {\parbox{3.3cm}{\centerline{$\Tr_\cC:\cM\to \cC\,\,\,$}  is a lax monoidal functor\;\! \footnotesize \cite{MR0360749}.}};

\pgftransformxshift{-130}
\pgftransformyshift{-215}
\pgftransformyscale{1/1.3}

	\draw[thick] (-.3,-1) -- (-.3,1);
	\draw[thick] (.3,-1) -- (.3,1);
	\draw[thick] (0,1) ellipse (.3cm and .13cm);
	\halfDottedEllipse{(-.3,-1)}{.3}{.13}
	
	\draw[thick, yString] (.1,-1.12) .. controls ++(85:.2cm) and ++(225:.2cm) .. (.3,-.22);		
	\draw[thick, yString] (-.1,.88) .. controls ++(265:.2cm) and ++(45:.2cm) .. (-.3,.18);
	\draw[thick, yString, dotted] (-.3,.18) -- (.3,-.22);	
	\draw[thick, xString] (-.1,-1.12)  .. controls ++(90:.2) and ++(270:.2) ..  (.1,.88);
%\node at (1.35,-.05) {$\begin{smallmatrix}:\,\Tr_\cC(x\otimes y)\to\\ \quad\qquad\Tr_\cC(y\otimes x)\end{smallmatrix}$};
\node at (1.2,-.05) {$\begin{smallmatrix}:\,\Tr_\cC(x\otimes y)\\ \quad\qquad\to\Tr_\cC(y\otimes x)\end{smallmatrix}$};

\pgftransformyscale{1.3}

\pgftransformxshift{290}
\pgftransformyshift{6.8}

	\topPairOfPants{(-1,-1)}{}
	\draw[thick, xString] (-.73,-1.1) .. controls ++(90:.8cm) and ++(270:.8cm) .. (-.1,.4);		
	\draw[thick, yString] (.73,-1.1) .. controls ++(90:.8cm) and ++(270:.8cm) .. (.1,.4);		
\node at (2,-.29) {$\begin{smallmatrix}:\,\Tr_\cC(x)\otimes \Tr_\cC(y)\\\qquad\quad\,\, \to\Tr_\cC(x\otimes y)\end{smallmatrix}$};

\pgftransformxshift{-150}
\node at (2.25,-.29) {$\begin{smallmatrix}:\,\Tr_\cC(w)\otimes \Tr_\cC(x\otimes y)\\\quad\to\Tr_\cC(z\otimes x)\end{smallmatrix}$};
\pgftransformyshift{5}

	\draw[thick] (-1,-.9) .. controls ++(90:.8cm) and ++(270:.8cm) .. (-.35,.5) -- +(0,.5);
	\draw[thick] (1,-.9) .. controls ++(90:.8cm) and ++(270:.8cm) .. (.35,.5) -- +(0,.5);
%	\draw[thick] (-.4,-1.5) -- (-.4,-1) .. controls ++(90:.8cm) and ++(90:.8cm) .. (.4,-1) -- +(0,-.5); 
	\draw[thick] (0,.5+.5) ellipse (.35cm and .1cm);
	\halfDottedEllipse{(-1,-2.2)}{.35}{.1}
	\halfDottedEllipse{(1,-2.2)}{-.35}{.1}

	\draw[thick, yString, dotted] (-.3,-1.1) -- (-1,-.9);
	\draw[thick, yString] (-1,-.9) .. controls ++(40:.3cm) and ++(260:.3cm) .. (-.11,-.05);	
	\draw[thick, wString] (-.65,-2.3)  .. controls ++(90:.7cm) and ++(270:.7cm) .. (.7,-.85) .. controls ++(90:.5cm) and ++(280:.3cm) .. (.11,-.05);		
	\draw[thick, zString] (0,.5) to[out=90, in=-90] (-.05,.91);
	\draw[thick, xString, dotted] (-.4,-.8) -- (-.95,-.58);
	\draw[thick, xString] (-.95,-.58) .. controls ++(33:.3cm) and ++(290:.25cm) .. (-.47+.03,.06);	
	\draw[thick, xString, dotted] (-.47+.03,.06) -- (.35,.7);
	\draw[thick, xString] (.35,.7) to[bend left=25] (.1,.91);	
	\nbox{unshaded}{(0,.25)}{.3}{-.1}{-.1}{$f$}

\draw[thick] (-1,-2.2) .. controls ++(90:.7cm) and ++(270:.65cm) .. (.4,-.8) .. controls ++(90:.1cm) and ++(0:.3cm) .. (0,-.4);
\draw[thick] (-.3,-2.2)  .. controls ++(90:.35cm) and ++(270:.8cm) ..  (1,-.9);
\fill[white] (0,-1.2) -- (.7,-1.6) -- (0,-2) -- (-.7,-1.6) -- cycle;
\draw[thick] (1,-2.2) .. controls ++(90:.7cm) and ++(270:.65cm) .. (-.4,-.8) .. controls ++(90:.1cm) and ++(180:.3cm) .. (0,-.4);
\draw[thick] (.3,-2.2)  .. controls ++(90:.35cm) and ++(270:.8cm) ..  (-1,-.9);

	\draw[thick, yString] (.8,-2.3) .. controls ++(90:.4cm) and ++(-27:.2cm) .. (.14,-1.6) .. controls ++(180-27:.4cm) and ++(220:.22cm) .. (-.3,-1.1);	
	\draw[thick, xString] (.57,-2.3) .. controls ++(90:.32cm) and ++(-27:.2cm) .. (-.2,-1.61) .. controls ++(180-27:.7cm) and ++(215:.3cm) .. (-.4,-.8);	

\end{tikzpicture}
$$
\caption{Our setup (in the middle), in comparison to previously studied situations.}
\label{fig: big figure}
\end{figure}

In our sequel paper \cite{PAinBTC}, we will prove that the relations established in this paper imply that the morphism assigned to a diagram is invariant under \emph{all} isotopies.
This verification will be performed in the language of planar algebras internal to braided tensor categories.
In our forthcoming paper,
we will later classify planar algebras internal to $\cC$ in terms of module tensor categories for~$\cC$.\bigskip

It is well known that lax tensor functors send algebras to algebras. As a result, if $A$ is an algebra object in $\cM$, then $\Tr_\cC(A)$ is an algebra object in $\cC$. In our situation, more is true.
If $A$ and $B$ are two algebra objects, then $\Tr_\cC(A\otimes B)$ is again an algebra object
(actually, this only requires $\cC$ and $\Phi^{\scriptscriptstyle \cZ}$ to be monoidal, as opposed to braided).
The structure maps are best illustrated by putting the strand corresponding to $A$ on the front of the cylinders, and the one corresponding to $B$ on the back of the cylinders:

\begin{equation*}
m_{\Tr_\cC(A\otimes B)} \,\,=\,\, 
\begin{tikzpicture}[baseline=.6cm, scale=.9]
	\topPairOfPants{(0,0)}{}

	%multiplications
\pgftransformxshift{1.3}
	\filldraw[AString] (.9, 1.1) circle (.025cm);
	\draw[thick, AString] (.2,-.09) .. controls ++(92:.8cm) and ++(270:.4cm) .. (.9,1.1);		
	\draw[thick, AString] (1.6,-.09) .. controls ++(90:.8cm) and ++(270:.4cm) .. (.9,1.1);		
	\draw[thick, AString] (.9,1.1) -- (.9,1.41);
\pgftransformxshift{-2.6}
	\filldraw[BString] (1.1, 1.1) circle (.025cm);
	\draw[more thick, BString, densely dotted] (.4,.08) .. controls ++(88:.6cm) and ++(270:.4cm) .. (1.1,1.1);		
	\draw[more thick, BString, densely dotted] (1.8,.08) .. controls ++(90:.6cm) and ++(270:.4cm) .. (1.1,1.1);		
	\draw[more thick, BString, densely dotted] (1.1,1.1) -- (1.1,1.58);
\end{tikzpicture}
\qquad\qquad
i_{\Tr_\cC(A\otimes B)}\,\,=\,\,
\begin{tikzpicture}[baseline=.3cm, scale=.9]
	\draw[thick] (1,1) circle (.3 and .1);
	\draw[thick] (.7,1) -- (.7,.25) arc (-180:0:.3 and .35) -- (1.3,1);		

	\filldraw[AString] (.95, .3+.1) circle (.025cm);
	\filldraw[BString] (1.05, .46+.1) circle (.025cm);
	\draw[thick, AString] (.95,.91) -- (.95,.3+.1);
	\draw[more thick, BString, densely dotted] (1.05,1.09) -- (1.05,.46+.1);
\end{tikzpicture}
\end{equation*}
%The algebras $\Tr_\cC(A\otimes B)$ and $\Tr_\cC(B\otimes A)$ are each other's opposites, and the isomorphism $\Tr_\cC(A\otimes B)\to \Tr_\cC(B\otimes A)^{\mathrm{op}}$ is provided by the traciator (see Proposition \ref{prop:TwoAlgebrasAreOpposite}).

Let us now assume that $\cC$ and $\cM$ are fusion categories over a field of characteristic zero.
One of our main results is that if $A$ and $B$ are semisimple algebras then so is $\Tr_\cC(A \otimes B)$ (Theorem \ref{thm:TwoAlgebrasPartTwo} and Corollary \ref{CORL our main thm}):

\begin{thmalpha}
\label{thm:TwoAlgebrasIntro}
Let $\cC$ and $\cM$ be tensor categories subject to the above assumptions, and let $\Phi^{\scriptscriptstyle \cZ}:\cC\to \cZ(\cM)$ be a tensor functor. %, and let $\Tr_\cC:\cM\to \cC$ be as defined above.
If $A,B\in\cM$ are semisimple algebras, then $\Tr_\cC(A \otimes B)$ is also semisimple. 
%\andre{put remark after proof of Thm \ref{thm:TwoAlgebrasPartTwo} that pivotality is not needed.}
\end{thmalpha}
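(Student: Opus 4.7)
The plan is to exhibit $\Tr_\cC(A\otimes B)$ as a \emph{separable} algebra in $\cC$, which for algebra objects in a fusion category over a field of characteristic zero is equivalent to being semisimple. Concretely, I will construct a morphism $\overline{\Delta}\colon \Tr_\cC(A\otimes B)\to \Tr_\cC(A\otimes B)\otimes \Tr_\cC(A\otimes B)$ that is a $\Tr_\cC(A\otimes B)$-bimodule map and a section of the multiplication $m$ up to a nonzero scalar. Since $\cM$ is fusion and $A,B$ are semisimple algebras, each carries a special Frobenius structure: there exist $\Delta_A\colon A\to A\otimes A$ and $\Delta_B\colon B\to B\otimes B$ that are bimodule maps, with $m_A\circ\Delta_A=\lambda_A\,\id_A$ and $m_B\circ\Delta_B=\lambda_B\,\id_B$ for some nonzero scalars $\lambda_A,\lambda_B$.

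I would then define $\overline{\Delta}$ by the graphical diagram dual to the multiplication $m_{\Tr_\cC(A\otimes B)}$ displayed in the excerpt: an \emph{inverted} pair of pants, with $\Delta_A$ inserted on the front strand and $\Delta_B$ inserted on the back strand. The obvious counterpart of the unit picture (a bottom-capped tube with the counits $\varepsilon_A,\varepsilon_B$) defines a candidate counit $\overline{\varepsilon}\colon\Tr_\cC(A\otimes B)\to 1_\cC$.

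The verification has two steps. \textbf{(1)} Show that $\overline{\Delta}$ is a bimodule map. Graphically, left or right multiplication by a strand of $\Tr_\cC(A\otimes B)$ must be commuted past the inverted pair of pants; the identity to be checked reduces, after isotopy on the tube surface, to the planar Frobenius relations for $A$ along the front strand and for $B$ along the back strand, combined with the multiplication-traciator compatibility identity featured after Figure~\ref{fig: big figure}. \textbf{(2)} Show that $m\circ\overline{\Delta}=\lambda\cdot\id$ for some nonzero $\lambda$. The composition produces a tube with a single handle attached, along which the front strand carries $m_A\circ\Delta_A$ and the back strand $m_B\circ\Delta_B$. By special Frobenius each equals a scalar multiple of the identity strand, and the remaining bare handle evaluates to a further nonzero scalar by nondegeneracy of the pivotal trace on the fusion category $\cC$. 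Rescaling $\overline{\Delta}$ by $\lambda^{-1}$ then gives the desired bimodule section of $m$.

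The main technical obstacle is step \textbf{(1)}: on the tube the two multiplications one needs to compare involve strands that cross the join between two tubes, and moving them past the $\overline{\Delta}$-vertex could a priori introduce contributions from the traciator $\tau$, from the braiding of $\cC$, and from its twist. The point is that the compatibility identities for the categorified trace established earlier in the paper (in particular the commuting squares displayed after Figure~\ref{fig: big figure}) absorb exactly these corrections, so that the remaining identity is the purely planar Frobenius relation for $A$ on the front strand and $B$ on the back strand. Once step \textbf{(1)} is settled, step \textbf{(2)} is a direct topological calculation using only the special Frobenius property of $A$ and $B$ and the nondegeneracy of the categorical trace on $\cC$.
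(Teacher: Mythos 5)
Your strategy stands or falls with the morphism $\overline{\Delta}$, and that is exactly where it fails: the ``inverted pair of pants'' is not part of the structure available on $\Tr_\cC$. The paper's calculus provides the lax monoidal data $\mu$ and $i$ (tubes may merge and be capped at the bottom), the traciator, and braidings of tubes, but no splitting of a tube into two tubes. Constructing a comultiplication $\Tr_\cC(x\otimes y)\to\Tr_\cC(x)\otimes\Tr_\cC(y)$ compatible with $\mu$ is precisely the open problem recorded in Remark~\ref{rem:ThingsWeDon'tKnowHowToDo}, where the missing ingredient is the non-degeneracy of the pairing $\epsilon\circ\Tr_\cC(\ev_x)\circ\mu_{x^*,x}$; and the remark following Theorem~\ref{thm:TwoAlgebrasPartTwo} states explicitly that separability of $\Tr_\cC(A\otimes B)$ -- which is what your argument is designed to prove -- is not known to the authors under these hypotheses. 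Even if you obtain a candidate $\overline{\Delta}$ from the fact that in the semisimple setting the adjoint of $\Phi$ is two-sided (Lemma~\ref{lem:AdjointExists}), your step (1) does not go through as described: the compatibility identities established in the paper (naturality, associativity of $\mu$, the traciator/braiding/twist relations) involve no comultiplication whatsoever, so they cannot ``absorb the corrections'' needed to prove that $\overline{\Delta}$ is a bimodule map; that Frobenius-type compatibility is exactly the unproved non-degeneracy statement. Your step (2) is also unjustified: $m\circ\overline{\Delta}$ is a handle operator on $\Tr_\cC(A\otimes B)$, and since $\Tr_\cC(A\otimes B)$ is far from simple, there is no reason for this endomorphism to be a nonzero scalar multiple of the identity (an invertible bimodule endomorphism would suffice, but even that is not established).

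For contrast, the paper avoids any comultiplication. It converts the hypothesis into separability of $A$ and $B$ (Remark~\ref{rem:WhenSimpleImpliesSeparable}, using characteristic zero and fusion), so that $\Bimod_\cM(A,B)$ is semisimple (Proposition~\ref{prop: ostrik bimod semisimple}), and then proves semisimplicity of the module category $\Mod_\cC(\Tr_\cC(A\otimes B))$ directly: the trace functor restricts to an equivalence between the semisimple subcategory of $\Bimod_\cM(A,B)$ generated by the objects $A\otimes\Phi(c)\otimes B$ and $\Mod_\cC(\Tr_\cC(A\otimes B))$, with full faithfulness checked by an explicit inverse on hom-spaces via the adjunction, and essential surjectivity via coequalizer presentations and Lemma~\ref{lem:EssentiallySurjective}. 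If you want to salvage your approach, you would first have to prove that $\Tr_\cC$ is a Frobenius monoidal functor (or establish the non-degeneracy of the pairing in Remark~\ref{rem:ThingsWeDon'tKnowHowToDo}); as it stands, that is a stronger statement than the theorem you are asked to prove.
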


\noindent
Alternatively, we can trade the extra assumptions on $\cC$ and $\cM$ for the assumption that $A$ and $B$ are separable algebras (Theorem~\ref{thm:TwoAlgebrasPartTwo}).
We illustrate our construction by computing the algebra $\Tr_\cC(A \otimes B)$ for certain algebra objects related to the Coxeter--Dynkin diagrams $E_7$ and $D_{10}$ (Example \ref{example: E_7}).

In the special case in which we have only one algebra, our result holds in the greater generality of an arbitrary tensor functor $\Phi:\cC \to \cM$ between rigid semisimple tensor categories (Theorem~\ref{thm:SemiSimple} and Corollary~\ref{cor:FrobeniusToFrobenius}): 

\begin{thmalpha}
\label{thm:AlgebraToAlgebraIntro}
Let $\cC$ and $\cM$ be tensor categories subject to the above assumptions, and
let $\Phi:\cC \to \cM$ be a tensor functor with right adjoint $\Tr_\cC$.
If $A\in \cM$ is a semisimple algebra, then $\Tr_\cC(A)$ is as well. In particular, if $A$ is semisimple connected, and thus Frobenius, then so is $\Tr_\cC(A)$. 
\end{thmalpha}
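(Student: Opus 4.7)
The plan is to realize $\Tr_\cC(A)$ as an internal endomorphism algebra in a suitable $\cC$-module category derived from $A$, and then deduce semisimplicity from semisimplicity of the ambient module category via the standard Ostrik-type correspondence between algebras in $\cC$ and cyclic $\cC$-module categories with a distinguished object.

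Since $\Phi: \cC \to \cM$ is a tensor functor, $\cM$ inherits the structure of a $\cC$-module category via $X \cdot M := \Phi(X) \otimes M$, and this module structure descends to the category $A\text{-mod}_\cM$ of right $A$-modules in $\cM$ (the action on the $M$-factor is unaffected by tensoring with $\Phi(X)$ on the left). Because $A$ is semisimple and $\cM$ is a fusion category over a field of characteristic zero, the category $A\text{-mod}_\cM$ is semisimple. Combining the $(\Phi, \Tr_\cC)$-adjunction with the free/forgetful adjunction for $A$-modules yields, for every $X \in \cC$,
\[
\Hom_{A\text{-mod}_\cM}(\Phi(X)\otimes A,\, A) \,\cong\, \Hom_\cM(\Phi(X), A) \,\cong\, \Hom_\cC(X, \Tr_\cC(A)),
\]
which identifies $\Tr_\cC(A)$ with the internal $\Hom$-object $\underline{\Hom}(A,A)$ of the free right $A$-module on $1_\cM$ inside the $\cC$-module category $A\text{-mod}_\cM$.

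The principal technical step is to verify that the algebra structure on $\underline{\Hom}(A,A)$ given by composition of internal endomorphisms matches the algebra structure on $\Tr_\cC(A)$ coming from the lax monoidal structure of the right adjoint to a tensor functor. Both multiplications are determined by universal properties relative to the same pair of adjunctions, so the identification is essentially formal, but unpacking it is the main bookkeeping task and the main obstacle of the argument. Once matched, semisimplicity of $\Tr_\cC(A)$ follows from Ostrik's theorem: the category $\Tr_\cC(A)\text{-mod}_\cC$ is equivalent to the $\cC$-submodule category of $A\text{-mod}_\cM$ generated by $A$, which is semisimple as a full abelian subcategory of the semisimple category $A\text{-mod}_\cM$. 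Hence $\Tr_\cC(A)$ is a semisimple algebra in $\cC$.

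For the Frobenius assertion, suppose additionally that $A$ is connected. Adjunction gives $\Hom_\cC(1_\cC, \Tr_\cC(A)) \cong \Hom_\cM(\Phi(1_\cC), A) = \Hom_\cM(1_\cM, A)$, which is one-dimensional, so $\Tr_\cC(A)$ is also connected. A semisimple connected algebra in a fusion category over a field of characteristic zero is automatically Frobenius, completing the proof.
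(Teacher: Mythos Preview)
Your approach is correct and is in spirit the same as the paper's, but packaged more abstractly. The paper argues exactly as you do---by exhibiting $\Mod_\cC(\Tr_\cC(A))$ as (equivalent to) a full subcategory $\cN$ of $\Mod_\cM(A)$, namely the idempotent completion of the essential image of $c\mapsto A\otimes\Phi(c)$---but instead of invoking the Ostrik correspondence for internal endomorphism algebras as a black box, it constructs the equivalence by hand: it shows that $\Tr_\cC$ induces a functor $\Mod_\cM(A)\to\Mod_\cC(\Tr_\cC(A))$, writes down an explicit candidate inverse to the map on hom-spaces (this is where the ``matching of algebra structures'' you flag gets absorbed), checks both composites are identities via the unit/counit relations of the $(\Phi,\Tr_\cC)$ adjunction, and proves essential surjectivity from the free-module coequalizer presentation $B\otimes B\otimes c\rightrightarrows B\otimes c\to c$ together with a small lemma on fully faithful functors out of semisimple categories.

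What your abstract route buys is brevity and conceptual clarity (the whole argument is really an instance of the module-category/internal-End dictionary); what the paper's explicit route buys is self-containment---no appeal to Ostrik's theorem is needed, and the proof works for rigid semisimple $\cC,\cM$ without requiring them to be fusion. Your treatment of the Frobenius claim via connectedness agrees with the paper's Corollary, which also defines the counit explicitly as $\epsilon\circ\Tr_\cC(\epsilon_A)$ and cites Ostrik for nondegeneracy of the resulting pairing.
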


%%%%%%%%%%%%%%%%%%%%%%%%%%%%%%%%%%%%%%%%%%%%%%%%%%
\subsection{Acknowledgements}

The authors would like to thank
Bojko Bakalov,
Bruce Bartlett, 
Alexandru Chirvasitu,
Vaughan Jones,
Scott Morrison,
Richard Ng,
Victor Ostrik,
Ingo Runkel,
Chris Schommer-Pries,
Noah Snyder, and
Kevin Walker
for helpful discussions.

The authors would like to thank Mathematisches Forschungsinstitut Oberwolfach for their hospitality during the workshop on ``Subfactors and conformal field theory.''
Andr\'e Henriques gratefully acknowledges the Leverhulme trust for financing his visiting position in Oxford, and the Hausdorff Institute for its hospitality while visiting James Tener.
James Tener would like to thank the Max Planck Institute for its hospitality and support during the course of this work.
David Penneys and James Tener were partially supported by NSF DMS grant 0856316.
David Penneys was partially supported by an AMS-Simons travel grant and NSF DMS grant 1500387.
		%PUT ME BACK

%%%%%%%%%%%%%%%%%%%%%%%%%%%%%%%%%%%%%%%%%%%%%%%%%%
%  Background
\settocdepth{subsection}						%PUT ME BACK
%auto-ignore
%this ensures the arxiv doesn't try to start TeXing here.
%!TEX root =../InternalTrace.tex

%%%%%%%%%%%%%%%%%%%%%%%%%%%%%%%%%%%%%%%%%%%%%%%%%%
%%%%%%%%%%%%%%%%%%%%%%%%%%%%%%%%%%%%%%%%%%%%%%%%%%
%%%%%%%%%%%%%%%%%%%%%%%%%%%%%%%%%%%%%%%%%%%%%%%%%%
\section{Background}
\label{sec:Background}

The natural setting to study the categorified trace is when $\cC$ is a braided pivotal category, $\cM$ is a pivotal category, and $\Phi^{\scriptscriptstyle \cZ}: \cC\to \cZ(\cM)$ is a braided pivotal functor.
This is perhaps an unfamiliar setting, since most readers are more likely to work with either braided tensor categories or ribbon categories, and braided pivotal categories are a strange intermediate (see Figure~\ref{fig:SynopticChart} in Section~\ref{sec: chart of categories}).

For these reasons, it is important to carefully understand what we can and cannot do in a braided pivotal category.
Moreover, in the literature there are sometimes redundancies within definitions, as well as incorrect statements, so we begin with a short, comprehensive background on various flavours of tensor categories.
Various technical lemmas about braided pivotal categories are deferred to Appendix \ref{sec:Lemmas}.
Finally, we provide a short section on algebras in tensor categories.

%%%%%%%%%%%%%%%%%%%%%%%%%%%%%%%%%%%%%%%%%%%%%%%%%%
\subsection{Tensor categories}\label{sec:  Tensor categories}

We will usually assume that all our categories are linear over some field $k$ (the morphism spaces are finite dimensional vector spaces), 
and that all functors are linear.
However, the large majority of our results do not require this linearity assumption.
The only place where this is really required is for statements involving semisimplicity of certain algebra objects (Theorems \ref{thm:SemiSimple} and \ref{thm:TwoAlgebrasPartTwo}).

\begin{defn}
A \underline{tensor cate}g\!\underline{\,or}y\!\!\underline{\,\,}\footnote{The adjectives `tensor' and `monoidal' are essentially synonymous.
The first one is usually only used when the category is linear; the second one is used regardless of whether the category is linear or not.} consists of the data $(\cC, 1, \otimes, \alpha, \lambda, \rho)$ where $\cC$ is a category, $1\in\cC$ is the distinguished unit object, $\otimes :\cC\boxtimes \cC\to \cC$ is a bifunctor, the associator $\alpha: (a\otimes b)\otimes c \to a\otimes (b\otimes c)$ is a natural isomorphism, and the left and right unitors $\lambda : 1\otimes a\to a$ and $\rho: a\otimes 1\to a$ are natural isomorphisms.
This data must satisfy the well known pentagon and triangle axioms.

A tensor category $\cC$ is called:\footnote{Alternative terminologies include \cite{MR2767048}: rigid = autonomous, pivotal = sovereign, ribbon = tortile.}
\begin{itemize}
\item
\underline{ri}g\!\underline{\,id} (or \underline{has duals})
if for every $a\in\cC$ there is an object $a^* \in \cC$, and there are maps $\coev_a : 1\to a\otimes a^*$ and $\ev_a: a^*\otimes a \to 1$ satisfying the zig-zag axioms
\begin{gather*}
(\id_a\otimes\ev_a)\circ(\coev_a\otimes\id_a)
= 
\id_a
\\
(\ev_a\otimes\id_{a^*})\circ(\id_{a^*}\otimes\coev_a)
= 
\id_{a^*}.
\end{gather*}
Moreover, for every $a\in \cC$, there should exist an object ${}^*\hspace{-.1mm}a\in \cC$ such that $({}^*\hspace{-.1mm}a)^*\cong a$.\\
\phantom{....} Note that being rigid is not data; it is just a property of the category.\footnote{
In any category, an object $a$ is said to \emph{have a dual} if there exist solutions of the zig-zag equations.
The dual object $a^*$ is then unique up to canonical isomorphism, and may thus be referred to as \emph{the} dual of $a$.}
Given $f:a\to b$ in $\cC$, we write $f^*:b^*\to a^*$ for
$(\ev_b\otimes\id_{a^*})\circ(\id_{b^*}\otimes f \otimes\id_{a^*})\circ(\id_{b^*}\otimes\coev_a)$.
\item
\underline{fusion} if $\cC$ is rigid, semisimple, $\cC(1,1)\cong k$, and there are only finitely many isomorphism classes of simple objects.

\item
p\!\!\!\underline{\,\,\,ivotal} if $\cC$ is rigid, and there is a monoidal natural isomorphism $\varphi$ from the identity functor to the double dual functor.%
\footnote{The original definition of Freyd and Yetter \cite{MR1154897} contains the redundant axiom $\varphi_{a^*}=(\varphi_a^{-1})^*$, later reproduced by other authors.
See \cite[Lem. 4.11]{MR2767048} for a proof of that property.
}
The left and right pivotal traces of a morphism $f:a\to a$ are then defined by
\begin{align*}
\tr_L(f)
&= 
\ev_{a} \circ (\id_{a^*}\otimes f)\circ (\id_{a^*}\otimes \varphi_a^{-1}) \circ \coev_{a^*}
\\
\tr_R(f) 
&= 
\ev_{a^*} \circ (\varphi_a \otimes \id_{a^*})\circ (f\otimes \id_{a^*}) \circ \coev_a.
\end{align*}

\item
\underline{s}p\!\underline{\,herical} if $\cC$ is pivotal and for every $c\in \cC$ and $f\in\cC(c,c)$, we have $\tr_L(f)=\tr_R(f)$.

\item
\underline{braided} if there is a family of natural isomorphisms $\beta_{a,b}: a\otimes b \to b\otimes a$ satisfying the two well known hexagon axioms.
We also write $\beta^+_{a,b}$ for $\beta_{a,b}$ and $\beta^-_{a,b}$ for $\beta_{b,a}^{-1}$.
%\item
%\underline{s}y\!\underline{\,mmetric} if $\cC$ is braided, and for all $a,b\in \cC$, we have $\beta_{b,a}\circ \beta_{a,b} = \id_{a\otimes b}$.

\item
\underline{balanced} if $\cC$ is braided and there are twist isomorphisms $\theta_a:a\to a$ for $a\in \cC$, natural in $a$, satisfying 
$
\theta_{a\otimes b} = \beta_{b,a}\circ \beta_{a,b}\circ(\theta_a\otimes \theta_b)
$
for all $a,b\in\cC$.

\item
\underline{ribbon}%
\footnote{We warn the reader that \cite[Def. 2.2.1]{MR1797619} defines the notion of a braided pivotal category (equivalently, a balanced rigid category -- see Appendix \ref{sec:Lemmas}), rather than that of a ribbon category.
In a braided pivotal category, it is not necessarily the case that $\theta_{a^*}={\theta_a}^{\!\!*}$.
The same problem appears in \cite[Eq.\,(1.6)]{MR1936496}.
}
if $\cC$ is balanced and rigid, and the twist maps satisfy $\theta_{a^*}={\theta_a}^{\!\!*}$ for all $a\in\cC$.

%\item
%p\!\!\!\underline{\,\,\,re-modular} if $\cC$ is ribbon and fusion.
%
%\item
%\underline{modular} if $\cC$ is pre-modular and the S-matrix $S=(S_{a,b})_{a,b\in \Irr(\cC)}$ defined by $S_{a,b} = \tr(\beta_{b,a}\circ\beta_{a,b})$ is invertible.

\end{itemize}
\end{defn}

There exist graphical calculi for morphisms in various kinds of monoidal or tensor categories.
We may draw different types of diagrams and perform different types of isotopies based on the properties and structures of our category.
We give a helpful guide below (see \cite{MR2767048} for a comprehensive survey).
If $\cC$ is...
\begin{itemize}
\item
\underline{monoidal} (Joyal--Street \cite{MR1113284}),
we may draw our string diagrams so that strands only go up and down.
We read the diagram from bottom to top.
\[\,\,\,
\tikzmath{
\draw (.25,0) to[out=80, in=-100] (.25,1.8) node[scale=.7, left, yshift=-4]{$x$};
\draw (.25,-1.2) node[scale=.7, left, yshift=5]{$v$} to[out=90, in=-100] (.3,0);
\draw (.45,0) to[out=80, in=-100, looseness=1.2] (1.9,1);
\draw (2,1) to[out=80, in=-90] (2,1.8) node[scale=.7, left, yshift=-4]{$y$};
\draw (4,.5) to[out=100, in=-95] (4,1.8) node[scale=.7, left, yshift=-4]{$z$};
\draw (2.5,-.5) to[out=80, in=-95, looseness=1.2] (2.1,1);
\draw (4,-1.2) node[scale=.7, left, yshift=5]{$w$} to[out=90, in=-100] (4,.5);
\node[scale=.7] at (1,.73) {$t$};
\node[scale=.7] at (2.05,.23) {$u$};
\filldraw[fill=white, very thick, rounded corners=5pt]
(.26,0) node{$f$} +(-.6,-.3) rectangle +(.6,.3)
(2,1) node{$g$} +(-.6,-.3) rectangle +(.6,.3)
(2.5,-.5) node{$h$} +(-.6,-.3) rectangle +(.6,.3)
(4,.5) node{$k$} +(-.6,-.3) rectangle +(.6,.3);
\pgftransformxshift{10}
\pgftransformyshift{10}
\node[scale=.8, right] at (4.5,-.2) {$f:v\to x\otimes t$};
\node[scale=.8, right] at (4.5,-.6) {$g: t\otimes u \to y$};
\node[scale=.8, right] at (4.5,-1.0) {$h:1\to u$};
\node[scale=.8, right] at (4.5,-1.4) {$k:w\to z$};
}
\]

\item
\underline{ri}g\!\underline{\,id},
then we can draw any planar diagram. Strings may bend up and down,
however coupons are not allowed to rotate.
\[
\,\,\tikzmath{
\draw (.2,0) to[out=80, in=-100] (.2,1.8);
\draw (.25,-1.2) to[out=85, in=-93] (.25,-.3);
\draw (.45,0) to[out=80, in=-100, looseness=2.5] (1.9,1);
\draw (2,1) to[out=80, in=-90] (2,1.8);
\draw (4.15,.8) to[out=86, in=-95] (4.15,1.8);
\draw (2.4,-.5) to[out=80, in=-95, looseness=1.2] (2.1,1);
\draw (4,-1.2) to[out=90, in=-100] (4,.5);
\draw (2.7,-.2) to[out=90, in=100, looseness=1.5] (3.8,.8);
\filldraw[fill=white, very thick, rounded corners=5pt]
(.26,0) node{$f$} +(-.6,-.3) rectangle +(.6,.3)
(2,1) node{$g$} +(-.6,-.3) rectangle +(.6,.3)
(2.55,-.5) node{$h$} +(-.6,-.3) rectangle +(.6,.3)
(4,.5) node{$k$} +(-.6,-.3) rectangle +(.6,.3);
\node[scale=.8] at (5.8,-.7) {\parbox{3cm}{
use $\coev$ for $\tikzmath[scale=.15]{\useasboundingbox  (-1,1) -- (1,-1); \draw (-1,.8) arc (-180:0:1);}$\,\,\,\,\,\linebreak and $\ev$ for\, 
$\tikzmath[scale=.15]{\useasboundingbox  (-1,1) -- (1,-1); \draw (-1,-.8) arc (180:0:1);}$}};
}
\]

\item
p\!\!\!\underline{\,\,\,ivotal} (Freyd--Yetter \cite{MR1154897}),
we may rotate coupons by $2\pi$ without changing the morphism.
In defining the right hand side below, we must use the pivotal structure:
\[
\tikzmath{
\draw (.15,.3) to[out=80, in =-95] (.3,1.1);
\draw (-.15,.3) to[out=100, in =-85] (-.3,1.1);
\draw (0,0) -- (0,-1);
\filldraw[fill=white, very thick, rounded corners=5pt]
(0,0) node{$f$} +(-.4,-.3) rectangle +(.4,.3);
}\,\,\,\,
=
\tikzmath{
\clip (1.2,1.1) rectangle (-1.2,-1.1);
\draw (0,-.3) to[out=-100, in=-20] (-.3,-.5) to[out=180-20, in=180+20, looseness=1.25] (0,.68) to[out=20, in=90-15, looseness=1.4] (.8,-.2) to[out=-90-15, in=80] (-.3,-1.6);
\draw (.15,.3) to[out=100, in=180] (.25,.5) to[out=0, in=0, looseness=1.2] (-.1,-.69) to[out=180, in=-90-20, looseness=1.25] (-.7,.2) to[out=90-20, in=-100] (.4,1.3);
\draw (-.15,.3) to[out=80, in=180] (.25,.6) to[out=0, in=0, looseness=1.3] (-.11,-.82) to[out=180, in=-90-20, looseness=1.25] (-.85,.2) to[out=90-20, in=-100] (-.3,1.1);
\filldraw[fill=white, very thick, rounded corners=5pt]
(0,0) node{$f$} +(-.4,-.3) rectangle +(.4,.3);
}
\]

\item
\underline{s}p\!\underline{\,herical},
then the value of a closed diagram is invariant under spherical isotopy.
In particular:
$$
\tr_R(f)=
\begin{tikzpicture}[baseline=-.1cm]
	\roundNbox{unshaded}{(0,0)}{.3}{0}{0}{$f$}	
	\draw (0,.3) arc (180:0:.3cm) -- (.6,-.3) arc (0:-180:.3cm);
\end{tikzpicture}
=
\begin{tikzpicture}[baseline=-.1cm]
	\roundNbox{unshaded}{(0,0)}{.3}{0}{0}{$f$}	
	\draw (0,.3) arc (0:180:.3cm) -- (-.6,-.3) arc (-180:0:.3cm);
\end{tikzpicture}
=
\tr_L(f).
$$

\item
\underline{braided} (Joyal--Street \cite{MR1113284}),
then our diagrams are now in three dimensions, and we draw them projected to the plane.
If $\cC$ is not rigid, then coupons again may only travel up and down, and are not allowed to rotate.
\[
\tikzmath{
\draw (.3,0) to[out=80, in=-100] (.3,1.8);
\draw (.3,-1.2) to[out=90, in=-100] (.35,0);
\draw (.5,0) to[out=80, in=-100, looseness=1.2] (1.9,1);
\draw (2,1) to[out=80, in=-90] (2,1.8);
\draw (2.55,-.65) to[out=75, in=-165, looseness=1.2] (3.95,0);
\draw (4.25,.1) to[out=30, in=-95, looseness=1] (2.1,1);
\draw (2.35,-.5) to[out=80, in=-150, looseness=1.2] (3.4,.26);
\draw (4,.45) to[out=100, in=-95] (4,1.8);
\draw (4,-1.2) to[out=90, in=-80] (4.08,.2);
\draw (3.65,.5) to[out=50, in=-90, looseness=1.2] (3.8,1);
\filldraw[fill=white, very thick, rounded corners=5pt]
(.31,0) node{$f$} +(-.6,-.3) rectangle +(.6,.3)
(2,1) node{$g$} +(-.6,-.3) rectangle +(.6,.3)
(2.6,-.6) node{$h$} +(-.6,-.3) rectangle +(.6,.3)
(3.95,1) node{$k$} +(-.6,-.3) rectangle +(.6,.3);
\node[scale=.8] at (5.8,-.7) {\parbox{2.8cm}{
use $\beta^+$\, for\, 
$\tikzmath[scale=.15]{\draw (1,1) -- (-1,-1) (1,-1) -- (.4,-.4) (-1,1) -- (-.4,.4);}$
\,\,\,\,\,\linebreak and\, 
$\beta^-$ for $\tikzmath[scale=.15]{\draw (-1,1) -- (1,-1) (-1,-1) -- (-.4,-.4) (1,1) -- (.4,.4);}$
}};
}
\]

\item
\underline{balanced} (Shum \cite{MR1268782}),
then strands are replaced by ribbons, which can twirl on themselves, but cannot bend up and down.
Coupons may rotate around the $z$-axis.
\[
\,\,\tikzmath{
\draw[double, double distance = 2.5, line cap = rect] (.3,0) to[out=80, in=-95] (.3,1.8);
\draw[double, double distance = 2.5, line cap = rect] (.3,-1.2) to[out=90, in=-100] (.35,0);
\draw[double, double distance = 2.5, line cap = rect] (.5,0) to[out=80, in=-100, looseness=1.2] (1.9,1);
\draw[double, double distance = 2.5, line cap = rect] (2,1) to[out=80, in=-90] (2,1.8);
\draw[double, double distance = 2.5, line cap = butt] (2.6,-.5) to[out=80, in=180+30, looseness=.9] (4.25,.14);
\draw[double, double distance = 2.5, line cap = rect] (2.35,-.5) to[out=80, in=-90, looseness=1.2] (3.8,1);
\draw[double, double distance = 2.5, line cap = rect] (4.13,-1.2) to[out=85, in=-95] (4,1.8);
\draw[double, double distance = 2.5, line cap = butt] (4.25,.14) to[out=30, in=-95, looseness=.9] (2.1,1);
\fill[white] (4.25,.14) circle (.043);
\fill[white] (.22,1.35) rectangle (.385,1);
\draw (.342,1.35) to[out=-90, in=90] (.249,1.175);
\draw (.351,1.175) to[out=-90, in=90] (.263,1);
\draw[double, draw=white, double=black, double distance=.4, very thick] (.24,1.35) to[out=-90, in=90] (.351,1.175);
\draw[double, draw=white, double=black, double distance=.4, very thick] (.249,1.175) to[out=-90, in=90] (.365,1);
\pgftransformxshift{108.95}
\pgftransformyshift{-43}
\fill[white] (.22,1.35) rectangle (.385,1);
\draw (.342,1.35) to[out=-90, in=90] (.249,1.175);
\draw (.351,1.175) to[out=-90, in=90] (.263,1);
\draw[double, draw=white, double=black, double distance=.4, very thick] (.24,1.35) to[out=-90, in=90] (.351,1.175);
\draw[double, draw=white, double=black, double distance=.4, very thick] (.249,1.175) to[out=-90, in=90] (.365,1);
\pgftransformxshift{-108.95}
\pgftransformyshift{43}
\filldraw[fill=white, very thick, rounded corners=5pt]
(.31,0) node{$f$} +(-.6,-.3) rectangle +(.6,.3)
(2,1) node{$g$} +(-.6,-.3) rectangle +(.6,.3)
(2.55,-.5) node{$h$} +(-.6,-.3) rectangle +(.6,.3)
(3.95,1) node{$k$} +(-.6,-.3) rectangle +(.6,.3);
\node[scale=.8] at (5.9,-.7) {\parbox{2.5cm}{
use\;\! $\theta$\;\! for\;\! 
$\tikzmath[scale=1.2, line cap = rect]{
\draw (0,.175) to[out=-90, in=90] (.1,0);
\draw (0,.35) to[out=-90, in=90] (.1,.175);
\draw[double, draw=white, double=black, double distance=.4, very thick, line cap = butt] (.1,.175) to[out=-90, in=90] (0,0);
\draw[double, draw=white, double=black, double distance=.4, very thick, line cap = butt] (.1,.35) to[out=-90, in=90] (0,.175);
\draw (.1,.35) -- (.1,.5) -- (0,.5) -- (0,.35);
\draw (.1,0) -- (.1,-.15) -- (0,-.15) -- (0,0);}
$
\,\,\,\,\,\linebreak and\, 
$\theta^{-1}$\;\! for\;\! 
$\tikzmath[scale=1.2, line cap = rect]{
\draw (.1,.175) to[out=-90, in=90] (0,0);
\draw (.1,.35) to[out=-90, in=90] (0,.175);
\draw[double, draw=white, double=black, double distance=.4, very thick, line cap = butt] (0,.175) to[out=-90, in=90] (.1,0);
\draw[double, draw=white, double=black, double distance=.4, very thick, line cap = butt] (0,.35) to[out=-90, in=90] (.1,.175);
\draw (.1,.35) -- (.1,.5) -- (0,.5) -- (0,.35);
\draw (.1,0) -- (.1,-.15) -- (0,-.15) -- (0,0);}
$
}};}
\]

\item
\underline{braided }p\!\underline{\,ivotal} (Freyd--Yetter \cite{MR1154897}), then the second and third Reidemeister moves are allowed, but not the first.
Strands behave like ribbons pressed flat against the plane.

\newcommand{\VarLoopIsoReverse}[1]{
	\fill[unshaded] ($ #1 - (.3,.3) $) rectangle ($ #1 + (.1,.3) $);
	\draw ($ #1 + (-.3,.2) $) arc (90:270:.2cm);
	\draw ($ #1 + (0,.3) $)  .. controls ++(270:.2cm) and ++(0:.2cm) .. ($ #1 + (-.3,-.2) $);
	\draw[line width=5, white]	 ($ #1 + (0,-.3) $)  .. controls ++(90:.2cm) and ++(0:.2cm) .. ($ #1 + (-.3,.2) $);
	\draw ($ #1 + (0,-.3) $)  .. controls ++(90:.2cm) and ++(0:.2cm) .. ($ #1 + (-.3,.2) $);
}
\newcommand{\VarLoopIsoInverse}[1]{
	\fill[unshaded] ($ #1 - (.1,.3) $) rectangle ($ #1 + (.3,.3) $);
	\draw ($ #1 + (.3,.2) $) arc (90:-90:.2cm);
	\draw ($ #1 + (0,-.3) $)  .. controls ++(90:.2cm) and ++(180:.2cm) .. ($ #1 + (.3,.2) $);
	\draw[line width=5, white] ($ #1 + (0,.3) $)  .. controls ++(270:.2cm) and ++(180:.2cm) .. ($ #1 + (.3,-.2) $);
	\draw ($ #1 + (0,.3) $)  .. controls ++(270:.2cm) and ++(180:.2cm) .. ($ #1 + (.3,-.2) $);
}
\[
\quad\,\,\,\tikzmath{
\draw (.3,0) to[out=80, in=-100] (.3,1.8);
\draw (.3,-1.2) to[out=90, in=-100] (.35,0);
\draw (.45,0) to[out=80, in=-100, looseness=2.5] (1.9,1);
\draw (2,1) to[out=80, in=-90] (2,1.8);
\draw (2.55,-.65) to[out=75, in=-165, looseness=1.2] (3.95,0);
\draw (4.25,.1) to[out=30, in=-95, looseness=1] (2.1,1);
\draw (2.35,-.5) to[out=80, in=-150, looseness=1.2] (3.4,.26);
\draw (4,.45) to[out=100, in=-95] (4,1.8);
\draw (4,-1.2) to[out=90, in=-80] (4.08,.2);
\draw (3.65,.5) to[out=50, in=-90, looseness=1.2] (3.8,1);
\filldraw[fill=white, very thick, rounded corners=5pt]
(.31,0) node{$f$} +(-.6,-.3) rectangle +(.6,.3)
(2,1) node{$g$} +(-.6,-.3) rectangle +(.6,.3)
(2.6,-.6) node{$h$} +(-.6,-.3) rectangle +(.6,.3)
(3.95,1) node{$k$} +(-.6,-.3) rectangle +(.6,.3);
\pgftransformscale{.7}
\pgftransformrotate{3.35}
	\VarLoopIsoReverse{(.496,1.7)}
\pgftransformrotate{-3.35}
\pgftransformscale{1/.7}
\pgftransformscale{.7}
\pgftransformrotate{-5.5}
	\VarLoopIsoInverse{(5.861,.076)}
\pgftransformrotate{5.5}
\pgftransformscale{1/.7}
\node[scale=.8] at (6.3,-.7) {\parbox{3.3cm}{
in general\,
$
\begin{tikzpicture}[baseline=-.1cm, scale=.55]
	\draw (0,-.6) -- (0,.6);
	\loopIsoInverseReverse{(0,0)}
\end{tikzpicture}
\;\!\not =\;\!
\begin{tikzpicture}[baseline=-.1cm, scale=.55]
	\draw (0,-.6) -- (0,.6);
	\loopIso{(0,0)}
\end{tikzpicture}
$}};
}
\]
There are two ways of making such a category into a balanced one, by letting $\theta$ be either
$
\begin{tikzpicture}[baseline=-.1cm, scale=.5]
	\draw (0,-.6) -- (0,.6);
	\loopIsoInverseReverse{(0,0)}
\end{tikzpicture}
$
or
$
\begin{tikzpicture}[baseline=-.1cm, scale=.5]
	\draw (0,-.6) -- (0,.6);
	\loopIso{(0,0)}
\end{tikzpicture}
$\, (see Lemma \ref{lem:CenterPivotalBalanced} in the appendix).

\item
\underline{ribbon} (Shum \cite{MR1268782}),
then the stands are replaced by ribbons, and all three dimensional isotopies are allowed.
\[
\,\,\tikzmath{
\draw[double, double distance = 2.5, line cap = rect] (.1,0) to[out=80, in=-95] (.1,1.8);
\draw[double, double distance = 2.5, line cap = rect] (.3,-1.2) to[out=90, in=-100] (.35,0);
\draw[double, double distance = 2.5, line cap = rect] (.5,0) to[out=85, in=-95, looseness=2.5] (1.9,1.05);
\draw[double, double distance = 2.5, line cap = rect] (2,1) to[out=80, in=-90] (2,1.8);
\draw[double, double distance = 2.5, line cap = butt] (2.8,-.5) to[out=80, in=180+30, looseness=.9] (4.25,.14);
\draw[double, double distance = 2.5, line cap = rect] (2.55,-.5) to[out=83, in=-110, looseness=1] (2.8,.6);
\draw[double, double distance = 2.5, line cap = rect] (2.8,.6) to[out=70, in=110, looseness=1.5] (3.6,1.2);
\draw[double, double distance = 2.5, line cap = rect] (4.13,-1.2) to[out=85, in=-95] (4,1.8);
\draw[double, double distance = 2.5, line cap = butt] (4.25,.14) to[out=30, in=-95, looseness=.9] (2.1,1);
\fill[white] (4.25,.14) circle (.043);
\fill[white] (.02,1.35) rectangle (.185,1);
\draw (.142,1.35) to[out=-90, in=90] (.049,1.175);
\draw (.151,1.175) to[out=-90, in=90] (.063,1);
\draw[double, draw=white, double=black, double distance=.4, very thick] (.04,1.35) to[out=-90, in=90] (.151,1.175);
\draw[double, draw=white, double=black, double distance=.4, very thick] (.049,1.175) to[out=-90, in=90] (.165,1);
\pgftransformxshift{108.95}
\pgftransformyshift{-43}
\fill[white] (.22,1.35) rectangle (.385,1);
\draw (.342,1.35) to[out=-90, in=90] (.249,1.175);
\draw (.351,1.175) to[out=-90, in=90] (.263,1);
\draw[double, draw=white, double=black, double distance=.4, very thick] (.24,1.35) to[out=-90, in=90] (.351,1.175);
\draw[double, draw=white, double=black, double distance=.4, very thick] (.249,1.175) to[out=-90, in=90] (.365,1);
\pgftransformxshift{-108.95}
\pgftransformyshift{43}
\filldraw[fill=white, very thick, rounded corners=5pt]
(.31,0) node{$f$} +(-.6,-.3) rectangle +(.6,.3)
(2,1) node{$g$} +(-.6,-.3) rectangle +(.6,.3)
(2.75,-.6) node{$h$} +(-.6,-.3) rectangle +(.6,.3)
(3.95,.9) node{$k$} +(-.6,-.3) rectangle +(.6,.3);
}
\qquad\quad\,\,\,
\]

\end{itemize}

Given a functor between categories with extra structure, one can ask for the functor to be compatible with that structure.
We list the relevant conditions for the notions mentioned above.
Let $\cC$ and $\cD$ be tensor categories, and let $F:\cC \to \cD$ be a functor. 
\begin{itemize}
\item
We say that $F$ is a \underline{tensor functor} if it comes equipped with 
an invertible\footnote{
If $\nu$ and $i$ are not assumed invertible, then this is called a \emph{lax tensor functor}.
%If we are instead given $\bar \nu_{x,y}:F(x\otimes y)\to F(x)\otimes F(y)$ and $\bar \imath:F(1_\cC)\to 1_\cD$ (not assumed invertible),
%subject to the obvious analogs of \eqref{eq: def: tensor functor}, then $F$ is called an \emph{oplax tensor functor}.
We warn the reader that some people use `strong tensor functor' for tensor functors, and `tensor functor' for lax tensor functors.
}
 natural transformation $\nu_{x,y}:F(x)\otimes F(y)\to F(x\otimes y)$ and an isomorphism $i:1_\cD\to F(1_\cC)$,
subject to
\begin{equation}\label{eq: def: tensor functor}
\begin{split}
\nu_{x,y\otimes z}\circ(1_{F(x)}\otimes\nu_{y,z})\circ \alpha_{F(x),F(y),F(z)}  = F(\alpha_{x,y,z})\circ \nu_{x\otimes y, z}\circ (\nu_{x,y}\otimes 1_{F(z)}),
\\
F(\lambda_x) \circ \nu_{1,x}\circ (i\otimes 1_{F(x)})=\lambda_{F(x)},
\quad\,\,\,\text{and}\quad\,\,\,
 F(\rho_x)\circ \nu_{x,1}\circ(1_{F(x)}\otimes i)=\rho_{F(x)}.
\end{split}
\end{equation}
\item
If $\cC$ and $\cD$ are braided, then $F$ is a \underline{braided functor} if the condition $F(\beta)=\nu \circ \beta\circ \nu^{-1}$ is satisfied.
\item
If $\cC$ and $\cD$ are balanced, then $F$ is a \underline{balanced functor} if we also have $F(\theta)=\theta$.
\end{itemize}
\noindent
If $\cC$ is rigid and $F:\cC\to\cD$ is a tensor functor, then every object in the essential image of $F$ has a dual, and there is a canonical isomorphism $\delta_x:F(x^*)\to F(x)^*$,
characterised by $F(\ev_x) = i \circ \ev_{F(x)}\circ(\delta_x\otimes \id_{F(x)})\circ \nu_{x^*,x}^{-1}$ (this requires no extra data).
\begin{itemize}
\item If $\cC$ and $\cD$ are pivotal, then a p\!\!\!\underline{\,\,\,ivotal functor} is a tensor functor satisfying the extra axiom $F(\varphi_x)=\delta_{x^*}^{-1} \circ \delta_x^* \circ \varphi_{F(x)}$.
\end{itemize}

\subsection{The Drinfel'd center}
The Drinfel'd center construction takes as input a tensor category and produces as output a braided tensor category \cite{MR1107651}.
It is a categorification of the operation of taking the center of a ring.

\begin{defn}
Let $\cC$ be a tensor category.
Its center $\cZ(\cC)$ is the tensor category whose objects are pairs $(a, e_a)$ where $e_a=e_{a,\scriptscriptstyle \bullet}$ is a half-braiding for $a\in \cC$.
A half-braiding for $a\in\cC$ is a family of isomorphisms 
$$
e_{a,b}\,
=
\begin{tikzpicture}[baseline=-.1cm]
	\draw (0,-.4)  .. controls ++(90:.3cm) and ++(270:.3cm) .. (-.6,.4);
	\draw[super thick, white] (-.6,-.4)  .. controls ++(90:.3cm) and ++(270:.3cm) .. (0,.4);
	\draw (-.6,-.4)  .. controls ++(90:.3cm) and ++(270:.3cm) .. (0,.4);
	\node at (-.8,-.3) {\scriptsize{$a$}};
	\node at (.2,-.3) {\scriptsize{$b$}};
\end{tikzpicture}
:a\otimes b \to b\otimes a
$$ 
which is natural with respect to $b$, and satisfies the hexagon axiom
\begin{equation}\label{eq: hexagon identity}
(\id_{b} \otimes e_{a,c})\circ \alpha \circ (e_{a,b} \otimes \id_{c}) = \alpha\circ e_{a,b\otimes c}\circ \alpha.
\end{equation}
The morphisms in $\cZ(\cC)$ from $(a,e_a)$ to $(b,e_b)$ are all the $f\in\cC(a,b)$ which ``pass over all crossings'', i.e., such that for all $c\in\cC$,
$$
e_{b,c}\circ(f\otimes \id_c) = (\id_c\otimes f) \circ e_{a,c}.
$$
%Another way to define $\cZ(\cC)$ is that it is the commutant of $\cC\boxtimes \cC^{\text{op}}$ acting on $\cC$ \cite{MR1966525}. \textcolor{red}{(AH: let's leave this out)}

Note that $\cZ(\cC)$ is always braided by defining $\beta_{(a, e_a) (b, e_b)} =e_{a,b}$.
In fact, $\cZ(\cC)$ retains many of the properties and structures from $\cC$.
For example, if $\cC$ is rigid, fusion, pivotal, or spherical, then so is $\cZ(\cC)$ (e.g., see \cite[Thm. 2.15]{MR2183279} and  \cite[Prop. 3.9]{MR1966525}).
%If $\cC$ is braided, then there are two braided embeddings $\cC\to \cZ(\cC)$, whose images are each others commutants \cite[Prop. 7.3]{MR1966525}.
%If $\cC$ is modular, then $\cZ(\cC)$ is canonically equivalent to $\cC\boxtimes \cC^{\text{op}}$ as a braided tensor category \cite[Theorem 7.10]{MR1966525}.
\end{defn}

The proof of pivotality of $\cZ(\cC)$ in \cite[Prop. 3.9]{MR1966525} assumes $\cC$ to be a \emph{strict} pivotal category.
We are not aware of a reference that proves the result in full generality, so we provide a short argument of the most non-trival part of the proof:

\begin{prop}\label{prop: communicated by Noah Snyder}
Let $\cC$ be a rigid tensor category.
Then a pivotal structure on $\cC$ induces a pivotal structure on $\cZ(\cC)$.
\end{prop}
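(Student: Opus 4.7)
The plan is to define the candidate pivotal structure on $\cZ(\cC)$ by the same formula as in $\cC$, namely $\tilde{\varphi}_{(a,e_a)} := \varphi_a$, after checking that this morphism of $\cC$ is actually a morphism in $\cZ(\cC)$ with the correct source and target. Three things must be verified: (i) that the double dual $(a, e_a)^{**}$ in $\cZ(\cC)$ is $(a^{**}, e_{a^{**}})$ for a specific half-braiding $e_{a^{**}}$ induced from $e_a$; (ii) that $\varphi_a$ intertwines $e_a$ and $e_{a^{**}}$ so as to be a morphism in $\cZ(\cC)$; and (iii) that $\tilde{\varphi}$ is monoidally natural. The main obstacle is (ii).

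For (i), I would first recall the standard construction of duals in $\cZ(\cC)$: given a half-braiding $e_{a,b} : a \otimes b \to b \otimes a$, the dual is $(a^*, e_{a^*})$ where $e_{a^*, b} : a^* \otimes b \to b \otimes a^*$ is the unique half-braiding compatible with $e_a$ under the duality, and is expressible as a composition of $\id \otimes \coev_a$, $\id \otimes e_{a,b}^{-1} \otimes \id$, and $\ev_a \otimes \id$ (together with the associators and unitors). Iterating this prescription yields an explicit formula for $e_{a^{**}, b}$ in terms of $e_{a,b}$ and the duality data of $a$ and $a^*$.

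The heart of the proof is (ii): verifying, for every $b \in \cC$, that
\[
(\id_b \otimes \varphi_a) \circ e_{a, b} \;=\; e_{a^{**}, b} \circ (\varphi_a \otimes \id_b).
\]
My approach is to expand the right-hand side using the explicit formula for $e_{a^{**}, b}$ and then manipulate the resulting diagram using the planar graphical calculus for rigid pivotal categories. Pictorially, the right-hand side wraps the $a$-strand around the $b$-strand by two successive cup-cap pairs before crossing, while the left-hand side performs a single crossing and postcomposes with $\varphi_a$. The two agree precisely because $\varphi$ is a \emph{monoidal} natural isomorphism $\id \Rightarrow (-)^{**}$: the naturality of $e_{a, -}$ and the zig-zag equations reduce the calculation to identifying the iterated cup-cap composite with $\varphi_a$ itself, and monoidal naturality controls the interaction with the tensor unit and with the canonical identification of $(-)^{**}$ on tensor products (this is the same input that gives the relation between $\varphi_{a^*}$ and $(\varphi_a^{-1})^*$ recorded in \cite[Lem.\,4.11]{MR2767048}). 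The main bookkeeping difficulty is keeping track of two simultaneous levels of duality without invoking strictness of the pivotal structure; this can be managed by inserting all associators and the canonical isomorphisms $\delta$ for the ``forgetful'' tensor functor $\cZ(\cC) \to \cC$ explicitly.

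Finally, (iii) is essentially free: since the forgetful functor $\cZ(\cC) \to \cC$ is strict monoidal and conservative, and the tensor product of objects in $\cZ(\cC)$ has as half-braiding the evident composite of the half-braidings of its factors, the monoidality axiom for $\tilde{\varphi}$ in $\cZ(\cC)$ reduces to the monoidality of $\varphi$ in $\cC$. Naturality of $\tilde{\varphi}$ in morphisms of $\cZ(\cC)$ reduces likewise to naturality of $\varphi$, because a morphism of $\cZ(\cC)$ is just a morphism of $\cC$ subject to an extra equation.
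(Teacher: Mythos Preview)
Your proposal is correct and follows the same overall outline as the paper: set $\tilde\varphi_{(a,e_a)}:=\varphi_a$ and check that this is a morphism $(a,e_a)\to(a,e_a)^{**}$ in $\cZ(\cC)$, with step (ii) being the only nontrivial part. The difference lies in the execution of (ii). You choose the cup--cap formula for the dual half-braiding and propose a graphical reduction; the paper instead adopts the formula $e_{a^*,b}:=e_{a,{}^*\hspace{-.1mm}b}^*$, which upon iteration gives $e_{a^{**},b}=e_{a,{}^{**}\hspace{-.1mm}b}^{**}$ and turns (ii) into a four-line algebraic computation: insert $\varphi_{{}^{**}\hspace{-.1mm}b}\circ\varphi_{{}^{**}\hspace{-.1mm}b}^{-1}$, use monoidality of $\varphi$ to rewrite $\varphi_a\otimes\varphi_{{}^{**}\hspace{-.1mm}b}$ as $\varphi_{a\otimes{}^{**}\hspace{-.1mm}b}$, use naturality of $\varphi$ to commute it past $e_{a,{}^{**}\hspace{-.1mm}b}^{**}$, use naturality of $e_{a,-}$ in the second slot to move $\varphi_{{}^{**}\hspace{-.1mm}b}^{-1}$ across the half-braiding, and undo with monoidality again. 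Your approach would work, but with more bookkeeping; the paper's choice of dual-half-braiding formula is precisely what makes the argument short and transparent without any explicit graphical calculus or appeal to strictness.
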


\begin{proof}[Proof \rm(communicated by Noah Snyder).]
The dual of an object $(a,e_a)\in \cZ(\cC)$ is given by $(a^*,e_{a^*})$, with $e_{a^*,b}:=e_{a,{}^*\hspace{-.1mm}b}^*$.
The somewhat non-trivial claim is that the map $\varphi_a:a\to a^{**}$ provided by the pivotal structure of $\cC$ defines a morphism $(a,e_a)\to (a,e_a)^{**}$ in $\cZ(\cC)$.
We need to show that $(\id_b\otimes \varphi_a)\circ e_{a,b} = e_{a^{**},b}\circ (\varphi_a\otimes \id_b)$ for every $b\in\cC$.
This is done as follows:
\[
\begin{split}
e_{a^{**},b}\circ (\varphi_a\otimes \id_b) &= 
e_{a,{}^{**}\hspace{-.1mm}b}^{**}\circ \varphi_{a\;\!\otimes {}^{**}\hspace{-.2mm}b}\circ (\id_a\otimes \varphi_{{}^{**}\hspace{-.2mm}b}^{-1}) \\&=
\varphi_{{}^{**}\hspace{-.2mm}b\;\!\otimes\;\! a}\circ e_{a,{}^{**}\hspace{-.1mm}b} \circ (\id_a\otimes \varphi_{{}^{**}\hspace{-.2mm}b}^{-1}) \\&=
\varphi_{{}^{**}\hspace{-.2mm}b\otimes a}\circ(\varphi_{{}^{**}\hspace{-.2mm}b}^{-1}\otimes\id_a)\circ e_{a,b} =(\id_b\otimes \varphi_a)\circ e_{a,b}
.\qedhere
\end{split}
\]
\end{proof}

%We refer the reader to Subsection \ref{sec: chart of categories} for a synoptic chart 
%that includes all the relevant notions of tensor category, along with their relation to the Drinfel'd center construction.

%%%%%%%%%%%%%%%%%%%%%%%%%%%%%%%%%%%%%%%%%%%%%%%%%%
%auto-ignore
%this ensures the arxiv doesn't try to start TeXing here.
%!TEX root =../InternalTrace.tex

%%%%%%%%%%%%%%%%%%%%%%%%%%%%%%%%%%%%%%%%%%%%%%%%%%
%%%%%%%%%%%%%%%%%%%%%%%%%%%%%%%%%%%%%%%%%%%%%%%%%%
%%%%%%%%%%%%%%%%%%%%%%%%%%%%%%%%%%%%%%%%%%%%%%%%%%
\subsection{Synoptic chart of tensor categories}
\label{sec: chart of categories}
We present here a chart (Figure~\ref{fig:SynopticChart}) that summarizes the various notions of tensor category.
The two ways of going between balanced rigid categories and braided pivotal categories will be discussed in Appendix \ref{sec:Lemmas}.

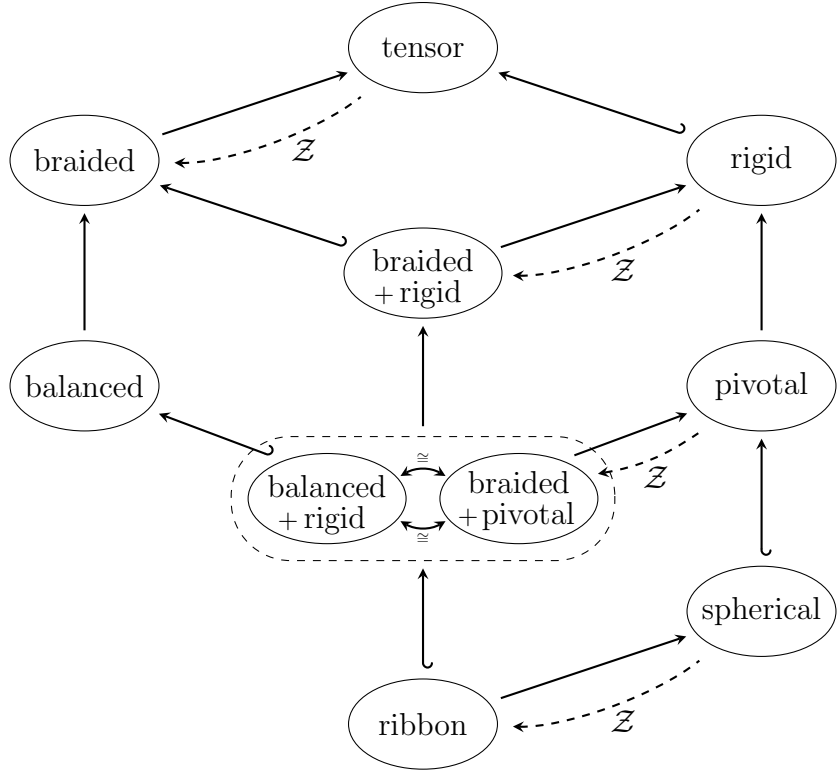
\begin{figure}[!ht]
\[
\begin{tikzpicture}[scale=1.5]
	\draw (3,6) coordinate (A) circle (.66 and .4) node {tensor}; 
	\draw (0,5) coordinate (B) circle (.66 and .4) node {braided}; 
	\draw (6,5) coordinate (C) circle (.66 and .4) node {rigid}; 
	\draw (3,4) coordinate (D) circle (.7 and .4) +(0,-.05) node {\parbox{2cm}{\centerline{braided}\vspace{-.1cm} \centerline{{\footnotesize+}\,rigid\,\,}}}; 
	\draw (0,3) coordinate (E) circle (.66 and .4) node {balanced}; 
	\draw (6,3) coordinate (F) circle (.66 and .4) node {pivotal}; 
	\draw (2.15,2) coordinate (G) circle (.7 and .4) +(0,-.05) node {\parbox{2cm}{\centerline{balanced}\vspace{-.1cm} \centerline{{\footnotesize+}\,rigid\,\,}}}; 
	\draw (3.85,2) coordinate (H) circle (.7 and .4) +(0,-.02) node {\parbox{2cm}{\centerline{braided}\vspace{-.1cm} \centerline{{\footnotesize+}\;\!pivotal\,}}}; 
	\draw (6,1) coordinate (I) circle (.66 and .4) node {spherical}; 
	\draw (3,0) coordinate (J) circle (.66 and .4) node {ribbon}; 
\begin{scope}[thick,-stealth, shorten >=30, shorten <=31]
	\draw (B) -- (A);
	\draw (D) -- (C);
	\draw (H)+(-.19,.13) -- (F);
	\draw (J) -- (I);
\end{scope}
\begin{scope}[dashed, thick, stealth-, shorten >=30, shorten <=34]
	\draw (B) to[bend right=20]node[below, pos=.58, xshift=5]{$\cZ$} (A);
	\draw (D) to[bend right=20]node[below, pos=.52, xshift=5]{$\cZ$} (C);
	\draw (J) to[bend right=20]node[below, pos=.52, xshift=5]{$\cZ$} (I);
\end{scope}
\draw[dashed, thick, stealth-] (4.55,2.22) to[bend right=12]node[below, xshift=2, yshift=1.9]{$\cZ$} (5.44,2.58);
\begin{scope}[thick,-stealth, shorten >=20, shorten <=21]
	\draw (E) -- (B);
	\draw (F) -- (C);
	\draw[left hook-stealth] (I) -- (F);
	\draw[left hook-stealth] (J) -- (3,1.85);
	\draw (3,2.15) --(D);
\end{scope}
\begin{scope}[thick,left hook-stealth, shorten >=30, shorten <=31]
	\draw (C) -- (A);
	\draw (D) -- (B);
	\draw (G)+(.19,.12) -- (E);
\end{scope}
\draw[thick,stealth-stealth] (3,2) +(-.2,.2) to[bend left=30]node[above, scale=.5]{$\cong$} +(.2,.2);
\draw[thick,stealth-stealth] (3,2) +(-.2,-.2) to[bend right=30]node[below, scale=.5]{$\cong$} +(.2,-.2);
\draw[dashed, rounded corners=22.5] (3,2) +(1.7,.55) rectangle +(-1.7,-.55);
\end{tikzpicture}
\]
\caption{Synoptic chart of tensor categories}
\label{fig:SynopticChart}
\end{figure}

In Figure~\ref{fig:SynopticChart}, we use the following notations:
\begin{itemize}
\item
An arrow 
$\tikz[baseline=-.1cm]{\node[draw, ellipse, inner ysep=2] (A) at (0,0) {A};\node[draw, ellipse, inner ysep=2] (B) at (1.5,0) {B};\draw[thick, -stealth, shorten >=2, shorten <=2] (A) -- (B);}$
indicates that notion B can be obtained from notion A by forgetting part of the data.\medskip
\item
An arrow 
$\tikz[baseline=-.1cm]{\node[draw, ellipse, inner ysep=2] (A) at (0,0) {A};\node[draw, ellipse, inner ysep=2] (B) at (1.5,0) {B};\draw[thick, right hook-stealth, shorten >=2, shorten <=2] (A) -- (B);}$
indicates that notion A can be obtained from notion B by imposing extra axioms.
\item
A dashed arrow
$\tikz[baseline=-.1cm]{\node[draw, ellipse, inner ysep=2] (A) at (0,0) {A};\node[draw, ellipse, inner ysep=2] (B) at (1.5,0) {B};\draw[thick, -stealth, dashed, shorten >=2, shorten <=2] (A) --node[above]{$\scriptstyle \cZ$} (B);}$
indicates that the Drinfel'd center construction goes from notion A to notion B.\medskip
\item
A double arrow 
$\tikz[baseline=-.1cm]{\node[draw, ellipse, inner ysep=2] (A) at (0,0) {A};\node[draw, ellipse, inner ysep=2] (B) at (1.5,0) {B};\draw[thick, stealth-stealth, shorten >=2, shorten <=2] (A) --node[above, scale=.5]{$\cong$} (B);}$
indicates an equivalence between notions A and B.
\end{itemize}

\noindent
As mentioned above, there are \emph{two} ways of going between balanced rigid categories and braided pivotal categories.
To avoid any confusion, we will only use \emph{one} of those two equivalences in the remainder of this paper.
Specifically, given a braided pivotal category $(\cC, \beta,\varphi)$, we will always equip it with the twists given by
\begin{equation}\label{def: theta1 -- PRE}
\theta_{a}
:= 
(\id_a\otimes \ev_{a^*})
\circ
(\beta_{a^{**},a}\otimes\id_{a^*})
\circ
(\id_{a^{**}}\otimes \coev_a)
\circ
\varphi_a
=
\begin{tikzpicture}[rotate=180, baseline=.35cm]
	\draw (0,-1.6) -- (0,.8);
	\loopIso{(0,-1)}
	\roundNbox{unshaded}{(0,0)}{.35}{0}{0}{$\varphi_a$}	
	\node at (-.2+.4,.6) {\scriptsize{$a$}};
	\node at (-.2+.4,-1.4) {\scriptsize{$a$}};
	\node at (-.3+.55,-.55) {\scriptsize{$a^{**}$}};
\end{tikzpicture}
\end{equation}
See Appendix \ref{sec:Lemmas} for more details.

%%%%%%%%%%%%%%%%%%%%%%%%%%%%%%%%%%%%%%%%%%%%%%%%%%

\subsection{Algebra objects}
\label{sec:Frobenius}

We now discuss algebra objects in tensor categories. 
General references include \cite[\S 1]{MR1936496}, \cite[\S 2.4]{MR3039775}, \cite[\S 3]{MR1976459}, and \cite{MR1966524}.

\begin{defn}
An algebra object in a tensor category $\cC$ is a triple $(A,m:A\otimes A\to A,i:1\to A)$ which satisfies the following axioms:
\begin{align*}
&m( m\otimes \id_A) =  m(\id_A \otimes  m)
&&\text{(Associativity)}
\\
& m(i\otimes \id_A)=\id_A =  m(\id_A\otimes\, i)
&&\text{(Unitality)}
\end{align*}
Dually, a coalgebra is a triple $(A,\Delta:A\to A\otimes A,\epsilon:A\to 1)$ satisfying analogous coassociativity and counitality axioms.
\end{defn}

We represent the multiplication $ m$ and unit $i$ by a trivalent, respectively univalent, vertex
$$
 m_A \,=\! 
\begin{tikzpicture}[baseline=-.1cm]
	\coordinate (a) at (0,0);
	\filldraw (a) circle (.05cm);
	\draw (-.3,-.4) -- (a) -- (.3,-.4);
	\draw (a) -- (0,.4);
	\node at (-.45,-.3) {\scriptsize{$A$}};
	\node at (.45,-.3) {\scriptsize{$A$}};
	\node at (.15,.3) {\scriptsize{$A$}};
\end{tikzpicture}
\qquad\qquad
i_A\,=\,
\begin{tikzpicture}[baseline=-.1cm]
	\coordinate (a) at (0,-.1);
	\filldraw (a) circle (.05cm);
	\draw (a) -- (0,.3);
	\node at (.15,.2) {\scriptsize{$A$}};
\end{tikzpicture}\,.
$$
An algebra object is called \emph{connected} if $\cC(1, A)$ is one dimensional.
If the ambient category $\cC$ is semisimple, then an algebra object is:
\begin{itemize}
\item
\emph{semisimple} if the category $\Mod_\cC(A)$ of $A$-modules in $\cC$ is semisimple,
\item
\emph{separable} if $m_A: A\otimes A\to A$ splits as a morphism of $A$-$A$-bimodules.
\end{itemize}
At first glance, the notions of semisimple and separable algebras do not seem very related.
However, a special case of the following proposition (namely setting $B=1$) shows that separable always implies semisimple:

\begin{prop}
\label{prop: ostrik bimod semisimple}
Let $A$ and $B$ be separable algebras in a semisimple tensor category $\cC$.
Then the category $\Bimod_{\cC}(A,B)$ of $A$-$B$-bimodules is semisimple.
\end{prop}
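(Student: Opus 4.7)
The plan is to show that every short exact sequence in $\Bimod_\cC(A,B)$ splits; combined with the abelian structure inherited from $\cC$ (bimodule actions descend to the kernels and cokernels computed in $\cC$), this yields semisimplicity. The key tool is an averaging operation built from the separability idempotents on both sides.

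Since $A$ is separable, fix a section $e_A:A\to A\otimes A$ of $m_A$ as a map of $A$-$A$-bimodules, and set $\eta_A:=e_A\circ i_A:1\to A\otimes A$. Define $\eta_B:1\to B\otimes B$ analogously from a separability splitting of $m_B$. For any $f\in\hom_\cC(N,M)$ between $A$-$B$-bimodules, define the averaged morphism $\bar f:N\to M$ by first inserting $\eta_A$ to the left of $N$ and $\eta_B$ to the right, then using the inner legs of these Casimir morphisms to act on $N$, applying $f$, and finally using the outer legs to act on $M$. I would then verify: (a)~$\bar f$ is always a bimodule morphism; (b)~if $f$ is already a bimodule morphism, then $\bar f=f$; and (c)~the averaging operation commutes with pre- and post-composition by bimodule morphisms. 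Property~(b) reduces to $m_A\circ e_A=\id_A$, $m_B\circ e_B=\id_B$, together with unitality of the actions (using that $f$ commutes with the actions, the inner/outer legs of each $\eta$ reassemble as $m\circ e$); property~(c) is immediate from the fact that bimodule morphisms commute with actions.

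Given an epimorphism $\pi:M\twoheadrightarrow N$ of $A$-$B$-bimodules, semisimplicity of $\cC$ provides a section $s:N\to M$ in $\cC$. Then by~(c) and~(b), $\pi\circ\bar s=\overline{\pi\circ s}=\overline{\id_N}=\id_N$, so $\bar s$ is a bimodule section of $\pi$. Hence every short exact sequence in $\Bimod_\cC(A,B)$ splits, and the category is semisimple.

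The main obstacle is proving~(a): left $A$-equivariance of $\bar f$ requires sliding an external $A$-action through the left Casimir $\eta_A$, which is exactly the statement that $e_A$ is an $A$-$A$-bimodule map for the outer bimodule structure on $A\otimes A$ (so that an action on the outer leg can be transferred to the inner leg, which is in turn absorbed by the action on $N$); right $B$-equivariance is symmetric. Because $A$ acts only on the left and $B$ only on the right in any bimodule, the two averagings for $A$ and $B$ are manifestly independent, and no braiding on $\cC$ is needed to carry the argument out.
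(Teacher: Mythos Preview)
Your argument is correct and uses the same underlying ingredient as the paper --- the separability sections $e_A,e_B$ --- but packages it differently. The paper (following Ostrik) bypasses the averaging operator entirely: it observes directly that for any bimodule $M$, the map
\[
M \,\cong\, A\otimes_A M\otimes_B B \xrightarrow{\,e_A\otimes\id\otimes e_B\,} (A\otimes A)\otimes_A M\otimes_B (B\otimes B) \,\cong\, A\otimes M\otimes B
\]
is an $A$-$B$-bimodule section of the action map $A\otimes M\otimes B\to M$, so $M$ is a bimodule summand of the induced (hence projective) object $A\otimes M\otimes B$, and therefore projective itself. Your $\bar f$ is exactly the composite $p_M\circ(\id\otimes f\otimes\id)\circ j_N$ where $j_N$ is this section and $p_M$ the action map, so your properties (a)--(c) are the unpacking of ``$j_N$ is a bimodule map'' and ``$p_N\circ j_N=\id$''. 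The paper's formulation is shorter and yields the slightly stronger statement that every object is projective; your Maschke-style version has the virtue of making the splitting of a given epimorphism completely explicit.
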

\begin{proof}[Proof \rm (communicated by Victor Ostrik)]
Let $s_A : A \to A \otimes A$ and $s_B: B \to B \otimes B$ be bimodule maps that split $m_A$ and $m_B$.
Given $M\in\Bimod_{\cC}(A,B)$, the action map $A \otimes M \otimes B \to M$ admits a splitting 
$$
M \cong A \otimes_A M \otimes_B B \xrightarrow{s_A \otimes \id \otimes s_B}  (A \otimes  A) \otimes_A M \otimes_B  (B \otimes B) \cong A \otimes M \otimes B,
$$
which is furthermore a bimodule map.
$M$ is therefore isomorphic to a direct summand of $A \otimes M \otimes B$.
As the latter is projective, so is $M$.
Every object of $\Bimod_\cC(A,B)$ is projective, every exact sequence splits, and the category is semisimple.
\end{proof}

We also get the following well-known equivalent definition of separability:

\begin{prop}[{\cite[Prop. 2.7]{MR3039775}}]
\label{prop:EquivalentSeparabilityDefinition}
An algebra object $A$ in a semisimple tensor category $\cC$ is separable if and only if $\Bimod_{\cC}(A,A)$ is semisimple.
\end{prop}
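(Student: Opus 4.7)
The statement is an if-and-only-if, and one direction is essentially already in the paper. The forward implication (separable $\Rightarrow$ bimodules semisimple) is the special case $B = A$ of Proposition \ref{prop: ostrik bimod semisimple}, so there is nothing new to do there except cite that result.

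For the reverse implication, my plan is to exhibit the splitting of $m_A$ as a bimodule map directly, using semisimplicity of $\Bimod_\cC(A,A)$. The object $A \otimes A$ carries its natural $A$-$A$-bimodule structure by outer multiplication, and the associativity axiom for $m_A$ says exactly that $m_A : A \otimes A \to A$ is a bimodule map. The key preliminary observation is that $m_A$ is an epimorphism in $\Bimod_\cC(A,A)$: in the underlying category $\cC$, the morphism $i_A \otimes \id_A : A \to A \otimes A$ satisfies $m_A \circ (i_A \otimes \id_A) = \id_A$ by unitality, so $m_A$ is split epi in $\cC$; since the forgetful functor $\Bimod_\cC(A,A) \to \cC$ is faithful and exact, $m_A$ is an epimorphism in $\Bimod_\cC(A,A)$ as well. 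Now the short exact sequence
\[
0 \to \ker(m_A) \to A \otimes A \xrightarrow{\,m_A\,} A \to 0
\]
in $\Bimod_\cC(A,A)$ splits by semisimplicity of that category, yielding a bimodule section of $m_A$. This is exactly the defining property of separability.

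Putting the two directions together finishes the proof. I do not anticipate any serious obstacle: the only nontrivial step is recognizing that surjectivity of $m_A$ in $\cC$ upgrades to surjectivity in $\Bimod_\cC(A,A)$, which follows formally from the faithfulness and exactness of the forgetful functor. The rest is immediate from the definitions and the hypothesis that $\Bimod_\cC(A,A)$ is semisimple abelian, so that every epimorphism admits a section.
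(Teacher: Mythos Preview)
Your proof is correct and matches the paper's approach essentially verbatim: the paper also observes that $m_A$ is a split epimorphism (via the unit, using $\id_A\otimes i_A$ rather than your $i_A\otimes\id_A$) and hence splits in $\Bimod_\cC(A,A)$ by semisimplicity, while the forward direction is the special case $B=A$ of the preceding proposition. The only difference is that you spell out a bit more detail (that $m_A$ is a bimodule map, and the passage from epi in $\cC$ to epi in bimodules).
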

\begin{proof}
The multiplication map $m_A: A\otimes A\to A$ admits a right inverse (given by $\id_A\otimes\, i_A$) and is thus always an epimorphism.
If $\Bimod_{\cC}(A,A)$ is semisimple, $m_A$ therefore splits.
The other direction follows from the previous proposition by setting $B=A$.
\end{proof}

\begin{rem}
\label{rem:WhenSimpleImpliesSeparable}
In positive characteristic there exist semisimple algebras which are not separable, e.g., 
the group algebra $\mathbb F_p[G]$ viewed as a $G$-graded vector space, for some $p$-group~$G$.

However, if $\cC$ is a fusion category over a field of characteristic zero, then algebra objects in $\cC$ are semisimple if and only if they are separable.
To see that, consider a semisimple algebra object $A$, which we assume without loss of generality to be simple (not a direct sum of smaller algebras).
Then $\cM:=\Mod_\cC(A)$ is an indecomposable module category, and
the category $\mathsf{Func}_\cC(\cM,\cM)=\Bimod_\cC(A,A)$ 
of $\cC$-linear endofunctors of $\cM$ is semisimple by \cite[Lem. 2.15]{MR2183279}.
%The general case follows as every semisimple algebra is a direct sum of simple algebras.
\end{rem}

We now briefly introduce the notion of a Frobenius algebra object: 

\begin{defn}\label{def:FrobAlg}
A Frobenius algebra in a tensor category $\cC$ is a quintuplet $(A, m,\Delta,i,\epsilon)$ such that 
$(A, m,i)$ is an algebra,
$(A,\Delta,\epsilon)$ is a coalgebra,
and the following compatibility condition is satisfied:
\begin{align*}
%\label{eq:Frobenius}
( m\otimes \id_A)\circ(\id_A\otimes \Delta)
&=
\Delta\circ  m
=
(\id_A \otimes  m)\circ(\Delta \otimes \id_A)
&&\text{(Frobenius)}
\end{align*}
%A Frobenius algebra is \emph{special} (also called \emph{$\Delta$-separable}) if $ m\circ\Delta=\id_A$.
\end{defn}

It is interesting to note that a Frobenius algebra is entirely determined by its underlying algebra $(A, m,i)$ and counit $\epsilon$.
Moreover, for a given algebra $A=(A, m,i)$, a morphism $\epsilon:A\to 1$ is the counit of a Frobenius structure if and only if the pairing $\epsilon\circ  m : A\otimes A\to 1$ is \emph{non-degenerate}
(a pairing $p:A\otimes B\to 1$ is non-degenerate if $A$ and $B$ are dualizable and $(p\otimes \id_{B^*})\circ(\id_A \otimes \coev_B) : A\to B^*$ is an isomorphism):
\begin{prop}[{\cite[Lem.\,3.7]{MR1940282}, \cite[Prop.\,8]{MR2500035}}]
\label{prop   :Frobenius}
Let $\cC$ be a tensor category, and let $(A, m, i)$ be an algebra object in $\cC$.
Let $\epsilon: A\to 1$ be such that the pairing $\epsilon\circ  m$ is non-degenerate.
Then, there exists a unique comultiplication $\Delta:A\to A\otimes A$ making the quintuplet $(A, m,\Delta,i,\epsilon)$ into a Frobenius algebra.
\end{prop}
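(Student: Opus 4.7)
The plan is to use the non-degenerate pairing $p := \epsilon \circ m$ to extract a copairing $c : 1 \to A \otimes A$ (unique by non-degeneracy), and to show that every piece of the Frobenius structure is rigidly determined by $c$.

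For uniqueness, suppose $(A, m, \Delta, i, \epsilon)$ is Frobenius and set $c := \Delta \circ i$. Using the two Frobenius identities together with the unit and counit axioms, I verify
\[
(p \otimes \id_A)(\id_A \otimes c) \,=\, (\epsilon \otimes \id_A) \circ \Delta \circ m \circ (\id_A \otimes i) \,=\, (\epsilon \otimes \id_A)\Delta \,=\, \id_A,
\]
and symmetrically $(\id_A \otimes p)(c \otimes \id_A) = \id_A$. Thus $c$ is a copairing for $p$, so by non-degeneracy it is \emph{the} copairing. The Frobenius relation together with unitality then gives $\Delta = \Delta \circ m \circ (\id_A \otimes i) = (m \otimes \id_A)(\id_A \otimes c)$, so $\Delta$ too is determined.

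For existence, let $c$ be the unique copairing for $p$ and \emph{define} $\Delta := (m \otimes \id_A)(\id_A \otimes c)$. The crucial first step is to show that $\Delta$ also equals $\Delta' := (\id_A \otimes m)(c \otimes \id_A)$; this follows because $(\id_A \otimes p)(\Delta \otimes \id_A) = m = (\id_A \otimes p)(\Delta' \otimes \id_A)$, both sides being checked directly from associativity of $m$ and the zigzag, while the map $F \mapsto (\id_A \otimes p)(F \otimes \id_A)$ from $\Hom(A, A \otimes A)$ to $\Hom(A \otimes A, A)$ is a bijection by non-degeneracy of $p$. Counitality then collapses to the zigzag, and each Frobenius relation becomes immediate by unfolding $\Delta$ via the appropriate formula and applying associativity of $m$. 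Coassociativity is handled by the same ``uniqueness of copairing'' technique: composing $(\Delta \otimes \id_A)\Delta$ and $(\id_A \otimes \Delta)\Delta$ with $\id_A \otimes \id_A \otimes p$ (paired against $\id_A$ on a trailing $A$-factor) reduces both to $\Delta$ itself, so the two expressions agree in $\Hom(A, A^{\otimes 3})$.

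The main obstacle is not conceptual but a matter of bookkeeping: every verification is a one- or two-step string-diagram manipulation using only associativity of $m$ and the two zigzag identities for $c$. The structural content of the proof is the single observation that the map $F \mapsto (\id_A \otimes p)(F \otimes \id_A)$ is a bijection, which pins down the comultiplication uniquely given $(m, i, \epsilon)$.
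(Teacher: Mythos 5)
The paper does not actually prove this proposition; it cites \cite[Lem.\,3.7]{MR1940282} and \cite[Prop.\,8]{MR2500035} and moves on, so there is no in-paper argument to compare against. Your proof is the standard one and it is correct: $c:=\Delta\circ i$ is forced to be the (unique) two-sided copairing for $p$, which pins down $\Delta=(m\otimes\id_A)(\id_A\otimes c)$, and conversely this formula, together with its twin $(\id_A\otimes m)(c\otimes\id_A)$, yields all the coalgebra and Frobenius axioms via associativity, the zigzags, and the bijection $F\mapsto(\id\otimes p)(F\otimes\id)$. Two small points to tighten. First, the paper's definition of non-degeneracy only requires the single map $\phi=(p\otimes\id_{A^*})(\id_A\otimes\coev_A):A\to A^*$ to be invertible, so you should justify that a two-sided copairing exists at all: since $p=\ev_A\circ(\phi\otimes\id_A)$ (a one-line zigzag computation), the element $c:=(\id_A\otimes\phi^{-1})\circ\coev_A$ satisfies both snake identities with $p$, and is then unique by the usual argument. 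Second, in the coassociativity step the phrase ``reduces both to $\Delta$ itself'' is slightly off: composing $(\Delta\otimes\id_A)\Delta$ and $(\id_A\otimes\Delta)\Delta$ with $\id_A\otimes\id_A\otimes p$ against a trailing $A$-strand reduces both to $\Delta\circ m\in\Hom(A\otimes A,A\otimes A)$ (for the second composite one uses the already-established Frobenius identity, or associativity again), and injectivity of that pairing map then gives coassociativity; the method is exactly right, only the named target of the reduction needs correcting.
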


\section{Categorified traces for module tensor categories}
\label{sec: categorified trace for module tensor categories}

In this section, we explain the general notion of categorified trace, along with some basic properties thereof.
We then explain how to construct a categorified trace $\Tr_\cC:\cM\to \cC$ from a module tensor category $\cM$ over a braided tensor category $\cC$.

\subsection{Categorified traces}
\label{sec: categorified traces}
Recall that a trace on an algebra is a linear map to the ground field satisfying $\operatorname{tr}(xy) = \operatorname{tr}(yx)$. The categorification of the notion of algebra is that of tensor category, where notably the associativity constraint gets replaced by the additional data of associators.
The notion of a \emph{categorified trace} (also known as a \emph{commutator functor} \cite[\S 6]{MR2506324}, \cite[Def. 2.1]{MR3250042}) is defined in the same spirit:
\begin{defn}\label{def:  categorified trace}
Let $\cM$ be a tensor category, and let $\cC$ be a category.
A \emph{categorified trace} is a functor $\Tr:\cM\to \cC$ along with natural isomorphisms
\[
\tau_{x,y}:\Tr(x \otimes y) \to \Tr(y \otimes x)\qquad\quad x,y \in \cM
\]
which we call the \emph{traciators}, subject to the following axiom:
\begin{equation}\label{eq: axiom for traciator}
\tau_{x, y \otimes z} \,=\, \tau_{z \otimes x, y} \circ \tau_{x \otimes y, z}.
\end{equation}
Here, we have suppressed the associators for readability.
The axiom including the necessary associators reads as follows: $\Tr(\alpha_{y,z,x})\circ \tau_{x, y \otimes z}\circ \Tr(\alpha_{x,y,z}) = \tau_{z \otimes x, y} \circ \Tr(\alpha_{z,x,y})^{-1} \circ \tau_{x \otimes y, z}$.
\end{defn}

We will sometimes write $\tau^+_{x,y}$ for $\tau_{x,y}$, and $\tau^-_{x,y}$ for $\tau_{y,x}^{-1}$.
The latter is called the inverse traciator.
We point out that $\tau^-$ does not in general satisfy equation \eqref{eq: axiom for traciator}, but instead a mirrored version of that axiom.

\begin{rem}\label{rem: rant No1}
A slightly less general notion of categorified trace was studied in \cite{MR3095324} under the name ``shadow''.
It differs from Definition \ref{def:  categorified trace} in that the authors also included the axioms\footnote{We have again suppressed the coherences for readability.
The actual axioms are $\Tr(\lambda_x) \circ \tau_{x,1}=\Tr(\rho_x)$ and $\Tr(\rho_x) \circ \tau_{1,x}=\Tr(\lambda_x)$.}
\[
\tau_{x,1}=\id \qquad\quad\text{and}\qquad\quad \tau_{1,x}=\id.
\]
The first axiom above is redundant: see Lemma~\ref{lem: Ponto-Shulman axiom redundant} below.
The second axiom is not satisfied by the trace functors we will consider (see Remark \ref{rem: rant No2}),
and we prefer not to include it in the general definition of a categorified trace.
\end{rem}

\begin{lem}[{\cite[Rem. 2.2.]{MR3250042}}]\label{lem: Ponto-Shulman axiom redundant}
Let $(\Tr,\tau)$ be a categorified trace. Then $\tau_{x,1}=\id_{\Tr(x)}$.
\end{lem}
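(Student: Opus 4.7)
The plan is to substitute $y = z = 1$ into the traciator axiom \eqref{eq: axiom for traciator} and exploit invertibility of $\tau_{x,1}$.

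Concretely, I would first set $y = z = 1_\cM$ in the axiom
\[
\tau_{x, y \otimes z} \,=\, \tau_{z \otimes x, y} \circ \tau_{x \otimes y, z}.
\]
Modulo the unitors of $\cM$ (which I suppress exactly as the statement of the axiom does), this reads
\[
\tau_{x, 1} \,=\, \tau_{x, 1} \circ \tau_{x, 1}.
\]
Since $\tau_{x,1} \colon \Tr(x \otimes 1) \to \Tr(1 \otimes x)$ is by definition a natural isomorphism, I would then compose both sides with its inverse to conclude $\tau_{x,1} = \id_{\Tr(x)}$ (after identifying $\Tr(x \otimes 1)$ with $\Tr(1 \otimes x)$ via the appropriate unitors).

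The only subtlety is bookkeeping: because we have suppressed associators and unitors, the displayed equality $\tau_{x,1}=\tau_{x,1}\circ\tau_{x,1}$ is not literally in $\End(\Tr(x))$. To make the argument rigorous I would unwind the coherence data explicitly using naturality of $\tau$ and functoriality of $\Tr$ applied to the triangle identity $\lambda_1 = \rho_1$ (and the standard consequence $\lambda_{1\otimes x}=\id\otimes \lambda_x$, etc.), rewriting the specialized axiom as an equation between two composites $\Tr(x)\to\Tr(x)$ in which the left-hand side is $\tau_{x,1}$ (after conjugation by $\Tr$ of unitors) and the right-hand side is its square. Inverting then yields the claim. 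This coherence bookkeeping is the only step requiring care; the conceptual content is simply idempotency of an isomorphism.
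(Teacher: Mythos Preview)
Your proof is correct and essentially identical to the paper's: both set $y=z=1$ in the traciator axiom to obtain $\tau_{x,1}=\tau_{x,1}\circ\tau_{x,1}$ (suppressing unitors and associators) and then invoke invertibility.
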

\begin{proof}
Omitting unitors and associators, we have $\tau_{x,1}=\tau_{x,1\otimes 1}=\tau_{1\otimes x, 1}\circ \tau_{x\otimes1, 1}=\tau_{x, 1}\circ \tau_{x,1}$, where the second equality holds by Equation \eqref{eq: axiom for traciator}.
As $\tau_{x,1}$ is invertible, it follows that $\tau_{x,1}=\id_{\Tr(x)}$.
\end{proof}

In Section \ref{sec:  Tensor categories} we have reviewed the classical string diagram notation for objects and morphisms in tensor categories.
As explained in \cite{MR3095324}, a good graphical notation for categorified traces is to take the strands that represent the objects of $\cM$, and place them on the surface of a cylinder.
For example, if we denote $x\in \mathcal{M}$ as a strand on a horizontal plane, then we represent $\Tr(x)\in\cC$ as the same strand on the surface of a cylinder:
$$
\begin{tikzpicture}[baseline=-.1cm, scale=.7]
	\plane{(0,-1)}{3}{2}
	\draw[thick, xString] (-1,0) -- (2,0);
\node at (1.2,.2) {$\scriptstyle x$};
\end{tikzpicture}
\,\,\,
\longmapsto
\quad
\Tr(x)=
\begin{tikzpicture}[baseline=-.1cm]

	%cylinder
	\draw[thick] (-.3,-1+.2) -- (-.3,1);
	\draw[thick] (.3,-1+.2) -- (.3,1);
	\draw[thick] (0,1) ellipse (.3cm and .1cm);
	\halfDottedEllipse{(-.3,-1+.2)}{.3}{.1}
	
	\draw[thick, xString] (0,-1.1+.2) -- (0,.9);
	\node at (-.13,.6) {$\scriptstyle x$};
\end{tikzpicture}
$$
Morphisms $f\in \cM(x, y)$ are denoted by coupons in the horizontal plane, and we represent $\Tr(f):\Tr(x)\to \Tr(y)$ as the same coupon, but now on a cylinder:
$$
\begin{tikzpicture}[baseline=-.1cm, scale=.7]
	\plane{(0,-1)}{3}{2}
	\draw[thick, xString] (-1,0) -- (.5,0);
	\draw[thick, yString] (.5,0) -- (2,0);
	\Mbox{(.4,-.3)}{1}{.6}{$f$}
	\node at (2.35,0) {$\scriptstyle y$};
	\node at (-1.35,0) {$\scriptstyle x$};
\end{tikzpicture}
%\in \cM(x,y)
\,\,\,\longmapsto
\quad
\Tr(f)
=
\begin{tikzpicture}[baseline=-.1cm]

	%cylinder
	\draw[thick] (-.3,-1) -- (-.3,1);
	\draw[thick] (.3,-1) -- (.3,1);
	\draw[thick] (0,1) ellipse (.3cm and .1cm);
	\halfDottedEllipse{(-.3,-1)}{.3}{.1}
	
	\draw[thick, xString] (0,-1.1) -- (0,0);
	\draw[thick, yString] (0,.9) -- (0,0);
	\nbox{unshaded}{(0,0)}{.3}{-.1}{-.1}{$f$}
	\node at (-.13,.68) {$\scriptstyle y$};
	\node at (-.13,-.77) {$\scriptstyle x$};
\end{tikzpicture}
$$

The traciators $\tau^\pm : \Tr_\cC(x\otimes y) \to \Tr_\cC(y\otimes x)$ are depicted
\[
\tau^+ =
\begin{tikzpicture}[baseline=-.1cm]

	%cylinder
	\draw[thick] (-.3,-1) -- (-.3,1);
	\draw[thick] (.3,-1) -- (.3,1);
	\draw[thick] (0,1) ellipse (.3cm and .1cm);
	\halfDottedEllipse{(-.3,-1)}{.3}{.1}
	
	\draw[thick, xString] (-.1,-1.07) .. controls ++(90:.8cm) and ++(270:.8cm) .. (.1,.92);		
	\draw[thick, yString] (.1,-1.07) .. controls ++(90:.2cm) and ++(225:.2cm) .. (.3,-.2);		
	\draw[thick, yString] (-.1,.92) .. controls ++(270:.2cm) and ++(45:.2cm) .. (-.3,.2);
	\draw[thick, yString, dotted] (-.3,.2) -- (.3,-.2);	
\end{tikzpicture}
\qquad\text{and}\quad
\tau^- =
\begin{tikzpicture}[baseline=-.1cm, xscale=-1]

	%clyinder
	\draw[thick] (-.3,-1) -- (-.3,1);
	\draw[thick] (.3,-1) -- (.3,1);
	\draw[thick] (0,1) ellipse (.3cm and .1cm);
	\halfDottedEllipse{(-.3,-1)}{.3}{.1}
	
	\draw[thick, yString] (-.1,-1.07) .. controls ++(90:.8cm) and ++(270:.8cm) .. (.1,.92);		
	\draw[thick, xString] (.1,-1.07) .. controls ++(90:.2cm) and ++(225:.2cm) .. (.3,-.2);		
	\draw[thick, xString] (-.1,.92) .. controls ++(270:.2cm) and ++(45:.2cm) .. (-.3,.2);
	\draw[thick, xString, dotted] (-.3,.2) -- (.3,-.2);	
\end{tikzpicture}\,\,.
\]
The naturality of the traciator and the traciator axiom \eqref{eq: axiom for traciator} are given in diagrams by
%This is Lemmas \ref{lem:TraciatorComposition}-\ref{lem:MoveThroughTraciator}:
$$
\begin{tikzpicture}[baseline=-.1cm]

	%cylinder
	\draw[thick] (-.5,-1) -- (-.5,1);
	\draw[thick] (.5,-1) -- (.5,1);
	\draw[thick] (0,1) ellipse (.5cm and .2cm);
	\halfDottedEllipse{(-.5,-1)}{.5}{.2}
	\halfDottedEllipse{(-.5,0)}{.5}{.2}
	
	\draw[thick, xString] (-.2,-1.17) -- (-.2,-.7);		
	\draw[thick, wString] (-.2,-.7) -- (-.2,-.17) .. controls ++(90:.6cm) and ++(270:.6cm) .. (.2,.82);		
	\draw[thick, yString] (.2,-1.17) -- (.2,-.7);
	\draw[thick, zString] (.2,-.7) -- (.2,-.17) .. controls ++(90:.3cm) and ++(225:.2cm) .. (.5,.3);		
	\draw[thick, zString] (-.2,.82) .. controls ++(270:.3cm) and ++(45:.2cm) .. (-.5,.4);
	\draw[thick, zString, dotted] (.5,.3) -- (-.5,.4);	

	\nbox{unshaded}{(-.2,-.7)}{.3}{-.15}{-.15}{$f$}
	\nbox{unshaded}{(.2,-.7)}{.3}{-.15}{-.15}{$g$}
\end{tikzpicture}
\,\,\,=\,\,\,
\begin{tikzpicture}[baseline=-.1cm]

	%cylinder
	\draw[thick] (-.5,-1) -- (-.5,1);
	\draw[thick] (.5,-1) -- (.5,1);
	\draw[thick] (0,1) ellipse (.5cm and .2cm);
	\halfDottedEllipse{(-.5,-1)}{.5}{.2}
	\halfDottedEllipse{(-.5,0)}{.5}{.2}
	
	\draw[thick, xString] (-.2,-1.17) .. controls ++(90:.6cm) and ++(270:.6cm) .. (.2,-.17) -- (.2,.3);		
	\draw[thick, wString] (.2,.3) -- (.2,.83);		
	\draw[thick, yString] (.2,-1.17) .. controls ++(90:.3cm) and ++(225:.2cm) .. (.5,-.7);		
	\draw[thick, yString] (-.2,.3) -- (-.2,-.17) .. controls ++(270:.3cm) and ++(45:.2cm) .. (-.5,-.6);
	\draw[thick, zString] (-.2,.83) -- (-.2,.3);
	\draw[thick, yString, dotted] (-.5,-.6) -- (.5,-.7);	

	\nbox{unshaded}{(-.2,.3)}{.3}{-.15}{-.15}{$g$}
	\nbox{unshaded}{(.2,.3)}{.3}{-.15}{-.15}{$f$}
\end{tikzpicture}
\qquad \text{and} \qquad
\begin{tikzpicture}[baseline=-.1cm, xscale=1.2]

	%cylinder
	\draw[thick] (-.3,-1) -- (-.3,1);
	\draw[thick] (.3,-1) -- (.3,1);
	\draw[thick] (0,1) ellipse (.3cm and .1cm);
	\halfDottedEllipse{(-.3,-1)}{.3}{.1}
	
	\draw[thick, xString] (-.15,-1.09) .. controls ++(90:.8cm) and ++(270:.8cm) .. (.15,.91);		

	\draw[thick, yString] (0,-1.1) .. controls ++(90:.2cm) and ++(220:.28cm) .. (.3,-.05);		
	\draw[thick, yString] (-.15,.9) .. controls ++(270:.2cm) and ++(45:.1cm) .. (-.3,.3-.05);
	\draw[thick, yString, dotted] (-.3,.3-.05) -- (.3,0-.05);	
	\draw[thick, zString] (0,.9) .. controls ++(270:.2cm) and ++(40:.28cm) .. (-.3,0);
	\draw[thick, zString] (.15,-1.09) .. controls ++(90:.2cm) and ++(225:.1cm) .. (.3,-.3);		
	\draw[thick, zString, dotted] (-.3,0) -- (.3,-.3);	

\end{tikzpicture}
\,\,\,=\,\,\,
\begin{tikzpicture}[baseline=-.1cm, xscale=1.2]

	%cylinder
	\draw[thick] (-.3,-1) -- (-.3,1);
	\draw[thick] (.3,-1) -- (.3,1);
	\draw[thick] (0,1) ellipse (.3cm and .1cm);
	\halfDottedEllipse{(-.3,-1)}{.3}{.1}
	\halfDottedEllipse{(-.3,0)}{.3}{.1}
		
	\draw[thick, xString] (-.15,-1.09) .. controls ++(90:.4cm) and ++(270:.4cm) .. (0,-.1);		
	\draw[thick, yString] (0,-1.1) .. controls ++(90:.4cm) and ++(270:.4cm) .. (.15,-.08);	
	\draw[thick, xString] (0,-.1) .. controls ++(90:.4cm) and ++(270:.4cm) .. (.15,.91);	
	\draw[thick, zString] (-.15,-.08) .. controls ++(90:.4cm) and ++(270:.4cm) .. (0,.9);

	\draw[thick, zString] (-.15,-.08) .. controls ++(270:.2cm) and ++(45:.1cm) .. (-.3,-.52);
	\draw[thick, zString] (.15,-1.1) .. controls ++(90:.2cm) and ++(225:.1cm) .. (.3,-.62);		
	\draw[thick, zString, dotted] (-.3,-.52) -- (.3,-.62);	
	\draw[thick, yString] (.15,-.08) .. controls ++(90:.2cm) and ++(225:.1cm) .. (.3,.38);
	\draw[thick, yString] (-.15,.92) .. controls ++(270:.2cm) and ++(45:.1cm) .. (-.3,.48);
	\draw[thick, yString, dotted] (-.3,.48) -- (.3,.38);	

\end{tikzpicture}\,.
$$

%%%%%%%%%%%%%%%%%%%%%%%%%%%%%%%%%%%%%%%%%%%%%%%%%%%%%%%%%%

To illustrate the graphical notation, we prove a general basic property of categorified traces.

\begin{lem}\label{lem:ZigZagWithTraciator}
Let $\Tr: \cM \to \cC$ be a categorified trace. Then for any dualizable object $x\in \cM$, the following equality holds:
\[
\tau^+_{y,x^*}=\Tr(\id_{x^*\otimes y}\otimes \ev_x)\circ \tau^-_{x,x^*\otimes y \otimes x^*}\circ \Tr(\coev_x\otimes \id_{y\otimes x^*}).
\]
In diagrams:
$$
\begin{tikzpicture}[xscale=1.2, baseline=-.1cm]

	%cylinder
	\draw[thick] (-.5,-1) -- (-.5,1);
	\draw[thick] (.5,-1) -- (.5,1);
	\draw[thick] (0,1) ellipse (.5cm and .2cm);
	\halfDottedEllipse{(-.5,-1)}{.5}{.2}
	\halfDottedEllipse{(-.5,-.5)}{.5}{.2}
	\halfDottedEllipse{(-.5,.5)}{.5}{.2}
	
	\draw[thick, xString] (.1,-1.2) .. controls ++(90:.4cm) and ++(270:.4cm) .. (.2,.45) .. controls ++(90:.2cm) and ++(90:.2cm) .. (.35,.45) .. controls ++(270:.1cm) and ++(135:.1cm) .. (.5,.1);	
	\draw[thick, xString] (-.1,.8) .. controls ++(270:.4cm) and ++(90:.4cm) .. (-.2,-.75) .. controls ++(270:.2cm) and ++(270:.2cm) .. (-.35,-.75) .. controls ++(90:.1cm) and ++(-45:.1cm) .. (-.5,-.3);
	\draw[thick, xString, dotted] (-.5,-.3) -- (.5,.1);	

\draw[thick, yString] (-.1,-1.2)  .. controls ++(90:.2) and ++(270:.2) ..  (.1,.8);
	
	\node at (.15,-1.34) {$\scriptstyle x^*$};
	\node at (-.1,-1.4) {$\scriptstyle y$};
\end{tikzpicture}
\,\,=\,\,
\begin{tikzpicture}[xscale=1.2, baseline=-.1cm]

	%cylinder
	\draw[thick] (-.3,-1) -- (-.3,1);
	\draw[thick] (.3,-1) -- (.3,1);
	\draw[thick] (0,1) ellipse (.3cm and .1cm);
	\halfDottedEllipse{(-.3,-1)}{.3}{.1}
	
	\draw[thick, xString] (.1,-1.1) .. controls ++(85:.2cm) and ++(225:.2cm) .. (.3,-.2);		
	\draw[thick, xString] (-.1,.9) .. controls ++(265:.2cm) and ++(45:.2cm) .. (-.3,.2);
	\draw[thick, xString, dotted] (-.3,.2) -- (.3,-.2);	

\draw[thick, yString] (-.1,-1.1)  .. controls ++(90:.2) and ++(270:.2) ..  (.1,.9);
	
	\node at (.15,-1.24) {$\scriptstyle x^*$};
	\node at (-.1,-1.3) {$\scriptstyle y$};
\end{tikzpicture}
$$
\end{lem}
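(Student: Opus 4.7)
The plan is to unfold the right hand side by first applying the traciator axiom, then two instances of naturality, and finally collapsing via a zig-zag identity in $\cM$. Throughout I will suppress associators and unitors.

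First, I would use the traciator axiom \eqref{eq: axiom for traciator} with $a=x^*\otimes y$, $b=x^*$, $c=x$ to write
\[
\tau^+_{x^*\otimes y,\,x^*\otimes x} = \tau^+_{x\otimes x^*\otimes y,\,x^*}\circ \tau^+_{x^*\otimes y\otimes x^*,\,x},
\]
and therefore, noting $\tau^-_{x,\,x^*\otimes y\otimes x^*}=(\tau^+_{x^*\otimes y\otimes x^*,x})^{-1}$,
\[
\tau^-_{x,\,x^*\otimes y\otimes x^*} = (\tau^+_{x^*\otimes y,\,x^*\otimes x})^{-1}\circ \tau^+_{x\otimes x^*\otimes y,\,x^*}.
\]
Substituting this into the right hand side of the claimed identity converts it into a composition of three $\tau^+$'s (two in a quotient), two $\Tr$ images of $\coev$, $\ev$, and the original traciator, so the problem reduces to two naturality shuffles.

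The second step is naturality of $\tau^+$ in its first slot applied to $\coev_x\otimes\id_y:y\to x\otimes x^*\otimes y$:
\[
\tau^+_{x\otimes x^*\otimes y,\,x^*}\circ \Tr(\coev_x\otimes\id_{y\otimes x^*}) = \Tr(\id_{x^*}\otimes\coev_x\otimes\id_y)\circ \tau^+_{y,\,x^*}.
\]
The third step is naturality of $\tau^+$ in its second slot applied to $\ev_x: x^*\otimes x\to 1$, combined with Lemma~\ref{lem: Ponto-Shulman axiom redundant} which gives $\tau^+_{x^*\otimes y,\,1}=\id$:
\[
\Tr(\id_{x^*\otimes y}\otimes\ev_x) = \Tr(\ev_x\otimes\id_{x^*\otimes y})\circ \tau^+_{x^*\otimes y,\,x^*\otimes x}.
\]
Composing with $(\tau^+_{x^*\otimes y,\,x^*\otimes x})^{-1}$ on the right yields $\Tr(\id_{x^*\otimes y}\otimes\ev_x)\circ(\tau^+_{x^*\otimes y,\,x^*\otimes x})^{-1}=\Tr(\ev_x\otimes\id_{x^*\otimes y})$, so all traciators except the desired $\tau^+_{y,x^*}$ cancel.

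Finally, the right hand side becomes
\[
\Tr\!\big((\ev_x\otimes\id_{x^*\otimes y})\circ(\id_{x^*}\otimes\coev_x\otimes\id_y)\big)\circ\tau^+_{y,x^*} = \Tr(\id_{x^*\otimes y})\circ\tau^+_{y,x^*} = \tau^+_{y,x^*},
\]
where the penultimate equality uses the zig-zag axiom $(\ev_x\otimes\id_{x^*})\circ(\id_{x^*}\otimes\coev_x)=\id_{x^*}$. The main bookkeeping obstacle is choosing the right decomposition for the traciator axiom so that the ``extra'' $x$ and $x^*$ created by $\coev$ end up in positions adjacent to a copy of $x^*$, respectively $x$, where $\ev_x$ can act; once the tensor factors $x^*\otimes x$ appear in the indices of $\tau^+$, the two naturality moves and the zig-zag do the rest.
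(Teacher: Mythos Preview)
Your proof is correct and uses essentially the same ingredients as the paper's: the traciator axiom, naturality of $\tau$, Lemma~\ref{lem: Ponto-Shulman axiom redundant}, and the zig-zag relation. The only organizational difference is that the paper first composes both sides with $\tau^-_{x^*,y}$ and then simplifies the resulting expression to the identity (using the \emph{inverse} traciator axiom to merge two $\tau^-$'s), whereas you work directly on the right hand side, using the \emph{forward} traciator axiom to split $\tau^-_{x,x^*\otimes y\otimes x^*}$ into two $\tau^+$'s that then cancel against the $\Tr(\ev)$ and $\Tr(\coev)$ factors via naturality.
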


\begin{proof}
%We prove this identity after composing with $\tau^-_{x^*,y}$ (which is the inverse of $\tau^+_{y,x^*}$ by Lemma \ref{lem:TraciatorsComposeToIdentity}):
We prove this identity after composing with the inverse traciator $\tau^-_{x^*,y}$:
%After composing, we use Lemmas \ref{lem:TraciatorComposition}, and \ref{lem:MoveThroughTraciator} again to get the first three equalities below:
$$
\begin{tikzpicture}[xscale=1.2, baseline=.3cm]

	%cylinder
	\draw[thick] (-.5,-1) -- (-.5,2);
	\draw[thick] (.5,-1) -- (.5,2);
	\halfDottedEllipse{(-.5,-1)}{.5}{.2}
	\halfDottedEllipse{(-.5,-.5)}{.5}{.2}
	\halfDottedEllipse{(-.5,.5)}{.5}{.2}
	\halfDottedEllipse{(-.5,1)}{.5}{.2}
	\draw[thick] (0,2) ellipse (.5cm and .2cm);
	
	\draw[thick, xString] (.1,-1.2) .. controls ++(90:.4cm) and ++(270:.4cm) .. (.2,.45) .. controls ++(90:.2cm) and ++(90:.2cm) .. (.35,.45) .. controls ++(270:.1cm) and ++(135:.1cm) .. (.5,.1);	
	\draw[thick, xString] (-.1,.8) .. controls ++(270:.4cm) and ++(90:.4cm) .. (-.2,-.75) .. controls ++(270:.2cm) and ++(270:.2cm) .. (-.35,-.75) .. controls ++(90:.1cm) and ++(-45:.1cm) .. (-.5,-.3);
	\draw[thick, xString, dotted] (-.5,-.3) -- (.5,.1);	
	\draw[thick, xString] (-.1,.8) .. controls ++(90:.2cm) and ++(-45:.2cm) .. (-.5,1.25);		
	\draw[thick, xString] (.1,1.8) .. controls ++(270:.2cm) and ++(135:.2cm) .. (.5,1.45);
	\draw[thick, xString, dotted] (-.5,1.25) -- (.5,1.45);	

\draw[thick, yString] (-.1,1.8) .. controls ++(270:.4cm) and ++(90:.4cm) .. (0.07,.8) .. controls ++(270:.4cm) and ++(90:.4cm) .. (-.065,-1.2);

\end{tikzpicture}
\,\,=\,\,
\begin{tikzpicture}[xscale=1.2, baseline=.3cm]

	%cylinder
	\draw[thick] (-.5,-1) -- (-.5,2);
	\draw[thick] (.5,-1) -- (.5,2);
	\halfDottedEllipse{(-.5,-1)}{.5}{.2}
	\halfDottedEllipse{(-.5,-.5)}{.5}{.2}
	\halfDottedEllipse{(-.5,.5)}{.5}{.2}
	\halfDottedEllipse{(-.5,1.5)}{.5}{.2}
	\draw[thick] (0,2) ellipse (.5cm and .2cm);
	
	\draw[thick, xString] (.1,-1.2) .. controls ++(90:.4cm) and ++(270:.4cm) .. (.1,.45) .. controls ++(90:.4cm) and ++(270:.4cm) ..  (-.08,1.35) .. controls ++(90:.2cm) and ++(90:.2cm) .. (.13,1.35) .. controls ++(270:.4cm) and ++(90:.4cm) .. (.3,.45) .. controls ++(270:.1cm) and ++(135:.1cm) .. (.5,0);	
	\draw[thick, xString] (-.2,.45) .. controls ++(270:.4cm) and ++(90:.4cm) .. (-.2,-.75) .. controls ++(270:.2cm) and ++(270:.2cm) .. (-.35,-.75) .. controls ++(90:.1cm) and ++(-45:.1cm) .. (-.5,-.2);
	\draw[thick, xString, dotted] (-.5,-.2) -- (.5,0);	
	\draw[thick, xString] (-.2,.45) .. controls ++(90:.2cm) and ++(-45:.2cm) .. (-.5,.75);		
	\draw[thick, xString] (.1,1.8) .. controls ++(270:.2cm) and ++(90:.2cm) .. (.3,1.3) .. controls ++(270:.2cm) and ++(135:.2cm) .. (.5,.95);
	\draw[thick, xString, dotted] (-.5,.75) -- (.5,.95);	

\draw[thick, yString] (-.1,1.8) .. controls ++(270:.2cm) and ++(90:.2cm) .. (-.25,1.3) .. controls ++(270:.4cm) and ++(90:.4cm) .. (-.055,.3) .. controls ++(270:.4cm) and ++(90:.4cm) .. (-.065,-1.2);

\end{tikzpicture}
\,\,=\,\,
\begin{tikzpicture}[xscale=1.2, baseline=.3cm]

	%cylinder
	\draw[thick] (-.5,-1) -- (-.5,2);
	\draw[thick] (.5,-1) -- (.5,2);
	\halfDottedEllipse{(-.5,-1)}{.5}{.2}
	\halfDottedEllipse{(-.5,-.5)}{.5}{.2}
%	\halfDottedEllipse{(-.5,.5)}{.5}{.2}
	\halfDottedEllipse{(-.5,1.5)}{.5}{.2}
	\draw[thick] (0,2) ellipse (.5cm and .2cm);
	
	\draw[thick, xString] (.1,-1.2) .. controls ++(90:.4cm) and ++(270:.4cm) .. (-.08,1.35) .. controls ++(90:.2cm) and ++(90:.2cm) .. (.13,1.35) .. controls ++(270:.4cm) and ++(140:.1cm) .. (.5,.6);	
	\draw[thick, xString] (-.5,.2) .. controls ++(-40:.25cm) and ++(90:.4cm) .. (-.2,-.75) .. controls ++(270:.2cm) and ++(270:.2cm) .. (-.35,-.75) .. controls ++(90:.1cm) and ++(-45:.15cm) .. (-.5,0);
	\draw[thick, xString] (.1,1.8) .. controls ++(270:.2cm) and ++(90:.2cm) .. (.3,1.3) .. controls ++(270:.2cm) and ++(135:.2cm) .. (.5,.8);
\draw[thick, yString] (-.1,1.8) .. controls ++(270:.2cm) and ++(90:.2cm) .. (-.25,1.3) .. controls ++(270:.4cm) and ++(90:.4cm) .. (-.065,-1.2);
 	
	\draw[thick, xString, dotted] (-.5,.0) -- (.5,.6);	
	\draw[thick, xString, dotted] (-.5,.2) -- (.5,.8);	
\end{tikzpicture}
\,\,=\,\,
\begin{tikzpicture}[xscale=1.2, baseline=.3cm]

	%cylinder
	\draw[thick] (-.5,-1) -- (-.5,2);
	\draw[thick] (.5,-1) -- (.5,2);
	\halfDottedEllipse{(-.5,-1)}{.5}{.2}
	\halfDottedEllipse{(-.5,.5)}{.5}{.2}
	\draw[thick] (0,2) ellipse (.5cm and .2cm);
	
	\draw[thick, xString] (.1,-1.2) .. controls ++(90:.4cm) and ++(270:.4cm) .. (-.05,.35) .. controls ++(90:.4cm) and ++(90:.4cm) .. (.12,.3) .. controls ++(270:.4cm) and ++(270:.4cm) .. (.2+.1,.35) .. controls ++(90:.4cm) and ++(270:.4cm) .. (.1,1.8);	
	\draw[thick, yString] (-.1,-1.2) .. controls ++(90:.4cm) and ++(270:.4cm) .. (-.25,.3) .. controls ++(90:.4cm) and ++(270:.4cm) .. (-.1,1.8);
\end{tikzpicture}
\,\,=\,\,
\begin{tikzpicture}[xscale=1.2, baseline=-.1cm]

	%cylinder
	\draw[thick] (-.3,-1) -- (-.3,1);
	\draw[thick] (.3,-1) -- (.3,1);
	\draw[thick] (0,1) ellipse (.3cm and .1cm);
	\halfDottedEllipse{(-.3,-1)}{.3}{.1}
	
	\draw[thick, xString] (.1,-1.1) -- (.1,.9);
	\node at (.16,-1.28) {$\scriptstyle x^*$};

	\draw[thick, yString] (-.1,-1.1) -- (-.1,.9);
	\node at (-.1,-1.34) {$\scriptstyle y$};
\end{tikzpicture}
$$
The first equality holds by naturality of $\tau^-$.
The second one is the analog of \eqref{eq: axiom for traciator} for inverse traciators.
Finally, the third equality follows from naturality, along with Lemma~\ref{lem: Ponto-Shulman axiom redundant}.
\end{proof}

The main focus of this paper is the construction of a particular class of examples of categorical traces,
which have the further property that the cylinders can branch and braid (see Figure \ref{fig: big figure}). To obtain such a categorified trace $\Tr:\cM \to \cC$, we must assume that $\cM$ is equipped with the structure of a pivotal module tensor category over $\cC$, a notion we describe below.

\subsection{Module tensor categories}

If $\cM$ and $\cC$ are tensor categories, a functor $\Phi:\cC \to \cM$ equips $\cM$ with the structure of a left module category via $c \cdot m := \Phi(c) \otimes m$. If $\cC$ is braided, then $\Phi$ actually equips $\cM$ with \emph{two} distinct left $\cC$-module structures (one coming from the left action above, and one coming from the right action, twisted by the braiding). These two module structures are equivalent precisely when $\Phi$ is given the structure of a \emph{braided central functor}:
%\cite[Def.1]{MR2074589} \cite[Def 2.4]{MR3039775}.
%\cite{MR2506324}\cite[\S2.1]{MR3250042}.
\begin{defn}\label{def: central functor}
Let $\cC$ be a category and $\cM$ a tensor category.
A \emph{central functor}\footnote{
The reader is cautioned that our usage of the term `central functor' agrees with \cite{MR2506324,MR3250042}, but differs from \cite{MR2074589,MR3039775} as these require that a central functor be braided.
} 
from $\cC$ to $\cM$ is a functor $\Phi: \cC \to \cM$ equipped with a factorisation
\[
\Phi\,:\,\,\cC\xrightarrow{\,\,\,\Phi^{\scriptscriptstyle \cZ}\,\,\,} \cZ(\cM)\xrightarrow{\,\,\,\,F\,\,\,\,} \cM
\]
where $F$ is the forgetful functor \cite{MR2506324}\cite[\S2.1]{MR3250042}. When $\cC$ is monoidal, a central functor $\Phi: \cC \to \cM$ is called monoidal if $\Phi^\ssZ$ is monoidal.
When $\cC$ is braided, a central functor is called braided if the functor $\Phi^\ssZ$ is braided.
\end{defn}

Equivalently, a functor $\Phi$ is a central functor if it is equipped with the additional data of half-braidings $e_{\Phi(c), x}:\Phi(c)\otimes x \to x \otimes \Phi(c)$, natural in $c$ and $x$, satisfying
$$
e_{\Phi(c), x \otimes y} = (\id_x \otimes e_{\Phi(c),y}) \circ (e_{\Phi(c),x} \otimes \id_y) \hspace{2.2cm} \text{(Central functor).}\hspace{.4cm}
$$
To go from a central functor to a monoidal central functor, one imposes the additional axiom
$$
e_{\Phi(c \otimes d),x} = (e_{\Phi(c),x} \otimes \id_{\Phi(d)}) \circ (\id_{\Phi(c)} \otimes e_{\Phi(d),x})\hspace{1cm} \text{(Monoidal central functor).}
$$
Finally, a braided central functor satisfies the further axiom 
$$
\hspace{2.1cm}e_{\Phi(c),\Phi(d)} = \Phi(\beta_{c,d})\hspace{3.7cm} \text{(Braided central functor).}
$$ 
Note that in formulating the above axioms we have omitted the associators and the isomorphisms $\Phi(x) \otimes \Phi(y) \to \Phi(x \otimes y)$, for readability. 

Let $\cC$ be a braided tensor category.
A braided central functor $\Phi:\cC \to \cM$ makes the tensor category $\cM$ into a $\cC$-module category in way that is compatible with the tensor structure of $\cM$ and the braiding of $\cC$.
We call such an $\cM$ a \emph{module tensor category} over $\cC$:

\begin{defn}\label{def: module tensor category}
If $\cM$ is a tensor category and $\cC$ is a braided category, then $\cM$ is a \emph{module tensor category} over $\cC$ if it is equipped with a braided central functor $\Phi: \cC \to \cM$.

If $\cM$, $\cC$ and $\Phi^{\scriptscriptstyle \cZ}$ are pivotal, then we say that $\cM$ is a pivotal module tensor category.
\end{defn}

As module tensor categories might be an unfamiliar concept, we provide a couple of alternative viewpoints:\medskip

\label{rem:WhyTensorModuleCategories?}
A tensor category can be thought of as a $2$-category with a single object.
Similarly, a pair consisting of a tensor category $\cC$ and module category $\cM$ can be encoded by a $2$-category with exactly two objects $1$ and $*$, and only non-trivial homs given by $\cC:=\hom(1,1)$ and $\cM:=\hom(1,*)$.

On the other hand, a braided tensor category can be thought of as a 3-category with one object and one 1-morphism \cite[`Periodic Table', Fig. 21]{MR1355899}.
The pair consisting of a braided category $\cC$ and module tensor category $\cM$ can be encoded by a $3$-category with exactly two objects $1$ and $*$,
and with only non-trivial homs given by the $2$-categories $\hom(1,1)$ and $\hom(1,*)$, which are furthermore required to have only one object.
We recover the braided category and its module tensor category as $\cC=\hom_{\hom(1,1)}(1_1,1_1)$ and $\cM=\hom_{\hom(1,*)}(\cdot,\cdot)$, respectively, where $\cdot:1\to *$ is the unique morphism.
\medskip

Alternatively, for those who like to think of a braided tensor category as a category which is an algebra over the little discs operad,
then a pair consisting of a braided tensor category $\cC$ and a module tensor category $\cM$ is the same thing as an algebra over Voronov's Swiss-cheese operad \cite{MR1718089}.

\subsection{The categorified trace associated to a module tensor category}

We now introduce the main construction of this paper, the categorified trace associated to a module tensor category.

\begin{defn}
\label{defn:AssociatedCategorifiedTrace}
Let $\cC$ be a braided pivotal category, and let $\cM$ be a pivotal module tensor category.
The \emph{associated categorified trace} $\Tr_\cC:\cM \to \cC$ is the right adjoint of the action functor $\Phi$.
\end{defn} 

The existence of a right adjoint is a mild assumption, which can be easily checked in many circumstances
(see, e.g., \cite[Cor. 1.9]{1406.4204}):

\begin{lem}
\label{lem:AdjointExists}
If $\cC$ and $\cM$ are semisimple (linear over some field $k$) and $\cC$ has finitely many types of simple objects,
then any linear functor $F:\cC\to \cM$ admits a right adjoint.
\end{lem}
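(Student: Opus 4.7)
My plan is to construct the right adjoint $G\colon \cM\to\cC$ explicitly, exploiting the semisimple structure of $\cC$ together with the standing finite-dimensionality of hom spaces. First I would fix representatives $c_1,\dots,c_n$ of the isomorphism classes of simple objects in $\cC$, and set $D_i := \cC(c_i,c_i)$; by Schur's lemma each $D_i$ is a finite-dimensional division algebra over $k$. Semisimplicity gives a natural decomposition $c\cong\bigoplus_i \cC(c_i,c)\otimes_{D_i} c_i$ of every object of $\cC$, so a $k$-linear functor out of $\cC$ is determined by its values on the $c_i$ together with the $D_i$-actions.

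For each $m\in\cM$, I would set $V_i(m):=\cM(F(c_i),m)$; this is a finite-dimensional right $D_i$-module via precomposition with $F$ applied to $D_i\subset \cM(F(c_i),F(c_i))$. The proposed right adjoint is then
\[
G(m)\;:=\;\bigoplus_{i=1}^n V_i(m)\otimes_{D_i} c_i\;\in\;\cC,
\]
with $G(f):=\bigoplus_i\bigl(\cM(F(c_i),f)\otimes \id_{c_i}\bigr)$ on a morphism $f\colon m\to m'$; this is manifestly functorial.

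The next step is to verify the adjunction $\cC(c,G(m))\cong\cM(F(c),m)$, natural in both variables. By $k$-linearity it suffices to check this when $c=c_j$ is simple, and in that case
\[
\cC\!\left(c_j,\,\bigoplus_i V_i(m)\otimes_{D_i} c_i\right)\;\cong\; V_j(m)\otimes_{D_j} D_j \;\cong\; V_j(m) \;=\;\cM(F(c_j),m),
\]
since $\cC(c_j,c_i)=0$ for $i\ne j$ and $\cC(c_j,c_j)=D_j$. Naturality in $c$ extends to all of $\cC$ by the decomposition recalled above, and naturality in $m$ is immediate from the definition of $G(f)$. The triangle identities for the resulting unit and counit then follow automatically from the universal property built into this computation.

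The only real subtlety I anticipate is the bookkeeping of the division algebras $D_i$ when $k$ is not algebraically closed; when $k$ is algebraically closed each $D_i\cong k$ and the formula simplifies to $G(m)=\bigoplus_i \cM(F(c_i),m)\otimes_k c_i$, literally a direct sum of $\dim_k \cM(F(c_i),m)$ copies of $c_i$. Beyond that, no substantial obstacle is expected.
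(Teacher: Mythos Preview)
Your proposal is correct and follows essentially the same strategy as the paper: build the right adjoint explicitly from the semisimple decomposition of $\cC$ into its finitely many simples. The paper's version packages this as a matrix of hom-spaces $A_{ij}=\cM(m_j,F(c_i))$ indexed by simples of \emph{both} categories, with the adjoint read off from the ``conjugate transpose'' $A^*_{ji}=\Hom_k(A_{ij},k)$ (which incidentally shows it is also a left adjoint); you instead work only with simples of $\cC$ and define $G(m)$ intrinsically, and your bookkeeping of the division algebras $D_i=\End(c_i)$ is more careful than the paper's sketch, which tacitly treats these as $k$.
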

\begin{proof}
Let $\{c_i\}$ be a basis of $\cC$ (representatives of the isomorphism classes of simple objects), and let $\{m_j\}$ be a basis of $\cM$.
Let $A=(A_{ij})$ be the matrix of finite dimensional vector spaces given by $A_{ij}:=\cM(m_j,F(c_i))$.
Note that $A$ has only finitely many non-zero entries.

We have canonical isomorphisms $F(c_i) \cong \bigoplus_j A_{ij} \otimes m_j$, and 
the functor $F$ can be recovered
by the formula
\[\textstyle
F\big(\bigoplus_i V_i \otimes c_i\big) \cong \bigoplus_{i,j} V_i \otimes A_{ij} \otimes m_j.
\]
Let $A^*$ be the `conjugate transpose' matrix of vector spaces, given by $A^*_{ji}=\Hom_k(A_{ij},k)$.
It is then an easy exercise to check that the functor $G\big(\bigoplus_j W_j \otimes m_j\big) := \bigoplus_{i,j} W_j \otimes A_{ji}^* \otimes c_i$
is a right adjoint (also a left adjoint) of $F$.
\end{proof}

Alternatively one can define $\Tr_\cC := \underline{\Hom}(1_\cM,-)$, where $\underline{\Hom}$ is the internal hom for module categories \cite{MR1976459}.
By definition, $\underline{\Hom}(1_\cM,x)$ represents the functor $c\mapsto\Hom(c\cdot 1_\cM, x)$ and so, recalling that $\Phi(c)=c\cdot 1_\cM$, we indeed have the adjunction
$
\Hom(\Phi(c), x) \cong \Hom(c,\underline{\Hom}(1_\cM,x)).
$

In Section \ref{sec:Traciator} we will construct traciators for $\Tr_\cC$ and prove that they satisfy the axioms of a categorified trace (Lemmas \ref{lem:TraciatorComposition} and \ref{lem:MoveThroughTraciator}), a result which first appeared in \cite[Prop.\,5]{MR2506324} and \cite[Prop.\,2.5]{MR3250042}.

We illustrate our construction by a couple of examples.

\begin{ex}
Any pivotal category $\cM$ is naturally a pivotal module tensor category over $\Vec$.  For every $x\in \cM$ we have $\cC(1,\Tr_\cC(x))\cong \cM(\Phi(1_\cC),x)=\cM(1_\cM,x)$, the invariant vectors.
This means that we can naturally identify $\Tr_\cC(x)$ with the vector space $\cM(1_\cM,x)$.
\end{ex}

\begin{ex}\label{ex: the short version of the thing in the appendix}
Let $\cC$ be a braided pivotal tensor category,  let $a\in \cC$ be a commutative algebra object, 
and let $\cM = \Mod_\cC(a)$ be the category of $a$-modules in $\cC$.
Then $\cM$ is both a tensor category \cite{MR1315904}, and a $\cC$-module category \cite[\S1]{MR1936496}.
Moreover, it is a $\cC$-module tensor category, as noted in \cite[\S3.4]{MR3039775}. The functor
$\Phi:\cC\to \cM$ is the \emph{free module} functor $\Phi(x):=a\otimes x$, and the half-braidings are given by
$$
e_{\Phi(x),y}:\Phi(x)\otimes_a y\cong x\otimes y\xrightarrow{\,\,\,\beta_{x,y}\,\,\,}y\otimes x\cong y\otimes_a\Phi(x).
$$

The associated categorified trace $\Tr_\cC:\Mod_\cC(a) \to \cC$ is given by the forgetful functor, so that $\Tr_\cC(x)=x$ for all $x \in \cM$.
Indeed, using the adjunction, we have 
$$
\cC(c, \Tr_\cC(x)) \cong \cM(\Phi(c), x) = \cM(a\otimes c, x) := \cC_{a}(a\otimes c, x) \cong \cC(c,x),
$$
where $\cC_{a}(-,-)$ denotes the $a$-module maps.
%
%As explained in \cite[Figure 4]{MR1936496}, the final isomorphism is given via the maps
%\begin{align*}
%\cC_{a-}(a\otimes c,  x)
%\ni
%\begin{tikzpicture}[baseline=-.1cm]
%	\draw[thick, orange] (-.15,-.7) -- (-.15,0);
%	\draw[thick, red] (0,.7) -- (0,0);
%	\draw[thick, blue] (.15,-.7) -- (.15,0);
%	\roundNbox{unshaded}{(0,0)}{.27}{0}{0}{$f$}
%\end{tikzpicture}
%&\longmapsto 
%\begin{tikzpicture}[baseline=-.1cm]
%	\filldraw[orange] (-.15,-.5) circle (.05cm);
%	\draw[thick, orange] (-.15,-.5) -- (-.15,0);
%	\draw[thick, red] (0,.7) -- (0,0);
%	\draw[thick, blue] (.15,-.7) -- (.15,0);
%	\roundNbox{unshaded}{(0,0)}{.27}{0}{0}{$f$}
%\end{tikzpicture}
%\in \cC(c, x)
%\text{ and }
%\\
%\cC(c, x)
%\ni
%\begin{tikzpicture}[baseline=-.1cm]
%	\draw[thick, red] (0,.7) -- (0,0);
%	\draw[thick, blue] (0,-.7) -- (0,0);
%	\roundNbox{unshaded}{(0,0)}{.27}{0}{0}{$g$}
%\end{tikzpicture}
%&\longmapsto 
%\begin{tikzpicture}[baseline=-.1cm]
%	\draw[thick, orange] (-.45,-.7) -- (-.45,0) .. controls ++(90:.3cm) and ++(225:.2cm) .. (0,.5) ;
%	\draw[thick, red] (0,.7) -- (0,0);
%	\draw[thick, blue] (0,-.7) -- (0,0);
%	\roundNbox{unshaded}{(0,0)}{.27}{0}{0}{$g$}
%\end{tikzpicture}
%\in \cC_{a-}(a\otimes c, x).
%\end{align*}

When $a$ is furthermore separable and with trivial twist, then $\Mod_\cC(a)$ is a pivotal module tensor category.
For more details on this example, and in particular for the proof that $\Phi^{\scriptscriptstyle \cZ}$ is a pivotal functor, we refer the reader to Appendix \ref{sec:Examples}.
\end{ex}

\begin{sub-ex}
\label{ex:trace of su2}
Take $\cC$ to be $SU(2)_k$ (the semi-simplification of the category of tilting modules for Lusztig's integral form of $U_q(\mathfrak{sl}(2))$, specialized at $q=\exp(\frac{2\pi i}{2(k+2)})$\footnote{We use the convention according to which $[2]_q=q+q^{-1}$.} -- see \cite{MR2286123} for details), which is an $A_{k+1}$ modular tensor category. 
The commutative algebra objects $a\in\cC$ are completely classified \cite{MR1907188,MR1936496}, and yield $A_n$, $D_{2n}$, $E_6$, and $E_8$ fusion categories, whose simple objects are classified by the nodes of the corresponding Dynkin diagrams.
%Conversely, each such fusion category arises from a commutative algebra in an appropriate $A_k$ category.
Combining Example~\ref{ex: the short version of the thing in the appendix} with the computation at the end of \cite[\S6]{MR1936496}, we get the following explicit description of the trace functor:

{\it Example:} $D_4$\,.\quad
This is a module tensor category over $SU(2)_4$ 
(with simple objects $\mathbf 1,\ldots, \mathbf 5$).
Its unit object is
$
1_\cM=\tikz[scale=.3]{\draw (0,0) -- (1,0) -- (2,0) (1,0) -- (1,1);
\filldraw[fill=black] (0,0) circle(.15);
\filldraw[fill=white] (1,0) circle(.15);
\filldraw[fill=white] (2,0) circle(.15);
\filldraw[fill=white] (1,1) circle(.15);
}
$\,, and
the adjacency matrix of the Dynkin diagram
encodes the operation of tensoring by
$
\Phi(\mathbf 2)=
\tikz[scale=.3]{\draw (0,0) -- (1,0) -- (2,0) (1,0) -- (1,1);
\filldraw[fill=white] (0,0) circle(.15);
\filldraw[fill=black] (1,0) circle(.15);
\filldraw[fill=white] (2,0) circle(.15);
\filldraw[fill=white] (1,1) circle(.15);
}
$\,.
The trace functor is given (at the level of isomorphism classes of simple objects) by:\bigskip

\centerline{
\begin{tabular}{|c|c|c|c|c|}
\hline
$x\in \cM$\phantom{\Big|}\!\!
&
$
\tikz[scale=.3]{\draw (0,0) -- (1,0) -- (2,0) (1,0) -- (1,1);
\filldraw[fill=black] (0,0) circle(.15);
\filldraw[fill=white] (1,0) circle(.15);
\filldraw[fill=white] (2,0) circle(.15);
\filldraw[fill=white] (1,1) circle(.15);
}
$
&
$
\tikz[scale=.3]{\draw (0,0) -- (1,0) -- (2,0) (1,0) -- (1,1);
\filldraw[fill=white] (0,0) circle(.15);
\filldraw[fill=black] (1,0) circle(.15);
\filldraw[fill=white] (2,0) circle(.15);
\filldraw[fill=white] (1,1) circle(.15);
}
$
&
$
\tikz[scale=.3]{\draw (0,0) -- (1,0) -- (2,0) (1,0) -- (1,1);
\filldraw[fill=white] (0,0) circle(.15);
\filldraw[fill=white] (1,0) circle(.15);
\filldraw[fill=white] (2,0) circle(.15);
\filldraw[fill=black] (1,1) circle(.15);
}
$
&
$
\tikz[scale=.3]{\draw (0,0) -- (1,0) -- (2,0) (1,0) -- (1,1);
\filldraw[fill=white] (0,0) circle(.15);
\filldraw[fill=white] (1,0) circle(.15);
\filldraw[fill=black] (2,0) circle(.15);
\filldraw[fill=white] (1,1) circle(.15);
}
$
\\
$\Tr_\cC(x)\in SU(2)_4$
&
$\mathbf 1\oplus \mathbf 5$
&
$\mathbf 2\oplus \mathbf 4$
&
$\mathbf 3$
&
$\mathbf 3$\\\hline
\end{tabular}}\bigskip

{\it Example:} $E_6$\,.\quad
This is a module tensor category over $SU(2)_{10}$ 
(with simple objects $\mathbf 1,\ldots, \mathbf {11}$).
The unit object is
$
\tikz[scale=.3]{\draw (-1,0) -- (0,0) -- (1,0) -- (2,0) -- (3,0) (1,0) -- (1,1);
\filldraw[fill=black] (-1,0) circle(.15);
\filldraw[fill=white] (0,0) circle(.15);
\filldraw[fill=white] (1,0) circle(.15);
\filldraw[fill=white] (2,0) circle(.15);
\filldraw[fill=white] (3,0) circle(.15);
\filldraw[fill=white] (1,1) circle(.15);
}
$\,, and
the adjacency matrix of the Dynkin diagram
encodes the operation of tensoring by
$
\Phi(\mathbf 2)=
\tikz[scale=.3]{\draw (-1,0) -- (0,0) -- (1,0) -- (2,0) -- (3,0) (1,0) -- (1,1);
\filldraw[fill=white] (-1,0) circle(.15);
\filldraw[fill=black] (0,0) circle(.15);
\filldraw[fill=white] (1,0) circle(.15);
\filldraw[fill=white] (2,0) circle(.15);
\filldraw[fill=white] (3,0) circle(.15);
\filldraw[fill=white] (1,1) circle(.15);
}
$\,.
The trace functor is:\bigskip

\centerline{
\begin{tabular}{|c|c|c|c|c|c|c|}
\hline
$x\in \cM$\phantom{\Big|}\!\!
&
$
\tikz[scale=.3]{\draw (-1,0) -- (0,0) -- (1,0) -- (2,0) -- (3,0) (1,0) -- (1,1);
\filldraw[fill=black] (-1,0) circle(.15);
\filldraw[fill=white] (0,0) circle(.15);
\filldraw[fill=white] (1,0) circle(.15);
\filldraw[fill=white] (2,0) circle(.15);
\filldraw[fill=white] (3,0) circle(.15);
\filldraw[fill=white] (1,1) circle(.15);
}
$
&
$
\tikz[scale=.3]{\draw (-1,0) -- (0,0) -- (1,0) -- (2,0) -- (3,0) (1,0) -- (1,1);
\filldraw[fill=white] (-1,0) circle(.15);
\filldraw[fill=black] (0,0) circle(.15);
\filldraw[fill=white] (1,0) circle(.15);
\filldraw[fill=white] (2,0) circle(.15);
\filldraw[fill=white] (3,0) circle(.15);
\filldraw[fill=white] (1,1) circle(.15);
}
$
&
$
\tikz[scale=.3]{\draw (-1,0) -- (0,0) -- (1,0) -- (2,0) -- (3,0) (1,0) -- (1,1);
\filldraw[fill=white] (-1,0) circle(.15);
\filldraw[fill=white] (0,0) circle(.15);
\filldraw[fill=black] (1,0) circle(.15);
\filldraw[fill=white] (2,0) circle(.15);
\filldraw[fill=white] (3,0) circle(.15);
\filldraw[fill=white] (1,1) circle(.15);
}
$
&
$
\tikz[scale=.3]{\draw (-1,0) -- (0,0) -- (1,0) -- (2,0) -- (3,0) (1,0) -- (1,1);
\filldraw[fill=white] (-1,0) circle(.15);
\filldraw[fill=white] (0,0) circle(.15);
\filldraw[fill=white] (1,0) circle(.15);
\filldraw[fill=white] (2,0) circle(.15);
\filldraw[fill=white] (3,0) circle(.15);
\filldraw[fill=black] (1,1) circle(.15);
}
$
&
$
\tikz[scale=.3]{\draw (-1,0) -- (0,0) -- (1,0) -- (2,0) -- (3,0) (1,0) -- (1,1);
\filldraw[fill=white] (-1,0) circle(.15);
\filldraw[fill=white] (0,0) circle(.15);
\filldraw[fill=white] (1,0) circle(.15);
\filldraw[fill=black] (2,0) circle(.15);
\filldraw[fill=white] (3,0) circle(.15);
\filldraw[fill=white] (1,1) circle(.15);
}
$
&
$
\tikz[scale=.3]{\draw (-1,0) -- (0,0) -- (1,0) -- (2,0) -- (3,0) (1,0) -- (1,1);
\filldraw[fill=white] (-1,0) circle(.15);
\filldraw[fill=white] (0,0) circle(.15);
\filldraw[fill=white] (1,0) circle(.15);
\filldraw[fill=white] (2,0) circle(.15);
\filldraw[fill=black] (3,0) circle(.15);
\filldraw[fill=white] (1,1) circle(.15);
}
$
\\
$\Tr_\cC(x)\in SU(2)_{10}$
&
$\!\scriptstyle \mathbf 1\,\oplus\, \mathbf 7\!$
&
$\!\scriptstyle \mathbf 2\,\oplus\, \mathbf 6\,\oplus\, \mathbf 8\!$
&
$\!\scriptstyle \mathbf 3\,\oplus\, \mathbf 5\,\oplus\, \mathbf 7\,\oplus\, \mathbf 9\!$
&
$\!\scriptstyle \mathbf 4\,\oplus\, \mathbf 8\!$
&
$\!\scriptstyle \mathbf 4\,\oplus\, \mathbf 6\,\oplus\, \mathbf {10}\!$
&
$\!\scriptstyle \mathbf 5\,\oplus\, \mathbf {11}\!$
\\\hline
\end{tabular}}\bigskip

{\it Example:} $E_8$\,.\quad
This is a module tensor category over $SU(2)_{28}$ %\marginpar{\tiny \textcolor{red}{AH: Why are there two $E_8$ subfactors and only one $E_8$ example here?}}
(with simple objects $\mathbf 1,\ldots, \mathbf {29}$).
The unit object is
$
\tikz[scale=.3]{\draw (-3,0) -- (-2,0) -- (-1,0) -- (0,0) -- (1,0) -- (2,0) -- (3,0) (1,0) -- (1,1);
\filldraw[fill=black] (-3,0) circle(.17);
\filldraw[fill=white] (-2,0) circle(.17);
\filldraw[fill=white] (-1,0) circle(.17);
\filldraw[fill=white] (0,0) circle(.17);
\filldraw[fill=white] (1,0) circle(.17);
\filldraw[fill=white] (2,0) circle(.17);
\filldraw[fill=white] (3,0) circle(.17);
\filldraw[fill=white] (1,1) circle(.17);
}
$\,. Once again,
the edges of the Dynkin diagram
encode tensoring by
$
\Phi(\mathbf 2)=
\tikz[scale=.3]{\draw (-3,0) -- (-2,0) -- (-1,0) -- (0,0) -- (1,0) -- (2,0) -- (3,0) (1,0) -- (1,1);
\filldraw[fill=white] (-3,0) circle(.17);
\filldraw[fill=black] (-2,0) circle(.17);
\filldraw[fill=white] (-1,0) circle(.17);
\filldraw[fill=white] (0,0) circle(.17);
\filldraw[fill=white] (1,0) circle(.17);
\filldraw[fill=white] (2,0) circle(.17);
\filldraw[fill=white] (3,0) circle(.17);
\filldraw[fill=white] (1,1) circle(.17);
}
$\,.
The trace functor is given by:\bigskip

\centerline{
\begin{tabular}{|c|c|c|c|c|c|c|c|c|}
\hline
$\tikz[baseline=-.1cm]{\node[scale=1.2]{$\scriptstyle x$};}$\phantom{\Big|}\!\!
&
\!\!\!\;$
\tikz[scale=.25]{\draw (-3,0) -- (-2,0) -- (-1,0) -- (0,0) -- (1,0) -- (2,0) -- (3,0) (1,0) -- (1,1);
\filldraw[fill=black] (-3,0) circle(.17);
\filldraw[fill=white] (-2,0) circle(.17);
\filldraw[fill=white] (-1,0) circle(.17);
\filldraw[fill=white] (0,0) circle(.17);
\filldraw[fill=white] (1,0) circle(.17);
\filldraw[fill=white] (2,0) circle(.17);
\filldraw[fill=white] (3,0) circle(.17);
\filldraw[fill=white] (1,1) circle(.17);
}
$\!\!\!\;
&
\!\!\!\;$
\tikz[scale=.25]{\draw (-3,0) -- (-2,0) -- (-1,0) -- (0,0) -- (1,0) -- (2,0) -- (3,0) (1,0) -- (1,1);
\filldraw[fill=white] (-3,0) circle(.17);
\filldraw[fill=black] (-2,0) circle(.17);
\filldraw[fill=white] (-1,0) circle(.17);
\filldraw[fill=white] (0,0) circle(.17);
\filldraw[fill=white] (1,0) circle(.17);
\filldraw[fill=white] (2,0) circle(.17);
\filldraw[fill=white] (3,0) circle(.17);
\filldraw[fill=white] (1,1) circle(.17);
}
$\!\!\!\;
&
\!\!\!\;$
\tikz[scale=.25]{\draw (-3,0) -- (-2,0) -- (-1,0) -- (0,0) -- (1,0) -- (2,0) -- (3,0) (1,0) -- (1,1);
\filldraw[fill=white] (-3,0) circle(.17);
\filldraw[fill=white] (-2,0) circle(.17);
\filldraw[fill=black] (-1,0) circle(.17);
\filldraw[fill=white] (0,0) circle(.17);
\filldraw[fill=white] (1,0) circle(.17);
\filldraw[fill=white] (2,0) circle(.17);
\filldraw[fill=white] (3,0) circle(.17);
\filldraw[fill=white] (1,1) circle(.17);
}
$\!\!\!\;
&
\!\!\!\;$
\tikz[scale=.25]{\draw (-3,0) -- (-2,0) -- (-1,0) -- (0,0) -- (1,0) -- (2,0) -- (3,0) (1,0) -- (1,1);
\filldraw[fill=white] (-3,0) circle(.17);
\filldraw[fill=white] (-2,0) circle(.17);
\filldraw[fill=white] (-1,0) circle(.17);
\filldraw[fill=black] (0,0) circle(.17);
\filldraw[fill=white] (1,0) circle(.17);
\filldraw[fill=white] (2,0) circle(.17);
\filldraw[fill=white] (3,0) circle(.17);
\filldraw[fill=white] (1,1) circle(.17);
}
$
&
$
\tikz[scale=.25]{\draw (-3,0) -- (-2,0) -- (-1,0) -- (0,0) -- (1,0) -- (2,0) -- (3,0) (1,0) -- (1,1);
\filldraw[fill=white] (-3,0) circle(.17);
\filldraw[fill=white] (-2,0) circle(.17);
\filldraw[fill=white] (-1,0) circle(.17);
\filldraw[fill=white] (0,0) circle(.17);
\filldraw[fill=black] (1,0) circle(.17);
\filldraw[fill=white] (2,0) circle(.17);
\filldraw[fill=white] (3,0) circle(.17);
\filldraw[fill=white] (1,1) circle(.17);
}
$\!\!\!\;
&
\!\!\!\;$
\tikz[scale=.25]{\draw (-3,0) -- (-2,0) -- (-1,0) -- (0,0) -- (1,0) -- (2,0) -- (3,0) (1,0) -- (1,1);
\filldraw[fill=white] (-3,0) circle(.17);
\filldraw[fill=white] (-2,0) circle(.17);
\filldraw[fill=white] (-1,0) circle(.17);
\filldraw[fill=white] (0,0) circle(.17);
\filldraw[fill=white] (1,0) circle(.17);
\filldraw[fill=white] (2,0) circle(.17);
\filldraw[fill=white] (3,0) circle(.17);
\filldraw[fill=black] (1,1) circle(.17);
}
$\!\!\!\;
&
\!\!\!\;$
\tikz[scale=.25]{\draw (-3,0) -- (-2,0) -- (-1,0) -- (0,0) -- (1,0) -- (2,0) -- (3,0) (1,0) -- (1,1);
\filldraw[fill=white] (-3,0) circle(.17);
\filldraw[fill=white] (-2,0) circle(.17);
\filldraw[fill=white] (-1,0) circle(.17);
\filldraw[fill=white] (0,0) circle(.17);
\filldraw[fill=white] (1,0) circle(.17);
\filldraw[fill=black] (2,0) circle(.17);
\filldraw[fill=white] (3,0) circle(.17);
\filldraw[fill=white] (1,1) circle(.17);
}
$\!\!\!\;
&
\!\!\!\;$
\tikz[scale=.25]{\draw (-3,0) -- (-2,0) -- (-1,0) -- (0,0) -- (1,0) -- (2,0) -- (3,0) (1,0) -- (1,1);
\filldraw[fill=white] (-3,0) circle(.17);
\filldraw[fill=white] (-2,0) circle(.17);
\filldraw[fill=white] (-1,0) circle(.17);
\filldraw[fill=white] (0,0) circle(.17);
\filldraw[fill=white] (1,0) circle(.17);
\filldraw[fill=white] (2,0) circle(.17);
\filldraw[fill=black] (3,0) circle(.17);
\filldraw[fill=white] (1,1) circle(.17);
}
$\!\!\!\;
\\
\!\!\!\;$\scriptstyle \Tr_\cC(x)$\!\!\!\;
&
\!\!\!$\scriptscriptstyle \mathbf 1\oplus \mathbf {11}\oplus \mathbf {19}\oplus \mathbf {29}$\!\!\!
&
\!\!$\begin{smallmatrix}\scriptscriptstyle \mathbf {2}\oplus \mathbf {10}\oplus \mathbf {12}\oplus\\\scriptscriptstyle \mathbf {18}\oplus \mathbf {20}\oplus \mathbf {28}\end{smallmatrix}$\!\!
&
\!\!\!$\begin{smallmatrix}\scriptscriptstyle \mathbf {3}\oplus \mathbf {9}\oplus \mathbf {11}\oplus \mathbf {13}\oplus\\\scriptscriptstyle \mathbf {17}\oplus \mathbf {19}\oplus \mathbf {21}\oplus \mathbf {27}\end{smallmatrix}$\!\!\!
&
\!\!\!$\begin{smallmatrix}\scriptscriptstyle \mathbf {4}\oplus \mathbf {8}\oplus \mathbf {10}\oplus \mathbf {12}\oplus \mathbf {14}\oplus\\\scriptscriptstyle \mathbf {16}\oplus \mathbf {18}\oplus \mathbf {20}\oplus \mathbf {22}\oplus \mathbf {26}\end{smallmatrix}$\!\!\!
&
\!\!\!$\begin{smallmatrix}\scriptscriptstyle \mathbf {5}\oplus \mathbf {7}\oplus \mathbf {9}\oplus \mathbf {11}\oplus \mathbf {13}\oplus \mathbf {15}\oplus\\\scriptscriptstyle \mathbf {15}\oplus \mathbf {17}\oplus \mathbf {19}\oplus \mathbf {21}\oplus \mathbf {23}\oplus \mathbf {25}\end{smallmatrix}$\!\!\!
&
\!\!\!$\begin{smallmatrix}\scriptscriptstyle \mathbf {6}\oplus \mathbf {10}\oplus \mathbf {14}\oplus\\\scriptscriptstyle \mathbf {16}\oplus \mathbf {20}\oplus \mathbf {24}\end{smallmatrix}$\!\!\!
&
\!\!\!$\begin{smallmatrix}\scriptscriptstyle \mathbf {6}\oplus \mathbf {8}\oplus \mathbf {12}\oplus \mathbf {14}\oplus\\\scriptscriptstyle \mathbf {16}\oplus \mathbf {18}\oplus \mathbf {22}\oplus \mathbf {24}\end{smallmatrix}$\!\!\!
&
\!\!\!$\scriptscriptstyle \mathbf {7}\oplus \mathbf {13}\oplus \mathbf {17}\oplus \mathbf {23}$\!\!\!
\\\hline
\end{tabular}}\bigskip

Note that $\Tr_\cC(1_\cM)$ (the first entry of the above tables) recovers the commutative algebra object $a$ for which $\cM = \Mod_\cC(a)$ (see \cite[Table 1]{MR1936496}).
%See Section \ref{sec:AlgebrasAndTraciator} below for a general discussion of the structure of $\Tr_\cC(1_\cM)$.
\end{sub-ex}

\begin{rem}
The category $SU(2)_k$ is closely related to the Temperley-Lieb $A_{k+1}$ ribbon category with $q=\exp(\frac{2\pi i}{2(k+2)})$ and braiding
\begin{equation}
\label{eq:BraidingInTL}
\begin{tikzpicture}[baseline=-.1cm]
	\braiding{(0,0)}{.6}{.4}
\end{tikzpicture}
=
iq^{1/2}\,
\begin{tikzpicture}[baseline=-.1cm]
	\nbox{unshaded}{(0,0)}{.4}{0}{0}{}
	\draw (.2,-.6)--(.2,.6);
	\draw (-.2,-.6)--(-.2,.6);
\end{tikzpicture}
-
iq^{-1/2}\,
\begin{tikzpicture}[baseline=-.1cm]
	\draw (.2,-.6)--(.2,.6);
	\draw (-.2,-.6)--(-.2,.6);
	\nbox{unshaded}{(0,0)}{.4}{0}{0}{}
	\draw (-.2,-.4) arc (180:0:.2cm);
	\draw (-.2,.4) arc (-180:0:.2cm);
\end{tikzpicture}\,,
\end{equation}
familiar from the definition of the Jones polynomial \cite[\S 1.1.3]{MR2783128}.
The classification of commutative algebra objects in Temperley-Lieb is very similar to that of $SU(2)_k$.
We again have $A_n$, $D_{2n}$, $E_6$, and $E_8$, but now we also have commutative algebras coming from the tadpole diagrams $T_n$ (compare \cite[Thm. 3.12]{MR2046203}).
Interestingly, for the braiding \eqref{eq:BraidingInTL}, only the $T_{2n}$ arise from commutative algebras, while if we negate \eqref{eq:BraidingInTL}, only the $T_{2n+1}$ arise from commutative algebras.

The skein theory for the $D_{2n}$ planar algebras was computed in \cite{MR2559686}, and the skein theory for the $E_6$ and $E_8$ planar algebras was computed in \cite{MR2577673}.
To give evaluation algorithms for these planar algebras, the authors used the existence of overbraiding relations of the form
$$
\begin{tikzpicture}[baseline = -.3cm]
	\draw (-.7,-.8)--(-.7,0) arc (180:0:.7cm)--(.7,-.8);
	\draw (-.15,0)--(-.15,-.8) (.15,0)--(.15,-.8);
	\node[scale=.4] at (0,-.6) {$\ldots$};
	\node at (0,-1.05) {{\scriptsize{$4n$}}};
	\node[rotate=90] at (0,-.9) {$\{$};
	\draw[thick, unshaded] (0,0) circle (.4);
	\node at (0,0) {$S$};
%	\node at (-.25,-.5) {$\star$};
\end{tikzpicture}
\,=
\begin{tikzpicture}[baseline = -.2cm]
	\draw (-.15,0)--(-.15,-.8) (.15,0)--(.15,-.8);
	\node[scale=.4] at (0,-.6) {$\ldots$};
	\node at (0,-1.05) {{\scriptsize{$4n$}}};
	\node[rotate=90] at (0,-.9) {$\{$};
	\draw[super thick, white] (-.7,-.8) .. controls ++(90:.45cm) and ++(90:.45cm) .. (.7,-.8);
	\draw (-.7,-.8) .. controls ++(90:.45cm) and ++(90:.45cm) .. (.7,-.8);
	\draw[thick, unshaded] (0,.25) circle (.4);
	\node at (0,.25) {$S$};
\end{tikzpicture}
$$
where the crossing is the braiding \eqref{eq:BraidingInTL} in the appropriate $A_{k}$ category.
%This braiding satisfies the Reidemeister {\rm II} and {\rm III} relations, but the Reidemeister {\rm I} relation requires a writhe factor $iq^{3/2}$.
The reason this overbraiding exists is that the $D_{2n}$, $E_6$ and $E_8$ fusion categories arise as module tensor categories of the form $\Mod_\cC(a)$ for $a$ a separable commutative algebra with trivial twist,
as explained above.
\end{rem}

\begin{rem}\label{rem: rant No2}
The calculations in Example \ref{ex:trace of su2} show that our trace functor is typically not a shadow in the sense of \cite{MR3095324}, as it may fail to satisfy $\tau_{1,x} = \id$.
Indeed, $\tau_{1,x}$ is equal to the twist by Proposition \ref{prop:ThetaAndTraciator}, and the latter is almost always nontrivial (see \cite[\S8.2]{MR1358783},\cite[(12.8.11)]{MR1104219} for explicit formulas). 
\end{rem}

			%PUT ME BACK

%%%%%%%%%%%%%%%%%%%%%%%%%%%%%%%%%%%%%%%%%%%%%%%%%%
%  Properties of the trace functor
%auto-ignore
%this ensures the arxiv doesn't try to start TeXing here.
%!TEX root =../InternalTrace.tex

%%%%%%%%%%%%%%%%%%%%%%%%%%%%%%%%%%%%%%%%%%%%%%%%%%
%%%%%%%%%%%%%%%%%%%%%%%%%%%%%%%%%%%%%%%%%%%%%%%%%%
%%%%%%%%%%%%%%%%%%%%%%%%%%%%%%%%%%%%%%%%%%%%%%%%%%
\section{Properties of the trace functor}
\label{sec:InternalTrace}

In this section we establish all the basic properties of the categorified trace $\Tr_\cC$ associated to a pivotal module tensor category (Definition \ref{defn:AssociatedCategorifiedTrace}).  
In Sections \ref{sec:AdjointsOfTensorFunctors}--\ref{sec:UnitMap}, we discuss the properties of $\Tr_\cC$ which follow from the fact that it is the right adjoint of a tensor functor, notably the existence of the multiplication map $\mu_{x,y}:\Tr_\cC(x) \otimes \Tr_\cC(y) \to \Tr_\cC(x \otimes y)$. 
The construction of the traciator $\tau_{x,y}:\Tr_\cC(x \otimes y)\to \Tr_\cC(y \otimes x)$ is done in Section~\ref{sec:Traciator}.
This only depends on the fact that $\cM$ is pivotal and that $\Phi$ factors through $\cZ(\cM)$.
Finally, in Section~\ref{sec:InteractionBetweenTraciatorAndBraiding} we assume our full set of hypothesis: that $\cM$ is a pivotal module tensor category over $\cC$.
In this context we establish various compatibility relations which combine the traciator and multiplication maps with the braiding and the twist of $\cC$.

In terms of Figure \ref{fig: big figure} from the introduction, Sections~\ref{sec:AdjointsOfTensorFunctors}--\ref{sec:UnitMap} deal with the right column,
Section~\ref{sec:Traciator} deals with the left column, and Section~\ref{sec:InteractionBetweenTraciatorAndBraiding} deals with the middle column.

%%%%%%%%%%%%%%%%%%%%%%%%%%%%%%%%%%%%%%%%%%%%%%%%%%
\subsection{The attaching map}
\label{sec:AdjointsOfTensorFunctors}

Adjoints of tensor functors have been studied systematically \cite{MR0360749} and arise in many contexts (e.g.\;\cite{MR2863377}, \cite{MR3039775}).
We will reprove several known properties as a way of introducing our graphical notation for $\Tr_\cC$.
For Sections \ref{sec:AdjointsOfTensorFunctors}-\ref{sec:UnitMap}, we fix the following notation:

\begin{samepage}
\begin{nota}
\label{nota:CM}
\mbox{}

\begin{itemize}
\item
$\cC,\cM$ are tensor categories, 
\item
$\Phi: \cC\to \cM$ is a tensor functor, and
\item 
$\Tr_\cC : \cM \to \cC$ is the right adjoint of $\Phi$.
\end{itemize}
\end{nota}
\end{samepage}

\noindent
The reader is cautioned that under these limited assumptions (compare Notation \ref{nota:SecTraciator}) the functor $\Tr_\cC$ may fail to be a categorified trace.
Nevertheless, we shall use the notation $\Tr_\cC$ for notational consistency.

Recall from Section \ref{sec: categorified traces} that we depict an object $x \in \cM$ by a strand in the plane, and $\Tr_\cC(x)$ by a cylinder with an $x$-strand on its surface.
Given an object $c\in\cC$, the adjunction $\cC(c,\Tr_\cC(x)) \cong \cM(\Phi(c),x)$ is represented diagrammatically by:
\begin{equation}\label{eq: open umbrella}
\qquad\quad\begin{tikzpicture}[baseline=-1.3cm, scale=.9]
	\draw[thick, cString] (0,-2) -- (0,-3.3);
	\CMbox{box}{(-.2,-2)}{.8}{.8}{.4}{$g$}
	%cylinder
	\fill[unshaded] (-.37,-.9) -- (-.37,-.6) -- (.37,-.6) -- (.37,-1.07);
	\draw[thick] (-.3,-1) -- (-.3,.5);
	\draw[thick] (.3,-1) -- (.3,.5);
	\draw[thick] (0,.5) ellipse (.3cm and .1cm);
	\halfDottedEllipse{(-.3,-1)}{.3}{.1}
	\draw[thick, xString] (0,.4) -- (0,-1.1);
	\node at (-.25,-3.1) {$\scriptstyle c$};
	\node at (-.15,0) {$\scriptstyle x$};
\end{tikzpicture}
\,\,\,\in\,\cC(c,\Tr_\cC(x))
\quad\longleftrightarrow
\begin{tikzpicture}[baseline=.1cm, scale=.8]
	\plane{(-.5,-.8)}{3}{2}
	\draw[thick, xString] (.8,0) -- (1.7,0);
	\CMbox{box}{(0,-.4)}{.8}{.8}{.4}{$g$}
	\invisibleTubeWithString{(-.3,.1)}{cString}
	\draw[thick, cString] (-1.4,1.4) .. controls ++(90:.2cm) and ++(0:.2cm) .. (-2,1.7);
	\node at (-1.75,1.9) {$\scriptstyle c$};
	\node at (1.25,-.22) {$\scriptstyle x$};
\end{tikzpicture}
\!\!\!\in\, \cM(\Phi(c),x)
\end{equation}
Visually, we can think of the above adjunction as the operation of opening the tube (like an umbrella) and then flattening it to a plane.

%%%%%%%%%%%%%%%%%%%%%%%%%%%%%%%%%%%%%%%%%%%%%%%%%%%%%%%%%%%%%%
%\subsection{The trace-counit}

\begin{defn}[the attaching map $\varepsilon$]
\label{defn:NaturalMap}
Given $x\in \cM$, we define $\varepsilon_x:\Phi(\Tr_\cC(x))\to x$ to be the image of $\id_{\Tr_\cC(x)}$ under the correspondence \eqref{eq: open umbrella}.
%adjunction $\cC(\Tr_\cC(x),\Tr_\cC(x))\cong \cM(\Phi(\Tr_\cC(x)),x)$.
Equivalently, $\varepsilon:\Phi\circ \Tr_\cC\to \id_\cM$ is the counit of the adjunction $\Phi\dashv \Tr_\cC$.

We represent the morphism $\varepsilon_x$ as follows:
$$
\begin{tikzpicture}[baseline=.1cm, scale=.8]
	\plane{(-.4,-.8)}{2.8}{1.8}

	\CMbox{box}{(0,-.5)}{.8}{.8}{.4}{$\varepsilon_x$}

	\straightTubeWithString{(-.2,-.1)}{.1}{1.5}{xString}
	\draw[thick, xString] (.8,-.2) -- (1.8,-.2);

\end{tikzpicture}
$$
A possibly better graphical notation for $\varepsilon_x$ is that of a tube coming out of the plane:
$$
\varepsilon_x
=\,\,\,
\newcommand{\straightTubeNoEnd}[4]{
	\coordinate (ZZq) at #1;
	\pgfmathsetmacro{\tubeLength}{#3};
	\pgfmathsetmacro{\tubeWidth}{#2};
	\pgfmathsetmacro{\buffer}{.05};	
	\fill[unshaded] ($ (ZZq) + (-\tubeLength,0) + 2*(0,-\buffer) $) -- ($ (ZZq) + 2*(0,-\buffer) $) arc(-90:90:{\tubeWidth+\buffer} and {2*(\tubeWidth+\buffer)}) -- ($ (ZZq) + (-\tubeLength,0) + 4*(0,\tubeWidth) + 2*(0,\buffer) $) arc(90:270:{\tubeWidth+\buffer} and {2*(\tubeWidth+\buffer)}) ;
	\draw[unshaded, thick]  ($ (ZZq) + (-\tubeLength,0) $) -- (ZZq) to[out=0,in=0] ($ (ZZq) + (0,-.1) $)($ (ZZq) + (-\tubeLength,0) + 4*(0,\tubeWidth) $) -- ($ (ZZq) + 2*(0,2*\tubeWidth) $) to[out=0,in=180] ($ (ZZq) + 2*(0,2*\tubeWidth) + (1,-.3)$);
	\draw[thick] ($ (ZZq) + (-\tubeLength,0) $) arc(-90:90:{\tubeWidth} and {2*\tubeWidth}) arc(90:270:{\tubeWidth} and {2*\tubeWidth});
	\draw[thick, #4] ($ (ZZq) + (-\tubeLength,0) + 2*(0,\tubeWidth) + (\tubeWidth,0)$) -- +(1.3,0) to[out=0,in=180] (.8,-.2);
}
\begin{tikzpicture}[baseline=.1cm, scale=.8]
	\plane{(-.4,-.8)}{2.8}{1.8}
	\straightTubeNoEnd{(-.4,0)}{.1}{1.5}{xString}
	\draw[thick, xString] (.8,-.2) -- (1.8,-.2);
\end{tikzpicture}
$$
but in order to show that this graphical depiction of $\varepsilon_x$ is valid, we would first need to prove properties such as 
\eqref{eq: pants o epsilon}, or \eqref{eqn:TrSplittingIso}.
The next two lemmas are general results that hold for any adjunction.
\end{defn}

\begin{lem}
\label{lem:PushDownMorphism}
Let $a\in\cC$ and $x\in\cM$. 
Then, under the adjunction $\cC(a,\Tr_\cC(x))\cong \cM(\Phi(a), x)$,
a morphism $f\in \cC(a,\Tr_\cC(x))$ corresponds to $\varepsilon_x\circ \Phi(f)\in \cM(\Phi(a), x)$.
\end{lem}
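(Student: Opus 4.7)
The plan is to derive this as a direct consequence of the naturality of the adjunction bijection $\cC(-,\Tr_\cC(x)) \cong \cM(\Phi(-), x)$ in the first argument, combined with the definition of $\varepsilon_x$ as the image of $\id_{\Tr_\cC(x)}$ under that bijection.

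First I would fix $x \in \cM$ and denote the adjunction bijection by $\eta_a : \cC(a, \Tr_\cC(x)) \to \cM(\Phi(a), x)$. Given $f \in \cC(a, \Tr_\cC(x))$, I observe that $f$ can be rewritten as $\id_{\Tr_\cC(x)} \circ f$, i.e., as the image of $\id_{\Tr_\cC(x)}$ under the precomposition map $\cC(\Tr_\cC(x), \Tr_\cC(x)) \to \cC(a, \Tr_\cC(x))$, $g \mapsto g \circ f$. Naturality of $\eta$ in the first argument says that the square
\[
\begin{array}{ccc}
\cC(\Tr_\cC(x), \Tr_\cC(x)) & \xrightarrow{\,\eta_{\Tr_\cC(x)}\,} & \cM(\Phi(\Tr_\cC(x)), x) \\
\downarrow\scriptstyle -\circ f & & \downarrow\scriptstyle -\circ \Phi(f) \\
\cC(a, \Tr_\cC(x)) & \xrightarrow{\,\eta_a\,} & \cM(\Phi(a), x)
\end{array}
\]
commutes. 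Chasing $\id_{\Tr_\cC(x)}$ around gives $\eta_a(f) = \eta_{\Tr_\cC(x)}(\id_{\Tr_\cC(x)}) \circ \Phi(f) = \varepsilon_x \circ \Phi(f)$, which is exactly the claim.

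Since the entire argument is a single abstract-nonsense calculation using only the naturality of the adjunction and the definition of $\varepsilon_x$ in Definition \ref{defn:NaturalMap}, there is no real obstacle. The main thing to present carefully is just the graphical translation: under the pictorial adjunction \eqref{eq: open umbrella}, pre-composing a cylinder diagram with $f$ on the bottom corresponds to pre-composing the ``flattened'' planar diagram with $\Phi(f)$, so the opened-up version of $f$ is the opened-up version of $\id_{\Tr_\cC(x)}$ (which is $\varepsilon_x$ by definition) stacked on top of $\Phi(f)$. This yields both the formula and its graphical interpretation simultaneously.
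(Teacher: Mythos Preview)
Your proposal is correct and is essentially identical to the paper's own proof: both argue by chasing $\id_{\Tr_\cC(x)}$ around the naturality square for the adjunction with respect to precomposition by $f$, using that $\varepsilon_x$ is by definition the mate of $\id_{\Tr_\cC(x)}$.
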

\begin{proof}
Since the adjunction is natural, the following diagram commutes:
\[
\xymatrix@C=0pt{
\varepsilon_x \ar@{|->}[d]
&\in&
\cM(\Phi(\Tr_\cC(x)), x) \ar[d]_{\Phi(f)^*}
&\cong&
\cC(\Tr_\cC(x),\Tr_\cC(x)) \ar[d]_{f^*}
&\ni&
\id_{\Tr_\cC(x)}, \ar@{|->}[d]
\\
\varepsilon_x \circ \Phi(f)
&\in& 
\cM(\Phi(a), x) 
&\cong& 
\cC(a, \Tr_\cC(x))
&\ni&
f
}
\]
where $f^*(g)= g\circ f$.
\end{proof}

\begin{lem}
\label{lem:MoveThroughNaturalMap}
Suppose $f\in\cM(x,y)$.  Then $\varepsilon_y\circ \Phi(\Tr_\cC(f)) = f\circ \varepsilon_x$, i.e.,
\begin{equation} \label{pic: f passes through epsilon}
\begin{tikzpicture}[baseline=.1cm, scale=.8]
	\plane{(-.4,-.8)}{3.2}{1.8}

	\CMbox{box}{(0,-.5)}{.8}{1.2}{.5}{$\varepsilon_y$}

	\straightTubeNoString{(-.3,.1)}{.15}{2}{xString}

	\draw[thick, xString] (-2.15,.4) -- (-1.2,.4);
	\draw[thick, yString] (-.15,.4) -- (-1.2,.4);

	\nbox{unshaded}{(-1.2,.4)}{.2}{.2}{.2}{\rotatebox{-75}{$f$}}
	\draw[thick, yString] (.8,-.2) -- (2.2,-.2);
\end{tikzpicture}
=
\begin{tikzpicture}[baseline=.1cm, scale=.8]
	\plane{(-.4,-.8)}{3.8}{1.8}

	\CMbox{box}{(0,-.5)}{.8}{.8}{.4}{$\varepsilon_x$}

	\straightTubeWithString{(-.2,-.1)}{.1}{1.5}{xString}
	\draw[thick, xString] (.8,-.2) -- (2,-.2);
	\draw[thick, yString] (2,-.2) -- (2.8,-.2);

	\Mbox{(1.6,-.5)}{1}{.6}{$f$}
\end{tikzpicture}
\end{equation}
\end{lem}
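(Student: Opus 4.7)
The equation $\varepsilon_y \circ \Phi(\Tr_\cC(f)) = f \circ \varepsilon_x$ is exactly the statement that $\varepsilon$ is a natural transformation $\Phi\circ\Tr_\cC \Rightarrow \id_\cM$. Since in Definition \ref{defn:NaturalMap} $\varepsilon$ was introduced as the counit of the adjunction $\Phi \dashv \Tr_\cC$, naturality is already part of that data and no further argument is needed. The plan, however, is to give a self-contained verification from the description of $\varepsilon_x$ in \eqref{eq: open umbrella} using only Lemma \ref{lem:PushDownMorphism} and the naturality of the adjunction bijection.

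The strategy is to apply the adjunction
\[
\cM(\Phi(\Tr_\cC(x)), y) \;\cong\; \cC(\Tr_\cC(x), \Tr_\cC(y))
\]
to both sides of the desired identity and check that the two resulting morphisms in $\cC(\Tr_\cC(x), \Tr_\cC(y))$ agree. For the left-hand side, Lemma \ref{lem:PushDownMorphism} (with $a=\Tr_\cC(x)$ and with the role of $f$ played by $\Tr_\cC(f)$) identifies $\varepsilon_y \circ \Phi(\Tr_\cC(f))$ with $\Tr_\cC(f)$ itself. For the right-hand side, we invoke naturality of the adjunction bijection in the second variable: post-composition with $f$ on the $\cM$ side corresponds to post-composition with $\Tr_\cC(f)$ on the $\cC$ side. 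Applied to $\varepsilon_x$, which by definition corresponds to $\id_{\Tr_\cC(x)}$, this shows that $f\circ \varepsilon_x$ corresponds to $\Tr_\cC(f)\circ \id_{\Tr_\cC(x)} = \Tr_\cC(f)$. Since the two sides agree after transport across the bijection, they are equal.

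I expect no real obstacle here; this is a short bookkeeping exercise, essentially the universal property of the counit. The only care required is in being explicit about which variable of the hom-set adjunction is being used at each step (the naturality in $a$ is used implicitly in Lemma \ref{lem:PushDownMorphism}, whereas naturality in $x$ is what handles the $f\circ\varepsilon_x$ side). The diagrammatic content of \eqref{pic: f passes through epsilon} is that a morphism $f$ drawn on the plane can be transported to a morphism $\Tr_\cC(f)$ drawn on a tube attached to the plane, a principle that will be used repeatedly in the remainder of Section \ref{sec:InternalTrace}.
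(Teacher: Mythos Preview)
Your proposal is correct and takes essentially the same approach as the paper, which simply cites that the counit of an adjunction is always a natural transformation. Your additional verification via Lemma~\ref{lem:PushDownMorphism} and naturality of the adjunction bijection is a fine unpacking of that standard fact.
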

\begin{proof}
The counit of an adjunction is always a natural transformation \cite[\S IV.1]{MR1712872}. 
%In more detail, we show that both sides of \eqref{pic: f passes through epsilon} are mates of $\Tr_\cC(f)$ under the adjunction
%$\cM(\Phi(\Tr_\cC(x)), y) \cong \cC(\Tr_\cC(x),\Tr_\cC(y))$.
%Lemma \ref{lem:PushDownMorphism} does it for the left hand side of \eqref{pic: f passes through epsilon}.
%The other half of the argument is the following commutative diagram:
%\[
%\quad
%\xymatrix@C=0pt{
%\varepsilon_x\ar@{|->}[d]
%&\in&
%\cM(\Phi(\Tr_\cC(x)), x)\ar[d]_{f_*}
%&\cong&
%\cC(\Tr_\cC(x), \Tr_\cC(x))\ar[d]_{\Tr_\cC(f)_*}
%&\ni&
%\id_{\Tr_\cC(x)}\ar@{|->}[d]
%\\
%f\circ \varepsilon_x
%&\in&
%\cM(\Phi(\Tr_\cC(x)), y)
%&\cong &
%\cC(\Tr_\cC(x), \Tr_\cC(y))
%&\ni&
%\Tr_\cC(f) 
%}
%\]
%where
%$g_*(h)=g\circ h$.
\end{proof}

%%%%%%%%%%%%%%%%%%%%%%%%%%%%%%%%%%%%%%%%%%%%%%%%%%
\subsection{The multiplication map}
\label{sec:Multiplication}

\begin{defn}\label{def: multiplication map}
For $x,y\in \cM$, there is a canonical map $\mu_{x,y} : \Tr_\cC(x) \otimes \Tr_\cC(y) \to \Tr_\cC(x \otimes y)$ given as the mate of 
$$
\varepsilon_x\otimes \varepsilon_y 
=
\begin{tikzpicture}[baseline=.4cm, scale=.8]
	\plane{(-.2,-.8)}{2.4}{3}
	\draw[thick, yString] (.8,-.1) -- (1.5,-.1);
	\draw[thick, xString] (-.6,1.3) -- (.1,1.3);
	\CMbox{box}{(-1.4,.8)}{.8}{.8}{.4}{$\varepsilon_x$}
	\CMbox{box}{(0,-.6)}{.8}{.8}{.4}{$\varepsilon_y$}
	\straightTubeWithString{(-1.6,1.1)}{.1}{1.2}{xString}
	\straightTubeWithString{(-.2,-.3)}{.1}{1.2}{yString}
\end{tikzpicture}
$$
under the isomorphisms
\begin{align*}
\cM(\Phi(\Tr_\cC(x)) \otimes \Phi(\Tr_\cC(y)), x\otimes y)
&\cong 
\cM(\Phi(\Tr_\cC(x) \otimes \Tr_\cC(y)), x\otimes y)
\\&\cong
\cC(\Tr_\cC(x) \otimes \Tr_\cC(y) , \Tr_\cC(x \otimes y)),
\end{align*}
where we used that $\Phi$ is a tensor functor.
Diagrammatically, we denote $\mu$ by a pair of pants:
$$
\mu_{x,y} \,\,=\,\,
\begin{tikzpicture}[baseline=-.3cm,scale=.9]
	\topPairOfPants{(-1,-1)}{}
	\draw[thick, xString] (-.73,-1.1) .. controls ++(90:.8cm) and ++(270:.8cm) .. (-.1,.4);		
	\draw[thick, yString] (.73,-1.1) .. controls ++(90:.8cm) and ++(270:.8cm) .. (.1,.4);		
	\node[below] at (-.73,-1.09+.03) {$\scriptstyle x$};
	\node[below] at (.73,-1.08+.03) {$\scriptstyle y$};
\end{tikzpicture}
$$
\end{defn}

Our next task will be to show that $\mu$ is associative.
We begin by showing that it is compatible with the attaching map $\varepsilon$:

\begin{lem}\label{lem:MultiplicationCompatibleWithNatural}
For any $x,y\in \cM$, we have
$\varepsilon_{x\otimes y} \circ \Phi(\mu_{x,y})= \varepsilon_x\otimes \varepsilon_y$.
\begin{equation}
\begin{tikzpicture}[baseline=.4cm, scale=.7]
	\plane{(-.4,-1.1)}{4.8}{3.1}
	\draw[thick, yString] (1.4,0) -- (3.3,0);
	\draw[thick, xString] (1.4,.3) -- (3,.3);
	\CMbox{box}{(.2,-.4)}{1.2}{1.4}{.5}{$\varepsilon_{x\otimes y}$}
%	\straightTubeNoString{(-.1,.2)}{.2}{2.4}
	\fill[unshaded] (-1,1.7) -- (-3.3,1.7) -- (-2.25,.65) -- ++(.2,0) -- ++(.3,-.3) -- ++(-.2,0) -- ++(.8,-.8) -- ++(.5,.1) -- (-.5,.1) -- (0,.1) -- (0,1.1) -- ++(-1,0);
	\draw[thick, xString] (-2.9,1.2) to[out=0,in=180] (-.1,.7) -- (.08,.7);
	\draw[thick, yString] (-1.6,-.2) to[out=10,in=180] (-.15,.5) -- (.08,.5);
	\draw[thick] (-3.1,1.56) to[out=0,in=180] (-.1,1);
	\draw[thick] (-1.8,-.6) to[out=5,in=180] (-.1,.2);
	\draw[thick] (-1.8,.2) to[out=10,in=2, looseness=2] (-3.1,.84);
	\draw[thick] (-3.1,1.2) circle (.2 and .36);
	\draw[thick] (-1.8,-.2) circle (.2 and .4);
	\draw[thick] (-.1,.2) arc (-90:90:.2 and .4);
\end{tikzpicture}
\!\!=\;\;
\begin{tikzpicture}[baseline=.65cm, scale=.8]
	\plane{(-.2,-.8)}{2.7}{3}
	\draw[thick, yString] (.8,-.1) -- (1.8,-.1);
	\draw[thick, xString] (-.6,1.3) -- (.4,1.3);
	\CMbox{box}{(-1.4,.8)}{.8}{.8}{.4}{$\varepsilon_x$}
	\CMbox{box}{(0,-.6)}{.8}{.8}{.4}{$\varepsilon_y$}
	\straightTubeWithString{(-1.6,1.1)}{.1}{1.2}{xString}
	\straightTubeWithString{(-.2,-.3)}{.1}{1.2}{yString}
\end{tikzpicture}
\label{eq: pants o epsilon}
\end{equation}
\end{lem}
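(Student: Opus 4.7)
The plan is to observe that this lemma is essentially a direct unpacking of the definition of $\mu_{x,y}$ as a mate, combined with Lemma~\ref{lem:PushDownMorphism}. Specifically, $\mu_{x,y}$ was defined as the unique morphism in $\cC(\Tr_\cC(x)\otimes\Tr_\cC(y), \Tr_\cC(x\otimes y))$ whose image under the adjunction isomorphism
\[
\cC(\Tr_\cC(x)\otimes\Tr_\cC(y),\Tr_\cC(x\otimes y)) \,\cong\, \cM(\Phi(\Tr_\cC(x)\otimes\Tr_\cC(y)),x\otimes y)
\]
is $\varepsilon_x\otimes\varepsilon_y$ (after identifying $\Phi(\Tr_\cC(x)\otimes\Tr_\cC(y))$ with $\Phi(\Tr_\cC(x))\otimes\Phi(\Tr_\cC(y))$ via the tensorator $\nu$ of $\Phi$). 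By Lemma~\ref{lem:PushDownMorphism} applied with $a=\Tr_\cC(x)\otimes\Tr_\cC(y)$, $z=x\otimes y$, and $f=\mu_{x,y}$, that image is precisely $\varepsilon_{x\otimes y}\circ\Phi(\mu_{x,y})$. Equating the two descriptions gives the desired identity.

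More concretely, I would structure the proof in two short steps. First, state the mate correspondence for the specific pair $(a,z)=(\Tr_\cC(x)\otimes\Tr_\cC(y),\,x\otimes y)$, citing Lemma~\ref{lem:PushDownMorphism}. Second, substitute $f=\mu_{x,y}$ and invoke Definition~\ref{def: multiplication map}, which says exactly that $\mu_{x,y}$ is the element whose mate is $\varepsilon_x\otimes\varepsilon_y$. The tensor functor structure isomorphism $\nu_{\Tr_\cC(x),\Tr_\cC(y)}$ should be absorbed silently (as is done throughout this section), since its compatibility with the adjunction is part of what it means to say that $\Phi$ is a tensor functor.

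The only subtlety worth flagging is that the adjunction isomorphism appearing in Definition~\ref{def: multiplication map} implicitly uses the tensorator $\nu_{\Tr_\cC(x),\Tr_\cC(y)}:\Phi(\Tr_\cC(x))\otimes\Phi(\Tr_\cC(y))\to\Phi(\Tr_\cC(x)\otimes\Tr_\cC(y))$; a fully rigorous statement of the identity reads
\[
\varepsilon_{x\otimes y}\circ \Phi(\mu_{x,y})\circ \nu_{\Tr_\cC(x),\Tr_\cC(y)} \,=\, \varepsilon_x\otimes\varepsilon_y.
\]
I do not expect any genuine obstacle here --- the lemma is a formal consequence of how $\mu$ was defined --- so the proof is essentially a one-line invocation of Lemma~\ref{lem:PushDownMorphism} together with Definition~\ref{def: multiplication map}.
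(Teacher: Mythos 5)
Your proof is correct and is essentially identical to the paper's: both identify $\varepsilon_x\otimes\varepsilon_y$ as the mate of $\mu_{x,y}$ by Definition~\ref{def: multiplication map} and then apply Lemma~\ref{lem:PushDownMorphism} to recognize that mate as $\varepsilon_{x\otimes y}\circ\Phi(\mu_{x,y})$. Your remark about silently absorbing the tensorator $\nu$ matches the paper's conventions as well.
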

\begin{proof}
By definition, $\varepsilon_x\otimes \varepsilon_y$ is the mate of $\mu_{x,y}$ under the adjunction.
Now apply Lemma \ref{lem:PushDownMorphism} to see that $\mu_{x,y}$ is also the mate of $\varepsilon_{x\otimes y} \circ \Phi(\mu_{x,y})$.
\end{proof}

As a corollary, we see $\mu_{x,y}$ that is natural in $x$ and $y$:

\begin{cor}
\label{cor:MoveTensorThroughMultiplication}
For all $x,y,z\in \cM$ and all $f:y\to z$ in $\cM$, we have $\mu_{x,z}\circ (\id_{\Tr_\cC(x)}\otimes \Tr_\cC(f))=\Tr_\cC(\id_x\otimes f) \circ \mu_{x,y}$.
In diagrams,
\begin{equation}\label{eq: f through pants}
\begin{tikzpicture}[baseline=.1cm]

	\coordinate (a1) at (-1,-1);
	\coordinate (a2) at ($ (a1) + (1.4,0) $);
	\coordinate (b1) at ($ (a1) + (0,1)$);
	\coordinate (b2) at ($ (a2) + (0,1)$);
	\coordinate (c) at ($ (b1) + (.7,1.5)$);

	\topPairOfPants{(b1)}{}
	
	\draw[thick, xString] ($ (a1) + (.3,-.1) $) -- ($ (b1) + (.3,-.1) $) .. controls ++(90:.8cm) and ++(270:.8cm) .. ($ (c) + (.2,-.08) $);		
	\draw[thick, yString] ($ (a2) + (.3,-.1) $) -- ($ (a2) + (.3,.4) $);
	\draw[thick, zString] ($ (a2) + (.3,.5) $) -- ($ (b2) + (.3,-.1) $).. controls ++(90:.8cm) and ++(270:.8cm) .. ($ (c) + (.4,-.08) $);		

	\nbox{unshaded}{($ (a2) + (.3,.4) $)}{.2}{-.05}{-.05}{\scriptsize{$f$}}

	\draw[thick] (a1) -- (b1);
	\draw[thick] ($ (a1) + (.6,0) $) -- ($ (a1) + (.6,1) $);
	\halfDottedEllipse{(a1)}{.3}{.1}	
	
	\draw[thick] (a2) -- (b2);
	\draw[thick] ($ (a2) + (.6,0) $) -- ($ (a2) + (.6,1) $);
	\halfDottedEllipse{(a2)}{.3}{.1}	
\end{tikzpicture}
\,\,=
\begin{tikzpicture}[baseline=.1cm]

	\coordinate (a1) at (-1,-1);
	\coordinate (a2) at ($ (a1) + (1.4,0) $);
	\coordinate (b) at ($ (a1) + (.7,1.5)$);
	\coordinate (c) at ($ (b) + (0,1)$);

	\pairOfPants{(a1)}{}
	
	\draw[thick, xString] ($ (a1) + (.3,-.1) $) .. controls ++(90:.8cm) and ++(270:.8cm) .. ($ (b) + (.2,-.08) $) -- ($ (c) + (.2,-.08) $);		
	\draw[thick, zString] ($ (b) + (.4,.4) $) -- ($ (c) + (.4,-.08) $);
	\draw[thick, yString] ($ (a2) + (.3,-.1) $) .. controls ++(90:.8cm) and ++(270:.8cm) .. ($ (b) + (.4,-.08) $) -- ($ (b) + (.4,.4) $);		

	\nbox{unshaded}{($ (b) + (.4,.4) $)}{.2}{-.07}{-.07}{\scriptsize{$f$}}

	\draw[thick] (b) -- (c);
	\draw[thick] ($ (b) + (.6,0) $) -- ($ (c) + (.6,0) $);
	\draw[thick] ($ (c) + (.3,0) $) ellipse (.3cm and .1cm);
\end{tikzpicture}.
\end{equation}
Similarly, for $g:w\to x$, we have $\mu_{x,y}\circ (\Tr_\cC(g)\otimes \id_{\Tr_\cC(y)})=\Tr_\cC(g\otimes \id_{y}) \circ \mu_{w,y}$.
\end{cor}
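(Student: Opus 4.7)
The plan is to verify the equality by computing the mates of both sides under the adjunction $\Phi \dashv \Tr_\cC$, and showing they agree in $\cM(\Phi(\Tr_\cC(x)\otimes \Tr_\cC(y)),\, x\otimes z)$. By faithfulness of the adjunction bijection, this suffices.

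First I would handle the left-hand side $\mu_{x,z}\circ (\id_{\Tr_\cC(x)}\otimes \Tr_\cC(f))$. By Lemma~\ref{lem:PushDownMorphism}, its mate is $\varepsilon_{x\otimes z} \circ \Phi\bigl(\mu_{x,z}\circ (\id\otimes \Tr_\cC(f))\bigr)$, which (using that $\Phi$ is a tensor functor, so I can suppress the $\nu$ isomorphism $\Phi(\Tr_\cC(x))\otimes\Phi(\Tr_\cC(z))\to\Phi(\Tr_\cC(x)\otimes\Tr_\cC(z))$) equals
\[
\varepsilon_{x\otimes z} \circ \Phi(\mu_{x,z}) \circ (\id_{\Phi(\Tr_\cC(x))} \otimes \Phi(\Tr_\cC(f))).
\]
Applying Lemma~\ref{lem:MultiplicationCompatibleWithNatural} to the first two factors and Lemma~\ref{lem:MoveThroughNaturalMap} (naturality of $\varepsilon$) to $\varepsilon_z\circ \Phi(\Tr_\cC(f))$, this simplifies to
\[
(\varepsilon_x \otimes \varepsilon_z) \circ (\id \otimes \Phi(\Tr_\cC(f))) \;=\; \varepsilon_x \otimes (f\circ \varepsilon_y) \;=\; (\id_x\otimes f) \circ (\varepsilon_x\otimes \varepsilon_y).
\]

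Next I would compute the mate of the right-hand side $\Tr_\cC(\id_x\otimes f) \circ \mu_{x,y}$. Again by Lemma~\ref{lem:PushDownMorphism}, its mate is
\[
\varepsilon_{x\otimes z} \circ \Phi(\Tr_\cC(\id_x\otimes f)) \circ \Phi(\mu_{x,y}).
\]
By naturality of $\varepsilon$ (Lemma~\ref{lem:MoveThroughNaturalMap}), the first two factors rewrite as $(\id_x\otimes f)\circ \varepsilon_{x\otimes y}$, and then Lemma~\ref{lem:MultiplicationCompatibleWithNatural} collapses $\varepsilon_{x\otimes y}\circ \Phi(\mu_{x,y})$ to $\varepsilon_x\otimes \varepsilon_y$. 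This gives exactly $(\id_x\otimes f)\circ(\varepsilon_x\otimes \varepsilon_y)$, matching the LHS.

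The symmetric statement for $g:w\to x$ is proved by exactly the same argument with the roles of the two tensor factors swapped. I do not anticipate a genuine obstacle here: the proof is essentially just a naturality check combined with the defining property of $\mu$ from Lemma~\ref{lem:MultiplicationCompatibleWithNatural}. The only mild bookkeeping point is keeping track of the tensorator $\nu$ of the functor $\Phi$ when identifying $\Phi(\Tr_\cC(x)\otimes \Tr_\cC(y))$ with $\Phi(\Tr_\cC(x))\otimes \Phi(\Tr_\cC(y))$, but this is suppressed consistently in the diagrammatic calculus and the naturality of $\nu$ makes all such insertions coherent.
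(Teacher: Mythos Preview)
Your proof is correct and follows essentially the same approach as the paper: take mates via Lemma~\ref{lem:PushDownMorphism}, then reduce both sides to $(\id_x\otimes f)\circ(\varepsilon_x\otimes\varepsilon_y)$ using Lemmas~\ref{lem:MultiplicationCompatibleWithNatural} and~\ref{lem:MoveThroughNaturalMap} together with the fact that $\Phi$ is a tensor functor. Your write-up is in fact more explicit than the paper's, which simply states that the mated equation ``follows readily'' from those two lemmas.
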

\begin{proof}
We only show the first statement; the other is similar.
By taking mates, and using Lemma \ref{lem:PushDownMorphism}, Equation \eqref{eq: f through pants} becomes
$\varepsilon\circ\Phi\big(\mu_{x,z}\circ (\id_{\Tr_\cC(x)}\otimes\Tr_\cC(f))\big)=
\varepsilon\circ\Phi\big(\Tr_\cC(\id_x\otimes f) \circ \mu_{x,y}\big)$
\[
\begin{tikzpicture}[baseline=.2cm, scale=.7]
	\plane{(-1.9,-1.1)}{6.3}{3.1}
	\draw[thick, zString] (1.4,0) -- (3.3,0);
	\draw[thick, xString] (1.4,.3) -- (3,.3);
	\CMbox{box}{(.2,-.4)}{1.2}{1.4}{.5}{$\varepsilon_{x\otimes y}$}
	\fill[unshaded] (-1,1.7) -- (-4.8,1.7) -- (-3.8,.7) -- ++(.2,0) -- ++(.35,-.35) -- ++(-.2,0) -- ++(1.1,-1.1) -- ++(.5,0) -- (-.5,.1) -- (0,.1) -- (0,1.1) -- ++(-1,0);
	\draw[thick, xString] (-2.9,1.2)+(-1.7,0)--+(0,0) to[out=0,in=180] (-.1,.7) -- (.08,.7);
	\draw[thick, zString] (-1.6,-.2)+(-.7,0)--+(0,0) to[out=10,in=180] (-.15,.5) -- (.08,.5);
	\draw[thick, yString] (-1.6,-.2)+(-.7,0) -- +(-1.7,0);
	\draw[thick] (-3.1,1.56) to[out=0,in=180] (-.1,1);
	\draw[thick] (-1.8,-.6) to[out=5,in=180] (-.1,.2);
	\draw[thick] (-1.8,.2) to[out=10,in=2, looseness=2] (-3.1,.84);
	\draw[thick] (-3.1,1.2)+(-1.7,0) circle (.2 and .36);
	\draw[thick] (-1.8,-.2)+(-1.7,0) circle (.2 and .4);
	\draw[thick] (-3.1,1.2)++(-1.7,-.36) -- ++(1.7,0) arc (-90:90:.2 and .36) -- ++(-1.7,0);
	\draw[thick] (-1.8,-.2)++(-1.7,-.4) -- ++(1.7,0) arc (-90:90:.2 and .4) -- ++(-1.7,0);
	\draw[thick] (-.1,.2) arc (-90:90:.2 and .4);
	\nbox{unshaded}{(-2.5,-.2)}{.2}{.2}{.2}{\rotatebox{-75}{$f$}}
\end{tikzpicture}
\!=\,\,
\begin{tikzpicture}[baseline=.2cm, scale=.7]
	\plane{(-1.9,-1.1)}{6.3}{3.1}
	\draw[thick, zString] (1.4,0) -- (3.3,0);
	\draw[thick, xString] (1.4,.3) -- (3,.3);
	\CMbox{box}{(.2,-.4)}{1.2}{1.4}{.5}{$\varepsilon_{x\otimes y}$}
	\fill[unshaded] (-1,1.7) -- (-4.8,1.7) -- (-3.75,.65) -- ++(.2,0) -- ++(.3,-.3) -- ++(-.2,0) -- ++(.7,-.7) -- ++(.5,.1) -- (-.5,.1) -- (0,.1) -- (.15,.2) -- (.15,.9) -- (0,1.1) -- ++(-1,0);
	\draw[thick, xString] (-2.9-1.7,1.2) to[out=0,in=180] (-.1-1.7,.7) -- (.08-1.7,.7) -- (.08,.7);
	\draw[thick, yString] (-1.6-1.7,-.2) to[out=10,in=180] (-.15-1.7,.5) -- (.08-1.7,.5) -- ++(.5,0);
	\draw[thick, zString] (.08-1.7,.5) ++(.7,0) -- ++(1,0);
	\draw[thick] (-3.1-1.7,1.56) to[out=0,in=180] (-.1-1.7,1);
	\draw[thick] (-1.8-1.7,-.6) to[out=5,in=180] (-.1-1.7,.2);
	\draw[thick] (-1.8-1.7,.2) to[out=10,in=2, looseness=2] (-3.1-1.7,.84);
	\draw[thick] (-3.1,1.2)+(-1.7,0) circle (.2 and .36);
	\draw[thick] (-1.8,-.2)+(-1.7,0) circle (.2 and .4);
	\draw[thick] (-.1-1.7,.2) arc (-90:90:.2 and .4);
	\draw[thick] (-.1,.2)+(-1.7,0) -- +(0,0) arc (-90:90:.2 and .4) -- +(-1.7,0);
	\nbox{unshaded}{(-.7,.45)}{.15}{.35}{.2}{\rotatebox{-75}{$f$}}
\end{tikzpicture}
\]
which follows readily from Lemmas \ref{lem:MultiplicationCompatibleWithNatural} and \ref{lem:MoveThroughNaturalMap}, along with the fact that $\Phi$ is a tensor functor.
\end{proof}

\begin{lem}\label{lem:Associative}
The multiplication map $\mu$ is associative, i.e., the following diagram commutes:
\[
\xymatrix{
\Tr_\cC(x) \otimes \Tr_\cC(y) \otimes \Tr_\cC(z) \ar[rr]^{\id_{\Tr_\cC(x)}\otimes\mu}\ar[d]_{\mu\otimes \id_{\Tr_\cC(z)}} 
&&
\Tr_\cC(x) \otimes \Tr_\cC(y \otimes z) \ar[d]^{\mu}
\\
\Tr_\cC(x \otimes y) \otimes \Tr_\cC(z) \ar[rr]^{\mu}  
&& 
\Tr_\cC(x \otimes y \otimes z) 
}
\]
In diagrams,
\begin{equation}\label{eqn:RHS -- 1st version}
\begin{tikzpicture}[baseline=.3cm, scale=.7]
	\pgfmathsetmacro{\voffset}{.08};
	\pgfmathsetmacro{\hoffset}{.15};

	\coordinate (a) at (-1,-1);
	\coordinate (a1) at ($ (a) + (1.4,0)$);
	\coordinate (a2) at ($ (a1) + (1.4,0)$);
	\coordinate (b) at ($ (a) + (.7,1.5)$);
	\coordinate (b2) at ($ (b) + (1.4,0)$);
	\coordinate (c) at ($ (b) + (.7,1.5)$);	

	\pairOfPants{(a)}{}
	\topPairOfPants{(b)}{}
	\LeftSlantCylinder{(a2)}{}

	%lower strands
	\draw[thick, xString] ($ (a) + 2*(\hoffset,0) + (0,-.1)$) .. controls ++(90:.8cm) and ++(270:.8cm) .. ($ (b) + (\hoffset,0) + (0,-\voffset) $);	
	\draw[thick, yString] ($ (a1) + 2*(\hoffset,0) + (0,-.1)$) .. controls ++(90:.8cm) and ++(270:.8cm) .. ($ (b) + 2*(\hoffset,0) + (0,-.1) $);	
	\draw[thick, zString] ($ (a2) + 2*(\hoffset,0) + (0,-.1)$) .. controls ++(90:.8cm) and ++(270:.6cm) .. ($ (b2) + 2*(\hoffset,0) + (0,-.1) $) -- ($ (b2) + 2*(\hoffset,0) + (0,-.1) $);	
	
	%upper pants strands
	\draw[thick, xString] ($ (b) + (\hoffset,0) + (0,-\voffset)$) .. controls ++(90:.8cm) and ++(270:.8cm) .. ($ (c) + (\hoffset,0) + (0,-\voffset) $);
	\draw[thick, yString] ($ (b) + 2*(\hoffset,0) + (0,-.1)$) .. controls ++(90:.8cm) and ++(270:.8cm) .. ($ (c) + 2*(\hoffset,0) + (0,-\voffset) $);	
	\draw[thick, zString] ($ (b2) + 2*(\hoffset,0) + (0,-.1)$) .. controls ++(90:.8cm) and ++(270:.6cm) .. ($ (c) + 3*(\hoffset,0) + (0,-\voffset) $);	
\end{tikzpicture}
=
\begin{tikzpicture}[baseline=.3cm, scale=.7, xscale=-1]
	\pgfmathsetmacro{\voffset}{.08};
	\pgfmathsetmacro{\hoffset}{.15};

	\coordinate (a) at (-1,-1);
	\coordinate (a1) at ($ (a) + (1.4,0)$);
	\coordinate (a2) at ($ (a1) + (1.4,0)$);
	\coordinate (b) at ($ (a) + (.7,1.5)$);
	\coordinate (b2) at ($ (b) + (1.4,0)$);
	\coordinate (c) at ($ (b) + (.7,1.5)$);	

	\pairOfPants{(a)}{}
	\topPairOfPants{(b)}{}
	\LeftSlantCylinder{(a2)}{}

	%lower strands
	\draw[thick, zString] ($ (a) + 2*(\hoffset,0) + (0,-.1)$) .. controls ++(90:.8cm) and ++(270:.8cm) .. ($ (b) + (\hoffset,0) + (0,-\voffset) $);	
	\draw[thick, yString] ($ (a1) + 2*(\hoffset,0) + (0,-.1)$) .. controls ++(90:.8cm) and ++(270:.8cm) .. ($ (b) + 2*(\hoffset,0) + (0,-.1) $);	
	\draw[thick, xString] ($ (a2) + 2*(\hoffset,0) + (0,-.1)$) .. controls ++(90:.8cm) and ++(270:.6cm) .. ($ (b2) + 2*(\hoffset,0) + (0,-.1) $) -- ($ (b2) + 2*(\hoffset,0) + (0,-.1) $);	
	
	%upper pants strands
	\draw[thick, zString] ($ (b) + (\hoffset,0) + (0,-\voffset)$) .. controls ++(90:.8cm) and ++(270:.8cm) .. ($ (c) + (\hoffset,0) + (0,-\voffset) $);
	\draw[thick, yString] ($ (b) + 2*(\hoffset,0) + (0,-.1)$) .. controls ++(90:.8cm) and ++(270:.8cm) .. ($ (c) + 2*(\hoffset,0) + (0,-\voffset) $);	
	\draw[thick, xString] ($ (b2) + 2*(\hoffset,0) + (0,-.1)$) .. controls ++(90:.8cm) and ++(270:.6cm) .. ($ (c) + 3*(\hoffset,0) + (0,-\voffset) $);	
\end{tikzpicture}
.
\end{equation}
\end{lem}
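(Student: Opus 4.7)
The plan is to prove associativity by transferring the statement across the adjunction $\Phi \dashv \Tr_\cC$. By faithfulness of the adjunction isomorphism, it suffices to check that the mates of $\mu_{x,y\otimes z}\circ(\id\otimes \mu_{y,z})$ and $\mu_{x\otimes y,z}\circ(\mu_{x,y}\otimes \id)$, viewed as morphisms $\Phi(\Tr_\cC(x)\otimes \Tr_\cC(y)\otimes \Tr_\cC(z))\to x\otimes y\otimes z$ in $\cM$, agree.

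By Lemma \ref{lem:PushDownMorphism}, the mate of any $f:c\to \Tr_\cC(w)$ is $\varepsilon_w\circ\Phi(f)$. Applied to $f=\mu_{x,y\otimes z}\circ(\id\otimes \mu_{y,z})$, this mate equals $\varepsilon_{x\otimes y\otimes z}\circ\Phi(\mu_{x,y\otimes z})\circ\Phi(\id\otimes\mu_{y,z})$. Using Lemma \ref{lem:MultiplicationCompatibleWithNatural} (and the fact that $\Phi$ is a tensor functor so that it commutes with $-\otimes-$ up to the natural isomorphisms $\nu$), the leftmost composite $\varepsilon_{x\otimes(y\otimes z)}\circ\Phi(\mu_{x,y\otimes z})$ equals $\varepsilon_x\otimes \varepsilon_{y\otimes z}$. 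Applying Lemma \ref{lem:MultiplicationCompatibleWithNatural} once more gives $\varepsilon_{y\otimes z}\circ\Phi(\mu_{y,z})=\varepsilon_y\otimes \varepsilon_z$. Hence the mate reduces to $\varepsilon_x\otimes \varepsilon_y\otimes \varepsilon_z$.

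Running the same two-step reduction on the other composite $\mu_{x\otimes y,z}\circ(\mu_{x,y}\otimes \id)$, one obtains the same morphism $\varepsilon_x\otimes \varepsilon_y\otimes \varepsilon_z$, so both mates coincide and associativity follows. Diagrammatically, this matches the picture in \eqref{eqn:RHS -- 1st version}: each side, after attaching $\varepsilon_{x\otimes y\otimes z}$ on top and flattening the tubes via the open-umbrella correspondence \eqref{eq: open umbrella}, collapses to the planar diagram consisting of three independent $\varepsilon$-discs for $x$, $y$, and $z$ placed side by side.

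The only subtlety—and the most careful bookkeeping required—will be to keep track of the tensorator $\nu$ and the associator on the $\cM$-side when peeling off the two applications of Lemma \ref{lem:MultiplicationCompatibleWithNatural}: one must verify that the coherence data of $\Phi$ as a tensor functor (equation \eqref{eq: def: tensor functor}) exactly compensates for the two possible ways of parenthesising the triple tensor product, so that both reductions land on the same morphism $\varepsilon_x\otimes \varepsilon_y\otimes \varepsilon_z:\Phi(\Tr_\cC(x))\otimes \Phi(\Tr_\cC(y))\otimes \Phi(\Tr_\cC(z))\to x\otimes y\otimes z$ up to the canonical associator. This is a routine diagram chase and contains no genuinely new idea beyond the two lemmas just cited.
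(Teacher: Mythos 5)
Your proposal is correct and follows essentially the same route as the paper: both sides are identified, via Lemma \ref{lem:PushDownMorphism}, Lemma \ref{lem:MultiplicationCompatibleWithNatural} (applied twice), and the tensor-functor structure of $\Phi$, with the common mate $\varepsilon_x\otimes\varepsilon_y\otimes\varepsilon_z$. The paper carries out this computation explicitly for one side and notes the other is similar, exactly as you do.
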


\begin{proof}
We claim that, under the adjunction, each map individually is the mate of
$$
\qquad\begin{tikzpicture}[baseline=.9cm, scale=.8]
	\plane{(0,-.6)}{3}{3.6}

	\draw[thick, xString] (-1.8,2) -- (.4,2);
	\draw[thick, yString] (-.8,1) -- (1.4,1);
	\draw[thick, zString] (.2,0) -- (2.4,0);
%	\draw[thick, yString] (-.1,.7) .. controls ++(0:.4cm) and ++(180:.4cm) .. (1.6,-.8) -- (4.3,-.8);

	\CMbox{box}{(-1.8,1.6)}{.8}{.8}{.4}{$\varepsilon_x$}
	\CMbox{box}{(-.8,.6)}{.8}{.8}{.4}{$\varepsilon_y$}
%	\CMbox{box}{(-1.3,.3)}{1.2}{.8}{.4}{$\varepsilon_{x\otimes y}$}
	\CMbox{box}{(.2,-.4)}{.8}{.8}{.4}{$\varepsilon_{z}$}

	\straightTubeWithString{(-2,2)}{.1}{1.4}{xString}
	\straightTubeWithString{(-1,1)}{.1}{1.4}{yString}
	\straightTubeWithString{(0,0)}{.1}{1.4}{zString}

\end{tikzpicture}
\hspace{-1cm}
\in\,\, \cM\Big(\Phi\big(\Tr_\cC(x)\otimes \Tr_\cC(y)\otimes \Tr_\cC(z)\big),\, x\otimes y\otimes z\Big).
$$
We only prove this for the right hand side of Equation \eqref{eqn:RHS -- 1st version} (the left hand side is similar).
The mate of the right hand side is given by
\begin{align*}
&\varepsilon_{x\otimes y\otimes z}
\circ
\Phi\big(\mu_{x,y\otimes z}\circ (\id_{\Tr_\cC(x)}\otimes \mu_{y,z})\big)
=
\varepsilon_{x\otimes y\otimes z}
\circ
\Phi(\mu_{x,y\otimes z})\circ \Phi(\id_{\Tr_\cC(x)}\otimes \mu_{y,z})
\\
&\,\,\,=
(\varepsilon_x\otimes\varepsilon_{y\otimes z})\circ\big(\Phi(\id_{\Tr_\cC(x)})\otimes\Phi(\mu_{y,z})\big)
=
\varepsilon_x \otimes 
\big(
\varepsilon_{y\otimes z}
\circ
\Phi(\mu_{y,z})
\big)
=
\varepsilon_x\otimes (\varepsilon_y\otimes \varepsilon_z),
\end{align*}
where the first equality is the functoriality of $\Phi$, the second equality is Lemma \ref{lem:MultiplicationCompatibleWithNatural} and the fact that $\Phi$ is a tensor functor, and the fourth equality is again Lemma \ref{lem:MultiplicationCompatibleWithNatural}.
\end{proof}

%%%%%%%%%%%%%%%%%%%%%%%%%%%%%%%%%%%%%%%%%%%%%%%%%%
\subsection{The unit of the adjunction}
\label{sec:UnitMap}
In Section \ref{sec:AdjointsOfTensorFunctors} we studied the attaching map $\varepsilon$, which is the counit of the adjunction $\Phi\dashv \Tr_\cC$.
We now study the unit of the adjunction, which we denote $\eta$.

Thus, for any object $c\in \cC$, we have a morphism $\eta_c:c \to \Tr_\cC(\Phi(c))$.
We will also write $i:1_\cC\to \Tr_\cC(1_\cM)$ for the unit $\eta$ evaluated on the unit object $1_\cC\in\cC$.
We represent $\eta$ and $i$ by the following diagrams:
$$
\eta_c \,\,=\,\, 
\begin{tikzpicture}[baseline=-.1cm]
	\coordinate (a1) at (0,0);
	\coordinate (b1) at (0,.4);
	%cylinder
	\draw[thick] (a1) -- (b1);
	\draw[thick] ($ (a1) + (.6,0) $) -- ($ (b1) + (.6,0) $);
	\draw[thick] ($ (b1) + (.3,0) $) ellipse (.3 and .1);
	%cap
	\draw[thick] (a1) arc (-180:0:.3cm);
	%string
	\draw[super thick, white] (0,-.6) -- (0,-.5) .. controls ++(90:.2cm) and ++(270:.2cm) .. ($ (a1) + (.3,0) $) -- ($ (b1) + (.3,-.05) $);
	\draw[thick, cString] (0,-.6) -- (0,-.5) .. controls ++(90:.2cm) and ++(270:.2cm) .. ($ (a1) + (.3,0) $) -- ($ (b1) + (.3,-.05) $);
%	\node [left] at (0,-.6) {$\scriptstyle c$};
\end{tikzpicture} %\,\,:c \to \Tr_\cC(\Phi(c))
\qquad\qquad\quad
i \,\,=\,\, 
\begin{tikzpicture}[baseline=-.1cm]
	\coordinate (a1) at (0,0);
	\coordinate (b1) at (0,.4);
	%cylinder
	\draw[thick] (a1) -- (b1);
	\draw[thick] ($ (a1) + (.6,0) $) -- ($ (b1) + (.6,0) $);
	\draw[thick] ($ (b1) + (.3,0) $) ellipse (.3 and .1);
	%cap
	\draw[thick] (a1) arc (-180:0:.3cm);
\end{tikzpicture} %\,\,:1_\cC\to \Tr_\cC(1_\cM)
\,.
$$

%\begin{facts}
Since we have an adjunction $\Phi\dashv \Tr_\cC$, the counit $\varepsilon$ and the unit $\eta$ interact in the following way:
\begin{enumerate}[(1)]
\item
For $x\in \cM$, the mate of $\varepsilon_{x}$ is $\id_{\Tr_\cC(x)}$.
Hence $\Tr_\cC(\varepsilon_x)\circ \eta_{\Tr_\cC(x)}=\id_{\Tr_\cC(x)} $.
We represent this diagrammatically by the following relation.
\begin{equation}
\label{eq:CAdjointRelation}
\begin{tikzpicture}[baseline=.3cm]
	\coordinate (a1) at (0,0);
	\coordinate (b1) at (0,1.5);

	%cylinder
	\draw[thick] (a1) -- (b1);
	\draw[thick] ($ (a1) + (.6,0) $) -- ($ (b1) + (.6,0) $);
	\draw[thick] ($ (b1) + (.3,0) $) ellipse (.3 and .1);
	\draw[thick] (a1) arc (-180:0:.3cm);
%	\halfDottedEllipse{(a1)}{.3}{.1}
	\downTubeWithString{(-.2,.2)}{.2}{1}{xString}
	\CMbox{box}{(-.2,.2)}{.4}{.4}{.2}{$\varepsilon$}
	\draw[thick, dotted] (-.4,-.4) rectangle (.8,0);
	\node at (1.4,-.2) {\scriptsize{$\eta_{\Tr_\cC(x)}$}};
	\draw[thick, xString] (.1,.7) -- (.1,1.42) ;
\end{tikzpicture}
=\,\,
\begin{tikzpicture}[baseline=.9cm]
	\coordinate (a1) at (0,0);
	\coordinate (b1) at (0,2);

	%cylinder
	\draw[thick] (a1) -- (b1);
	\draw[thick] ($ (a1) + (.6,0) $) -- ($ (b1) + (.6,0) $);
	\draw[thick] ($ (b1) + (.3,0) $) ellipse (.3 and .1);
	\halfDottedEllipse{(a1)}{.3}{.1}
	\draw[thick, xString] (.3,-.1) -- (.3,1.9) ;
\end{tikzpicture}
\end{equation}
\item
For $c\in\cC$, the mate of $\eta_c$ is $\id_{\Phi(c)}$.
So by Lemma \ref{lem:PushDownMorphism}, $\Phi(\eta_c)\circ \varepsilon_{\Phi(c)}=\id_{\Phi(c)}$. 
We represent this diagrammatically by the following relation.
\begin{equation}
\label{eq:MAdjointRelation}
\begin{tikzpicture}[baseline=.1cm, scale=.8]
	\plane{(-.4,-.8)}{2.8}{1.8}

	\CMbox{box}{(0,-.5)}{.8}{.8}{.4}{$\varepsilon$}

	\straightTubeWithCap{(-.2,-.1)}{.1}{.5}
	\draw[super thick, white] (.9,0) -- (2.2,0);
	\draw[thick, cString] (.8,0) -- (2.2,0);

	\draw[super thick, white] (-2,.3) -- (-1.1,.3) .. controls ++(0:.2cm) and ++(180:.2cm) .. (-.6,.1) -- (-.07,.1) ;
	\draw[thick, cString] (-2,.3) -- (-1.1,.3) .. controls ++(0:.2cm) and ++(180:.2cm) .. (-.6,.1) -- (-.07,.1) ;
\end{tikzpicture}
=
\begin{tikzpicture}[baseline=.1cm, scale=.8 ]
	\plane{(0,-.8)}{2}{1.8}

	\draw[super thick, white] (-1.4,0) -- (1.8,0);
	\draw[thick, cString] (-1.4,0) -- (1.8,0);
\end{tikzpicture}
\end{equation}
\end{enumerate}
%\end{facts}

The following lemmas demonstrate how $\eta$ and $i$ interact with the graphical calculus introduced so far.

\begin{lem}\label{lem:CreateUnit}
The following diagram commutes:
\[
\xymatrix{
1_\cC \otimes \Tr_\cC(x)\ar[d]^{i\otimes  \id_{\Tr_\cC(x)}}\ar[rr] && \Tr_\cC(x)\ar[d]^{\id_{\Tr_\cC(x)}} && \Tr_\cC(x)\otimes 1_\cC \ar[ll]\ar[d]^{\id_{\Tr_\cC(x)}\otimes i}
\\
\Tr_\cC(1) \otimes \Tr_\cC(x) \ar[r]^(.55)\mu & \Tr_\cC(1 \otimes x)\ar[r] &\Tr_\cC(x)& \Tr_\cC(x\otimes 1)\ar[l] & \Tr_\cC(x)\otimes \Tr_\cC(1) \ar[l]_(.55)\mu.
}
\]
In pictures, this is
$$
\begin{tikzpicture}[baseline=1.15cm]

	\coordinate (a1) at (0,0);
	\coordinate (a2) at (1.4,0);
	\coordinate (b1) at (0,1);
	\coordinate (b2) at (1.4,1);
	\coordinate (c1) at (.7,2.5);
	
	%cylinder
	\draw[thick] (a2) -- (b2);
	\draw[thick] ($ (a2) + (.6,0) $) -- ($ (b2) + (.6,0) $);
	\halfDottedEllipse{(a2)}{.3}{.1}
	%cap
	\draw[thick] (b1) arc (-180:0:.3cm);
	%pants
	\topPairOfPants{(b1)}{}
		
	\draw[thick, xString] ($ (a2) + (.3,-.1) $) -- ($ (b2) + (.3,-.1) $) .. controls ++(90:.8cm) and ++(270:.8cm) .. ($ (c1) + (.3,-.1) $);
\end{tikzpicture}
=
\begin{tikzpicture}[baseline=1.15cm]

	\coordinate (a1) at (0,0);
	\coordinate (b1) at (0,2.5);
	
	%cylinder
	\draw[thick] (a1) -- (b1);
	\draw[thick] ($ (a1) + (.6,0) $) -- ($ (b1) + (.6,0) $);
	\halfDottedEllipse{(a1)}{.3}{.1}
	\draw[thick] ($ (b1) + (.3,0) $) ellipse (.3cm and .1cm);
		
	\draw[thick, xString] ($ (a1) + (.3,-.1) $) -- ($ (b1) + (.3,-.1) $);
\end{tikzpicture}
=
\begin{tikzpicture}[baseline=1.15cm, xscale=-1]

	\coordinate (a1) at (0,0);
	\coordinate (a2) at (1.4,0);
	\coordinate (b1) at (0,1);
	\coordinate (b2) at (1.4,1);
	\coordinate (c1) at (.7,2.5);
	
	%cylinder
	\draw[thick] (a2) -- (b2);
	\draw[thick] ($ (a2) + (.6,0) $) -- ($ (b2) + (.6,0) $);
	\halfDottedEllipse{(a2)}{.3}{.1}
	%cap
	\draw[thick] (b1) arc (-180:0:.3cm);
	%pants
	\topPairOfPants{(b1)}{}
		
	\draw[thick, xString] ($ (a2) + (.3,-.1) $) -- ($ (b2) + (.3,-.1) $) .. controls ++(90:.8cm) and ++(270:.8cm) .. ($ (c1) + (.3,-.1) $);
\end{tikzpicture}
$$
\end{lem}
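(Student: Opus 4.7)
The plan is to verify each of the two equalities by passing to mates under the adjunction $\Phi\dashv \Tr_\cC$, where the mate calculus reduces the lemma to a short bookkeeping argument built from three earlier ingredients: Lemma \ref{lem:MultiplicationCompatibleWithNatural} (which computes $\varepsilon\circ\Phi(\mu)$), Lemma \ref{lem:MoveThroughNaturalMap} (naturality of $\varepsilon$), and the triangle identity \eqref{eq:MAdjointRelation} specialized to $c=1_\cC$, which reads $\varepsilon_{\Phi(1_\cC)}\circ\Phi(i)=\id_{\Phi(1_\cC)}$ (after identifying $\Phi(1_\cC)\cong 1_\cM$ via the tensor structure on $\Phi$).

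I treat the left-hand equality; the right-hand one is symmetric. Unwound algebraically, it asserts that
\[
\Tr_\cC(\lambda_x)\circ \mu_{1_\cM,x}\circ(i\otimes \id_{\Tr_\cC(x)})\;=\;\lambda_{\Tr_\cC(x)}.
\]
By the adjunction $\cC(1_\cC\otimes \Tr_\cC(x),\Tr_\cC(x))\cong\cM(\Phi(1_\cC\otimes\Tr_\cC(x)),x)$, it suffices to prove the corresponding equality of mates in $\cM$. By Lemma~\ref{lem:PushDownMorphism}, the mate of the right-hand side is simply $\varepsilon_x\circ\lambda_{\Phi(\Tr_\cC(x))}$ (up to the canonical isomorphism $\Phi(1_\cC\otimes\Tr_\cC(x))\cong 1_\cM\otimes \Phi(\Tr_\cC(x))$ coming from $\Phi$ being a tensor functor).

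For the mate of the left-hand side, Lemma~\ref{lem:PushDownMorphism} gives
\[
\varepsilon_x\circ\Phi(\Tr_\cC(\lambda_x))\circ\Phi(\mu_{1_\cM,x})\circ\Phi(i\otimes \id_{\Tr_\cC(x)}).
\]
I apply naturality of $\varepsilon$ (Lemma \ref{lem:MoveThroughNaturalMap}) to push $\lambda_x$ past $\varepsilon$: $\varepsilon_x\circ\Phi(\Tr_\cC(\lambda_x))=\lambda_x\circ\varepsilon_{1_\cM\otimes x}$. Next, Lemma \ref{lem:MultiplicationCompatibleWithNatural} rewrites $\varepsilon_{1_\cM\otimes x}\circ \Phi(\mu_{1_\cM,x})$ as $\varepsilon_{1_\cM}\otimes\varepsilon_x$ (again modulo the tensor-functor coherences). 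The result is
\[
\lambda_x\circ\big((\varepsilon_{1_\cM}\circ\Phi(i))\otimes \varepsilon_x\big),
\]
and by \eqref{eq:MAdjointRelation} the factor $\varepsilon_{1_\cM}\circ\Phi(i)$ equals $\id_{1_\mathcal M}$. What remains is $\lambda_x\circ(\id_{1_\cM}\otimes\varepsilon_x)$, which by naturality of the unitor equals $\varepsilon_x\circ\lambda_{\Phi(\Tr_\cC(x))}$, matching the mate of the right-hand side. The right-hand equality of the lemma is proved identically, using $\rho$ in place of $\lambda$.

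Graphically, the argument is exactly the umbrella-opening procedure of \eqref{eq: open umbrella}: opening the pair-of-pants-with-a-cap on the left of the pictured identity into a plane diagram turns the capped tube into $\varepsilon_{1_\cM}\circ \Phi(i)=\id_{1_\cM}$ by \eqref{eq:MAdjointRelation}, and the remaining tube becomes just $\varepsilon_x$ on the $x$-strand, which is what opening up the middle cylinder produces as well. The only genuine bookkeeping is keeping track of the isomorphisms $\nu_{\bullet,\bullet}$ and $i:1_\cM\to \Phi(1_\cC)$ witnessing that $\Phi$ is a tensor functor; these are suppressed in the diagrams exactly as in the preceding lemmas, and cause no real difficulty.
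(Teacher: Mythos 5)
Your proposal is correct and follows essentially the same route as the paper: take mates via Lemma~\ref{lem:PushDownMorphism}, apply Lemma~\ref{lem:MultiplicationCompatibleWithNatural} to turn $\varepsilon\circ\Phi(\mu)$ into $\varepsilon_1\otimes\varepsilon_x$, and finish with the triangle identity \eqref{eq:MAdjointRelation} giving $\varepsilon_{1}\circ\Phi(i)=\id$. The only difference is cosmetic: you track the unitors explicitly (invoking naturality of $\varepsilon$ and of $\lambda$), whereas the paper suppresses them, as it does throughout.
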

\begin{proof}
We check that the left square commutes upon taking mates (the right one is similar).
Using Lemma \ref{lem:PushDownMorphism}, we compute:
\begin{align*}
\varepsilon_x\circ\Phi\big(&\mu_{1,x}\circ (i\otimes \id_{\Tr_\cC(x)})\big)
=
\varepsilon_x\circ\Phi\big(\mu_{1,x}\big)\circ \Phi\big(i\otimes \id_{\Tr_\cC(x)})\big)
\\&=
(\varepsilon_{1}\otimes \varepsilon_x)\circ \big(\Phi(i)\otimes \Phi(\id_{\Tr_\cC(x)})\big)
=
\big(\varepsilon_{1}\circ \Phi(i)\big)\otimes \varepsilon_x =\varepsilon_x.
\end{align*}
We have used Lemma \ref{lem:MultiplicationCompatibleWithNatural} for the second equality, and Equation \eqref{eq:MAdjointRelation} for the last one.
\end{proof}

Note that a lot of what we have done so far is to reprove in our special case the well known fact that the adjoint of a tensor functor is a lax tensor functor \cite{MR0360749}.
The data is provided by the natural transformations $\mu$ and $\eta$, and the axioms are verified in Lemmas~\ref{lem:Associative} and~\ref{lem:CreateUnit}.

\begin{lem}
\label{lem:EtaSplitting}
For $c,d\in\cC$, we have $\eta_{c\otimes d} = \mu_{\Phi(c),\Phi(d)} \circ (\eta_c \otimes \eta_d)$.
In diagrams:
$$
\begin{tikzpicture}[baseline=-.1cm]
	\coordinate (a1) at (0,0);
	\coordinate (b1) at (0,.4);
	%cylinder
	\draw[thick] (a1) -- (b1);
	\draw[thick] ($ (a1) + (.6,0) $) -- ($ (b1) + (.6,0) $);
	\draw[thick] ($ (b1) + (.3,0) $) ellipse (.3 and .1);
	%cap
	\draw[thick] (a1) arc (-180:0:.3cm);
	%string
	\draw[super thick, white] (.05,-.6) -- (.05,-.5) .. controls ++(90:.2cm) and ++(270:.2cm) .. ($ (a1) + (.35,0) $) -- ($ (b1) + (.35,-.05) $);
	\draw[super thick, white] (-.05,-.6) -- (-.05,-.5) .. controls ++(90:.2cm) and ++(270:.2cm) .. ($ (a1) + (.25,0) $) -- ($ (b1) + (.25,-.05) $);
	\draw[thick, dString] (.05,-.6) -- (.05,-.5) .. controls ++(90:.2cm) and ++(270:.2cm) .. ($ (a1) + (.35,0) $) -- ($ (b1) + (.35,-.05) $);
	\draw[thick, cString] (-.05,-.6) -- (-.05,-.5) .. controls ++(90:.2cm) and ++(270:.2cm) .. ($ (a1) + (.25,0) $) -- ($ (b1) + (.25,-.05) $);
\end{tikzpicture}
\,\,\,=\,\,
\begin{tikzpicture}[baseline=.8cm]
	\coordinate (a1) at (0,0);
	\coordinate (b1) at (0,.4);
	\coordinate (a2) at (1.4,0);
	\coordinate (b2) at (1.4,.4);
	%cylinder
	\draw[thick] (a1) -- (b1);
	\draw[thick] ($ (a1) + (.6,0) $) -- ($ (b1) + (.6,0) $);
	\draw[thick] (a2) -- (b2);
	\draw[thick] ($ (a2) + (.6,0) $) -- ($ (b2) + (.6,0) $);
	%pants
	\topPairOfPants{(b1)}{}
	%cap
	\draw[thick] (a1) arc (-180:0:.3cm);
	\draw[thick] (a2) arc (-180:0:.3cm);
	%string
	\draw[super thick, white] ($ (a1) + (0,-.6) $) --($ (a1) + (0,-.5) $) .. controls ++(90:.2cm) and ++(270:.2cm) .. ($ (a1) + (.3,0) $) -- ($ (b1) + (.3,-.05) $) .. controls ++(90:.8cm) and ++(270:.8cm) .. ($ (b1) + (.95,1.45) $);
	\draw[thick, cString] ($ (a1) + (0,-.6) $) --($ (a1) + (0,-.5) $) .. controls ++(90:.2cm) and ++(270:.2cm) .. ($ (a1) + (.3,0) $) -- ($ (b1) + (.3,-.05) $) .. controls ++(90:.8cm) and ++(270:.8cm) .. ($ (b1) + (.95,1.45) $);
	\draw[super thick, white] ($ (a2) + (0,-.6) $) --($ (a2) + (0,-.5) $) .. controls ++(90:.2cm) and ++(270:.2cm) .. ($ (a2) + (.3,0) $) -- ($ (b2) + (.3,-.05) $) .. controls ++(90:.8cm) and ++(270:.8cm) .. ($ (b1) + (1.05,1.45) $);
	\draw[thick, dString] ($ (a2) + (0,-.6) $) --($ (a2) + (0,-.5) $) .. controls ++(90:.2cm) and ++(270:.2cm) .. ($ (a2) + (.3,0) $) -- ($ (b2) + (.3,-.05) $) .. controls ++(90:.8cm) and ++(270:.8cm) .. ($ (b1) + (1.05,1.45) $);
\end{tikzpicture}
$$
\end{lem}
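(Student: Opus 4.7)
The plan is to verify the identity by taking mates under the adjunction $\Phi \dashv \Tr_\cC$. Both sides are morphisms from $c \otimes d$ to $\Tr_\cC(\Phi(c)\otimes \Phi(d))$, so by the adjunction it suffices to show that they have the same image in $\cM(\Phi(c \otimes d), \Phi(c)\otimes \Phi(d))$ under the isomorphism of Lemma \ref{lem:PushDownMorphism}. (All instances of the natural isomorphism $\Phi(c)\otimes \Phi(d) \cong \Phi(c \otimes d)$ coming from the tensor structure on $\Phi$ will be suppressed for readability.)

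For the left-hand side $\eta_{c \otimes d}$, property (2) of the adjunction tells us that its mate is $\id_{\Phi(c \otimes d)}$. For the right-hand side, I will compute step-by-step:
\[
\varepsilon \circ \Phi\big(\mu_{\Phi(c),\Phi(d)} \circ (\eta_c \otimes \eta_d)\big) = \varepsilon \circ \Phi(\mu_{\Phi(c),\Phi(d)}) \circ \Phi(\eta_c \otimes \eta_d).
\]
Applying Lemma \ref{lem:MultiplicationCompatibleWithNatural} to rewrite $\varepsilon \circ \Phi(\mu_{\Phi(c),\Phi(d)})$ as $\varepsilon_{\Phi(c)} \otimes \varepsilon_{\Phi(d)}$, and using that $\Phi$ is a tensor functor, this becomes
\[
\big(\varepsilon_{\Phi(c)} \circ \Phi(\eta_c)\big) \otimes \big(\varepsilon_{\Phi(d)} \circ \Phi(\eta_d)\big).
\]
Each factor equals the identity by the triangle identity \eqref{eq:MAdjointRelation}, so the total mate is $\id_{\Phi(c)} \otimes \id_{\Phi(d)} = \id_{\Phi(c\otimes d)}$.

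Since both sides have the same mate, they must agree. The only real subtlety is bookkeeping of the tensorator isomorphisms $\Phi(c) \otimes \Phi(d) \to \Phi(c \otimes d)$ when invoking Lemma \ref{lem:MultiplicationCompatibleWithNatural}, but these are precisely absorbed into the identification already implicit in Definition \ref{def: multiplication map}, so no real obstacle arises.
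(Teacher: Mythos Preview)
Your proof is correct and follows essentially the same approach as the paper: both arguments take mates under the adjunction, observe that the mate of $\eta_{c\otimes d}$ is $\id_{\Phi(c\otimes d)}$, and then compute the mate of the right-hand side using Lemma~\ref{lem:MultiplicationCompatibleWithNatural} together with two applications of the triangle identity~\eqref{eq:MAdjointRelation}.
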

\begin{proof}
By Equation \eqref{eq:MAdjointRelation}, we know that $\Phi(\eta_{c\otimes d})\circ\varepsilon_{\Phi(c\otimes d)}=\id_{\Phi(c\otimes d)}$, so
it suffices to show that the mate of the right hand side is equal to $\id_{\Phi(c\otimes d)}$.
The result now follows from Lemma~\ref{lem:MultiplicationCompatibleWithNatural} together with two applications of the relation~\eqref{eq:MAdjointRelation}.
\begin{align*}
\begin{tikzpicture}[baseline=.4cm, scale=.7]
	\plane{(-.4,-1.1)}{4.8}{3.1}
	\draw[super thick, white] (1.5,0) -- (3.7,0);
	\draw[super thick, white] (1.5,.3) -- (3.4,.3);
	\draw[thick, dString] (1.4,0) -- (3.7,0);
	\draw[thick, cString] (1.4,.3) -- (3.4,.3);
	\CMbox{box}{(.2,-.4)}{1.2}{1.4}{.5}{$\varepsilon$}
%	\straightTubeNoString{(-.1,.2)}{.2}{2.4}
	\fill[unshaded] (-1,1.7) -- (-3.3,1.7) -- (-2.25,.65) -- ++(.2,0) -- ++(.3,-.3) -- ++(-.2,0) -- ++(.8,-.8) -- ++(.5,.1) -- (-.5,.1) -- (0,.1) -- (0,1.1) -- ++(-1,0);
	\draw[thick] (-3.1,1.56) to[out=0,in=180] (-.1,1);
	\draw[thick] (-1.8,-.6) to[out=5,in=180] (-.1,.2);
	\draw[thick] (-1.8,.2) to[out=10,in=2, looseness=2] (-3.1,.84);
%	\draw[thick] (-3.1,1.2) circle (.2 and .36);
	\draw[thick] (-3.1,.84) arc (270:90:.36cm);
%	\draw[thick] (-1.8,-.2) circle (.2 and .4);
	\draw[thick] (-1.8,-.6) arc (270:90:.4cm);
	\draw[thick] (-.1,.2) arc (-90:90:.2 and .4);
	\fill[unshaded] (.1,.6) circle (.04cm);
	\draw[super thick, white] (.05,.5) -- (.125,.5);
	\draw[super thick, white] (.05,.7) -- (.125,.7);
	\draw[thick, cString] (-2.9,1.2) to[out=0,in=180] (-.1,.7) -- (.12,.7);
	\draw[thick, dString] (-1.6,-.2) to[out=10,in=180] (-.15,.5) -- (.12,.5);
	\draw[super thick, white] (-4.3,1.56) -- (-3.9,1.56) .. controls ++(0:.4cm) and ++(180:.4cm) .. (-2.9,1.2);
	\draw[thick, cString] (-4.3,1.56) -- (-3.9,1.56) .. controls ++(0:.4cm) and ++(180:.4cm) .. (-2.9,1.2);
	\draw[super thick, white] (-3,.2) -- (-2.6,.2) .. controls ++(0:.4cm) and ++(180:.4cm) .. (-1.6,-.2);
	\draw[thick, dString] (-3,.2) -- (-2.6,.2) .. controls ++(0:.4cm) and ++(180:.4cm) .. (-1.6,-.2);
\end{tikzpicture}
\!\!&=\;\;
\begin{tikzpicture}[baseline=.65cm, scale=.8]
	\plane{(-.5,-.8)}{3}{3}
	\CMbox{box}{(-1.4,.8)}{.8}{.8}{.4}{$\varepsilon$}
	\CMbox{box}{(0,-.6)}{.8}{.8}{.4}{$\varepsilon$}
	\straightTubeWithCap{(-.2,-.3)}{.1}{.5}
	\draw[super thick, white] (-2,.1) -- (-1.1,.1) .. controls ++(0:.2cm) and ++(180:.2cm) .. (-.6,-.1) -- (-.07,-.1) ;
	\draw[thick, dString] (-2,.1) -- (-1.1,.1) .. controls ++(0:.2cm) and ++(180:.2cm) .. (-.6,-.1) -- (-.07,-.1) ;
	\straightTubeWithCap{(-1.6,1.1)}{.1}{.5}
	\draw[super thick, white] (-3.4,1.5) -- (-2.5,1.5) .. controls ++(0:.2cm) and ++(180:.2cm) .. (-2,1.3) -- (-1.47,1.3) ;
	\draw[thick, cString] (-3.4,1.5) -- (-2.5,1.5) .. controls ++(0:.2cm) and ++(180:.2cm) .. (-2,1.3) -- (-1.47,1.3) ;
	\draw[super thick, white] (.9,-.1) -- (2.2,-.1);
	\draw[thick, dString] (.8,-.1) -- (2.2,-.1);
	\draw[super thick, white] (-.5,1.3) -- (.8,1.3);
	\draw[thick, cString] (-.6,1.3) -- (.8,1.3);
\end{tikzpicture}
=
\begin{tikzpicture}[baseline=.65cm, scale=.8]
	\plane{(.5,-.8)}{2}{3}
	\draw[super thick, white] (-.8,-.1) -- (2.4,-.1);
	\draw[thick, dString] (-.8,-.1) -- (2.4,-.1);
	\draw[super thick, white] (-2.2,1.3) -- (1,1.3);
	\draw[thick, cString] (-2.2,1.3) -- (1,1.3);
\end{tikzpicture}
\qedhere
\end{align*}
\end{proof}

\begin{lem}
\label{lem:TrCSplitting}
The following two maps $\Tr_\cC(x)\otimes c \to \Tr_\cC(x\otimes \Phi(c))$ are equal:
\begin{equation}
\begin{tikzpicture}[baseline=.1cm]
	\coordinate (a1) at (0,0);
	\coordinate (b1) at (0,1.5);

	%cylinder
	\draw[thick] (a1) -- (b1);
	\draw[thick] ($ (a1) + (.6,0) $) -- ($ (b1) + (.6,0) $);
	\draw[thick] ($ (b1) + (.3,0) $) ellipse (.3 and .1);
	\draw[thick] (a1) arc (-180:0:.3cm);
%	\halfDottedEllipse{(a1)}{.3}{.1}
	\downTubeWithString{(-.2,.2)}{.2}{1}{xString}
	\CMbox{box}{(-.2,.2)}{.4}{.4}{.2}{$\varepsilon$}
	\draw[thick, dotted] (-.4,-.4) rectangle (.8,0);
	\node at (1.5,-.2) {\scriptsize{$\eta_{\Tr_\cC(x)\otimes c}$}};
	\draw[thick, xString] (.1,.7) -- (.1,1.42) ;
	\draw[super thick, white] ($ (a1) + (.15,-.85) $) -- ($ (a1) + (.15,-.5) $) .. controls ++(90:.2cm) and ++(270:.2cm) .. ($ (a1) + (.3,0) $) -- ($ (b1) + (.3,-.05) $) -- ($ (b1) + (.3,-.05) $);
	\draw[thick, cString] ($ (a1) + (.15,-.85) $) -- ($ (a1) + (.15,-.5) $) .. controls ++(90:.2cm) and ++(270:.2cm) .. ($ (a1) + (.3,0) $) -- ($ (b1) + (.3,-.05) $) -- ($ (b1) + (.3,-.05) $);
\end{tikzpicture}
\!=\,\,\,\,\,\,\,
\begin{tikzpicture}[baseline=.5cm]
	\coordinate (a1) at (0,-.5);
	\coordinate (b1) at (0,.4);
	\coordinate (a2) at (1.4,0);
	\coordinate (b2) at (1.4,.4);
	%cylinder
	\draw[thick] (a1) -- (b1);
	\draw[thick] ($ (a1) + (.6,0) $) -- ($ (b1) + (.6,0) $);
	\halfDottedEllipse{(a1)}{.3}{.1}
	\draw[thick] (a2) -- (b2);
	\draw[thick] ($ (a2) + (.6,0) $) -- ($ (b2) + (.6,0) $);
	%pants
	\topPairOfPants{(b1)}{}
	%cap
	\draw[thick] (a2) arc (-180:0:.3cm);
	%string
	\draw[thick, xString] ($ (a1) + (.3,-.1) $) -- ($ (b1) + (.3,-.1) $) .. controls ++(90:.8cm) and ++(270:.8cm) .. ($ (b1) + (.9,1.41) $);
	\draw[super thick, white] ($ (a2) + (0,-.6) $) --($ (a2) + (0,-.5) $) .. controls ++(90:.2cm) and ++(270:.2cm) .. ($ (a2) + (.3,0) $) -- ($ (b2) + (.3,-.05) $) .. controls ++(90:.8cm) and ++(270:.8cm) .. ($ (b1) + (1.08,1.45) $);
	\draw[thick, cString] ($ (a2) + (0,-.6) $) --($ (a2) + (0,-.5) $) .. controls ++(90:.2cm) and ++(270:.2cm) .. ($ (a2) + (.3,0) $) -- ($ (b2) + (.3,-.05) $) .. controls ++(90:.8cm) and ++(270:.8cm) .. ($ (b1) + (1.08,1.45) $);
\end{tikzpicture}
\label{eqn:TrSplittingIso}
\end{equation}
Moreover, when $\cC$ is rigid, the above map \eqref{eqn:TrSplittingIso} is invertible.
\end{lem}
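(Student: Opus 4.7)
The statement consists of two claims: (i) the two displayed morphisms in \eqref{eqn:TrSplittingIso} are equal, and (ii) when $\cC$ is rigid, this common morphism is invertible. My plan is to handle these separately, with (i) reducing to a short computation of mates under the adjunction $\Phi\dashv \Tr_\cC$, and (ii) reducing to a Yoneda argument using duality.

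For (i), I would take mates on both sides and verify that both reduce to $\varepsilon_x\otimes\id_{\Phi(c)}$. By Lemma \ref{lem:PushDownMorphism}, a morphism $f\in\cC(a,\Tr_\cC(y))$ is characterized by its mate $\varepsilon_y\circ\Phi(f)$. On the left-hand side, functoriality of $\Phi$, the naturality of $\varepsilon$ (Lemma \ref{lem:MoveThroughNaturalMap}), and the zig-zag identity \eqref{eq:MAdjointRelation} collapse the mate to $\varepsilon_x\otimes\id_{\Phi(c)}$. On the right-hand side, Lemma \ref{lem:MultiplicationCompatibleWithNatural} together with $\Phi$ being tensor rewrites the mate as $(\varepsilon_x\otimes\varepsilon_{\Phi(c)})\circ(\id\otimes\Phi(\eta_c))$, and a second use of \eqref{eq:MAdjointRelation} reduces this to $\varepsilon_x\otimes\id_{\Phi(c)}$ as well. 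The two morphisms therefore agree.

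For (ii), the plan is to exhibit the inverse by a Yoneda argument. Rigidity of $\cC$ gives an adjunction $(-\otimes{}^*c)\dashv(-\otimes c)$ in $\cC$, and the tensor structure on $\Phi$ ensures that $\Phi(c)$ is dualizable in $\cM$ with a canonical isomorphism $\delta:\Phi({}^*c)\xrightarrow{\cong}{}^*\Phi(c)$. For every test object $d\in\cC$ I would assemble the chain of natural bijections
\[
\cC(d,\Tr_\cC(x)\otimes c) \,\cong\, \cC(d\otimes{}^*c,\Tr_\cC(x)) \,\cong\, \cM(\Phi(d)\otimes\Phi({}^*c),x) \,\cong\, \cM(\Phi(d),x\otimes\Phi(c)) \,\cong\, \cC(d,\Tr_\cC(x\otimes\Phi(c))),
\]
using the duality adjunction in $\cC$, the adjunction $\Phi\dashv\Tr_\cC$ (together with $\Phi$ being tensor), the isomorphism $\delta$ combined with the analogous duality adjunction in $\cM$, and finally $\Phi\dashv\Tr_\cC$ once more. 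By the Yoneda lemma this composite is represented by an isomorphism $\Tr_\cC(x)\otimes c\to\Tr_\cC(x\otimes\Phi(c))$. The remaining, and main, task will be to identify this Yoneda-produced isomorphism with the map from \eqref{eqn:TrSplittingIso}: I would do so by tracing $\id_{\Tr_\cC(x)\otimes c}$ through the chain and computing its mate under $\Phi\dashv\Tr_\cC$, verifying that it equals the mate $\varepsilon_x\otimes\id_{\Phi(c)}$ already found in (i)---uniqueness of mates then forces the two morphisms to coincide. The bookkeeping around left versus right duals and the correct insertion of $\delta$ will be the fiddly part, but no essentially new ingredient beyond what has been developed in Sections \ref{sec:AdjointsOfTensorFunctors}--\ref{sec:UnitMap} is needed.
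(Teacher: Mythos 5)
Your argument is correct, and it diverges from the paper mainly in the invertibility step. For the equality of the two maps you take mates of both sides and reduce each to $\varepsilon_x\otimes\id_{\Phi(c)}$; the paper instead stays in $\cC$, applying Lemma~\ref{lem:EtaSplitting} to rewrite $\eta_{\Tr_\cC(x)\otimes c}$ as $\mu\circ(\eta_{\Tr_\cC(x)}\otimes\eta_c)$ and then absorbing $\Tr_\cC(\varepsilon_x)\circ\eta_{\Tr_\cC(x)}$ via Equation~\eqref{eq:CAdjointRelation} --- but since Lemma~\ref{lem:EtaSplitting} was itself proved by taking mates, the content is essentially the same. The genuine difference is in the invertibility claim: the paper exhibits an explicit inverse, namely $[(\Tr_\cC(\ev_{\Phi({}^*c)})\circ\mu_{x\otimes\Phi(c),\Phi({}^*c)}\circ(\id\otimes\eta_{{}^*c}))\otimes\id_c]\circ(\id\otimes\coev_{{}^*c})$, and verifies the two composites using associativity of $\mu$ (Lemma~\ref{lem:Associative}), Lemma~\ref{lem:EtaSplitting}, and naturality of $\eta$; you instead obtain an isomorphism abstractly by representability, via the chain of natural bijections built from the duality ${}^*c$ in $\cC$, the adjunction $\Phi\dashv\Tr_\cC$, the canonical isomorphism $\delta:\Phi({}^*c)\to{}^*\Phi(c)$, and the duality of $\Phi(c)$ in $\cM$, and then identify it with the map in \eqref{eqn:TrSplittingIso} by chasing $\id_{\Tr_\cC(x)\otimes c}$ through the chain and comparing mates with the common value $\varepsilon_x\otimes\id_{\Phi(c)}$ found in part (i). Both routes use exactly the same duality data; yours is less computational and avoids checking the two composites, while the paper's yields an explicit (graphically meaningful) formula for the inverse. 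One point you should make explicit when writing this up: the zig-zag identities pairing $\Phi({}^*c)$ with $\Phi(c)$, which make the chase of the identity collapse to $\varepsilon_x\otimes\id_{\Phi(c)}$, hold because the (co)evaluations in $\cM$ are the images under the tensor functor $\Phi$ of those for ${}^*c$, corrected by the tensorator and unit isomorphisms, so no rigidity of $\cM$ itself is needed.
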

\begin{proof}
To show that the two sides of \eqref{eqn:TrSplittingIso} are equal,
apply Lemma~\ref{lem:EtaSplitting} to the left hand side and use Equation~\eqref{eq:CAdjointRelation}.

If we assume that $c$ is dualizable,
then we can write down an inverse to the above map:
\[
\begin{tikzpicture}[baseline=.4cm]
	\coordinate (a1) at (0,-.7);
	\coordinate (b1) at (0,.4);
	\coordinate (a2) at (1.4,0);
	\coordinate (b2) at (1.4,.4);
	%cylinder
	\draw[thick] (a1) -- (b1);
	\draw[thick] ($ (a1) + (.6,0) $) -- ($ (b1) + (.6,0) $);
	\halfDottedEllipse{(a1)}{.3}{.1}
	\draw[thick] (a2) -- (b2);
	\draw[thick] ($ (a2) + (.6,0) $) -- ($ (b2) + (.6,0) $);
	%pants
	\topPairOfPants{(b1)}{}
	%cap
	\draw[thick] (a2) arc (-180:0:.3cm);
	%string
	\draw[thick, xString] ($ (a1) + (.2,-.09) $) -- ($ (b1) + (.2,-.09) $) .. controls ++(90:.8cm) and ++(270:.5cm) .. ($ (b1) + (.95,1.41) $);
	\draw[super thick, white](2.4,2)  -- (2.4,-.2) .. controls ++(269:.7cm) and ++(270:.35cm) .. (1.6,-.4) .. controls ++(90:.2cm) and ++(270:.2cm) .. ($ (a2) + (.3,0) $) -- ($ (b2) + (.3,-.05) $) arc(0:180:.65 and .85)
	--($ (a1) + (.4,-.18) $);
	\draw[thick, cString] (2.4,2)  -- (2.4,-.2) .. controls ++(269:.7cm) and ++(270:.35cm) .. (1.6,-.4) .. controls ++(90:.2cm) and ++(270:.3cm) .. ($ (a2) + (.3,0) $) -- ($ (b2) + (.3,-.05) $) arc(0:180:.65 and .85)
	--($ (a1) + (.4,-.18) $);
\end{tikzpicture}\,\,\,:\Tr_\cC(x\otimes \Phi(c))\to \Tr_\cC(x)\otimes c
\]
In formulas, this is:
\[
[
(\Tr_\cC(\ev_{\Phi({}^*\hspace{-.01cm}c)})
\circ\mu_{x\otimes \Phi(c),\Phi({}^*\hspace{-.01cm}c)}
\circ(\id_{\Tr_\cC(x\otimes \Phi(c))}\otimes \eta_{{}^*\hspace{-.01cm}c}))\otimes \id_c]
\circ(\id_{\Tr_\cC(x\otimes \Phi(c))}\otimes\coev_{{}^*\hspace{-.01cm}c}),
\]
where we have used the canonical identification $\Phi({}^*\hspace{-.015cm}c)\cong {}^*\Phi(c)$ provided by the fact that $\Phi$ is a tensor functor.
The two maps compose to the identity by a straightforward application of Lemma \ref{lem:Associative} (the associativity of $\mu$), Lemma \ref{lem:EtaSplitting}, and the naturality of $\eta$.
\end{proof}

%%%%%%%%%%%%%%%%%%%%%%%%%%%%%%%%%%%%%%%%%%%%%%%%%%
%%%%%%%%%%%%%%%%%%%%%%%%%%%%%%%%%%%%%%%%%%%%%%%%%%
%%%%%%%%%%%%%%%%%%%%%%%%%%%%%%%%%%%%%%%%%%%%%%%%%%

\subsection{Construction of the traciator}
\label{sec:Traciator}
So far we have constructed the multiplication $\mu_{x,y}: \Tr_\cC(x) \otimes \Tr_\cC(y) \to \Tr_\cC(x \otimes y)$ and the corresponding unit map under the assumption that $\cC$ and $\cM$ are tensor categories and that $\Phi:\cC\to \cM$ is a tensor functor. 
However, the construction of the traciator, along with the proof that it equips $\Tr_\cC$ with the structure of a categorified trace, only depends on $\Phi:\cC\to \cM$ factoring through $\cZ(\cM)$, and not on it being a tensor functor.
The material of this section can be found in \cite[Prop. 5]{MR2506324} and \cite[Prop. 2.5]{MR3250042}. 

We fix the following notation for Section~\ref{sec:Traciator}.
\begin{nota}
\label{nota:SecTraciator}
\mbox{}\vspace{-.1cm}

\begin{itemize}
\item
$\cC$ is a category
\item
$\cM$ is a pivotal category,
\item
$\Phi^{\scriptscriptstyle \cZ}: \cC\to \cZ(\cM)$ is a functor,
\item
the trace functor $\Tr_\cC:\cM\to \cC$ is the right adjoint of $\Phi:=F\circ \Phi^{\scriptscriptstyle \cZ}$. 
\end{itemize}
\end{nota}

\begin{defn}
\label{defn:Traciator}
For $x,y\in \cM$, we define the traciator $\tau_{x,y}: \Tr_\cC(x\otimes y) \to \Tr_\cC(y\otimes x)$ in the following way.
Let $c= \Tr_\cC(x\otimes y)$,
and let us write $\tilde\ev_y:y\otimes y^*\to 1$ for the composite $\ev_{y^*}\circ\,(\varphi_y\otimes\id_{y^*})$, with $\varphi_y:y\to y^{**}$ the pivotal structure.
Then $\tau_{x,y}$ is the mate of 
$$
	(\id_{y\otimes x}\otimes\, \tilde\ev_y)
\circ	(\id_y\otimes\, \varepsilon_{x\otimes y}\otimes \id_{y^*})
\circ	(e_{\Phi(c),y}\otimes \id_{y^*})
\circ	(\id_{\Phi(c)}\otimes\coev_y)
=
\begin{tikzpicture}[baseline=.1cm, scale=.8]
	\plane{(-.5,-.8)}{3.2}{2}
	\draw[thick, yString] (1.2,0) arc (90:-90:.3cm) -- (0,-.6) arc (270:90:.8cm) -- (.9,1);
	\draw[thick, xString] (1.2,.2) -- (1.7,.2);
	\CMbox{box}{(0,-.4)}{1.2}{.8}{.4}{$\varepsilon_{x\otimes y}$}
	\straightTubeTwoStrings{(-.2,0)}{2}{xString}{yString}
\end{tikzpicture}
$$
under the adjunction $\cM(\Phi(\Tr_\cC(x\otimes y)),y\otimes x)\cong \cC(\Tr_\cC(x\otimes y),\Tr_\cC(y\otimes x))$.
\end{defn}

As we will see shortly, the traciator is always invertible.
We will sometimes write $\tau^+_{x,y}$ for $\tau_{x,y}$, and $\tau^-_{x,y}$ for $\tau_{y,x}^{-1}$. 
In terms of $3$-dimensional diagrams, the traciators $\tau^\pm : \Tr_\cC(x\otimes y) \to \Tr_\cC(y\otimes x)$ are depicted
\[
\tau^+ =
\begin{tikzpicture}[baseline=-.1cm]

	%cylinder
	\draw[thick] (-.3,-1) -- (-.3,1);
	\draw[thick] (.3,-1) -- (.3,1);
	\draw[thick] (0,1) ellipse (.3cm and .1cm);
	\halfDottedEllipse{(-.3,-1)}{.3}{.1}
	
	\draw[thick, xString] (-.1,-1.07) .. controls ++(90:.8cm) and ++(270:.8cm) .. (.1,.92);		
	\draw[thick, yString] (.1,-1.07) .. controls ++(90:.2cm) and ++(225:.2cm) .. (.3,-.2);		
	\draw[thick, yString] (-.1,.92) .. controls ++(270:.2cm) and ++(45:.2cm) .. (-.3,.2);
	\draw[thick, yString, dotted] (-.3,.2) -- (.3,-.2);	
\end{tikzpicture}
\qquad\,\,\,\text{and}\quad\,\,\,\,
\tau^- =
\begin{tikzpicture}[baseline=-.1cm, xscale=-1]

	%clyinder
	\draw[thick] (-.3,-1) -- (-.3,1);
	\draw[thick] (.3,-1) -- (.3,1);
	\draw[thick] (0,1) ellipse (.3cm and .1cm);
	\halfDottedEllipse{(-.3,-1)}{.3}{.1}
	
	\draw[thick, yString] (-.1,-1.07) .. controls ++(90:.8cm) and ++(270:.8cm) .. (.1,.92);		
	\draw[thick, xString] (.1,-1.07) .. controls ++(90:.2cm) and ++(225:.2cm) .. (.3,-.2);		
	\draw[thick, xString] (-.1,.92) .. controls ++(270:.2cm) and ++(45:.2cm) .. (-.3,.2);
	\draw[thick, xString, dotted] (-.3,.2) -- (.3,-.2);	
\end{tikzpicture}\,\,.
\]

\begin{lem}\label{lem:TraciatorsComposeToIdentity}
The traciator $\tau_{y,x}$ is invertible, and its inverse $\tau^-_{x,y}$ is the mate of
\begin{equation}\label{mate of tau-}
\begin{tikzpicture}[baseline=.1cm, scale=.8]
	\plane{(-.5,-.8)}{3.7}{2}
	\draw[thick, xString] (2.7+.3,-.6) -- (-.05,-.6) arc (270:90:.8cm) -- +(.8,0) to[in=0,out=0, looseness=2] (1,.1);
	\draw[thick, yString] (1.2,-.1) -- (2.5,-.1);
	\CMbox{box}{(0,-.4)}{1.2}{.8}{.4}{$\varepsilon_{x\otimes y}$}
	\straightTubeTwoStrings{(-.2,0)}{2}{xString}{yString}
\end{tikzpicture}
\!\!\in\,
\cM(\Phi(\Tr_\cC(x\otimes y)),y\otimes x)
\end{equation}
%under the same adjunction $\cM(\Phi(\Tr_\cC(x\otimes y)),y\otimes x)\cong \cC(\Tr_\cC(x\otimes y),\Tr_\cC(y\otimes x))$.
\end{lem}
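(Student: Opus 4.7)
The plan is to define an auxiliary morphism $\sigma_{x,y} : \Tr_\cC(x \otimes y) \to \Tr_\cC(y \otimes x)$ as the mate of \eqref{mate of tau-} and show directly that it is a two-sided inverse of $\tau_{y,x}$, thereby establishing both invertibility and the explicit formula in one stroke. The defining morphism for $\sigma_{x,y}$ is obtained from the one defining $\tau_{y,x}$ by interchanging the roles of $x$ and $y$ and replacing $\coev_y$, $\tilde\ev_y$ by $\widetilde{\coev}_x := (\id_{x^*}\otimes \varphi_x^{-1})\circ \coev_{x^*}$ and $\ev_x$ respectively (and using the half-braiding in the opposite sense around the cylinder). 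By this symmetry, it suffices to prove that $\sigma_{x,y}\circ \tau_{y,x} = \id_{\Tr_\cC(y\otimes x)}$; the identity $\tau_{y,x}\circ \sigma_{x,y} = \id_{\Tr_\cC(x\otimes y)}$ then follows formally by swapping $x \leftrightarrow y$.

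First I would translate the desired equation into $\cM$ using the adjunction. By Lemma~\ref{lem:PushDownMorphism}, a morphism $\Tr_\cC(m)\to \Tr_\cC(m)$ equals the identity if and only if its mate equals $\varepsilon_m$. Applying this to $m = y\otimes x$ and the composite $\sigma_{x,y}\circ \tau_{y,x}$, the claim reduces to an identity of morphisms $\Phi(\Tr_\cC(y\otimes x))\to y\otimes x$ in $\cM$. A second application of Lemma~\ref{lem:PushDownMorphism} to $\tau_{y,x}$ lets us write the mate of the composite explicitly as two successive ``wrapping operations'' applied to $\varepsilon_{y\otimes x}$: first a $y$-strand is wrapped around the incoming $\Phi(c)$-tube using $\coev_y$, $e_{\Phi(c), y}$, and $\tilde\ev_y$ (coming from the definition of $\tau_{y,x}$), and then an $x$-strand is wrapped in the opposite sense using $\widetilde{\coev}_x$, $e_{\Phi(c), x}^{-1}$, and $\ev_x$ (coming from the definition of $\sigma_{x,y}$).

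The heart of the proof is to simplify this diagram to $\varepsilon_{y\otimes x}$. The key ingredients are: (i) the naturality of the half-braiding $e_{\Phi(c),\bullet}$, together with the hexagon identity \eqref{eq: hexagon identity}, which allow the two half-braidings to be rearranged and combined with the wrapping caps and cups; (ii) the zig-zag axioms for the duals of $x$ and $y$, which eliminate each wrapping loop once its $\coev$ and $\ev$ are brought into adjacent position; and (iii) the compatibility between the pivotal structure $\varphi$ and the rigid structure, ensuring that the $\varphi_y$ appearing in $\tilde\ev_y$ and the $\varphi_x^{-1}$ in $\widetilde{\coev}_x$ are absorbed into the cancellation without residue. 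After these simplifications the two wrappings collapse and one is left with $\varepsilon_{y\otimes x}$ as required.

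The main obstacle is the careful bookkeeping of half-braidings, pivotal isomorphisms, and associators in the composite diagram. Morally, the two loops travel around the cylinder in opposite senses and can be ``pulled off the back'' by an isotopy; the centrality of $\Phi(c)$, encoded by the hexagon axiom for the half-braiding, is precisely what makes this isotopy valid at the categorical level. In the full graphical calculus of strings on cylinders--whose validity is established subsequently in the paper--this becomes a transparent picture move, but at this stage the argument must be carried out by a direct, if somewhat laborious, manipulation of the defining formulas using only the axioms of a pivotal category together with the half-braiding data of $\Phi^{\scriptscriptstyle \cZ}$.
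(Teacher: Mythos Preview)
Your approach is essentially the paper's: define the candidate inverse via its mate, pass to $\cM$ using Lemma~\ref{lem:PushDownMorphism}, unfold both wrapping operations, and cancel them by zig-zag relations. Two small points deserve attention.

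First, your symmetry claim is not quite right as stated. Swapping $x\leftrightarrow y$ in $\sigma_{x,y}\circ\tau_{y,x}=\id$ yields $\sigma_{y,x}\circ\tau_{x,y}=\id$, not $\tau_{y,x}\circ\sigma_{x,y}=\id$; these are genuinely different equations. The paper deals with this by proving the pair $\tau_{y,x}\circ\tilde\tau^-_{x,y}=\id$ and $\tilde\tau^-_{y,x}\circ\tau_{x,y}=\id$ separately (the second being ``entirely similar'' to the first). Swapping $x\leftrightarrow y$ in the second then gives that $\tilde\tau^-_{x,y}$ is a left inverse of $\tau_{y,x}$, and together with the first equation it is a two-sided inverse. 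You should either carry out the second similar computation, or make precise the formal duality (between $\tau$ and $\sigma$, exchanging $\coev/\tilde\ev$ with $\tilde\coev/\ev$) that transports one identity to the other.

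Second, your inventory of tools is slightly heavier than what is actually needed. After using naturality of the half-braiding in its first argument to slide $\Phi(\sigma_{x,y})$ past $e_{\Phi(c),x}$, the resulting diagram collapses directly by the zig-zag relations among $\ev$, $\tilde\ev$, $\coev$, $\tilde\coev$; the hexagon axiom does not enter separately, and the pivotal structure appears only implicitly through these modified (co)evaluations. The paper's proof is correspondingly short.
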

\begin{proof}
Let us define $\tilde\tau^-_{x,y}$ to be the mate of \eqref{mate of tau-} and let us agree, for the purpose of this proof, to reserve the graphical notation\, 
\(
\begin{tikzpicture}[baseline=-.12cm, xscale=-.5, yscale=.5, line width=.5]
	\draw (-.3,-.7) -- (-.3,.7);
	\draw (.3,-.7) -- (.3,.7);
	\draw (0,.7) ellipse (.3cm and .1cm);
	\draw (-.3,-.7) arc (180:360:.3cm and .1cm);	
	\draw[yString] (-.1,-1.07+.3) .. controls ++(90:.6cm) and ++(270:.6cm) .. (.1,.92-.3);		
	\draw[xString] (.1,-1.07+.3) .. controls ++(90:.2cm) and ++(225:.2cm) .. (.3,-.15);		
	\draw[xString] (-.1,.92-.3) .. controls ++(270:.2cm) and ++(45:.2cm) .. (-.3,.15);
	\draw[xString, dotted] (-.3,.15) -- (.3,-.15);	
\end{tikzpicture}
\)\,
for $\tilde\tau^-_{x,y}$.
We need to show that $\tilde\tau^-_{x,y}=\tau^{-}_{x,y}$.
Equivalently, we need to show that the following two equations hold:
$$
\tau_{y,x}\circ \tilde\tau^-_{x,y}=\id_{\Tr_\cC(x\otimes y)}=\tilde\tau^-_{y,x}\circ \tau_{x,y}\,\,:\quad
\begin{tikzpicture}[baseline=-.1cm]
\pgftransformxscale{-1}
	%cylinder
	\draw[thick] (-.3,-1) -- (-.3,1);
	\draw[thick] (.3,-1) -- (.3,1);
	\draw[thick] (0,1) ellipse (.3cm and .1cm);
	\halfDottedEllipse{(-.3,-1)}{.3}{.1}
	\halfDottedEllipse{(-.3,0)}{.3}{.1}
		
	\draw[thick, yString] (-.1,-1.08) .. controls ++(90:.4cm) and ++(270:.4cm) .. (.1,-.08);		
	\draw[thick, yString] (-.1,.92) .. controls ++(270:.4cm) and ++(90:.4cm) .. (.1,-.08);		

	\draw[thick, xString] (.1,-1.08) .. controls ++(90:.2cm) and ++(225:.1cm) .. (.3,-.62);		
	\draw[thick, xString] (-.1,-.08) .. controls ++(270:.2cm) and ++(45:.1cm) .. (-.3,-.52);
	\draw[thick, xString] (-.1,-.08) .. controls ++(90:.2cm) and ++(-45:.1cm) .. (-.3,.32);		
	\draw[thick, xString] (.1,.92) .. controls ++(270:.2cm) and ++(135:.1cm) .. (.3,.42);
	\draw[thick, xString, dotted] (.3,.42) -- (-.3,.32);
	\draw[thick, xString, dotted] (.3,-.62) -- (-.3,-.52);	
\end{tikzpicture}
\,\,=\,\,
\begin{tikzpicture}[baseline=-.1cm]

	%cylinder
	\draw[thick] (-.3,-1) -- (-.3,1);
	\draw[thick] (.3,-1) -- (.3,1);
	\draw[thick] (0,1) ellipse (.3cm and .1cm);
	\halfDottedEllipse{(-.3,-1)}{.3}{.1}
	\halfDottedEllipse{(-.3,0)}{.3}{.1}
		
	\draw[thick, xString] (-.1,-1.08) -- (-.1,.92);		
	\draw[thick, yString] (.1,-1.08) -- (.1,.92);		
\end{tikzpicture}
\,\,=\,\,
\begin{tikzpicture}[baseline=-.1cm]

	%cylinder
	\draw[thick] (-.3,-1) -- (-.3,1);
	\draw[thick] (.3,-1) -- (.3,1);
	\draw[thick] (0,1) ellipse (.3cm and .1cm);
	\halfDottedEllipse{(-.3,-1)}{.3}{.1}
	\halfDottedEllipse{(-.3,0)}{.3}{.1}
		
	\draw[thick, xString] (-.1,-1.08) .. controls ++(90:.4cm) and ++(270:.4cm) .. (.1,-.08);		
	\draw[thick, xString] (-.1,.92) .. controls ++(270:.4cm) and ++(90:.4cm) .. (.1,-.08);		

	\draw[thick, yString] (.1,-1.08) .. controls ++(90:.2cm) and ++(225:.1cm) .. (.3,-.62);		
	\draw[thick, yString] (-.1,-.08) .. controls ++(270:.2cm) and ++(45:.1cm) .. (-.3,-.52);
	\draw[thick, yString] (-.1,-.08) .. controls ++(90:.2cm) and ++(-45:.1cm) .. (-.3,.32);		
	\draw[thick, yString] (.1,.92) .. controls ++(270:.2cm) and ++(135:.1cm) .. (.3,.42);
	\draw[thick, yString, dotted] (.3,.42) -- (-.3,.32);
	\draw[thick, yString, dotted] (.3,-.62) -- (-.3,-.52);	
\end{tikzpicture}\,\,.
$$
We only treat the first one (the other is entirely similar): upon taking mates, we get
\begin{gather*}
\begin{tikzpicture}[baseline=.4cm, scale=.8]
	\plane{(-.5,-1)}{4.8}{3.1}
	\draw[thick, yString] (1.4,0) -- (3.3,0);
	\draw[thick, xString] (1.4,.3) -- (3,.3);
	\CMbox{box}{(.2,-.4)}{1.2}{1.4}{.5}{$\varepsilon_{x\otimes y}$}
	\straightTubeNoString{(-.1,.2)}{.2}{3}
\pgftransformyshift{17}
\pgftransformyscale{1.3}
\pgftransformxshift{-43.5}
\pgftransformxscale{1.5}
\pgftransformrotate{90}
	\draw[thick, yString] (-.1,-1.08) .. controls ++(90:.4cm) and ++(270:.4cm) .. (.1,-.08);		
	\draw[thick, yString] (-.1,.92) .. controls ++(270:.4cm) and ++(90:.4cm) .. (.1,-.08);		
	\draw[thick, xString] (.1,-1.08) .. controls ++(90:.2cm) and ++(225:.1cm) .. (.3,-.62);		
	\draw[thick, xString] (-.1,-.08) .. controls ++(270:.2cm) and ++(45:.1cm) .. (-.3,-.52);
	\draw[thick, xString] (-.1,-.08) .. controls ++(90:.2cm) and ++(-45:.1cm) .. (-.3,.32);		
	\draw[thick, xString] (.1,.92) .. controls ++(270:.2cm) and ++(135:.1cm) .. (.3,.42);
	\draw[thick, xString, dotted] (.3,.42) -- (-.3,.32);
	\draw[thick, xString, dotted] (.3,-.62) -- (-.3,-.52);	
\end{tikzpicture}
\!\!\!\!\!=\!
\begin{tikzpicture}[baseline=.4cm, scale=.8]
	\plane{(-.5,-1)}{4.8}{3.1}
	\draw[thick, xString] (1.4,0) arc (90:-90:.3cm) -- (.2,-.6) arc (270:90:1.2cm) -- (1.5,1.8);
	\draw[thick, yString] (1.4,.3) -- (3,.3);
	\CMbox{box}{(.2,-.4)}{1.2}{1.4}{.5}{$\varepsilon_{y\otimes x}$}
	\straightTubeNoString{(-.1,.2)}{.2}{2.4}
	\draw[thick, yString] (-2.3,.5) .. controls ++(0:.4cm) and ++(180:.4cm) .. (.08,.7);
	\draw[thick, xString] (-2.3,.7) .. controls ++(0:.2cm) and ++(-135:.1cm) .. (-1.3,1);
	\draw[thick, xString] (.08,.5) .. controls ++(180:.2cm) and ++(45:.1cm) .. (-1.1,.2);
	\draw[thick, xString, dotted] (-1.1,.2) -- (-1.3,1);
\end{tikzpicture}\!\!\!\!=\,\,\,\,
\\
=
\begin{tikzpicture}[baseline=.4cm, scale=.8]
	\plane{(-.5,-1)}{4.8}{3.1}
	\draw[thick, xString] (1.3,.5) to[in=0,out=0, looseness=2] (.7,1.7) -- (.2,1.7) arc (-270:-90:1.15cm) -- (2,-1.8+1.2) arc (90:-90:.1cm) -- (.2,-2+1.2) arc (-90:-270:1.35cm) -- (1.4,1.9);
	\draw[thick, yString] (1.4,.3) -- (3,.3);
	\CMbox{box}{(.2,-.4)}{1.2}{1.4}{.5}{$\varepsilon_{x\otimes y}$}
	\straightTubeNoString{(-.1,.2)}{.2}{2.4}
	\draw[thick, yString] (-2.3,.5) -- (.08,.5);
	\draw[thick, xString] (-2.3,.7) -- (.08,.7);
\end{tikzpicture}
\!\!\!\!\!=\!
\begin{tikzpicture}[baseline=.4cm, scale=.8]
	\plane{(-.5,-1)}{4.8}{3.1}
	\draw[thick, yString] (1.4,0) -- (3.3,0);
	\draw[thick, xString] (1.4,.3) -- (3,.3);
	\CMbox{box}{(.2,-.4)}{1.2}{1.4}{.5}{$\varepsilon_{x\otimes y}$}
	\straightTubeNoString{(-.1,.2)}{.2}{2.4}
	\draw[thick, yString] (-2.3,.5) -- (.08,.5);
	\draw[thick, xString] (-2.3,.7) -- (.08,.7);
\end{tikzpicture}
\end{gather*}
where we have used Lemma \ref{lem:PushDownMorphism} for the first two equalities.
%Equation \eqref{pic: twopics - a} for the first equality, and we have used Equation \eqref{pic: twopics - b}
%along with the fact that $\Phi(\tau^-_{x, y})$ is a morphism in $\cZ(\cM)$ for the second equality.
The last equal sign follows from the zig-zag equations satisfied  by the (co)evaluation morphisms $\ev$, $\tilde \ev$, $\coev$, and $\tilde \coev$.
\end{proof}

By Lemmas \ref{lem:TraciatorsComposeToIdentity} and \ref{lem:PushDownMorphism}, the traciators satisfy
\addtocounter{equation}{1}
\begin{equation}\tag{\arabic{equation}.a}\label{pic: twopics - a}
\begin{tikzpicture}[baseline=.1cm, scale=.9]
	\plane{(-.5,-.8)}{3.4}{2}
	\draw[thick, yString] (1,.1) -- (2,.1);
	\draw[thick, xString] (1.2,-.1) -- (2.2,-.1);
	\CMbox{box}{(0,-.4)}{1.2}{.8}{.4}{$\varepsilon_{y\otimes x}$}
	\straightTubeNoString{(-.2,0)}{.1}{2}
\pgftransformxscale{.84}
\pgftransformxshift{-6}
\pgftransformyshift{13.8}
\pgftransformyscale{-.48}
	\draw[thick, xString] (-2.3,.5) .. controls ++(0:.4cm) and ++(180:.4cm) .. (.08,.7);
	\draw[thick, yString] (-2.3,.7) .. controls ++(0:.2cm) and ++(-135:.1cm) .. (-1.4,1);
	\draw[thick, yString] (.08,.5) .. controls ++(180:.2cm) and ++(45:.1cm) .. (-1,.2);
	\draw[thick, yString, densely dotted] (-1,.2) -- (-1.4,1);
\end{tikzpicture}
\!\!=\,\,
\begin{tikzpicture}[baseline=.1cm, scale=.9]
	\plane{(-.5,-.8)}{3.2+.2}{2}
	\draw[thick, yString] (1.2,0) arc (90:-90:.3cm) -- (0,-.6) arc (270:90:.8cm) -- (.9+.2,1);
	\draw[thick, xString] (1.2,.2) -- (1.7+.2,.2);
	\CMbox{box}{(0,-.4)}{1.2}{.8}{.4}{$\varepsilon_{x\otimes y}$}
	\straightTubeTwoStrings{(-.2,0)}{2}{xString}{yString}
\end{tikzpicture}
\end{equation}
%
%and
%
\begin{equation}\tag{\arabic{equation}.b}\label{pic: twopics - b}
\hspace{-1cm}\text{and}\,\,\,\hspace{1cm}
\begin{tikzpicture}[baseline=.1cm, scale=.9]
	\plane{(-.5,-.8)}{3.4}{2}
	\draw[thick, yString] (1,.1) -- (2,.1);
	\draw[thick, xString] (1.2,-.1) -- (2.2,-.1);
	\CMbox{box}{(0,-.4)}{1.2}{.8}{.4}{$\varepsilon_{y\otimes x}$}
	\straightTubeNoString{(-.2,0)}{.1}{2}
\pgftransformxscale{.84}
\pgftransformxshift{-6}
\pgftransformyscale{.48}
\pgftransformyshift{-5}
	\draw[thick, yString] (-2.3,.5) .. controls ++(0:.4cm) and ++(180:.4cm) .. (.08,.7);
	\draw[thick, xString] (-2.3,.7) .. controls ++(0:.2cm) and ++(-135:.1cm) .. (-1.4,1);
	\draw[thick, xString] (.08,.5) .. controls ++(180:.2cm) and ++(45:.1cm) .. (-1,.2);
	\draw[thick, xString, densely dotted] (-1,.2) -- (-1.4,1);
\end{tikzpicture}
\!\!=\,\,
\begin{tikzpicture}[baseline=.1cm, scale=.9]
	\plane{(-.5,-.8)}{3.7}{2}
	\draw[thick, xString] (3,-.6) -- (-.05,-.6) arc (270:90:.8cm) -- +(.8,0) to[in=0,out=0, looseness=2] (1,.1);
	\draw[thick, yString] (1.2,-.1) -- (2.5,-.1);
	\CMbox{box}{(0,-.4)}{1.2}{.8}{.4}{$\varepsilon_{x\otimes y}$}
	\straightTubeTwoStrings{(-.2,0)}{2}{xString}{yString}
\end{tikzpicture}
\end{equation}

\begin{rem}
\label{rem:DoubleDual}
In the event that $\cM$ is rigid but not pivotal, we only get isomorphisms $\tau^+:\Tr_\cC(x\otimes y)\to\Tr_\cC({}^{**}\hspace{-.1mm}y\otimes x)$ and $\tau^-:\Tr_\cC(x\otimes y)\to\Tr_\cC(y\otimes x^{**})$.
All of our theorems can be generalised to this more general situation, provided double duals are inserted in the appropriate places.
\end{rem}

We now prove the naturality of the traciator, and that it satisfies the axiom of a categorified trace.

\begin{lem}
\label{lem:TraciatorComposition}
The two maps $\Tr_\cC(x \otimes y \otimes z) \to  \Tr_\cC(y \otimes z \otimes x)$ given by $\tau_{x, y \otimes z}$ and by  $\tau_{z \otimes x, y} \circ \tau_{x \otimes y, z}$ are equal.
In diagrams:
$$
\begin{tikzpicture}[baseline=-.1cm, xscale=1.1]

	%cylinder
	\draw[thick] (-.3,-1) -- (-.3,1);
	\draw[thick] (.3,-1) -- (.3,1);
	\draw[thick] (0,1) ellipse (.3cm and .1cm);
	\halfDottedEllipse{(-.3,-1)}{.3}{.1}
	
	\draw[thick, xString] (-.15,-1.09) .. controls ++(90:.8cm) and ++(270:.8cm) .. (.15,.91);		

	\draw[thick, yString] (0,-1.1) .. controls ++(90:.2cm) and ++(220:.28cm) .. (.3,-.05);		
	\draw[thick, yString] (-.15,.9) .. controls ++(270:.2cm) and ++(45:.1cm) .. (-.3,.3-.05);
	\draw[thick, yString, dotted] (-.3,.3-.05) -- (.3,0-.05);	
	\draw[thick, zString] (0,.9) .. controls ++(270:.2cm) and ++(40:.28cm) .. (-.3,0);
	\draw[thick, zString] (.15,-1.09) .. controls ++(90:.2cm) and ++(225:.1cm) .. (.3,-.3);		
	\draw[thick, zString, dotted] (-.3,0) -- (.3,-.3);	

\end{tikzpicture}
\,\,\,=\,\,\,
\begin{tikzpicture}[baseline=-.1cm, xscale=1.1]

	%cylinder
	\draw[thick] (-.3,-1) -- (-.3,1);
	\draw[thick] (.3,-1) -- (.3,1);
	\draw[thick] (0,1) ellipse (.3cm and .1cm);
	\halfDottedEllipse{(-.3,-1)}{.3}{.1}
	\halfDottedEllipse{(-.3,0)}{.3}{.1}
		
	\draw[thick, xString] (-.15,-1.09) .. controls ++(90:.4cm) and ++(270:.4cm) .. (0,-.1);		
	\draw[thick, yString] (0,-1.1) .. controls ++(90:.4cm) and ++(270:.4cm) .. (.15,-.08);	
	\draw[thick, xString] (0,-.1) .. controls ++(90:.4cm) and ++(270:.4cm) .. (.15,.91);	
	\draw[thick, zString] (-.15,-.08) .. controls ++(90:.4cm) and ++(270:.4cm) .. (0,.9);

	\draw[thick, zString] (-.15,-.08) .. controls ++(270:.2cm) and ++(45:.1cm) .. (-.3,-.52);
	\draw[thick, zString] (.15,-1.1) .. controls ++(90:.2cm) and ++(225:.1cm) .. (.3,-.62);		
	\draw[thick, zString, dotted] (-.3,-.52) -- (.3,-.62);	
	\draw[thick, yString] (.15,-.08) .. controls ++(90:.2cm) and ++(225:.1cm) .. (.3,.38);
	\draw[thick, yString] (-.15,.92) .. controls ++(270:.2cm) and ++(45:.1cm) .. (-.3,.48);
	\draw[thick, yString, dotted] (-.3,.48) -- (.3,.38);	

\end{tikzpicture}\,.
$$
Similarly, we have $\tau^-_{x\otimes y,z}=\tau^-_{y, z\otimes x}\circ\tau^-_{x, y\otimes z}$.
\end{lem}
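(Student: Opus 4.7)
The plan is to verify the identity by taking mates on both sides under the adjunction $\Phi\dashv\Tr_\cC$. Two morphisms $c\to\Tr_\cC(w)$ in $\cC$ agree precisely when their mates in $\cM(\Phi(c),w)$ agree (Lemma~\ref{lem:PushDownMorphism}), so it suffices to verify the equality
\[
\varepsilon_{y\otimes z\otimes x}\circ\Phi(\tau_{x,\,y\otimes z})=\varepsilon_{y\otimes z\otimes x}\circ\Phi(\tau_{z\otimes x,\,y}\circ\tau_{x\otimes y,\,z})
\]
in $\cM$, where the source in each case is $\Phi(\Tr_\cC(x\otimes y\otimes z))$.

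By Definition~\ref{defn:Traciator}, the left-hand mate is the single composite that wraps one $(y\otimes z)$-strand once around the back of a tube coming out of $\varepsilon_{x\otimes y\otimes z}$; explicitly it involves $\coev_{y\otimes z}$, the half-braiding $e_{\Phi(\Tr_\cC(x\otimes y\otimes z)),\,y\otimes z}$, the attaching map $\varepsilon_{x\otimes y\otimes z}$, and $\tilde\ev_{y\otimes z}$. For the right-hand mate, I would apply Lemma~\ref{lem:PushDownMorphism} to rewrite it as $\varepsilon_{y\otimes z\otimes x}\circ\Phi(\tau_{z\otimes x,y})\circ\Phi(\tau_{x\otimes y,z})$, then push the leftmost pair $\varepsilon\circ\Phi(\tau_{z\otimes x,y})$ through to the defining diagram of $\tau_{z\otimes x,y}$; one more application of the naturality of $\varepsilon$ (Lemma~\ref{lem:MoveThroughNaturalMap}) to the inner $\Phi(\tau_{x\otimes y,z})$ replaces it by the defining diagram of $\tau_{x\otimes y,z}$. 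The resulting expression in $\cM$ has two stacked overcrossings: a $z$-strand wrapping via $e_{\Phi(c),z}$ with its own $\coev_z$ and $\tilde\ev_z$, sitting beneath a $y$-strand wrapping via $e_{\Phi(c),y}$ with its own $\coev_y$ and $\tilde\ev_y$, both over a single $\varepsilon_{x\otimes y\otimes z}$.

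To match the two expressions, I would invoke two standard compatibilities. First, the centrality axiom
\[
e_{\Phi(c),\,y\otimes z}=(\id_y\otimes e_{\Phi(c),z})\circ(e_{\Phi(c),y}\otimes\id_z)
\]
fuses the stacked half-braidings into the single $(y\otimes z)$-half-braiding. Second, the factorizations $\coev_{y\otimes z}=(\id_y\otimes\coev_z\otimes\id_{y^*})\circ\coev_y$ and $\tilde\ev_{y\otimes z}=\tilde\ev_y\circ(\id_y\otimes\tilde\ev_z\otimes\id_{y^*})$ (using the canonical $(y\otimes z)^*\cong z^*\otimes y^*$ together with the monoidality of the pivotal structure, $\varphi_{y\otimes z}=\varphi_y\otimes\varphi_z$) assemble the two pairs of caps and cups into the single $(y\otimes z)$-cap and cup. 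After these identifications, the two mates coincide.

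The main obstacle is bookkeeping rather than conceptual: one must carefully track associators, the identifications of duals of tensor products, and the monoidal pivotal coherence through a diagram chase whose three-dimensional graphical version is essentially the picture in the statement—two consecutive strands running around the back of a cylinder equal one fused strand going around once. The companion identity $\tau^-_{x\otimes y,z}=\tau^-_{y,\,z\otimes x}\circ\tau^-_{x,\,y\otimes z}$ follows by the mirror argument, using Lemma~\ref{lem:TraciatorsComposeToIdentity} to pass from $\tau^+$ to $\tau^-$ via the description of the inverse mate given there.
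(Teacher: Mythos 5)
Your proposal is correct and is essentially the paper's own argument: both take mates under $\Phi\dashv\Tr_\cC$, expand the traciators via their defining diagrams (Equation \eqref{pic: twopics - a}), and fuse the two wrappings into one using the hexagon property of the half-braidings together with the factorizations $\coev_{y\otimes z}=(\id_y\otimes\coev_z\otimes\id_{y^*})\circ\coev_y$ and $\tilde\ev_{y\otimes z}=\tilde\ev_y\circ(\id_y\otimes\tilde\ev_z\otimes\id_{y^*})$. The only cosmetic difference is at the end: the paper obtains the $\tau^-$ identity simply by taking inverses (and relabeling) in the $\tau^+$ identity, which is slightly more economical than your mirrored rerun via Lemma~\ref{lem:TraciatorsComposeToIdentity}.
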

\begin{proof}
The first statement is equivalent, upon taking mates, to the equation
\[
\varepsilon_{ y\otimes z\otimes x}\circ \Phi(\tau_{z\otimes x, y}\circ \tau_{x\otimes y, z})=
\varepsilon_{y\otimes z\otimes x}\circ \Phi(\tau_{x, y \otimes z}).
\]
The argument is similar to the one in the previous lemma:
%In diagrams, using Equation \eqref{pic: twopics - a}, we have
\begin{gather*}
\begin{tikzpicture}[baseline=.4cm, scale=.8]
	\plane{(-.5,-1)}{4.8}{3.1}
	\draw[thick, yString] (1.4,.2) -- (3.1,.2);
	\draw[thick, zString] (1.4,0) -- (3.3,0);
	\draw[thick, xString] (1.4,-.2) -- (3.5,-.2);
	\CMboxScale{box}{(.2,-.4)}{1.2}{1.4}{.5}{$\varepsilon_{y\otimes z\otimes x}$}{.75}
	\straightTubeNoString{(-.1,.2)}{.2}{2.4}
	\draw[thick, xString] (-2.32,.8) .. controls ++(0:.4cm) and ++(180:.4cm) .. (.08,.4);
	\draw[thick, zString] (-2.32,.4) .. controls ++(0:.2cm) and ++(135:.1cm) .. (-1.7,.2);
	\draw[thick, yString] (.08,.8) .. controls ++(180:.2cm) and ++(-45:.1cm) .. (-.6,1);
	\draw[thick, yString] (-2.3,.6) .. controls ++(0:.3cm) and ++(135:.4cm) .. (-.8,.2);
	\draw[thick, zString] (.1,.6) .. controls ++(180:.3cm) and ++(-45:.4cm) .. (-1.5,1);
	\draw[thick, zString, dotted] (-1.7,.2) -- (-1.5,1);
	\draw[thick, yString, dotted] (-.8,.2) -- (-.6,1);
\end{tikzpicture}
\!\!\!\!\!=\!
\begin{tikzpicture}[baseline=.4cm, scale=.8]
	\plane{(-.5,-1)}{4.8}{3.1}
	\draw[thick, yString] (1.4,-.2) arc (90:-90:.2cm) -- (.2,-.6) arc (270:90:1.2cm) -- (1.5,1.8);
	\draw[thick, zString] (1.4,.2) -- (3.1,.2);
	\draw[thick, xString] (1.4,0) -- (3.3,0);
	\CMboxScale{box}{(.2,-.4)}{1.2}{1.4}{.5}{$\varepsilon_{z\otimes x\otimes y}$}{.75}
	\straightTubeNoString{(-.1,.2)}{.2}{2.4}
	\draw[thick, xString] (-2.32,.8) .. controls ++(0:.4cm) and ++(180:.4cm) .. (.1,.6);
	\draw[thick, yString] (-2.3,.6) .. controls ++(0:.4cm) and ++(180:.4cm) .. (.08,.4);
	\draw[thick, zString] (-2.32,.4) .. controls ++(0:.2cm) and ++(135:.1cm) .. (-1.3,.2);
	\draw[thick, zString] (.08,.8) .. controls ++(180:.2cm) and ++(-45:.1cm) .. (-1.1,1);
	\draw[thick, zString, dotted] (-1.1,1) -- (-1.3,.2);
\end{tikzpicture}
=\\=
\begin{tikzpicture}[baseline=.4cm, scale=.8]
	\plane{(-.5,-1)}{4.8}{3.1}
	\draw[thick, zString] (1.4,-.2) arc (90:-90:.2cm) -- (.2,-.6) arc (270:90:1.2cm) -- (1.5,1.8);
	\draw[thick, yString] (1.4,0) arc (90:-90:.4cm) -- (.2,-.8) arc (270:90:1.4cm) -- (1.3,2);
	\draw[thick, xString] (1.4,.2) -- (3.1,.2);
%	\draw[thick, zString] (1.4,0) -- (3.3,0);
	\CMboxScale{box}{(.2,-.4)}{1.2}{1.4}{.5}{$\varepsilon_{x\otimes y\otimes z}$}{.75}
	\straightTubeNoString{(-.1,.2)}{.2}{2.4}
	\draw[thick, xString] (-2.32,.8) -- (.08,.8);
	\draw[thick, yString] (-2.3,.6) -- (.1,.6);
	\draw[thick, zString] (-2.32,.4) -- (.08,.4);
\end{tikzpicture}
=
\begin{tikzpicture}[baseline=.4cm, scale=.8]
	\plane{(-.5,-1)}{4.8}{3.1}
	\draw[thick, yString] (1.4,.2) -- (3.1,.2);
	\draw[thick, zString] (1.4,0) -- (3.3,0);
	\draw[thick, xString] (1.4,-.2) -- (3.5,-.2);
	\CMboxScale{box}{(.2,-.4)}{1.2}{1.4}{.5}{$\varepsilon_{y\otimes z\otimes x}$}{.75}
	\straightTubeNoString{(-.1,.2)}{.2}{2.4}
	\draw[thick, xString] (-2.32,.8) .. controls ++(0:.4cm) and ++(180:.4cm) .. (.08,.4);
	\draw[thick, zString] (-2.32,.4) .. controls ++(0:.2cm) and ++(135:.1cm) .. (-1.5,.2);
	\draw[thick, yString] (.08,.8) .. controls ++(180:.2cm) and ++(-45:.1cm) .. (-.7,1);
	\draw[thick, yString] (-2.3,.6) .. controls ++(0:.3cm) and ++(135:.4cm) .. (-1.2,.2);
	\draw[thick, zString] (.1,.6) .. controls ++(180:.3cm) and ++(-45:.4cm) .. (-1,1);
	\draw[thick, zString, dotted] (-1.5,.2) -- (-1,1);
	\draw[thick, yString, dotted] (-1.2,.2) -- (-.7,1);
\end{tikzpicture}
\end{gather*}
Here, we have used Equation \eqref{pic: twopics - a} for the first two equalities,
and we have used Equation \eqref{pic: twopics - a} along with the identities
\begin{align*}
\widetilde{\ev}_{y\otimes z}
&=
\widetilde{\ev}_{y}\circ (\id_{y}\otimes \widetilde{\ev}_{z}\otimes \id_{y^*})
\\
\coev_{y\otimes z}
&=
(\id_{y}\otimes \coev_{z}\otimes \id_{y^*})\circ \coev_y
\end{align*}
for the last equality.
The second statement follows from the first one by taking inverses.
\end{proof}

Naturality of the traciator is given by the following lemma.

\begin{lem}\label{lem:MoveThroughTraciator}
For $f\in \cM(x,w)$ and $g\in \cM(y,z)$, we have $\tau_{w,z}\circ \Tr_\cC(f\otimes g)=\Tr_\cC(g\otimes f)\circ \tau_{x,y}$, and similarly for $\tau^-$.
In diagrams:
\begin{equation}
\begin{tikzpicture}[baseline=-.1cm]

	%cylinder
	\draw[thick] (-.5,-1) -- (-.5,1);
	\draw[thick] (.5,-1) -- (.5,1);
	\draw[thick] (0,1) ellipse (.5cm and .2cm);
	\halfDottedEllipse{(-.5,-1)}{.5}{.2}
	\halfDottedEllipse{(-.5,0)}{.5}{.2}
	
	\draw[thick, xString] (-.2,-1.17) -- (-.2,-.7);		
	\draw[thick, wString] (-.2,-.7) -- (-.2,-.17) .. controls ++(90:.6cm) and ++(270:.6cm) .. (.2,.82);		
	\draw[thick, yString] (.2,-1.17) -- (.2,-.7);
	\draw[thick, zString] (.2,-.7) -- (.2,-.17) .. controls ++(90:.3cm) and ++(225:.2cm) .. (.5,.3);		
	\draw[thick, zString] (-.2,.82) .. controls ++(270:.3cm) and ++(45:.2cm) .. (-.5,.4);
	\draw[thick, zString, dotted] (.5,.3) -- (-.5,.4);	

	\nbox{unshaded}{(-.2,-.7)}{.3}{-.15}{-.15}{$f$}
	\nbox{unshaded}{(.2,-.7)}{.3}{-.15}{-.15}{$g$}
\end{tikzpicture}
\,\,\,=\,\,\,
\begin{tikzpicture}[baseline=-.1cm]

	%cylinder
	\draw[thick] (-.5,-1) -- (-.5,1);
	\draw[thick] (.5,-1) -- (.5,1);
	\draw[thick] (0,1) ellipse (.5cm and .2cm);
	\halfDottedEllipse{(-.5,-1)}{.5}{.2}
	\halfDottedEllipse{(-.5,0)}{.5}{.2}
	
	\draw[thick, xString] (-.2,-1.17) .. controls ++(90:.6cm) and ++(270:.6cm) .. (.2,-.17) -- (.2,.3);		
	\draw[thick, wString] (.2,.3) -- (.2,.83);		
	\draw[thick, yString] (.2,-1.17) .. controls ++(90:.3cm) and ++(225:.2cm) .. (.5,-.7);		
	\draw[thick, yString] (-.2,.3) -- (-.2,-.17) .. controls ++(270:.3cm) and ++(45:.2cm) .. (-.5,-.6);
	\draw[thick, zString] (-.2,.83) -- (-.2,.3);
	\draw[thick, yString, dotted] (-.5,-.6) -- (.5,-.7);	

	\nbox{unshaded}{(-.2,.3)}{.3}{-.15}{-.15}{$g$}
	\nbox{unshaded}{(.2,.3)}{.3}{-.15}{-.15}{$f$}
\end{tikzpicture}\,.
\label{pic: traciator with f and g}
\end{equation}
\end{lem}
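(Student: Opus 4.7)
The strategy mirrors the proofs of Lemmas \ref{lem:TraciatorsComposeToIdentity} and \ref{lem:TraciatorComposition}: pass to mates under the adjunction $\Phi\dashv\Tr_\cC$ and reduce both sides to the same expression using the naturality ingredients we have at our disposal. By Lemma \ref{lem:PushDownMorphism}, proving \eqref{pic: traciator with f and g} is equivalent to proving
\[
\varepsilon_{z\otimes w}\circ\Phi(\tau_{w,z})\circ\Phi(\Tr_\cC(f\otimes g)) \,=\, \varepsilon_{z\otimes w}\circ\Phi(\Tr_\cC(g\otimes f))\circ\Phi(\tau_{x,y}),
\]
as morphisms $\Phi(\Tr_\cC(x\otimes y))\to z\otimes w$ in $\cM$.

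For the left-hand side, I first use Definition \ref{defn:Traciator} to expand $\varepsilon_{z\otimes w}\circ\Phi(\tau_{w,z})$ in terms of $\widetilde{\ev}_z$, $\coev_z$, $\varepsilon_{w\otimes z}$, and the half-braiding $e_{\Phi(\Tr_\cC(w\otimes z)),z}$. Naturality of the half-braiding in its first argument (applied to the morphism $\Phi^\ssZ(\Tr_\cC(f\otimes g))$ in $\cZ(\cM)$) slides $\Phi(\Tr_\cC(f\otimes g))$ past $e_{\Phi(-),z}$, and Lemma \ref{lem:MoveThroughNaturalMap} then turns $\varepsilon_{w\otimes z}\circ\Phi(\Tr_\cC(f\otimes g))$ into $(f\otimes g)\circ\varepsilon_{x\otimes y}$. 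The LHS thus becomes
\[
(\id_z\otimes f\otimes g\otimes\id)\circ(\id_z\otimes\widetilde{\ev}_z)\circ(\id_z\otimes\varepsilon_{x\otimes y}\otimes\id_{z^*})\circ(e_{\Phi(\Tr_\cC(x\otimes y)),z}\otimes\id_{z^*})\circ(\id\otimes\coev_z).
\]
Dually, applying Lemma \ref{lem:MoveThroughNaturalMap} once on the right-hand side and then unpacking Definition \ref{defn:Traciator} for $\tau_{x,y}$ yields
\[
(g\otimes f)\circ(\id_{y\otimes x}\otimes\widetilde{\ev}_y)\circ(\id_y\otimes\varepsilon_{x\otimes y}\otimes\id_{y^*})\circ(e_{\Phi(\Tr_\cC(x\otimes y)),y}\otimes\id_{y^*})\circ(\id\otimes\coev_y).
\]

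It remains to show that these two expressions agree. I would do this by using naturality of $e_{\Phi(c),-}$ in its second argument to slide $g:y\to z$ across the half-braiding, at the cost of replacing the outer $y$-strand by a $z$-strand. The remaining discrepancy is a pair of zig-zags involving $\coev_y,\widetilde{\ev}_y$ versus $\coev_z,\widetilde{\ev}_z$ with $g$ inserted, and these are matched using the standard naturality identity $\widetilde{\ev}_z\circ(g\otimes\id_{z^*}) = \widetilde{\ev}_y\circ(\id_y\otimes g^*)$ (a consequence of the pivotal structure on $\cM$) together with the dual identity relating $\coev_y$ and $\coev_z$. Pictorially this is the evident statement that the coupon $g$ can be slid around the back of the cylinder; symbolically it is a short chain of equalities.

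The statement for $\tau^-$ follows either by inverting the identity just proved or, equivalently, by running the same argument starting from the mate description of $\tau^-$ given in Lemma \ref{lem:TraciatorsComposeToIdentity}. The main obstacle here is purely notational—keeping track of duals and of where the pivotal structure enters—rather than conceptual; the underlying geometric content is transparent once one adopts the strings-on-cylinders picture.
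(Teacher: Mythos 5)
Your proposal is correct and follows essentially the same route as the paper: both pass to mates, expand $\varepsilon\circ\Phi(\tau)$ via \eqref{pic: twopics - a} (equivalently Definition \ref{defn:Traciator}), move $f\otimes g$ through $\varepsilon$ by Lemma \ref{lem:MoveThroughNaturalMap}, and then slide $g$ around the wrapping strand using naturality of the half-braiding together with the pivotal compatibility of $\widetilde{\ev}$ and $\coev$ --- precisely the step the paper compresses into ``equal by pivotality in $\cM$.'' One small slip: in your displayed formula for the left-hand mate the factor $(\id_z\otimes f\otimes g\otimes\id)$ is placed to the left of $(\id_z\otimes\widetilde{\ev}_z)$, where it does not typecheck; it should be inserted between the evaluation and $\varepsilon_{x\otimes y}$, as your subsequent sliding argument in fact assumes.
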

\begin{proof}
By Equation \eqref{pic: twopics - a} and Lemma \ref{lem:MoveThroughNaturalMap}, the two sides of \eqref{pic: traciator with f and g} are mates of 
$$
\begin{tikzpicture}[baseline=.4cm, scale=.8]
	\plane{(-.5,-.8)}{4}{2.8}

	\draw[thick, zString] (1.4,0) arc (90:-90:.3cm) -- (.2,-.6) arc (270:90:1.2cm) -- (.9,1.8);
	\draw[thick, wString] (1.4,.3) -- (2.4,.3);
	
	\CMbox{box}{(.2,-.4)}{1.2}{1.45}{.5}{$\varepsilon_{w\otimes z}$}

	\straightTubeNoString{(-.1,.175)}{.225}{2.4}

	\draw[thick, yString] (-2.3,.45) -- (-1,.45);
	\draw[thick, zString] (.1,.45) -- (-1,.45);
	\draw[thick, xString] (-2.3,.85) -- (-1,.85);
	\draw[thick, wString] (-1,.85) -- (.1,.85);

	\nbox{unshaded}{(-1,.85)}{.15}{.2}{.2}{\rotatebox{-75}{$f$}}
	\nbox{unshaded}{(-1,.45)}{.15}{.2}{.2}{\rotatebox{-75}{$g$}}
\end{tikzpicture}
=
\begin{tikzpicture}[baseline=.4cm, scale=.8]
	\plane{(-.5,-.8)}{5.3}{2.8}

	\draw[thick, yString] (1,0) -- (2.5,0);
	\draw[thick, zString] (2.5,0) -- (3.5,0) arc (90:-90:.3cm) -- (.2,-.6) arc (270:90:1.2cm) -- (2.2,1.8);
	\draw[thick, xString] (1.4,.6) -- (2.2,.6);
	\draw[thick, wString] (2.2,.6) -- (3.4,.6);
	\Mbox{(2.5,-.2)}{1}{.4}{$g$}
	\Mbox{(2,.4)}{1}{.4}{$f$}
	
	\CMbox{box}{(.2,-.4)}{1.2}{1.4}{.5}{$\varepsilon_{x\otimes y}$}

	\straightTubeNoString{(-.1,.2)}{.2}{2.4}

	\draw[thick, yString] (-2.3,.5) -- (.08,.5);
	\draw[thick, xString] (-2.3,.7) -- (.08,.7);

\end{tikzpicture}
$$
%using Lemma \ref{lem:MoveThroughNaturalMap}.
%Now use the half braiding for $G(\Tr_\cC(x\otimes z))$ and pivotality in $\cM$ to get that this map is equal to
and
$$
\quad
\begin{tikzpicture}[baseline=.4cm, scale=.8]
	\plane{(-.5,-.8)}{5.3}{2.8}

	\draw[thick, xString] (1,0) -- (2.5,0);
	\draw[thick, wString] (2.5,0) -- (4,0);
	\draw[thick, yString] (1.4,.6) -- (2.2,.6);
	\draw[thick, zString] (2.2,.6) -- (3.4,.6);
	\Mbox{(2.5,-.2)}{1}{.4}{$f$}
	\Mbox{(2,.4)}{1}{.4}{$g$}
	
	\CMbox{box}{(.2,-.4)}{1.2}{1.4}{.5}{$\varepsilon_{y\otimes x}$}

	\straightTubeNoString{(-.1,.2)}{.2}{2.4}

%	\draw[thick, yString] (-2.3,.5) -- (.08,.5);
%	\draw[thick, xString] (-2.3,.7) -- (.08,.7);
	\draw[thick, xString] (-2.3,.7) .. controls ++(0:.4cm) and ++(180:.4cm) .. (.08,.5);
	\draw[thick, yString] (-2.3,.5) .. controls ++(0:.2cm) and ++(135:.1cm) .. (-1.3,.2);
	\draw[thick, yString] (.08,.7) .. controls ++(180:.2cm) and ++(-45:.1cm) .. (-1.1,1);
	\draw[thick, yString, dotted] (-1.1,1) -- (-1.3,.2);

\end{tikzpicture}
\!\!=
\begin{tikzpicture}[baseline=.4cm, scale=.8]
	\plane{(-.5,-.8)}{5}{2.8}

	\draw[thick, yString] (1.4,0) arc (90:-90:.3cm) -- (.2,-.6) arc (270:90:1cm) -- (.8,1.4);
	\draw[thick, xString] (1.4,.2) -- (2.3,.2);
	\draw[thick, wString] (2.3,.2) -- (3.5,.2);
	\draw[thick, zString] (1.6,1.4) -- (2.3,1.4);
	\Mbox{(1.1,1.1)}{1}{.6}{$g$}
	\Mbox{(2.3,-.1)}{1}{.6}{$f$}
	
	\CMbox{box}{(.2,-.4)}{1.2}{.8}{.4}{$\varepsilon_{x\otimes y}$}

%	\straightTubeNoString{(-.1,.2)}{.15}{2.4}
	\straightTubeTwoStrings{(0,-.1)}{2}{xString}{yString}

%	\nbox{unshaded}{(-1,.5)}{.2}{.2}{.2}{\rotatebox{-75}{$f$}}
\end{tikzpicture}$$
respectively.
The latter are equal by pivotality in $\cM$. %, and the fact that 
%\begin{align*}
%e_{\Phi(\Tr_\cC(x\otimes y)),z}\circ(\id_{\Phi(\Tr_\cC(x\otimes y))}\otimes g)
%&=
%(g\otimes \id_{\Phi(\Tr_\cC(x\otimes y))})\circ e_{\Phi(\Tr_\cC(x\otimes y)),y}.
%\qedhere
%\end{align*}
\end{proof}

%%%%%%%%%%%%%%%%%%%%%%%%%%%%%%%%%%%%%%%%%%%%%%%%%%%%%%%
%%%%%%%%%%%%%%%%%%%%%%%%%%%%%%%%%%%%%%%%%%%%%%%%%%%%%%%
%%%%%%%%%%%%%%%%%%%%%%%%%%%%%%%%%%%%%%%%%%%%%%%%%%%%%%%

\subsection{Interaction between traciator and braiding}
\label{sec:InteractionBetweenTraciatorAndBraiding}

In Sections~\ref{sec:AdjointsOfTensorFunctors}--\ref{sec:UnitMap}, we saw that when $\Tr_\cC$ is the adjoint of a tensor functor, it can be described by a graphical calculus of strings on tubes.
The tubes are allowed to split, but the strings must remain on the fronts of the tubes.
In Section~\ref{sec:Traciator}, we also saw that when $\Phi$ is a central functor (not assumed monoidal) $\Tr_\cC$ admits a graphical calculus of strings winding around a single tube.

In this section, we start by assuming that $\Phi$ is a monoidal central functor.
The tubes, with the strings on their surface are now allowed to branch, but they must remain in a single plane.
Finally, we go on to assume our strongest hypothesis, namely that $\Phi$ is a braided central functor.
The tubes can now braid freely in three dimensions (see Figure \ref{fig: big figure} in the introduction).\bigskip

We begin with the following assumptions:
\begin{nota}\mbox{}\vspace{-.15cm}
\begin{itemize}
\item
$\cC$ and $\cM$ are pivotal categories
\item
$\Phi^{\scriptscriptstyle \cZ}: \cC\to \cZ(\cM)$ is a pivotal tensor functor,
\item
the trace functor $\Tr_\cC:\cM\to \cC$ is the right adjoint of $\Phi:=F\circ \Phi^{\scriptscriptstyle \cZ}$. 
\end{itemize}
\end{nota}

\noindent 
Under these assumptions, we can establish a generalization of the associativity of~$\mu$:

\begin{lem}\label{lem:MultiplicationAssocaitive}
Given $w,x,y,z\in\cM$, the following two morphisms in $\cC\big(\Tr_\cC(w) \otimes \Tr_\cC(x\otimes y) \otimes \Tr_\cC(z),\Tr_\cC(w\otimes x \otimes z\otimes y)\big)$ are equal:
%The following diagram commutes:
%\[
%\xymatrix{
%\Tr_\cC(w) \otimes \Tr_\cC(x\otimes y) \otimes \Tr_\cC(z) \ar[rrr]^{\id_{\Tr_\cC(w)}\otimes \tau^+ \otimes \id_{\Tr_\cC(z)}} \ar[d]^{\mu\otimes \id_{\Tr_\cC(z)}}
%&&&
%\Tr_\cC(w) \otimes \Tr_\cC(y \otimes x) \otimes \Tr_\cC(z) \ar[d]^{\id_{\Tr_\cC(w)}\otimes \mu}
%\\
%\Tr_\cC(w \otimes x \otimes y)\otimes \Tr_\cC(z) \ar[d]^{\tau^+\otimes \id_{\Tr_\cC(z)}}
%&&&
%\Tr_\cC(w) \otimes \Tr_\cC(y \otimes x \otimes z)\ar[d]^{\id_{\Tr_\cC(w)}\otimes \tau^-}
%\\
%\Tr_\cC(y \otimes w \otimes x)\otimes \Tr_\cC(z) \ar[d]^{\mu}
%&&&
%\Tr_\cC(w) \otimes \Tr_\cC(x \otimes z\otimes y)\ar[d]^{\mu}
%\\
%\Tr_\cC(y \otimes w \otimes x \otimes z) \ar[rrr]^{\tau^-}
%&&&
%\Tr_\cC(w\otimes x \otimes z\otimes y) 
%}
%\]
%In diagrams, 
\begin{equation}
\begin{tikzpicture}[baseline=1.1cm]

	\pgfmathsetmacro{\voffset}{.08};
	\pgfmathsetmacro{\hoffset}{.15};
	\pgfmathsetmacro{\hoffsetTop}{.12};

	\coordinate (a) at (-1,-1);
	\coordinate (a1) at ($ (a) + (1.4,0)$);
	\coordinate (a2) at ($ (a1) + (1.4,0)$);
	\coordinate (b) at ($ (a) + (.7,1.5)$);
	\coordinate (b2) at ($ (b) + (1.4,0)$);
	\coordinate (c) at ($ (b) + (0,1)$);
	\coordinate (c2) at ($ (c) + (1.4,0)$);
	\coordinate (d) at ($ (c) + (.7,1.5)$);	
	
	\pairOfPants{(a)}{}
	\pairOfPants{(c)}{}
	\topCylinder{($ (a) + (1.4,4) $)}{}
	\draw[thick] (b) -- ($ (b) + (0,1) $);
	\draw[thick] ($ (b) + (.6,0) $) -- ($ (b) + (.6,1) $);
	\draw[thick] ($ (b) + (1.4,0) $)  -- ($ (b) + (1.4,1) $);
	\draw[thick] ($ (b) + (2,0) $) -- ($ (b) + (2,1) $);
	\LeftSlantCylinder{($ (a) + (2.8,0) $)}{}

	%lower traciator	
	\draw[thick, wString] ($ (b) + (\hoffset,-\voffset) $) .. controls ++(90:.4cm) and ++(270:.4cm) .. ($ (b) + 2*(\hoffset,0) + (0,.9) $);
	\draw[thick, xString] ($ (b) + 2*(\hoffset,0) + (0,-.1) $) .. controls ++(90:.4cm) and ++(270:.4cm) .. ($ (b) + 3*(\hoffset,0) + (0,.92) $);		
	\draw[thick, yString] ($ (b) + 3*(\hoffset,0) + (0,-\voffset) $) .. controls ++(90:.2cm) and ++(225:.1cm) .. ($ (b) + 4*(\hoffset,0) + (0,-\voffset) + (0,.45)$);
	\draw[thick, yString] ($ (b) + (\hoffset,1) + (0,-\voffset) $) .. controls ++(270:.2cm) and ++(45:.1cm) .. ($ (b) + (0,1) + (0,-\voffset) + (0,-.45)$);

	%upper traciator
	\draw[thick, wString] ($ (d) + 2*(\hoffsetTop,0) + (0,-.1)$) .. controls ++(90:.4cm) and ++(270:.4cm) .. ($ (d) + (\hoffsetTop,0) + (0,-\voffset) + (0,1) $);
	\draw[thick, xString] ($ (d) + 3*(\hoffsetTop,0) + (0,-\voffset) $) .. controls ++(90:.4cm) and ++(270:.4cm) .. ($ (d) + 2*(\hoffsetTop,0) + (0,.9) $);		
	\draw[thick, yString] ($ (d) + (\hoffsetTop,0) + (0,-\voffset) $) .. controls ++(90:.2cm) and ++(-45:.1cm) .. ($ (d) + (0,-\voffset) + (0,.45)$);
	\draw[thick, yString] ($ (d) + 4*(\hoffsetTop,0) + (0,1) + (0,-\voffset) $) .. controls ++(270:.2cm) and ++(135:.1cm) .. ($ (d) + 5*(\hoffsetTop,0) + (0,-\voffset) + (0,-.45) + (0,1)$);
	\draw[thick, zString] ($ (d) + 4*(\hoffsetTop,0) + (0,-\voffset) $) .. controls ++(90:.4cm) and ++(270:.4cm) .. ($ (d) + 3*(\hoffsetTop,0) + (0,-\voffset) + (0,1)$);

	%lower strands
	\draw[thick, wString] ($ (a) + 2*(\hoffset,0) + (0,-.1)$) .. controls ++(90:.8cm) and ++(270:.8cm) .. ($ (b) + (\hoffset,0) + (0,-\voffset) $);	
	\draw[thick, xString] ($ (a1) + 2*(\hoffset,0) + (0,-.1)$) .. controls ++(90:.8cm) and ++(270:.8cm) .. ($ (b) + 2*(\hoffset,0) + (0,-.1) $);	
	\draw[thick, yString] ($ (a1) + 3*(\hoffset,0) + (0,-\voffset)$) .. controls ++(90:.8cm) and ++(270:.8cm) .. ($ (b) + 3*(\hoffset,0) + (0,-\voffset) $);	
	\draw[thick, zString] ($ (a2) + 2*(\hoffset,0) + (0,-.1)$) .. controls ++(90:.8cm) and ++(270:.6cm) .. ($ (b2) + 2*(\hoffset,0) + (0,-.1) $) -- ($ (c2) + 2*(\hoffset,0) + (0,-.1) $);	
	
	%upper pants strands
	\draw[thick, yString] ($ (c) + (\hoffset,0) + (0,-\voffset)$) .. controls ++(90:.8cm) and ++(270:.8cm) .. ($ (d) + (\hoffsetTop,0) + (0,-\voffset) $);
	\draw[thick, wString] ($ (c) + 2*(\hoffset,0) + (0,-.1)$) .. controls ++(90:.8cm) and ++(270:.8cm) .. ($ (d) + 2*(\hoffsetTop,0) + (0,-\voffset) $);	
	\draw[thick, xString] ($ (c) + 3*(\hoffset,0) + (0,-\voffset)$) .. controls ++(90:.8cm) and ++(270:.8cm) .. ($ (d) + 3*(\hoffsetTop,0) + (0,-\voffset) $);	
	\draw[thick, zString] ($ (c2) + 2*(\hoffset,0) + (0,-.1)$) .. controls ++(90:.8cm) and ++(270:.6cm) .. ($ (d) + 4*(\hoffsetTop,0) + (0,-\voffset) $);	

\node at (-.7,-1.25) {$\scriptstyle w$};
\node at (.6,-1.25) {$\scriptstyle x$};
\node at (.85,-1.28) {$\scriptstyle y$};
\node at (2.1,-1.25) {$\scriptstyle z$};
\end{tikzpicture}
\quad=\quad\,\,\,
\begin{tikzpicture}[baseline=1.1cm]	
\node at (-.7,-1.25) {$\scriptstyle w$};
\node at (.57,-1.25) {$\scriptstyle x$};
\node at (.83,-1.28) {$\scriptstyle y$};
\node at (2.1,-1.25) {$\scriptstyle z$};

	\pgfmathsetmacro{\voffset}{.08};
	\pgfmathsetmacro{\hoffset}{.15};
	\pgfmathsetmacro{\hoffsetTop}{.12};

	\coordinate (a1) at (-1,-1);
	\coordinate (a2) at ($ (a1) + (1.4,0)$);
	\coordinate (a3) at ($ (a1) + (2.8,0)$);
	\coordinate (b1) at ($ (a1) + (0,1)$);
	\coordinate (b2) at ($ (b1) + (1.4,0) $);
	\coordinate (b3) at ($ (b2) + (1.4,0) $);
	\coordinate (c1) at ($ (b1) + (.7,1.5)$);
	\coordinate (c2) at ($ (b2) + (.7,1.5)$);
	\coordinate (d1) at ($ (c1) + (0,1)$);
	\coordinate (d2) at ($ (c2) + (0,1)$);
	
	\RightSlantCylinder{(b1)}{}
	\pairOfPants{(b2)}{}
	\draw[thick] (c1) -- ($ (c1) + (0,1) $);
	\draw[thick] ($ (c1) + (.6,0) $) -- ($ (c1) + (.6,1) $);
	\draw[thick] (c2) -- ($ (c2) + (0,1) $);
	\draw[thick] ($ (c2) + (.6,0) $) -- ($ (c2) + (.6,1) $);
	\topPairOfPants{(d1)}{}
	
	\draw[thick] (a1) -- ($ (a1) + (0,1) $);
	\draw[thick] ($ (a1) + (.6,0) $) -- ($ (a1) + (.6,1) $);
	\halfDottedEllipse{(a1)}{.3}{.1}	

	\draw[thick] (a2) -- ($ (a2) + (0,1) $);
	\draw[thick] ($ (a2) + (.6,0) $) -- ($ (a2) + (.6,1) $);
	\halfDottedEllipse{(a2)}{.3}{.1}	

	\draw[thick] (a3) -- ($ (a3) + (0,1) $);
	\draw[thick] ($ (a3) + (.6,0) $) -- ($ (a3) + (.6,1) $);
	\halfDottedEllipse{(a3)}{.3}{.1}	

	%lower traciator%
	\draw[thick, xString] ($ (a2) + (\hoffset,0) + (0,-.1)$) .. controls ++(90:.4cm) and ++(270:.4cm) .. ($ (a2) + 3*(\hoffset,0) + (0,-\voffset) + (0,1) $);		
	\draw[thick, yString] ($ (a2) + 3*(\hoffset,0) + (0,-\voffset) $) .. controls ++(90:.2cm) and ++(225:.1cm) .. ($ (a2) + 4*(\hoffset,0) + (0,-\voffset) + (0,.45)$);
	\draw[thick, yString] ($ (a2) + (\hoffset,1) + (0,-\voffset) $) .. controls ++(270:.2cm) and ++(45:.1cm) .. ($ (a2) + (0,1) + (0,-\voffset) + (0,-.45)$);

	%upper traciator
	\draw[thick, xString] ($ (c2) + 2*(\hoffset,0) + (0,-.1)$) .. controls ++(90:.4cm) and ++(270:.4cm) .. ($ (c2) + (\hoffset,0) + (0,-\voffset) + (0,1) $);
	\draw[thick, zString] ($ (c2) + 3*(\hoffset,0) + (0,-\voffset) $) .. controls ++(90:.4cm) and ++(270:.4cm) .. ($ (c2) + 2*(\hoffset,0) + (0,.9) $);		
	\draw[thick, yString] ($ (c2) + (\hoffset,0) + (0,-\voffset) $) .. controls ++(90:.2cm) and ++(-45:.1cm) .. ($ (c2) + (0,-\voffset) + (0,.45)$);
	\draw[thick, yString] ($ (c2) + 3*(\hoffset,0) + (0,1) + (0,-\voffset) $) .. controls ++(270:.2cm) and ++(135:.1cm) .. ($ (c2) + 4*(\hoffset,0) + (0,-\voffset) + (0,-.45) + (0,1)$);
	
	%lower pants
	\draw[thick, wString] ($ (a1) + 2*(\hoffset,0) + (0,-.1)$) -- ($ (b1) + 2*(\hoffset,0) + (0,-.1)$) .. controls ++(90:.8cm) and ++(270:.8cm) .. ($ (c1) + 2*(\hoffset,0) + (0,-.1) $) -- ($ (d1) + 2*(\hoffset,0) + (0,-.1) $) ;	
	\draw[thick, yString] ($ (b2) + (\hoffset,0) + (0,-\voffset)$) .. controls ++(90:.8cm) and ++(270:.8cm) .. ($ (c2) + (\hoffset,0) + (0,-\voffset) $);	
	\draw[thick, xString] ($ (b2) + 3*(\hoffset,0) + (0,-\voffset)$) .. controls ++(90:.8cm) and ++(270:.8cm) .. ($ (c2) + 2*(\hoffset,0) + (0,-.1) $);	
	\draw[thick, zString] ($ (a3) + 2*(\hoffset,0) + (0,-.1)$) -- ($ (b3) + 2*(\hoffset,0) + (0,-.1)$) .. controls ++(90:.8cm) and ++(270:.8cm) .. ($ (c2) + 3*(\hoffset,0) + (0,-\voffset) $);	

	%upper pants
	\draw[thick, wString] ($ (d1) + 2*(\hoffset,0) + (0,-.1)$) .. controls ++(90:.8cm) and ++(270:.8cm) .. ($ (d1) + (\hoffsetTop,0) + (0,-\voffset) + (.7,1.5) $);	
	\draw[thick, xString] ($ (d2) + (\hoffset,0) + (0,-\voffset)$) .. controls ++(90:.8cm) and ++(270:.8cm) .. ($ (d1) + 2*(\hoffsetTop,0) + (0,-\voffset) + (.7,1.5) $);	
	\draw[thick, zString] ($ (d2) + 2*(\hoffset,0) + (0,-.1)$) .. controls ++(90:.8cm) and ++(270:.6cm) .. ($ (d1) + 3*(\hoffsetTop,0) + (0,-\voffset) + (.7,1.5) $);	
	\draw[thick, yString] ($ (d2) + 3*(\hoffset,0) + (0,-\voffset)$) .. controls ++(90:.8cm) and ++(270:.6cm) .. ($ (d1) + 4*(\hoffsetTop,0) + (0,-\voffset) + (.7,1.5) $);
	
\end{tikzpicture}
\label{eqn:RHS}
\end{equation}
\end{lem}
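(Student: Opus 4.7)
The plan is to verify equation \eqref{eqn:RHS} by taking mates of both sides under the adjunction $\Phi \dashv \Tr_\cC$, reducing to an equality of morphisms
\[
\Phi(\Tr_\cC(w))\otimes \Phi(\Tr_\cC(x\otimes y))\otimes \Phi(\Tr_\cC(z)) \,\longrightarrow\, w\otimes x\otimes z\otimes y
\]
in $\cM$. By Lemma \ref{lem:PushDownMorphism}, it suffices to show that $\varepsilon_{w\otimes x\otimes z\otimes y}$ post-composed with $\Phi$ of each side yields the same morphism in $\cM$.

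First I would iteratively unfold both sides: each $\mu$ is replaced, via Lemma \ref{lem:MultiplicationCompatibleWithNatural} and the fact that $\Phi$ is a tensor functor, by a tensor product of attaching maps $\varepsilon$; each $\tau^{\pm}$ is replaced, via the mate formulas \eqref{pic: twopics - a} and \eqref{pic: twopics - b}, by a composition of a half-braiding $e_{\Phi(-),y}$ with a $\coev_y$ and a $\tilde\ev_y$. After these substitutions, both sides become explicit planar morphisms in $\cM$ built from $\varepsilon_w$, $\varepsilon_{x\otimes y}$, $\varepsilon_z$, certain half-braidings of $y$ past various tubes, and the pivotal (co)evaluations on $y$.

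The main obstacle will be to identify the two resulting planar morphisms in $\cM$. On the LHS, after the first $\mu$ joins the $w$-tube to the $xy$-tube, the outer traciator makes $y$ wrap once around the combined tube $\Phi(\Tr_\cC(w)\otimes \Tr_\cC(x\otimes y))$, contributing the half-braiding $e_{\Phi(\Tr_\cC(w)\otimes \Tr_\cC(x\otimes y)),y}$; a further outer $\tau^-$ then wraps $y$ around the $\Phi(\Tr_\cC(z))$-tube. On the RHS, the inner traciator wraps $y$ around only $\Phi(\Tr_\cC(x\otimes y))$ before $\mu$ joins everything, and the outer $\tau^{-1}$ unwinds $y$ from around $\Phi(\Tr_\cC(z))$. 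The identification uses the \emph{Monoidal central functor} axiom for $\Phi^{\scriptscriptstyle\cZ}$,
\[
e_{\Phi(c\otimes d),y} \,=\, (e_{\Phi(c),y}\otimes \id_{\Phi(d)}) \circ (\id_{\Phi(c)}\otimes e_{\Phi(d),y}),
\]
to split the combined half-braiding on the LHS into the pair that appears on the RHS, while the \emph{Central functor} axiom handles the analogous splitting against the additional $z$-factor.

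The remainder is an isotopy of planar diagrams in the pivotal category $\cM$: one slides $y$-strands past the attaching maps using Lemma \ref{lem:MoveThroughNaturalMap} (and naturality of the half-braiding in its second variable), and cancels the $\coev_y/\tilde\ev_y$ pairs introduced on both sides by the zig-zag relations. Both sides then collapse to the same morphism
\[
(\id_{w\otimes x\otimes z}\otimes \tilde\ev_y)\circ(\id_y\otimes (\varepsilon_w\otimes \varepsilon_{x\otimes y}\otimes \varepsilon_z)\otimes \id_{y^*})\circ (\text{half-braidings})\circ (\id\otimes \coev_y),
\]
establishing the lemma.
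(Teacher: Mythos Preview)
Your approach is essentially the same as the paper's: both take mates under the $\Phi\dashv\Tr_\cC$ adjunction and then unfold the $\mu$'s via Lemma~\ref{lem:MultiplicationCompatibleWithNatural} and the $\tau^{\pm}$'s via \eqref{pic: twopics - a}--\eqref{pic: twopics - b}, reducing both sides to the same explicit morphism in $\cM$ (the paper's \eqref{eqn:CommonMap}). The paper is terser --- it just lists the sequence \eqref{pic: twopics - b}, \eqref{eq: pants o epsilon}, \eqref{pic: twopics - a}, \eqref{eq: pants o epsilon} for the LHS (and a symmetric sequence for the RHS) --- while you make explicit the monoidal central functor axiom that allows a $y$-strand half-braided past a single tube $\Phi(c\otimes d)$ to be split into half-braidings past $\Phi(c)$ and $\Phi(d)$. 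That axiom is indeed used implicitly in the paper's graphical rewrites whenever a half-braiding is pushed below a pair of pants, so you have correctly isolated the content that the pictures are encoding.

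Two small points of cleanup. First, your invocation of the separate ``Central functor axiom'' for the $z$-factor is not quite the right label: the splitting you need is always $e_{\Phi(c\otimes d),y}=(e_{\Phi(c),y}\otimes\id)(\id\otimes e_{\Phi(d),y})$, i.e.\ the monoidal central functor axiom again (together with naturality of $e_{\Phi(-),y}$ in the first variable to push the half-braiding below $\Phi(\mu)$). The hexagon axiom $e_{\Phi(c),a\otimes b}=(\id\otimes e)(e\otimes\id)$ does not enter, since only a single $y$-strand ever wraps around. Second, your displayed ``common morphism'' at the end does not quite type-check as written (the $\id_{w\otimes x\otimes z}$ and the $\id_y$ are on opposite sides); the paper's picture \eqref{eqn:CommonMap} shows that the actual common form has the $y$-strand emerging from $\varepsilon_{x\otimes y}$ and crossing only the $\Phi(\Tr_\cC(z))$-tube, the wraps around the $w$- and $xy$-tubes having cancelled. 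With these adjustments your argument is complete.
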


\begin{rem}
\label{rem:BackOfTubes}
In the above equation, it is best to imagine the blue strands as running along the backs of the tubes:
pre- and postcomposing by the ``half-traciators''
\(
\begin{tikzpicture}[baseline=-.12cm, scale=.5, line width=.5]
	\draw (-.3,-.7) -- (-.3,.7);
	\draw (.3,-.7) -- (.3,.7);
	\draw (0,.7) ellipse (.3cm and .1cm);
	\draw (-.3,-.7) arc (180:360:.3cm and .1cm);	
	\draw[dotted] (-.3,-.7) arc (180:0:.3cm and .1cm);	
	\draw[xString] (-.1,-1.07+.27) -- (-.1,.92-.3);		
	\draw[yString] (.06,.92-.3) .. controls ++(270:.2cm) and ++(135:.2cm) .. (.3,0);		
	\draw[yString, densely dotted, line width=.6] (.06,-1.07+.15+.3) .. controls ++(90:.2cm) and ++(-135:.2cm) .. (.3,0);
\useasboundingbox ($(current bounding box.north east)+(0,-.5)$) rectangle ($(current bounding box.south west)+(0,.5)$);
\end{tikzpicture}
\)\;\!
and\;\!
\(
\begin{tikzpicture}[baseline=-.12cm, scale=.5, line width=.5]
	\pgfmathsetmacro{\hoffsetTop}{.12};
	\pgfmathsetmacro{\voffset}{.08};

	\draw (-.3,-.7) -- (-.3,.7);
	\draw (.3,-.7) -- (.3,.7);
	\draw (0,.7) ellipse (.3cm and .1cm);
	\draw (-.3,-.7) arc (180:360:.3cm and .1cm);	
	\draw[dotted] (-.3,-.7) arc (180:0:.3cm and .1cm);	
	\draw[wString] ($ (-.3,-.7) + (\hoffsetTop,0) + (0,-\voffset)$) -- ($ (-.3,.7) + (\hoffsetTop,0) + (0,-\voffset)$);		
	\draw[xString] ($ (-.3,-.7) + 2*(\hoffsetTop,0) + (0,-\voffset)$) -- ($ (-.3,.7) + 2*(\hoffsetTop,0) + (0,-\voffset)$);		
	\draw[zString] ($ (-.3,-.7) + 3*(\hoffsetTop,0) + (0,-\voffset)$) .. controls ++(90:.6cm) and ++(270:.6cm) .. ($ (-.3,.7) + 4*(\hoffsetTop,0) + (0,-\voffset)$);		
	\draw[yString] ($ (-.3,-.7) + 4*(\hoffsetTop,0) + (0,-\voffset)$)  .. controls ++(90:.2cm) and ++(-135:.15cm) .. (.3,0);		
	\draw[yString, densely dotted, line width=.6] ($ (-.3,.7) + 3*(\hoffsetTop,0) + (0,\voffset)$) .. controls ++(270:.2cm) and ++(135:.2cm) .. (.3,0);		
\useasboundingbox ($(current bounding box.north east)+(0,-.5)$) rectangle ($(current bounding box.south west)+(0,.5)$);
\end{tikzpicture}
\)\;\!,
Equation \eqref{eqn:RHS} becomes
$$
\begin{tikzpicture}[baseline=.5cm,scale=.8]

	\pgfmathsetmacro{\voffset}{.08};
	\pgfmathsetmacro{\hoffset}{.15};
	\pgfmathsetmacro{\hoffsetTop}{.12};

	\coordinate (a) at (-1,-1);
	\coordinate (a1) at ($ (a) + (1.4,0)$);
	\coordinate (a2) at ($ (a1) + (1.4,0)$);
	\coordinate (b) at ($ (a) + (.7,1.5)$);
	\coordinate (b2) at ($ (b) + (1.4,0)$);
	\coordinate (c) at ($ (b) + (.7,1.5)$);	
	
	\pairOfPants{(a)}{}
	\topPairOfPants{(b)}{}
	\LeftSlantCylinder{($ (a) + (2.8,0) $)}{}

	%lower strands
	\draw[thick, wString] ($ (a) + 2*(\hoffset,0) + (0,-.1)$) .. controls ++(90:.8cm) and ++(270:.8cm) .. ($ (b) + (\hoffset,0) + (0,-\voffset) $);	
	\draw[thick, xString] ($ (a1) + 2*(\hoffset,0) + (0,-.1)$) .. controls ++(90:.8cm) and ++(270:.8cm) .. ($ (b) + 2*(\hoffset,0) + (0,-.1) $);		
	\draw[thick, zString] ($ (a2) + 2*(\hoffset,0) + (0,-.1)$) .. controls ++(90:.8cm) and ++(270:.6cm) .. ($ (b2) + 2*(\hoffset,0) + (0,-.1) $);	
	
	%upper pants strands
	\draw[thick, wString] ($ (b) + (\hoffset,0) + (0,-\voffset)$) .. controls ++(90:.8cm) and ++(270:.8cm) .. ($ (c) + (\hoffsetTop,0) + (0,-\voffset) $);
	\draw[thick, xString] ($ (b) + 2*(\hoffset,0) + (0,-.1)$) .. controls ++(90:.8cm) and ++(270:.8cm) .. ($ (c) + 2*(\hoffsetTop,0) + (0,-\voffset) $);	
	\draw[thick, zString] ($ (b2) + 2*(\hoffset,0) + (0,-.1)$) .. controls ++(90:.7cm) and ++(270:.8cm) .. ($ (c) + 4*(\hoffsetTop,0) + (0,-\voffset) $);	

	%dotted blue string
	\draw[more thick, densely dotted, yString] ($ (a1) + 3*(\hoffset,0) + (0,\voffset)$) .. controls ++(90:.6cm) and ++(270:.8cm) .. ($ (b) + 3*(\hoffset,0) + (0,-\voffset) $) .. controls ++(90:.8cm) and ++(270:1cm) .. ($ (c) + 3*(\hoffsetTop,0) + (0,\voffset) $);

\end{tikzpicture}
\quad=\quad\,\,\,
\begin{tikzpicture}[baseline=.5cm, scale=.8]	

	\pgfmathsetmacro{\voffset}{.08};
	\pgfmathsetmacro{\hoffset}{.15};
	\pgfmathsetmacro{\hoffsetTop}{.12};

	\coordinate (a1) at (-1,-1);
	\coordinate (a2) at ($ (a1) + (1.4,0) $);
	\coordinate (a3) at ($ (a2) + (1.4,0) $);
	\coordinate (b1) at ($ (a1) + (.7,1.5)$);
	\coordinate (b2) at ($ (a2) + (.7,1.5)$);
	
	\RightSlantCylinder{(a1)}{}
	\pairOfPants{(a2)}{}
	\topPairOfPants{(b1)}{}
	
	%lower pants
	\draw[thick, wString] ($ (a1) + 2*(\hoffset,0) + (0,-.1)$) .. controls ++(90:.8cm) and ++(270:.6cm) .. ($ (b1) + 2*(\hoffset,0) + (0,-.1) $);	
	\draw[thick, xString] ($ (a2) + (\hoffset,0) + (0,-\voffset)$) .. controls ++(90:.8cm) and ++(270:.8cm) .. ($ (b2) + (\hoffset,0) + (0,-\voffset) $);	
	\draw[thick, zString] ($ (a3) + 2*(\hoffset,0) + (0,-.1)$) .. controls ++(90:.8cm) and ++(270:.8cm) .. ($ (b2) + 3*(\hoffset,0) + (0,-\voffset) $);	

	%upper pants
	\draw[thick, wString] ($ (b1) + 2*(\hoffset,0) + (0,-.1)$) .. controls ++(90:.7cm) and ++(270:.8cm) .. ($ (b1) + (\hoffsetTop,0) + (0,-\voffset) + (.7,1.5) $);	
	\draw[thick, xString] ($ (b2) + (\hoffset,0) + (0,-\voffset)$) .. controls ++(90:.8cm) and ++(270:.8cm) .. ($ (b1) + 2*(\hoffsetTop,0) + (0,-\voffset) + (.7,1.5) $);	
	\draw[thick, zString] ($ (b2) + 3*(\hoffset,0) + (0,-\voffset)$) .. controls ++(90:.8cm) and ++(270:.6cm) .. ($ (b1) + 4*(\hoffsetTop,0) + (0,-\voffset) + (.7,1.5) $);
	
	%dotted blue string
	\draw[more thick, densely dotted, yString] ($ (a2) + 3*(\hoffset,0) + (0,\voffset)$) .. controls ++(90:.6cm) and ++(270:.8cm) .. ($ (b2) + 2*(\hoffset,0) + (0,-.1) $) .. controls ++(90:.8cm) and ++(270:.9cm) .. ($ (b1) + 3*(\hoffsetTop,0) + (0,\voffset) + (.7,1.5) $);	
	
\end{tikzpicture}
$$
\end{rem}

\begin{proof}
We start from the left hand side, whose mate is given by
\newcommand{\halfDottedEllipseRev}[3]{
	\draw[thick] ($#1 + (0.001,.3)$) .. controls ++(75:.2cm) and ++(90:.2cm) .. ($#1 + (.6,-.3)$);
	\draw[thick, dotted] ($#1 + (0,.3)$) .. controls ++(270:.2cm) and ++(270:.2cm) .. ($#1 + (.6,-.3)$);
}

\newcommand{\halfDottedEllipseRevSolid}[3]{
	\draw[thick] ($#1 + (0.001,.3)$) .. controls ++(75:.2cm) and ++(90:.2cm) .. ($#1 + (.6,-.3)$);
	\draw[thick] ($#1 + (0,.3)$) .. controls ++(270:.2cm) and ++(270:.2cm) .. ($#1 + (.6,-.3)$);
}

\newcommand{\pairOfPantsRev}[2]{
	%pair of pants:
	\draw[thick] ($ #1 + (0,.28) $) .. controls ++(90:.8cm) and ++(270:.8cm) .. ($ #1 + (.7,1.5+.28) $);
	\draw[thick] ($ #1 + (2,-.28) $) .. controls ++(90:.6cm) and ++(270:.3cm) .. ($ #1 + (2,0) + (-.7,1.5-.28) $);
	\draw[thick] ($ #1 + (.6,-.28) $).. controls ++(90:.9cm) and ++(95:.7cm) .. ($ #1 + (1.4,.28) $); 
	\halfDottedEllipseRev{($ #1 + (.7,1.5) $)}{.3}{.1}
	\halfDottedEllipseRev{#1}{.3}{.1}
	\halfDottedEllipseRev{($ #1 + (1.4,0) $)}{.3}{.1}
}
\newcommand{\pairOfPantsRevSolid}[2]{
	%pair of pants:
	\draw[thick] ($ #1 + (0,.28) $) .. controls ++(90:.8cm) and ++(270:.8cm) .. ($ #1 + (.7,1.5+.28) $);
	\draw[thick] ($ #1 + (2,-.28) $) .. controls ++(90:.6cm) and ++(270:.3cm) .. ($ #1 + (2,0) + (-.7,1.5-.28) $);
	\draw[thick] ($ #1 + (.6,-.28) $).. controls ++(90:.9cm) and ++(95:.7cm) .. ($ #1 + (1.4,.28) $); 
	\halfDottedEllipseRev{($ #1 + (.7,1.5) $)}{.3}{.1}
	\halfDottedEllipseRevSolid{#1}{.3}{.1}
	\halfDottedEllipseRevSolid{($ #1 + (1.4,0) $)}{.3}{.1}
}

\newcommand{\LeftSlantCylinderSolid}[2]{
	\draw[thick] ($ #1 + (0,.28) $) .. controls ++(90:.45cm) and ++(270:.8cm) .. ($ #1 + (-.7,1.5+.28) $);
	\draw[thick] ($ #1 + (.6,0-.28) $).. controls ++(90:.8cm) and ++(270:.45cm) .. ($ #1 + (-.1,1.5-.28) $); 
	\halfDottedEllipseRev{($ #1 + (-.7,1.5) $)}{.3}{.1}
	\halfDottedEllipseRevSolid{#1}{.3}{.1}
}

\newcommand{\topCylinderRev}[2]{
	\draw[thick] ($ #1 + (0,.28) $) -- ($ #1 + (0,1+.28) $);
	\draw[thick] ($ #1 + (.6,-.28) $) -- ($ #1 + (.6,1-.28) $);
	\halfDottedEllipseRev{($ #1 + (0,1) $)}{.3}{.1}
}

\[\begin{tikzpicture}[scale=.85]

	\draw[thick] (2.1,-2.7) -- ++(-4.2,4.2) -- ++(6.5,0) -- ++(4.2,-4.2) -- cycle;
	\CMboxSize{box}{(4.95,-1.45)}{1}{1}{.5}{$\varepsilon$}{1.1}
	\fill[unshaded] (4.35,-1.075) rectangle +(.3,.75);
	\fill[unshaded] (-.1,-.65) circle (.35+.02);
	\fill[unshaded] ($(-.45,-.65)+(1.8,-1.37)$) circle (.37+.02);
	\fill[unshaded] (-1.83-.02,.12-.02) rectangle +(1.3+.02+.02,.95+.02+.02);

\draw[thick, wString] (5.95,-.8) -- +(.75,0); 
\draw[thick, xString] (5.95,-.9-.02) -- +(.87,0); 
\draw[thick, zString] (5.95,-1-.02-.02) -- +(.99,0); 
\draw[thick, yString] (5.95,-1.1-.02-.02-.02) -- +(1.11,0); 

\pgftransformxslant{-1}
\pgftransformrotate{-90}
%	\pgfmathsetmacro{.08}{.08};
%	\pgfmathsetmacro{.15}{.15};
%	\pgfmathsetmacro{.12}{.28};

	\coordinate (a) at (-1,-1);
	\coordinate (a1) at ($ (a) + (1.4,0)$);
	\coordinate (a2) at ($ (a1) + (1.4,0)$);
	\coordinate (b) at ($ (a) + (.7,1.5)$);
	\coordinate (b2) at ($ (b) + (1.4,0)$);
	\coordinate (c) at ($ (b) + (0,1)$);
	\coordinate (c2) at ($ (c) + (1.4,0)$);
	\coordinate (d) at ($ (c) + (.7,1.5)$);	
	
	\pairOfPantsRevSolid{(a)}{}
	\pairOfPantsRev{(c)}{}
	\topCylinderRev{($ (a) + (1.4,4) $)}{}
	\draw[thick] ($ (b) + (0,.28) $) -- ($ (b) + (0,1+.28) $);
	\draw[thick] ($ (b) + (.6,-.28) $) -- ($ (b) + (.6,1-.28) $);
	\draw[thick] ($ (b) + (1.4,.28) $)  -- ($ (b) + (1.4,1+.28) $);
	\draw[thick] ($ (b) + (2,-.28) $) -- ($ (b) + (2,1-.28) $);
	\LeftSlantCylinderSolid{($ (a) + (2.8,0) $)}{}

	%lower traciator	
	\draw[thick, wString] ($ (b) + (.15,-.08)  + (0,.2)$) .. controls ++(90:.4cm) and ++(270:.4cm) .. ($ (b) + 2*(.15,0) + (0,.9)  + (0,.2)$);
	\draw[thick, xString] ($ (b) + 2*(.15,0)$) .. controls ++(90:.4cm) and ++(270:.4cm) .. ($ (b) + 3*(.15,0) + (0,.92) $);		
	\draw[thick, yString] ($ (b) + 3*(.15,0) + (0,-.08) $) .. controls ++(90:.2cm) and ++(225:.1cm) .. ($ (b) + 4*(.15,0) + (0,-.08) + (0,.35)$);
	\draw[thick, yString] ($ (b) + (.15,1) + (0,-.08)  + (0,.3)$) .. controls ++(270:.2cm) and ++(45:.1cm) .. ($ (b) + (0,1) + (0,-.08) + (0,-.45) + (0,.4)$);

	%upper traciator
	\draw[thick, wString] ($ (d) + 2*(.12,0) + (0,-.1) + (0,.2)$) .. controls ++(90:.4cm) and ++(270:.4cm) .. ($ (d) + (.12,0) + (0,-.08) + (0,1) + (0,.39)$);
	\draw[thick, xString] ($ (d) + 3*(.12,0) + (0,-.08) $) .. controls ++(90:.4cm) and ++(270:.4cm) .. ($ (d) + 2*(.12,0) + (0,.9) + (0,.32)$);		
	\draw[thick, yString] ($ (d) + (.12,0) + (0,-.08)  + (0,.2)$) .. controls ++(90:.2cm) and ++(-45:.1cm) .. ($ (d) + (0,-.08) + (0,.45) + (0,.35)$);
	\draw[thick, yString] ($ (d) + 4*(.12,0) + (0,1) + (0,-.08) + (0,.04) $) .. controls ++(270:.2cm) and ++(135:.1cm) .. ($ (d) + 5*(.12,0) + (0,-.08) + (0,-.45) + (0,.95)$);
	\draw[thick, zString] ($ (d) + 4*(.12,0) + (0,-.08) $) .. controls ++(90:.4cm) and ++(270:.4cm) .. ($ (d) + 3*(.12,0) + (0,-.08) + (0,1) + (0,.18)$);

	%lower strands
	\draw[thick, wString] ($ (a) + 2*(.15,0) + (0,-.1) + (0,.27)$) .. controls ++(90:.5cm) and ++(270:.8cm) .. ($ (b) + (.15,0) + (0,-.08) + (0,.2) $);	
	\draw[thick, xString] ($ (a1) + 2*(.15,0) + (0,-.1) + (-.12,.375)$) .. controls ++(90:.25cm) and ++(270:.45cm) .. ($ (b) + 2*(.15,0)$);	
	\draw[thick, yString] ($ (a1) + 3*(.15,0) + (0,-.08) + (-.05,.14)$) .. controls ++(90:.35cm) and ++(270:.45cm) .. ($ (b) + 3*(.15,0) + (0,-.08) $);	
	\draw[thick, zString] ($ (a2) + 2*(.15,0) + (0,-.09) + (0,.26)$) .. controls ++(90:.3cm) and ++(270:.45cm) .. ($ (b2) + 2*(.15,0) + (0,-.05) $) -- ($ (c2) + 2*(.15,0) + (0,-.1) $);	
	
	%upper pants strands
	\draw[thick, yString] ($ (c) + (.15,0) + (0,-.08) + (0,.3)$) .. controls ++(90:.6cm) and ++(270:.6cm) .. ($ (d) + (.12,0) + (0,-.08) + (0,.2) $);
	\draw[thick, wString] ($ (c) + 2*(.15,0) + (0,-.1) + (0,.2)$) .. controls ++(90:.6cm) and ++(270:.8cm) .. ($ (d) + 2*(.12,0) + (0,-.08) + (0,.2) $);	
	\draw[thick, xString] ($ (c) + 3*(.15,0) + (0,-.08)$) .. controls ++(90:.7cm) and ++(270:.75cm) .. ($ (d) + 3*(.12,0) + (0,-.08) $);	
	\draw[thick, zString] ($ (c2) + 2*(.15,0) + (0,-.1)$) .. controls ++(90:.8cm) and ++(270:.7cm) .. ($ (d) + 4*(.12,0) + (0,-.08) $);	

\end{tikzpicture}
\]
By successive application of Equations \eqref{pic: twopics - b}, \eqref{eq: pants o epsilon}, \eqref{pic: twopics - a}, and then once again \eqref{eq: pants o epsilon},
we can rewrite this as:
\begin{equation}
\begin{tikzpicture}[baseline=.9cm, scale=.8]
	\plane{(-.5,-1)}{5}{4}

	\draw[thick, wString] (-1.8,2) -- (1.5,2);
	\draw[thick, xString] (-.1,.9) -- (2.6,.9);
	\draw[thick, zString] (2.4,-.2) -- (3.7,-.2);
	\draw[thick, yString] (-.1,.7) .. controls ++(0:.4cm) and ++(180:.4cm) .. (1.6,-.8) -- (4.3,-.8);

	\CMbox{box}{(-2.6,1.6)}{.8}{.8}{.4}{$\varepsilon_w$}
	\CMbox{box}{(-1.3,.3)}{1.2}{.8}{.4}{$\varepsilon_{x\otimes y}$}
	\CMbox{box}{(1.6,-.6)}{.8}{.8}{.4}{$\varepsilon_{z}$}

	\straightTubeWithString{(-2.8,1.9)}{.1}{1.5}{wString}
	\straightTubeTwoStrings{(-1.5,.6)}{1.5}{xString}{yString}
	\straightTubeWithString{(1.4,-.3)}{.1}{3.5}{zString}

\node[scale=.87] at (8.5,1.3) {$\in \cM\big(\Phi(\Tr_\cC(w)\otimes \Tr_\cC(x\otimes y)\otimes \Tr_\cC(z)), w\otimes x\otimes z\otimes y\big).$};
\end{tikzpicture}
\label{eqn:CommonMap}
\end{equation}
Starting instead from the right hand side of \eqref{eqn:RHS}, if we take its mate as above, and apply
\eqref{eq: pants o epsilon}, then 
\eqref{pic: twopics - b}, then
\eqref{eq: pants o epsilon}, and finally \eqref{pic: twopics - a},
we obtain the same picture \eqref{eqn:CommonMap}.
\end{proof}

We now prove a compatibility between the traciator and the unit $\eta$.

\begin{lem}\label{lem:StringOverCap}
For $x\in \cM$ and $c\in \cC$, the following diagram commutes:
\[
\xymatrix{
&c \ar[dr]^{\eta} \ar[d]^{\eta} \ar[dl]_{\eta}&
\\
 \Tr_\cC(\Phi(c)) \ar[d]_{\Tr_\cC(\id\otimes \tilde\coev_x)} &  \Tr_\cC(\Phi(c)) \ar[d]|(.48){\,\Tr_\cC((e_{\Phi(c),x}\otimes \id)\circ(\id\otimes \coev_x))\phantom{\textstyle |}} & \Tr_\cC(\Phi(c))\ar[d]^{\Tr_\cC(\tilde\coev_x\otimes \id)}
\\
\Tr_\cC(\Phi(c)\otimes x^* \otimes x) \ar[r]_(.49){\tau^+} & \Tr_\cC(x \otimes \Phi(c) \otimes x^*) & \Tr_\cC(x^* \otimes x\otimes \Phi(c)) \ar[l]^(.48){\tau^-}
}
\]
In diagrams,
$$
\begin{tikzpicture}[baseline=.9cm, xscale=1.1]

	\coordinate (a1) at (0,0);
	\coordinate (b1) at (0,2);
	\coordinate (c) at (.15,.4);
	
	%cylinder
	\draw[thick] (a1) -- (b1);
	\draw[thick] ($ (a1) + (.6,0) $) -- ($ (b1) + (.6,0) $);
	\halfDottedEllipse{(0,.15)}{.3}{.1}
	\draw[thick] ($ (b1) + (.3,0) $) ellipse (.3cm and .1cm);
	%cap
	\draw[thick] (a1) arc (-180:0:.3cm);		

	\draw[thick, xString] (c)+(.15,0) arc (-180:0:.08cm)  .. controls ++(90:.2cm) and ++(225:.2cm) .. ($ (c) + (.45,.6) $);		
	\draw[thick, xString] ($ (b1) + (.15,-.08) $) .. controls ++(270:.2cm) and ++(45:.2cm) .. ($ (c) + (-.15,.8) $);
	\draw[thick, xString, dotted] ($ (c) + (-.15,.8) $) -- ($ (c) + (.45,.6) $);	
	\draw[thick, xString] (c)++(.15,0) .. controls ++(90:.8cm) and ++(270:.8cm) .. ($ (b1) + (.45,-.08) $);	
\draw[super thick, white] (.3,1.95) .. controls ++(270:.8cm) and ++(90:.8cm) .. (.15,.5) -- (.15,0) .. controls ++(270:.2cm) and ++(90:.2cm) .. ++(-.2,-.5) -- +(0,-.1);
\draw[thick, cString] (.3,1.95) .. controls ++(270:.8cm) and ++(90:.8cm) .. (.15,.5) -- (.15,0) .. controls ++(270:.2cm) and ++(90:.2cm) .. ++(-.2,-.5) -- +(0,-.1);
\end{tikzpicture}
\,\,=\,\,
\begin{tikzpicture}[baseline=.9cm, xscale=1.1]

	\coordinate (a1) at (0,0);
	\coordinate (b1) at (0,2);
	\coordinate (c) at (.15,.4);
	
	%cylinder
	\draw[thick] (a1) -- (b1);
	\draw[thick] ($ (a1) + (.6,0) $) -- ($ (b1) + (.6,0) $);
	\halfDottedEllipse{(0,.15)}{.3}{.1}
	\draw[thick] ($ (b1) + (.3,0) $) ellipse (.3cm and .1cm);
	%cap
	\draw[thick] (a1) arc (-180:0:.3cm);		

	\draw[thick, xString] ($ (b1) + (.15,-.08) $) -- ++ (0,-.3) .. controls ++(270:.8cm) and ++(90:.8cm) .. (.3,.5) -- ++(0,-.08) arc (-180:0:.08cm) -- ($ (b1) + (.45,-.08) $);	
\draw[super thick, white] (.3,1.95) -- ++ (0,-.3) .. controls ++(270:.8cm) and ++(90:.8cm) .. (.15,.5) -- (.15,0) .. controls ++(270:.2cm) and ++(90:.2cm) .. ++(-.2,-.5) -- +(0,-.1);
\draw[thick, cString] (.3,1.95) -- ++ (0,-.3) .. controls ++(270:.8cm) and ++(90:.8cm) .. (.15,.5) -- (.15,0) .. controls ++(270:.2cm) and ++(90:.2cm) .. ++(-.2,-.5) -- +(0,-.1);
\end{tikzpicture}
\,\,=\,\,
\begin{tikzpicture}[baseline=.9cm, xscale=-1.1]

	\coordinate (a1) at (0,0);
	\coordinate (b1) at (0,2);
	\coordinate (c) at (.15,.4);
	
	%cylinder
	\draw[thick] (a1) -- (b1);
	\draw[thick] ($ (a1) + (.6,0) $) -- ($ (b1) + (.6,0) $);
	\halfDottedEllipse{(0,.15)}{.3}{.1}
	\draw[thick] ($ (b1) + (.3,0) $) ellipse (.3cm and .1cm);
	%cap
	\draw[thick] (a1) arc (-180:0:.3cm);		

	\draw[thick, xString] (c)+(.15,0) arc (-180:0:.08cm)  .. controls ++(90:.2cm) and ++(225:.2cm) .. ($ (c) + (.45,.6) $);		
	\draw[thick, xString] ($ (b1) + (.15,-.08) $) .. controls ++(270:.2cm) and ++(45:.2cm) .. ($ (c) + (-.15,.8) $);
	\draw[thick, xString, dotted] ($ (c) + (-.15,.8) $) -- ($ (c) + (.45,.6) $);	
	\draw[thick, xString] (c)++(.15,0) .. controls ++(90:.8cm) and ++(270:.8cm) .. ($ (b1) + (.45,-.08) $);	
\draw[super thick, white] (.3,1.95) .. controls ++(270:.8cm) and ++(90:.8cm) .. (.15,.5) -- (.15,.1) .. controls ++(270:.3cm) and ++(90:.3cm) .. ++(.3,-.6) -- +(0,-.1);
\draw[thick, cString] (.3,1.95) .. controls ++(270:.8cm) and ++(90:.8cm) .. (.15,.5) -- (.15,.1) .. controls ++(270:.3cm) and ++(90:.3cm) .. ++(.3,-.6) -- +(0,-.1);
\end{tikzpicture}
$$
\end{lem}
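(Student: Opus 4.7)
The plan is to take mates of the three composites $c \to \Tr_\cC(x \otimes \Phi(c) \otimes x^*)$ under the adjunction $\Phi \dashv \Tr_\cC$ and to verify that each of them equals the single morphism
\[
M \;:=\; (e_{\Phi(c),x}\otimes \id_{x^*})\circ(\id_{\Phi(c)}\otimes \coev_x)\;\;\in\;\cM(\Phi(c),\;x\otimes \Phi(c) \otimes x^*).
\]

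The middle route should be essentially immediate: by Lemma~\ref{lem:PushDownMorphism} its mate is $\varepsilon_{x\otimes \Phi(c)\otimes x^*}\circ \Phi(\Tr_\cC(M))\circ \Phi(\eta_c)$, which by Lemma~\ref{lem:MoveThroughNaturalMap} equals $M\circ \varepsilon_{\Phi(c)}\circ \Phi(\eta_c)$, and this collapses to $M$ by Equation~\eqref{eq:MAdjointRelation}.

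For the left route, I would first unpack the mate of $\tau^+_{\Phi(c)\otimes x^*,\, x}$ using Definition~\ref{defn:Traciator}, and then precompose with $\Phi$ applied to $\Tr_\cC(\id_{\Phi(c)} \otimes \tilde\coev_x)\circ \eta_c$. Naturality of the half-braiding in its first variable (valid because $\Phi$ factors through $\cZ(\cM)$, so every morphism of the form $\Phi(g)$ is central) then allows me to pull the precomposition past the $e$-term. Lemma~\ref{lem:MoveThroughNaturalMap} moves $\tilde\coev_x$ through $\varepsilon_{\Phi(c)\otimes x^* \otimes x}$, and Equation~\eqref{eq:MAdjointRelation} kills the residual $\varepsilon_{\Phi(c)} \circ \Phi(\eta_c)$. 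What remains is $M$ with an extra factor $(\id_{x^*} \otimes \tilde\ev_x) \circ (\tilde\coev_x \otimes \id_{x^*})$ inserted on the $x^*$-leg, which equals $\id_{x^*}$ by the zig-zag identity for the pivotal duality.

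The right route is handled by the same three-step simplification, now applied to the explicit mate of $\tau^-_{x^*,\, x\otimes \Phi(c)}$ recorded in Lemma~\ref{lem:TraciatorsComposeToIdentity}. This time the residual factor sits on the $x$-leg and has the form $(\ev_x \otimes \id_x)\circ(\id_x \otimes \coev_x)$, which collapses by the ordinary zig-zag identity. The main obstacle throughout is purely clerical: the three traciators operate on triple tensor products, so the plan requires careful bookkeeping of associators and of the interplay between the ordinary duality $(\ev,\coev)$ and the pivotal duality $(\tilde\ev,\tilde\coev)$ in order to ensure that the correct zig-zag collapses at the end of each of the three computations. Beyond this bookkeeping, no new ideas are required beyond the lemmas already established.
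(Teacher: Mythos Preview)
Your proposal is correct and follows essentially the same route as the paper: take mates under the adjunction $\Phi\dashv\Tr_\cC$ and simplify each composite to the common morphism $M=(e_{\Phi(c),x}\otimes\id_{x^*})\circ(\id_{\Phi(c)}\otimes\coev_x)$, using Equation~\eqref{pic: twopics - a} (resp.\ \eqref{pic: twopics - b}) to unpack the traciator, Lemma~\ref{lem:MoveThroughNaturalMap} to push morphisms through $\varepsilon$, and Equation~\eqref{eq:MAdjointRelation} to collapse $\varepsilon_{\Phi(c)}\circ\Phi(\eta_c)$. The only cosmetic difference is that the paper carries out the computation graphically and leaves the ``naturality of $e$ in the first variable'' step implicit in its pictures, whereas you single it out explicitly; both are the same argument.
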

\begin{proof}
We only prove that the left hand diagram commutes (the other one is similar).
The mate of the leftmost map can be simplified in the following way:
\[
\begin{tikzpicture}[baseline=.22cm, scale=.67]
	\draw[thick, xString] (1.4,-.1) -- (3.4,-.1);
	\draw[thick, xString] (1.4,.3) -- (3,.3);
	\CMbox{box}{(.23,-.4)}{1.5}{1.4}{.5}{$\,\varepsilon$}
	\straightTubeNoString{(-.1,.2)}{.2}{3}
\pgftransformyshift{17}
\pgftransformyscale{1.3}
\pgftransformxshift{-59}
\pgftransformxscale{2}
\pgftransformrotate{90}
	\draw[thick, xString] (-.15,-1.07) .. controls ++(90:.4cm) and ++(270:.4cm) .. (.1-.08,-.08);		
	\draw[thick, xString] (.15,-1.07) .. controls ++(90:.2cm) and ++(225:.1cm) .. (.3,-.62);		
	\draw[thick, xString] (-.1-.03,-.08) .. controls ++(270:.2cm) and ++(45:.1cm) .. (-.3,-.52);
	\draw[thick, xString, dotted] (.3,-.62) -- (-.3,-.52);	
\pgftransformrotate{-90}
\pgftransformxscale{1/2}
\pgftransformxshift{59}
\pgftransformyscale{1/1.3}
\pgftransformyshift{-17}
	\fill[white] (-4,.1) rectangle (-1.9,1.1);
	\draw[thick] (-1.9,1) -- ++(-.3,0) arc (90:270:.4) -- ++(.3,0);
	\plane{(-1.5,-.9)}{5.7}{2.7}
	\draw [xString,thick] (-1.9,.624) arc (90:270:.098);
\draw[super thick, white] (1.9,.12) -- (3.35,.12);
\draw[thick, cString] (1.73,.12) -- (3.35,.12);
\draw[super thick, white] (.15,.6) .. controls ++(180:.6cm) and ++(0:.6cm) .. ++(-1.7,.2) -- ++(-.65,0) .. controls ++(180:.2cm) and ++(0:.2cm) .. ++(-.7,.25) -- +(-.7,0);
\draw[thick, cString] (.15,.6) .. controls ++(180:.6cm) and ++(0:.6cm) .. ++(-1.7,.2) -- ++(-.65,0) .. controls ++(180:.2cm) and ++(0:.2cm) .. ++(-.7,.25) -- +(-.7,0);
\end{tikzpicture}
\!\!\!\!\!\!=\!\!\!\!\!\!
\begin{tikzpicture}[baseline=.133cm, scale=.87]
	\draw[thick, xString] (1.2,-.1) arc (90:-90:.25cm) -- (0,-.6) .. controls ++(180:1cm) and ++(180:1.2cm) .. (-.3,1.15) -- (.9+.05,1.15);
	\draw[thick, xString] (1.2,.2-.15) -- (1.9+.15,.2-.15);
	\CMbox{box}{(0,-.4)}{1.2}{1}{.4}{$\varepsilon$}
	\filldraw[line width=3, white] (-.25,0) arc (-90:90:.15 and .3) -- ++(-.8,0) arc (90:270:.3) -- cycle;
	\draw[thick] (-.25,0) arc (-90:90:.15 and .3) -- ++(-.8,0) arc (90:270:.3) -- cycle;
	\draw [xString,thick] (-.8,.3) arc (90:270:.07) -- ++(.67,0) (-.8,.3) -- ++(.7,0);
	\plane{(-.5,-.8)}{3.2+.2}{2.1}
\draw[super thick, white](1.5,.05+.17) -- (2.2,.05+.17);
\draw[thick, cString] (1.22,.05+.17) -- (2.2-.13,.05+.17);
\draw[super thick, white]  (-.05,.3+.16) -- ++(-1.03,0) .. controls ++(180:.2cm) and ++(0:.2cm) .. ++(-.5,.15) -- +(-.5,0);
\draw[thick, cString] (-.05,.3+.16) -- ++(-1.03,0) .. controls ++(180:.2cm) and ++(0:.2cm) .. ++(-.5,.15) -- +(-.55,0);
\end{tikzpicture}
\!\!\!\!\!\!=\!\!\!\!\!\!
\begin{tikzpicture}[baseline=.1cm, scale=.9]
	\draw[thick, xString] (2.2,-.1) -- (1.2,-.1) arc (90:270:.1cm) arc (90:-90:.15cm) -- (0,-.6) .. controls ++(180:1cm) and ++(180:1.4cm) .. (-.2,1) -- (1.1,1);
	\CMbox{box}{(.1,-.4)}{.85}{.8}{.4}{$\varepsilon$}
	\straightTubeWithCap{(-.1,0)}{.1}{.3}
	\plane{(-.5,-.8)}{3.2+.2}{2}
\draw[super thick, white](1.5,.05) -- (2.2,.05);
\draw[thick, cString] (.974,.05) -- (2.2,.05);
\draw[super thick, white]  (.05,.2) -- ++(-.45,0) .. controls ++(180:.2cm) and ++(0:.2cm) .. ++(-.45,.15) -- +(-1.1,0);
\draw[thick, cString] (.05,.2) -- ++(-.45,0) .. controls ++(180:.2cm) and ++(0:.2cm) .. ++(-.45,.15) -- +(-1.1,0);
\end{tikzpicture}
\!\!\!\!\!\!=
\begin{tikzpicture}[baseline=.3cm, scale=.9]
	\plane{(0,0)}{1}{1}
	\draw [xString,thick] (.35,.65) -- +(-.2,0) arc (90:270:.15) -- +(.5,0);
\draw[super thick, white] (.7,.5) -- ++(-.5,0) .. controls ++(180:.2cm) and ++(0:.2cm) .. ++(-.5,.15) -- +(-.65,0);
\draw[thick, cString] (.7,.5) -- ++(-.5,0) .. controls ++(180:.2cm) and ++(0:.2cm) .. ++(-.5,.15) -- +(-.65,0);
\end{tikzpicture}
\]
where we have used Equation \eqref{pic: twopics - a}, then
Lemma \ref{lem:MoveThroughNaturalMap} (naturality of $\varepsilon$), and finally Equation \eqref{eq:MAdjointRelation}.
It agrees with the mate of the middle map:
\begin{align*}
\begin{tikzpicture}[baseline=.1cm, scale=.9]
	\draw[thick, xString] (1.2,-.1) -- (2.2,-.1);
	\draw[thick, xString] (1.2,.2) -- (1.7+.2,.2);
	\CMbox{box}{(0,-.4)}{1.2}{1}{.4}{$\varepsilon$}
	\filldraw[line width=3, white] (-.25,0) arc (-90:90:.15 and .3) -- ++(-.8,0) arc (90:270:.3) -- cycle;
	\draw[thick] (-.25,0) arc (-90:90:.15 and .3) -- ++(-.8,0) arc (90:270:.3) -- cycle;
	\draw [xString,thick] (-.8,.27) arc (90:270:.07) -- ++(.67,0) (-.8,.27) .. controls ++(0:.2cm) and ++(180:.2cm) .. ++(.67,.2);
	\plane{(-.5,-.8)}{3.2+.2}{2.1}
	\draw[super thick, white](1.5,.05) -- (2.2,.05);
	\draw[thick, cString] (1.22,.05) -- (2.2,.05);
\draw[super thick, white]  (-.05,.3) -- ++(-.1,0) .. controls ++(180:.2cm) and ++(0:.2cm) .. ++(-.67,.16) -- ++(-.26,0) .. controls ++(180:.2cm) and ++(0:.2cm) .. ++(-.5,.15) -- +(-.5,0);
\draw[thick, cString] (-.05,.3) -- ++(-.1,0) .. controls ++(180:.2cm) and ++(0:.2cm) .. ++(-.67,.16) -- ++(-.26,0) .. controls ++(180:.2cm) and ++(0:.2cm) .. ++(-.5,.15) -- +(-.5,0);
\end{tikzpicture}
\!\!\!&=\!\!\!
\begin{tikzpicture}[baseline=.1cm, scale=.9]
	\CMbox{box}{(.1,-.3)}{.85}{.8}{.4}{$\varepsilon$}
	\straightTubeWithCap{(-.1,.1)}{.1}{.3}
	\plane{(-.5,-.8)}{3.2+.2}{2}
\draw [xString,thick] (1.35+.6,.15) -- +(-.2,0) arc (90:270:.15) -- +(.5,0);
\draw[super thick, white] (1.7+.6,0) -- ++(-.5,0) .. controls ++(180:.2cm) and ++(0:.2cm) .. ++(-.5,.15);
\draw[thick, cString] (1.7+.6,0) -- ++(-.5,0) .. controls ++(180:.2cm) and ++(0:.2cm) .. ++(-.5,.15) -- +(-.33,0);
\draw[super thick, white]  (.05,.3) -- ++(-.45,0) .. controls ++(180:.2cm) and ++(0:.2cm) .. ++(-.5,.15) -- +(-1,0);
\draw[thick, cString] (.05,.3) -- ++(-.45,0) .. controls ++(180:.2cm) and ++(0:.2cm) .. ++(-.5,.15) -- +(-1,0);
\end{tikzpicture}
\!\!\!=\,\,
\begin{tikzpicture}[baseline=.3cm, scale=.9]
	\plane{(0,0)}{1}{1}
	\draw [xString,thick] (.35,.65) -- +(-.2,0) arc (90:270:.15) -- +(.5,0);
\draw[super thick, white] (.7,.5) -- ++(-.5,0) .. controls ++(180:.2cm) and ++(0:.2cm) .. ++(-.5,.15) -- +(-.65,0);
\draw[thick, cString] (.7,.5) -- ++(-.5,0) .. controls ++(180:.2cm) and ++(0:.2cm) .. ++(-.5,.15) -- +(-.65,0);
\end{tikzpicture}
.
\qedhere
\end{align*}
\end{proof}

We now assume that $\cC$ and $\Phi^\ssZ$ are braided, so that $\Tr_\cC$ is the categorified trace associated to a pivotal module tensor category. In this context, we establish certain relations between $\Tr_\cC$ and the braiding and twist of $\cC$. We begin by examining the traciator and the twist.

%We now connect $\tau^\pm_{\bullet,\bullet}$ and the twist maps $\theta_\bullet$.%

\begin{lem}\label{lem:PullUpTheta}
Let $c\in \cC$ and $x\in \cM$ be objects. 
Then for every $f\in\cM(\Phi(c), x)$, we have
\[
f\circ\Phi(\theta_c)
=	(\id_x\otimes \tilde\ev_x)
\circ	(\id_x\otimes f\otimes \id_{x^*})
\circ	(e_{\Phi(c),x}\otimes \id_{x^*})
\circ	(\id_{\Phi(c)}\otimes \coev_x),
\]
where $\tilde \ev_x:x\otimes x^*\to 1$ is as in Definition \ref{defn:Traciator}. 
In diagrams:
$$
\begin{tikzpicture}[baseline=.1cm, scale=.8]
	\plane{(-.5,-.8)}{3}{2}
	\draw[thick, xString] (.8,0) -- (1.7,0);
	
	\CMbox{box}{(0,-.4)}{.8}{.8}{.4}{$f$}
	\invisibleTube{(-.3,.1)}
	\fill[unshaded] (-1.3,1.2) circle (.1cm);
	\draw[thick, cString] (-.2,.2) -- (-.3,.2) .. controls ++(180:.6cm) and ++(315:.6cm) .. (-1.35,.8);
	\draw[thick, cString] (-2,1.7) .. controls ++(0:1.1cm) and ++(310:.4cm) .. (-1.7,.8) .. controls ++(135:.2cm) and ++(135:.2cm) .. (-1.5,.95);
\end{tikzpicture}
=
\begin{tikzpicture}[baseline=.1cm, scale=.8]
	\plane{(-.5,-.8)}{3}{2}
	\draw[thick, xString] (.8,0) arc (90:-90:.3cm) -- (0,-.6) arc (270:90:.8cm) -- (.7,1);
	\CMbox{box}{(0,-.4)}{.8}{.8}{.4}{$f$}
	\invisibleTubeWithString{(-.3,.1)}{cString}
	\draw[thick, cString] (-1.4,1.4) .. controls ++(90:.2cm) and ++(0:.2cm) .. (-2,1.7);
\end{tikzpicture}.
$$
%the following diagram commutes:
%$$
%\xymatrix{
%\cM(\Phi(c), x) \ar[r]\ar[d]_{\theta_c}	& \cM(\Phi(c), 1_\cM\otimes x) \ar[r]	&  	\cM(\Phi(c) \otimes x^*, 1_\cM) \ar[d]^{e_{G(c)}(x^*)}
%\\
%\cM(\Phi(c), x)  & \cM(\Phi(c), x\otimes 1_\cM) \ar[l]&  \cM(x^* \otimes \Phi(c) , 1_\cM)  \ar[l]
%}
%$$
%where $e_{G(c)}$ is the half braiding for $G(c)\in \cZ(\cM)$, and the unlabeled morphisms are the natural maps.
\end{lem}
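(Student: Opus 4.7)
The plan is to unfold $\Phi(\theta_c)$ using \eqref{def: theta1 -- PRE} together with the fact that $\Phi^\ssZ$ preserves all the structure at play, and then transport $f$ past the braiding and the pivotal cap by two standard naturality moves.

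First I would use that $\Phi^\ssZ:\cC\to \cZ(\cM)$ is a pivotal braided tensor functor, so the formula \eqref{def: theta1 -- PRE} is preserved: $\Phi^\ssZ(\theta_c)$ is the twist of $\Phi^\ssZ(c)$ in $\cZ(\cM)$.  Forgetting down to $\cM$, the braiding of $\cZ(\cM)$ at $(\Phi^\ssZ(c),\Phi^\ssZ(c))$ is by definition the half-braiding $e_{\Phi(c),\Phi(c)}$.  Since $\varphi_{\Phi(c)}$ is a morphism in $\cZ(\cM)$ by Proposition~\ref{prop: communicated by Noah Snyder}, it slides past the half-braiding, and one may absorb it into $\tilde\ev_{\Phi(c)}=\ev_{\Phi(c)^*}\circ (\varphi_{\Phi(c)}\otimes \id_{\Phi(c)^*})$. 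The upshot is the simple formula
\[
\Phi(\theta_c)=(\id_{\Phi(c)}\otimes \tilde\ev_{\Phi(c)})\circ (e_{\Phi(c),\Phi(c)}\otimes \id_{\Phi(c)^*})\circ (\id_{\Phi(c)}\otimes \coev_{\Phi(c)}),
\]
i.e.\ the usual depiction of the twist as a string looping once around itself via the half-braiding.

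Next I would precompose with $f$ and apply naturality of the half-braiding in its second argument,
\[
(f\otimes \id_{\Phi(c)})\circ e_{\Phi(c),\Phi(c)}=e_{\Phi(c),x}\circ (\id_{\Phi(c)}\otimes f),
\]
which turns the second $\Phi(c)$ strand of the self-braiding into an $x$-strand, leaving $f$ sitting between the braiding and the (co)evaluation.  This yields
\[
f\circ \Phi(\theta_c)=(\id_x\otimes \tilde\ev_{\Phi(c)})\circ (e_{\Phi(c),x}\otimes \id_{\Phi(c)^*})\circ (\id_{\Phi(c)}\otimes f\otimes \id_{\Phi(c)^*})\circ (\id_{\Phi(c)}\otimes \coev_{\Phi(c)}).
\]
Finally, I would use the two standard pivotal/rigid identities
\[
(f\otimes \id_{\Phi(c)^*})\circ \coev_{\Phi(c)}=(\id_x\otimes f^*)\circ \coev_x,\qquad \tilde\ev_{\Phi(c)}\circ (\id_{\Phi(c)}\otimes f^*)=\tilde\ev_x\circ (f\otimes \id_{x^*}),
\]
to swap the $\Phi(c)$-cap for an $x$-cap.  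Since $f^*$ acts on the rightmost tensor factor and $e_{\Phi(c),x}\otimes \id$ acts on the first two, these two morphisms commute trivially, so after substituting the identities above one reads off precisely the right-hand side of the lemma.

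The only real obstacle is bookkeeping in the non-strict pivotal setting: one must carefully invoke the canonical isomorphisms $\delta_c:\Phi(c^*)\to \Phi(c)^*$ supplied by the tensor functor structure, and the pivotal functor axiom $F(\varphi_c)=\delta_{c^*}^{-1}\circ\delta_c^*\circ \varphi_{F(c)}$, to justify the identification of $\Phi^\ssZ(\theta_c)$ with the twist of $\Phi^\ssZ(c)$ and to obtain the clean form of $\Phi(\theta_c)$ above.  Once this is set up, everything else is a short chain of naturality squares and zig-zag equations, which is best visualized in the graphical calculus.
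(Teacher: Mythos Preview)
Your proof is correct and follows essentially the same route as the paper, only argued in the opposite direction: the paper starts from the right-hand side, pushes $f$ past the crossing and caps using pivotality to obtain $f$ postcomposed with the self-loop in $\Phi(c)$, and then identifies that loop with $\Phi(\theta_c)$ by citing Lemma~\ref{lem: it is a balanced functor} (that $\Phi^\ssZ$ is balanced). You instead begin by unpacking $\Phi(\theta_c)$ via the balanced-functor fact and then push $f$ through; the naturality-of-half-braiding step together with your two pivotal cap identities is exactly the inverse of the paper's ``bring $f$ under the crossing using pivotality'' move.
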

\begin{proof}
%Let us write $\tilde\coev_y:1\to y^*\otimes y$ for $(\id_y\otimes\varphi^{-1}_y)\circ\coev_{y^*}$.
Starting from the right hand side,
we bring $f$ under the crossing using pivotality:\vspace{-.2cm}
%\begin{gather*}
\[
\begin{tikzpicture}[baseline=-.1, xscale=1.1]
	\draw[thick, xString] (1.3,0) arc (90:-90:.3cm) -- (-.7,-.6) arc (270:90:.3cm) .. controls ++(0:.3cm) and ++(180:.3cm) .. (.7,.6) -- (1.6,.6);
	\fill[unshaded] (0,.3) circle (.2cm);
	\draw[thick, cString] (-1,.6) -- (-.7,.6) .. controls ++(0:.3cm) and ++(180:.3cm) .. (.7,0);
	\nbox{unshaded}{(1,0)}{.3}{0}{0}{$f$}
	\node[scale=1.1] at (-.8,.85) {$\scriptstyle \Phi(c)$};
	\node[scale=1.1] at (1.4,.85) {$\scriptstyle x$};
	\node at (1.9,.-.3) {$\scriptstyle \tilde\ev_x$};
	\node[left] at (-.9,.-.335) {$\scriptstyle \coev_x$};
\end{tikzpicture}
=
\begin{tikzpicture}[baseline=-.1, xscale=-1, xscale=1.1]
	\draw[thick, xString] (-1,.6) -- (-.7,.6) .. controls ++(0:.3cm) and ++(180:.3cm) .. (.7,0);
	\fill[unshaded] (0,.3) circle (.2cm);
	\draw[thick, cString] (1.3,0) arc (90:-90:.3cm) -- (-.7,-.6) arc (270:90:.3cm) .. controls ++(0:.3cm) and ++(180:.3cm) .. (.7,.6) -- (1.6,.6);
	\nbox{unshaded}{(1,0)}{.3}{0}{0}{$f$}
	\node at (2.15,.-.335) {$\scriptstyle \coev_{\Phi(c)}$};
	\node at (-1.45,.-.3) {$\scriptstyle \tilde\ev_{\Phi(c)}$};
\end{tikzpicture}
=
\begin{tikzpicture}[baseline=-.1, xscale=1.1]
	\draw[thick, cString] (.7,0) arc (90:-90:.3cm) -- (-.7,-.6) arc (270:90:.3cm) .. controls ++(0:.3cm) and ++(180:.3cm) .. (.7,.6);
	\fill[unshaded] (0,.3) circle (.2cm);
	\draw[thick, cString] (-1,.6) -- (-.7,.6) .. controls ++(0:.3cm) and ++(180:.3cm) .. (.7,0);
	\draw[thick, xString] (1.3,.6) -- (1.8,.6);
	\nbox{unshaded}{(1,.6)}{.3}{0}{0}{$f$}
	\node at (1.45,.-.3) {$\scriptstyle \tilde\ev_{\Phi(c)}$};
	\node at (-1.55,.-.335) {$\scriptstyle \coev_{\Phi(c)}$};
\end{tikzpicture}.\vspace{.1cm}
\]
%\end{gather*}
To conclude the argument, note that $(\id_{\Phi(c)}\otimes\tilde\ev_{\Phi(c)})\circ(e_{\Phi(c),\Phi(c)}\otimes \id_{\Phi(c)^*})\circ(\id_{\Phi(c)}\otimes\coev_{\Phi(c)})$
is the image of
$(\id_c\otimes\tilde\ev_c)\circ(\beta_{c,c}\otimes \id_{c^*})\circ(\id_c\otimes\coev_c)=\theta_c$
under $\Phi=F\circ \Phi^{\scriptscriptstyle \cZ}$, because
$\Phi^{\scriptscriptstyle \cZ}:\cC\to \cZ(\cM)$ is a balanced functor (by Lemma \ref{lem: it is a balanced functor}).
\end{proof}

\begin{prop}
\label{prop:ThetaAndTraciator}
The map $\tau_{1_\cM,x}:\Tr_\cC(1_\cM \otimes x) \to \Tr_\cC(x \otimes 1_\cM )$ is equal to the twist $\theta_{\Tr_\cC(x)}$.
In diagrams:
$$
\theta_{\Tr_\cC(x)}\,\,=\,\,
\begin{tikzpicture}[baseline=-.1cm]

	%cylinder
	\draw[thick] (-.3,-1) -- (-.3,1);
	\draw[thick] (.3,-1) -- (.3,1);
	\draw[thick] (0,1) ellipse (.3cm and .1cm);
	\halfDottedEllipse{(-.3,-1)}{.3}{.1}
	
	\draw[thick, xString] (0,-1.1) .. controls ++(90:.2cm) and ++(225:.2cm) .. (.3,-.2);		
	\draw[thick, xString] (0,.9) .. controls ++(270:.2cm) and ++(45:.2cm) .. (-.3,.2);
	\draw[thick, xString, dotted] (-.3,.2) -- (.3,-.2);	
\end{tikzpicture}\,.
$$
Similarly, we have $\theta_{\Tr_\cC(x)}^{-1}=\tau^-_{x,1_\cM}$.
\end{prop}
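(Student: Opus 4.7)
The plan is to prove the statement by comparing mates under the adjunction $\Phi \dashv \Tr_\cC$. By Definition~\ref{defn:Traciator} (applied with $y = 1_\cM$ and suppressing the unitors $\lambda, \rho$ identifying $1_\cM \otimes x \cong x \cong x \otimes 1_\cM$), the morphism $\tau_{1_\cM, x}: \Tr_\cC(x) \to \Tr_\cC(x)$ is the mate of
\[
(\id_x \otimes \tilde\ev_x) \circ (\id_x \otimes \varepsilon_x \otimes \id_{x^*}) \circ (e_{\Phi(\Tr_\cC(x)), x} \otimes \id_{x^*}) \circ (\id_{\Phi(\Tr_\cC(x))} \otimes \coev_x).
\]
On the other hand, by Lemma~\ref{lem:PushDownMorphism}, the mate of $\theta_{\Tr_\cC(x)}: \Tr_\cC(x) \to \Tr_\cC(x)$ is exactly $\varepsilon_x \circ \Phi(\theta_{\Tr_\cC(x)})$. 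So it suffices to show that these two morphisms in $\cM(\Phi(\Tr_\cC(x)), x)$ coincide.

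This last identity is precisely Lemma~\ref{lem:PullUpTheta} applied to $c := \Tr_\cC(x)$ and $f := \varepsilon_x : \Phi(\Tr_\cC(x)) \to x$. Indeed, the lemma gives
\[
\varepsilon_x \circ \Phi(\theta_{\Tr_\cC(x)}) = (\id_x \otimes \tilde\ev_x) \circ (\id_x \otimes \varepsilon_x \otimes \id_{x^*}) \circ (e_{\Phi(\Tr_\cC(x)), x} \otimes \id_{x^*}) \circ (\id_{\Phi(\Tr_\cC(x))} \otimes \coev_x),
\]
which is exactly the displayed mate of $\tau_{1_\cM, x}$. Since mates are unique, $\tau_{1_\cM, x} = \theta_{\Tr_\cC(x)}$.

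The second statement, $\tau^-_{x, 1_\cM} = \theta_{\Tr_\cC(x)}^{-1}$, is then immediate from our convention $\tau^-_{x,y} := (\tau^+_{y,x})^{-1}$ (see the discussion following Definition~\ref{defn:Traciator}), together with the fact that $\theta$ is invertible in any balanced category. No step here is a serious obstacle: the only subtlety is keeping track of the unitors that identify $\Tr_\cC(1_\cM \otimes x)$, $\Tr_\cC(x)$, and $\Tr_\cC(x \otimes 1_\cM)$, so that the mate-defining morphism in Definition~\ref{defn:Traciator} matches the right-hand side of Lemma~\ref{lem:PullUpTheta} on the nose. Once these identifications are made explicit, the proof is essentially a one-line invocation of Lemma~\ref{lem:PullUpTheta}.
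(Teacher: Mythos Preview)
Your proof is correct and follows essentially the same approach as the paper: apply Lemma~\ref{lem:PullUpTheta} with $c=\Tr_\cC(x)$ and $f=\varepsilon_x$, then identify the two sides as the mates of $\theta_{\Tr_\cC(x)}$ and $\tau_{1_\cM,x}$ respectively. The only cosmetic difference is that the paper packages the identification of the right-hand side with the mate of $\tau_{1_\cM,x}$ via Equation~\eqref{pic: twopics - a}, whereas you appeal directly to Definition~\ref{defn:Traciator}; these amount to the same thing.
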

\begin{proof}
Taking $c=\Tr_\cC(x)$ and $f=\varepsilon_x$ in the previous lemma, and using Equation \eqref{pic: twopics - a}, we get
\(
\varepsilon_x\circ\Phi(\theta_{\Tr_\cC(x)})=\varepsilon_x\circ\Phi(\tau_{1_\cM,x})
\),
which is the mate of our equation.
%Letting $c=\Tr_\cC(1_\cM\otimes x)$, we have a natural isomorphism
%$\cC(c,c)\cong \cM(\Phi(c), 1_\cM\otimes x)$ given by the adjunction defining $\Tr_\cC$.
%We now reverse the two left most arrows on the first and second rows of the commutative diagram from Lemma \ref{lem:PullUpTheta}, and we follow both paths to $\cM(\Phi(c), x\otimes 1_\cM)$, which is again naturally isomorphic to $\cC(c, \Tr_\cC(x\otimes 1_\cM))$.
%The result now follows from looking at the path of $\id_{\Tr_\cC(1_\cM\otimes x)}$ under both paths.
\end{proof}

\begin{cor}
\label{cor:ComposeTraciatorsToGetBigTheta}
For $x,y\in \cM$, the map $\theta_{\Tr_\cC(x\otimes y)}$ is equal to $\tau_{y,x}\circ \tau_{x,y} : \Tr_\cC(x\otimes y)\to \Tr_\cC(x\otimes y)$.
\end{cor}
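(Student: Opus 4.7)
The plan is to combine the two results immediately preceding the corollary: Proposition~\ref{prop:ThetaAndTraciator}, which identifies the twist on $\Tr_\cC$ with a traciator whose left factor is the monoidal unit, and Lemma~\ref{lem:TraciatorComposition}, which is the defining cyclic-invariance axiom for the categorified trace.

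First, I would apply Proposition~\ref{prop:ThetaAndTraciator} with the object $x\otimes y$ in place of $x$, yielding
\[
\theta_{\Tr_\cC(x\otimes y)} \,=\, \tau_{1_\cM,\, x\otimes y}.
\]
Next I would apply Lemma~\ref{lem:TraciatorComposition} to the triple $(1_\cM, x, y)$ (playing the roles of $x, y, z$ in the statement of that lemma). This gives
\[
\tau_{1_\cM,\, x\otimes y} \,=\, \tau_{y\otimes 1_\cM,\, x} \circ \tau_{1_\cM\otimes x,\, y}.
\]
Finally, suppressing (or equivalently, inserting) the left and right unitors of $\cM$ exactly as in the convention of Definition~\ref{def:  categorified trace} and using naturality of $\tau$ with respect to the unitor isomorphisms $\lambda_x:1_\cM\otimes x\to x$ and $\rho_y:y\otimes 1_\cM\to y$, the right-hand side becomes $\tau_{y,x}\circ \tau_{x,y}$, as desired.

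The only delicate point is bookkeeping with the unitors, since Lemma~\ref{lem:TraciatorComposition} was stated with those coherences suppressed. Concretely, the naturality square for $\tau$ applied to $\lambda_x\otimes\id_y$ and $\id_y\otimes\rho_x^{-1}$ lets one rewrite $\tau_{1_\cM\otimes x,y}$ as $\Tr_\cC(\lambda_x^{-1}\otimes\id_y)\circ \tau_{x,y}\circ \Tr_\cC(\lambda_x\otimes\id_y)$ (and similarly for the other factor), and these unitor corrections cancel once one expands the associators hidden in the statement of Lemma~\ref{lem:TraciatorComposition}. This is the only substantive step, and it is purely formal; no new diagrammatic identity is required. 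The inverse identity $\theta_{\Tr_\cC(x\otimes y)}^{-1} = \tau^-_{x,y}\circ \tau^-_{y,x}$ then follows by taking inverses.
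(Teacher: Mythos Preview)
Your argument is correct and is essentially the same as the paper's: both apply Lemma~\ref{lem:TraciatorComposition} to the triple $(1_\cM,x,y)$ and then invoke Proposition~\ref{prop:ThetaAndTraciator} (the paper does these two steps in the reverse order, and its printed subscripts ``$\tau_{x\otimes y,1_\cM}$'' are a typo for $\tau_{1_\cM,x\otimes y}$, as the stated source and target make clear). Your explicit remarks about the unitor bookkeeping are a welcome refinement of what the paper simply suppresses.
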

\begin{proof}
By Lemma \ref{lem:TraciatorComposition}, $\tau_{y,x}\circ \tau_{x,y}$ is equal to $\tau_{x \otimes y,1_\cM} : 1_\cM\otimes x\otimes y \to x\otimes y\otimes 1_\cM$ (modulo unitors and associators which we suppress).
The latter is equal to $\theta_{\Tr_\cC(x\otimes y)}$ by the previous proposition.
\end{proof}

We now establish the relationship between the traciator, the braiding and the twist.

\begin{lem}\label{lem:ThetaAndBraiding}
Given morphisms $f:\Phi(c)\to x$ and $g:\Phi(d)\to y$ in $\cM$, the following equation holds:
\[
(\id_{x\otimes y}\otimes \tilde\ev_x)\circ(\id_x\otimes g\otimes f\otimes \id_{x^*})\circ e_{\Phi(d\otimes c),x}\circ(\id_{\Phi(d\otimes c)}\otimes \coev_x)
=
(f\otimes g)\circ\Phi(\beta_{d,c}\circ (\id_d\otimes \theta_c)).
\]
%For $c,d\in\cC$ and $x,y\in \cM$, the following diagram commutes:
%\[
%\xymatrix{
%\cM(\Phi(c), x) \otimes \cM(\Phi(d), y) \ar[rr]\ar[d]	&& \cM(\Phi(d) \otimes \Phi(c), y \otimes x) \ar[d]^{\Phi(\beta)^*}
%\\
%\cM(\Phi(c) \otimes \Phi(d), x \otimes y)\ar[d]						&& \cM(\Phi(c) \otimes \Phi(d), y\otimes x)\ar[dd]^{(\id\otimes \theta_d)^*}
%\\
%\cM(\Phi(c) \otimes \Phi(d) \otimes y^*, x)\ar[d]^{e_{\Phi(c\otimes d),y*}} 	&& 
%\\
%\cM(y^* \otimes \Phi(c) \otimes \Phi(d), x)	\ar[rr]					&& \cM(\Phi(c)\otimes \Phi(d), y \otimes x) 
%}
%\]
In diagrams:
$$
\begin{tikzpicture}[baseline=.5cm, scale=.8]
	\plane{(-.5,-.8)}{3}{2.8}

	\draw[thick, xString] (.8,0) arc (90:-90:.3cm) -- (0,-.6) .. controls ++(180:1cm) and ++(180:1.4cm) .. (-1.2,1.8) -- (-.1,1.8);
	\draw[thick, yString] (0,1) -- (.7,1);

	\CMbox{box}{(-.8,.4)}{.8}{.8}{.4}{$g$}
	\CMbox{box}{(0,-.4)}{.8}{.8}{.4}{$f$}
	\fill[unshaded] (-1.3,.1) rectangle (-.3,.3);
	\fill[unshaded] (-2.65,2) circle (.1);
	\invisibleTubeWithString{(-1.1,.9)}{dString}
	\invisibleTubeWithString{(-1.3,.1)}{cString}

	\draw[thick, dString] (-2.2,2.2) arc (0:90:.6cm);
	\draw[thick, cString] (-2.4,1.4) arc (0:90:.9cm);
	\draw[thick, cString] (-1.2,.2) -- (-.2,.2);

\node at (-3.2,2.48) {$\scriptstyle c$};
\node at (-2.65,3.05) {$\scriptstyle d$};
\node at (.2,1.85) {$\scriptstyle x$};
\node at (1.03,1.05) {$\scriptstyle y$};
\end{tikzpicture}
=
\begin{tikzpicture}[baseline=.5cm, scale=.8]
	\plane{(-.5,-.8)}{3}{2.8}

	\draw[thick, xString] (0,1) -- (.7,1);
	\draw[thick, yString] (.4,.2) -- (1.5,.2);

	\CMbox{box}{(-.8,.4)}{.8}{.8}{.4}{$f$}
	\CMbox{box}{(0,-.4)}{.8}{.8}{.4}{$g$}
	\invisibleTube{(-1.1,.9)}
	\fill[unshaded] (-1.75,2) circle (.1cm);
	\fill[unshaded] (-2.15,2) circle (.1cm);
	\draw[thick, cString] (-1,1) -- (-1.1,1) .. controls ++(180:.6cm) and ++(315:.6cm) .. (-2.1,1.55);
	\draw[thick, cString] (-2.8,2.3) .. controls ++(0:1.1cm) and ++(310:.4cm) .. (-2.5,1.5) .. controls ++(135:.2cm) and ++(135:.2cm) .. (-2.3,1.75);
	
	\fill[unshaded] (-.5,.1) rectangle (-.3,.3);
	\invisibleTubeWithString{(-.5,.1)}{dString}
	\draw[thick, dString] (-.5,.2) -- (-.2,.2);
	\draw[thick, dString] (-1.6,1.4) arc (0:90:1.3cm);

\node at (-3.07,2.3) {$\scriptstyle c$};
\node at (-3.15,2.75) {$\scriptstyle d$};
\node at (1.02,1.05) {$\scriptstyle x$};
\node at (1.85,.22) {$\scriptstyle y$};
\end{tikzpicture}.
$$
\end{lem}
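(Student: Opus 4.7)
The plan is to reduce Lemma \ref{lem:ThetaAndBraiding} to Lemma \ref{lem:PullUpTheta} by ``peeling off'' the contribution from the $c$-strand first, and then handling the $d$-strand via naturality of the half-braiding together with the braided-central axiom. Pictorially, this amounts to splitting the single tube-crossing of $\Phi(d\otimes c)$ under the $x$-loop into two successive crossings (first $c$, then $d$).

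The first step is to apply the monoidal central functor axiom
\[
e_{\Phi(d\otimes c),x} \,=\, (e_{\Phi(d),x}\otimes \id_{\Phi(c)})\circ(\id_{\Phi(d)}\otimes e_{\Phi(c),x})
\]
to rewrite the left-hand side. Since $g$ acts on the $\Phi(d)$-slot, which is disjoint from the $\Phi(c)\otimes x^*$ slots on which $\tilde\ev_x\circ(f\otimes\id_{x^*})$ acts, bifunctoriality lets me pull $\id_x\otimes g$ out to the outermost position. This factorizes the LHS as
\[
(\id_x\otimes g)\circ e_{\Phi(d),x}\circ\bigl(\id_{\Phi(d)}\otimes\bigl[(\id_x\otimes\tilde\ev_x)\circ(\id_x\otimes f\otimes\id_{x^*})\circ(e_{\Phi(c),x}\otimes\id_{x^*})\circ(\id_{\Phi(c)}\otimes\coev_x)\bigr]\bigr).
\]

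Now the bracketed expression is precisely the right-hand side of Lemma \ref{lem:PullUpTheta} applied to $f:\Phi(c)\to x$, so it equals $f\circ\Phi(\theta_c)$. The LHS becomes $(\id_x\otimes g)\circ e_{\Phi(d),x}\circ(\id_{\Phi(d)}\otimes(f\circ\Phi(\theta_c)))$. Next I apply naturality of the half-braiding $e_{\Phi(d),-}$ in its second argument to the morphism $f\circ\Phi(\theta_c):\Phi(c)\to x$, which slides it from right to left across the crossing:
\[
e_{\Phi(d),x}\circ\bigl(\id_{\Phi(d)}\otimes(f\circ\Phi(\theta_c))\bigr) \,=\, \bigl((f\circ\Phi(\theta_c))\otimes\id_{\Phi(d)}\bigr)\circ e_{\Phi(d),\Phi(c)}.
\]
Using the braided central functor axiom $e_{\Phi(d),\Phi(c)}=\Phi(\beta_{d,c})$ and the tensor-functor structure of $\Phi$, the LHS becomes $(f\otimes g)\circ \Phi\bigl((\theta_c\otimes\id_d)\circ\beta_{d,c}\bigr)$.

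To finish, I invoke naturality of the braiding in $\cC$ applied to $\id_d\otimes\theta_c$, giving the hexagon-free identity $(\theta_c\otimes\id_d)\circ\beta_{d,c}=\beta_{d,c}\circ(\id_d\otimes\theta_c)$, which matches the RHS. The main obstacle is purely bookkeeping: keeping the four tensor slots $\Phi(d),x,\Phi(c),x^*$ straight through each rewriting, and verifying that the various associators and the identifications $\Phi(d\otimes c)\cong\Phi(d)\otimes\Phi(c)$ line up correctly so that the $c$-half can be isolated and matched against the hypothesis of Lemma \ref{lem:PullUpTheta}. Once this is done, the whole identity follows from that lemma, the two central functor axioms, and naturality of $\beta$.
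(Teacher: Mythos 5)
Your proof is correct and is essentially the paper's own argument: the paper's one-line proof (``Lemma \ref{lem:PullUpTheta} applied to $f$, together with passing the morphism $f$ under the $d$ strand'') is precisely your combination of isolating the $c$-crossing, invoking Lemma \ref{lem:PullUpTheta} to produce $f\circ\Phi(\theta_c)$, and then sliding that morphism past the $d$-strand via naturality of the half-braiding together with $e_{\Phi(d),\Phi(c)}=\Phi(\beta_{d,c})$. You have merely made explicit the bookkeeping (the monoidal central functor axiom splitting $e_{\Phi(d\otimes c),x}$, the interchange law, and naturality of $\beta$) that the paper leaves implicit in its graphical shorthand.
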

\begin{proof}
This follows from Lemma \ref{lem:PullUpTheta} applied to $f$, together with passing the morphism $f$ under the $d$ strand.
\end{proof}

\begin{lem}
\label{lem:TwistMultiplicationAndTraciators}
Given $x,y\in \cM$, we have
\[
\tau_{x,y}\circ\mu_{x,y}=\mu_{y,x}\circ \beta_{\Tr_\cC(x),\Tr_\cC(y)} \circ (\id_{\Tr_\cC(x)} \otimes\, \theta_{\Tr_\cC(y)}).
\]
%The following diagram commutes
%$$\xymatrix{
%\Tr_\cC(x) \otimes \Tr_\cC(y)  \ar[r]^(.55)\mu \ar[d]_{\beta^+ \circ (\id_{\Tr_\cC(x)} \otimes\, \theta_{\Tr_\cC(y)})} &\Tr_\cC(x \otimes y) \ar[d]^{\tau^+}
%\\
%\Tr_\cC(y) \otimes \Tr_\cC(x)  \ar[r]^(.55)\mu &\Tr_\cC(y \otimes x).
%}$$
In diagrams: %, we have
$$
\begin{tikzpicture}[baseline=.4cm]

	%cylinder
	\draw[thick] (-.3,.5) -- (-.3,1.5);
	\draw[thick] (.3,.5) -- (.3,1.5);
	\draw[thick] (0,1.5) ellipse (.3cm and .1cm);
	
	\draw[thick, xString] (-.1,.43) .. controls ++(90:.6cm) and ++(270:.6cm) .. (.1,1.42);		
	\draw[thick, yString] (.1,.43) .. controls ++(90:.2cm) and ++(225:.2cm) .. (.3,.95);		
	\draw[thick, yString] (-.1,1.42) .. controls ++(270:.2cm) and ++(45:.2cm) .. (-.3,1.05);
	\draw[thick, yString, dotted] (.3,.95) -- (-.3,1.05);	

	%pair of pants:
	\draw[thick] (-1,-1) .. controls ++(90:.8cm) and ++(270:.8cm) .. (-.3,.5);
	\draw[thick] (1,-1) .. controls ++(90:.8cm) and ++(270:.8cm) .. (.3,.5);
	\draw[thick] (-.4,-1) .. controls ++(90:.8cm) and ++(90:.8cm) .. (.4,-1); 
	\halfDottedEllipse{(-.3,.5)}{.3}{.1}
	\halfDottedEllipse{(-1,-1)}{.3}{.1}
	\halfDottedEllipse{(.4,-1)}{.3}{.1}
	
	\draw[thick, xString] (-.8,-1.07) .. controls ++(90:.8cm) and ++(270:.8cm) .. (-.1,.42);		
	\draw[thick, yString] (.8,-1.07) .. controls ++(90:.8cm) and ++(270:.8cm) .. (.1,.42);		
\end{tikzpicture}
=
\begin{tikzpicture}[baseline=-1.1cm]

	%cylinder
	\draw[thick] (-1,-2) -- (-1,-3);
	\draw[thick] (-.4,-2) -- (-.4,-3);
	\halfDottedEllipse{(-1,-3)}{.3}{.1}
	
	\draw[thick, xString] (-.7,-2.1) -- (-.7,-3.1);

	%cylinder
	\draw[thick] (1,-2) -- (1,-3);
	\draw[thick] (.4,-2) -- (.4,-3);
	\halfDottedEllipse{(.4,-3)}{.3}{.1}

	\draw[thick, yString] (.7,-3.1) .. controls ++(90:.2cm) and ++(225:.2cm) .. (1,-2.55);		
	\draw[thick, yString] (.7,-2.1) .. controls ++(270:.2cm) and ++(45:.2cm) .. (.4,-2.45);
	\draw[thick, yString, dotted] (.4,-2.45) -- (1,-2.55);		

	%crossing
	\draw[thick] (1,-2) .. controls ++(90:.7cm) and ++(270:.7cm) .. (-.4,0);
	\draw[thick] (.4,-2) .. controls ++(90:.7cm) and ++(270:.7cm) .. (-1,0);
	\draw[thick, yString] (-.7,-.07) .. controls ++(270:.6cm) and ++(90:.8cm) .. (.7,-2.1);		

	\fill[unshaded] (-1.2,-2) .. controls ++(90:.8cm) and ++(270:.8cm) .. (.2,0) -- (1.2,0)  .. controls ++(270:.8cm) and ++(90:.8cm) .. (-.2,-2);
	\draw[thick] (-1,-2) .. controls ++(90:.7cm) and ++(270:.7cm) .. (.4,0);
	\draw[thick] (-.4,-2) .. controls ++(90:.7cm) and ++(270:.7cm) .. (1,0);
	\draw[thick, xString] (.7,-.07) .. controls ++(270:.6cm) and ++(90:.8cm) .. (-.7,-2.1);	

	\halfDottedEllipse{(-1,-2)}{.3}{.1}
	\halfDottedEllipse{(.4,-2)}{.3}{.1}

	%pair of pants:
	\draw[thick] (-1,0) .. controls ++(90:.8cm) and ++(270:.8cm) .. (-.3,1.5);
	\draw[thick] (1,0) .. controls ++(90:.8cm) and ++(270:.8cm) .. (.3,1.5);
	\draw[thick] (-.4,0) .. controls ++(90:.8cm) and ++(90:.8cm) .. (.4,0); 
	\draw[thick] (0,1.5) ellipse (.3cm and .1cm);
	\halfDottedEllipse{(-1,0)}{.3}{.1}
	\halfDottedEllipse{(.4,0)}{.3}{.1}
	
	\draw[thick, yString] (-.7,-.07) .. controls ++(90:.8cm) and ++(270:.8cm) .. (-.1,1.42);		
	\draw[thick, xString] (.7,-.07) .. controls ++(90:.8cm) and ++(270:.8cm) .. (.1,1.42);		
\end{tikzpicture}.
$$
Similarly, we have
\(
\tau^-_{x,y}\circ\mu_{x,y}=\mu_{y,x}\circ \beta^-_{\Tr_\cC(x),\Tr_\cC(y)} \circ (\theta^{-1}_{\Tr_\cC(x)}\otimes \id_{\Tr_\cC(y)})
\).
%$\tau^-\circ \mu = \mu\circ \beta^-\circ(\theta_{\Tr_\cC(x)}^{-1}\otimes \id_{\Tr_\cC(y)})$. \nn{is this correct?}
\end{lem}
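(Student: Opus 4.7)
The plan is to verify this identity by taking mates of both sides under the adjunction $\Phi \dashv \Tr_\cC$ and then invoking Lemma \ref{lem:ThetaAndBraiding} as the key input. I will work with $c = \Tr_\cC(x)$ and $d = \Tr_\cC(y)$, and apply that lemma to the pair of morphisms $f = \varepsilon_x : \Phi(\Tr_\cC(x)) \to x$ and $g = \varepsilon_y : \Phi(\Tr_\cC(y)) \to y$. The right-hand side of Lemma \ref{lem:ThetaAndBraiding} then reads precisely $(\varepsilon_y \otimes \varepsilon_x) \circ \Phi\big(\beta_{\Tr_\cC(x),\Tr_\cC(y)} \circ (\id \otimes \theta_{\Tr_\cC(y)})\big)$, so the strategy is to show that both mates equal the left-hand side of that lemma.

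For the right-hand side of our identity, Lemma \ref{lem:PushDownMorphism} gives that the mate is $\varepsilon_{y\otimes x} \circ \Phi(\mu_{y,x}) \circ \Phi(\beta_{\Tr_\cC(x),\Tr_\cC(y)}) \circ \Phi(\id\otimes \theta_{\Tr_\cC(y)})$. Applying Lemma \ref{lem:MultiplicationCompatibleWithNatural} to rewrite $\varepsilon_{y\otimes x} \circ \Phi(\mu_{y,x}) = \varepsilon_y\otimes \varepsilon_x$, this is exactly the right-hand side of Lemma \ref{lem:ThetaAndBraiding} for our choice of data, and hence equals the left-hand side of that lemma, namely
\[
(\id_{y\otimes x}\otimes \tilde\ev_y)\circ(\id_y\otimes \varepsilon_x\otimes \varepsilon_y\otimes \id_{y^*})\circ e_{\Phi(\Tr_\cC(x)\otimes \Tr_\cC(y)),y}\circ(\id \otimes \coev_y).
\]

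For the left-hand side, the mate is $\varepsilon_{y\otimes x}\circ \Phi(\tau_{x,y}) \circ \Phi(\mu_{x,y})$. By Equation \eqref{pic: twopics - a} (which characterizes the mate of the traciator) applied with $c = \Tr_\cC(x\otimes y)$, this equals
\[
(\id_{y\otimes x}\otimes \tilde\ev_y)\circ(\id_y\otimes \varepsilon_{x\otimes y}\otimes \id_{y^*})\circ(e_{\Phi(\Tr_\cC(x\otimes y)),y}\otimes \id_{y^*})\circ(\Phi(\mu_{x,y})\otimes \coev_y).
\]
Now I will push $\Phi(\mu_{x,y})$ upward through the diagram. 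First, naturality of $e$ in its first argument allows me to replace $e_{\Phi(\Tr_\cC(x\otimes y)),y}\circ(\Phi(\mu_{x,y})\otimes \id_y)$ by $(\id_y\otimes \Phi(\mu_{x,y}))\circ e_{\Phi(\Tr_\cC(x)\otimes \Tr_\cC(y)),y}$, using that $\Phi^{\scriptscriptstyle \cZ}$ is monoidal (so that the half-braiding on a tensor product is the obvious composite, which in particular is natural with respect to morphisms in the image of $\Phi^{\scriptscriptstyle \cZ}$). Then Lemma \ref{lem:MultiplicationCompatibleWithNatural} collapses $\varepsilon_{x\otimes y}\circ\Phi(\mu_{x,y})$ to $\varepsilon_x\otimes \varepsilon_y$. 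The resulting expression is precisely the left-hand side of Lemma \ref{lem:ThetaAndBraiding} displayed above, which completes the argument.

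The main (minor) obstacle is tracking the monoidal/braided structure of $\Phi^{\scriptscriptstyle \cZ}$ carefully enough to justify decomposing the half-braiding on a tensor product and to interpret $\Phi(\beta)$ as a half-braiding in $\cM$; once this bookkeeping is clean, the whole identity reduces to a direct application of Lemma \ref{lem:ThetaAndBraiding}. The companion statement for $\tau^-$ follows either by taking inverses of the proven identity and using $\beta^- = \beta^{-1}$, $\theta^{-1}_{\Tr_\cC(x)\otimes \Tr_\cC(y)} = (\theta^{-1}_{\Tr_\cC(x)} \otimes \theta^{-1}_{\Tr_\cC(y)})\circ \beta^-_{\Tr_\cC(y),\Tr_\cC(x)}\circ \beta^-_{\Tr_\cC(x),\Tr_\cC(y)}$, or by running the entirely symmetric argument with Equation \eqref{pic: twopics - b} in place of \eqref{pic: twopics - a}.
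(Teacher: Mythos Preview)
Your proof is correct and follows essentially the same approach as the paper's: take mates, simplify both sides using Lemma~\ref{lem:MultiplicationCompatibleWithNatural} and Equation~\eqref{pic: twopics - a}, and reduce to Lemma~\ref{lem:ThetaAndBraiding}. One small slip: to match the statement of Lemma~\ref{lem:ThetaAndBraiding} you need $c=\Tr_\cC(y)$, $d=\Tr_\cC(x)$, $f=\varepsilon_y$, $g=\varepsilon_x$ (you have them swapped, though your displayed formulas already reflect the correct choice); the paper additionally invokes Proposition~\ref{prop:ThetaAndTraciator} to rewrite $\theta_{\Tr_\cC(y)}$ as $\tau_{1,y}$ in its tube picture before taking mates, a step your more algebraic computation bypasses.
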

\begin{proof}
We only prove the first claim.
By taking mates, it is equivalent to the equation
%Under the adjunction, this is the equality
\begin{equation}\label{eq: two thick tubes that braid}
\begin{tikzpicture}[baseline=.5cm, scale=.8]
	\plane{(-.5,-.8)}{3}{3}

	\draw[thick, yString] (.8,0) arc (90:-90:.3cm) -- (0,-.6) .. controls ++(180:1cm) and ++(180:1.4cm) .. (-1.2,2) -- (-.3,2);
	\draw[thick, xString] (-.2,1) -- (.7,1);

	\CMbox{box}{(-1,.6)}{.8}{.8}{.4}{$\varepsilon_x$}
	\CMbox{box}{(0,-.4)}{.8}{.8}{.4}{$\varepsilon_y$}
	\straightTubeWithString{(-1.2,.9)}{.1}{2}{xString}
	\straightTubeWithString{(-.2,-.1)}{.1}{2}{yString}

\end{tikzpicture}
\!\!\!\!=\,\,\,\,\,\,\,\,
\begin{tikzpicture}[baseline=.5cm, scale=.8]
	\plane{(-.5,-.8)}{3}{2.8}

	\draw[thick, yString] (-.2,1) -- (.7,1);
	\draw[thick, xString] (.4,0) -- (1.7,0);

	\pgfmathsetmacro{\distAtoB}{1};

	\coordinate (a) at (-1,.6);
	\coordinate (b) at ($ (a) + (\distAtoB,-\distAtoB)$);
	\CMbox{box}{(a)}{.8}{.8}{.4}{$\varepsilon_y$}
	\CMbox{box}{(b)}{.8}{.8}{.4}{$\varepsilon_x$}

	\coordinate (ZZqa) at ($ (a) + (-.2,.3)$);
	\coordinate (ZZqb) at ($ (b) + (-.2,.3)$);
	\pgfmathsetmacro{\tubeLength}{2};
	\pgfmathsetmacro{\tubeWidth}{.1};
	\pgfmathsetmacro{\buffer}{.05};	

	\fill[unshaded]  ($ (ZZqa) + 3/2*(-\tubeLength,0) + (0,-\distAtoB) + 2*(0,-\buffer) $) -- ($ (ZZqa) + (-\tubeLength,-\distAtoB) + 2*(0,-\buffer) $) .. controls ++(0:.6cm) and ++(180:.6cm) .. ($ (ZZqa) + 2*(0,-\buffer) $) arc(-90:90:{\tubeWidth+\buffer} and {2*(\tubeWidth+\buffer)}) .. controls ++(180:.6cm) and ++(0:.6cm) .. ($ (ZZqa) + (-\tubeLength,-\distAtoB) + 4*(0,\tubeWidth) + 2*(0,\buffer) $) -- ($ (ZZqa) + 3/2*(-\tubeLength,0) + (0,-\distAtoB) + 4*(0,\tubeWidth) + 2*(0,\buffer) $) ;

	\draw[unshaded, thick]  ($ (ZZqa) + 3/2*(-\tubeLength,0) + (0,-\distAtoB) $) -- ($ (ZZqa) + (-\tubeLength,-\distAtoB) $) .. controls ++(0:.6cm) and ++(180:.6cm) .. (ZZqa) arc(-90:90:{\tubeWidth} and {2*\tubeWidth}) .. controls ++(180:.6cm) and ++(0:.6cm) .. ($ (ZZqa) + (-\tubeLength,-\distAtoB) + 4*(0,\tubeWidth) $) -- ($ (ZZqa) + 3/2*(-\tubeLength,0) + (0,-\distAtoB) + 4*(0,\tubeWidth) $) ;

	\draw[thick, yString] ($ (ZZqa) + (\tubeWidth,0) + 2*(0,\tubeWidth) $) .. controls ++(180:.6cm) and ++(0:.4cm) .. ($ (ZZqa) + (-\tubeLength,-\distAtoB) + (\tubeWidth,0) + 2*(0,\tubeWidth) $) .. controls ++(180:.2cm) and ++(-45:.1cm) .. ($ (ZZqa) + 10/8*(-\tubeLength,0) + (0,-\distAtoB) + (\tubeWidth,0) + 4*(0,\tubeWidth) $); 
	\draw[thick, yString] ($ (ZZqa) + 3/2*(-\tubeLength,0) + (0,-\distAtoB) + (\tubeWidth,0) + 2*(0,\tubeWidth) $) .. controls ++(0:.2cm) and ++(135:.1cm) .. ($ (ZZqa) + 10/8*(-\tubeLength,0) + (0,-\distAtoB) + (\tubeWidth,0) $) ;
	\draw[thick, yString, dotted] ($ (ZZqa) + 10/8*(-\tubeLength,0) + (0,-\distAtoB) + (\tubeWidth,0) + 4*(0,\tubeWidth) $) -- ($ (ZZqa) + 10/8*(-\tubeLength,0) + (0,-\distAtoB) + (\tubeWidth,0) $);

	\fill[unshaded]  ($ (ZZqb) + 3/2*(-\tubeLength,0) + 2*(-\distAtoB,0) + (0,\distAtoB) + 2*(0,-\buffer) $) -- ($ (ZZqb) + (-\tubeLength,0) + 2*(-\distAtoB,0) + (0,\distAtoB) + 2*(0,-\buffer) $) .. controls ++(0:.6cm) and ++(180:.6cm) .. ($ (ZZqb) + 2*(0,-\buffer) $) arc(-90:90:{\tubeWidth + \buffer} and {2*(\tubeWidth+\buffer)}) .. controls ++(180:.6cm) and ++(0:.6cm) .. ($ (ZZqb) + (-\tubeLength,0) + 2*(-\distAtoB,0) + (0,\distAtoB) + 4*(0,\tubeWidth) + 2*(0,\buffer) $) -- ($ (ZZqb) + 3/2*(-\tubeLength,0) + 2*(-\distAtoB,0) + (0,\distAtoB) + 4*(0,\tubeWidth) + 2*(0,\buffer) $);

	\draw[unshaded, thick]  ($ (ZZqb) + 3/2*(-\tubeLength,0) + 2*(-\distAtoB,0) + (0,\distAtoB) $) -- ($ (ZZqb) + (-\tubeLength,0)+2*(-\distAtoB,0) + (0,\distAtoB) $) .. controls ++(0:.6cm) and ++(180:.6cm) .. (ZZqb) arc(-90:90:{\tubeWidth} and {2*\tubeWidth}) .. controls ++(180:.6cm) and ++(0:.6cm) .. ($ (ZZqb) + (-\tubeLength,0) + 2*(-\distAtoB,0) + (0,\distAtoB) + 4*(0,\tubeWidth) $) -- ($ (ZZqb) + 3/2*(-\tubeLength,0) + 2*(-\distAtoB,0) + (0,\distAtoB) + 4*(0,\tubeWidth) $);

	\draw[thick] ($ (ZZqa) + 3/2*(-\tubeLength,0) +(0,-\distAtoB) $) arc(-90:90:{\tubeWidth} and {2*\tubeWidth}) arc(90:270:{\tubeWidth} and {2*\tubeWidth});

	\draw[thick] ($ (ZZqb) + 3/2*(-\tubeLength,0) + 2*(-\distAtoB,0) + (0,\distAtoB) $) arc(-90:90:{\tubeWidth} and {2*\tubeWidth}) arc(90:270:{\tubeWidth} and {2*\tubeWidth});

	\draw[thick, xString] ($ (ZZqb) + (\tubeWidth,0) + 2*(0,\tubeWidth) $) .. controls ++(180:.6cm) and ++(0:.6cm) .. ($ (ZZqb) + (-\tubeLength,0) + 2*(-\distAtoB,0) + (0,\distAtoB) + (\tubeWidth,0) + 2*(0,\tubeWidth) $) -- ($ (ZZqb) + 3/2*(-\tubeLength,0) + 2*(-\distAtoB,0) + (0,\distAtoB) + (\tubeWidth,0) + 2*(0,\tubeWidth) $);
\end{tikzpicture}
\end{equation}
where we have used \eqref{pic: twopics - a} and Lemma \ref{lem:MultiplicationCompatibleWithNatural} for the left hand side, and Lemma \ref{lem:MultiplicationCompatibleWithNatural} for the right hand side.
Equation \eqref{eq: two thick tubes that braid} now follows from Proposition \ref{prop:ThetaAndTraciator} and Lemma \ref{lem:ThetaAndBraiding}.
\end{proof}

			%PUT ME BACK

%%%%%%%%%%%%%%%%%%%%%%%%%%%%%%%%%%%%%%%%%%%%%%%%%%
%  The trace of an algebra
%auto-ignore
%this ensures the arxiv doesn't try to start TeXing here.
%!TEX root =../InternalTrace.tex

%%%%%%%%%%%%%%%%%%%%%%%%%%%%%%%%%%%%%%%%%%%%%%%%%%
%%%%%%%%%%%%%%%%%%%%%%%%%%%%%%%%%%%%%%%%%%%%%%%%%%
%%%%%%%%%%%%%%%%%%%%%%%%%%%%%%%%%%%%%%%%%%%%%%%%%%
\section{Traces of algebras and semisimplicity}\label{sec:Pivotal}

We have seen in Section~\ref{sec:InternalTrace} that our categorified trace is a lax monoidal functor.
As a result, it takes algebra objects to algebra objects.
In fact, given two algebras $A$ and $B$, we will see that $\Tr_\cC(A \otimes B)$ is also naturally equipped with an algebra structure.
What is more, under mild additional assumptions, we will see that $\Tr_\cC(A \otimes B)$ is semisimple when both $A$ and $B$ are.
We begin by investigating the semisimplicity of $\Tr_\cC(A)$ under weaker assumptions than those necessary to study $\Tr_\cC(A \otimes B)$.

%%%%%%%%%%%%%%%%%%%%%%%%%%%%%%%%%%%%%%%%%%%%%%%%%%%%%%
\subsection{The algebra \texorpdfstring{$\Tr_\cC(A)$}{trace of A}}
\label{sec:Algebras}
\definecolor{dString}{named}{blue}

If $A=(A,m_A,i_A)$ is an algebra object in $\cM$, then $\Tr_\cC(A)$ is an algebra object in $\cC$, where
the multiplication and unit maps are given by:
\[
m_{\Tr_\cC(A)}:=\Tr_\cC(m_A)\circ \mu_{A,A}=
\begin{tikzpicture}[baseline=.6cm, scale=.9]
	\topPairOfPants{(0,0)}{}
	\filldraw[AString] (1, 1.1) circle (.025cm);
	\draw[thick, AString] (.3,-.09) .. controls ++(92:.8cm) and ++(270:.4cm) .. (1,1.1);		
	\draw[thick, AString] (1.7,-.09) .. controls ++(90:.8cm) and ++(270:.4cm) .. (1,1.1);		
	\draw[thick, AString] (1,1.1) -- (1,1.41);
\end{tikzpicture}
\quad
\qquad i_{\Tr_\cC(A)}:=\Tr_\cC(i_A)\circ i=\,
\begin{tikzpicture}[baseline=.3cm, scale=.9]
	\topCylinder{(.7,0)}{.3}{1}
	\draw[thick] (.7,0) arc (-180:0:.3cm);		
	\halfDottedEllipse{(.7,0)}{.3}{.1}
	\filldraw[AString] (1, .3) circle (.025cm);
	\draw[thick, AString] (1,.91) -- (1,.3);
\end{tikzpicture}\,\,.
\]

Recall that an algebra object $A$ in a semisimple tensor category $\cM$ is called semisimple if its category of $A$-modules $\Mod_\cM(A)$ is semisimple.
The following is the main result of this subsection:

\begin{thm}
\label{thm:SemiSimple}
Let $\cC$ and $\cM$ be rigid semisimple tensor categories.
Let $\Phi:\cC\to \cM$ be a tensor functor, with right adjoint $\Tr_\cC$.
If $A$ is a semisimple algebra object in $\cM$, then $\Tr_\cC(A)$ is a semisimple algebra object in $\cC$.
\end{thm}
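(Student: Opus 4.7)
The goal is to show that every short exact sequence of $\Tr_\cC(A)$-modules in $\cC$ splits. Equivalently, I plan to construct for each $N \in \Mod_\cC(\Tr_\cC(A))$ a $\Tr_\cC(A)$-linear section of the action map $a_N \colon \Tr_\cC(A) \otimes N \twoheadrightarrow N$. Given such sections, every $N$ is a retract of the free $\Tr_\cC(A)$-module $\Tr_\cC(A) \otimes N$, which forces semisimplicity of $\Mod_\cC(\Tr_\cC(A))$.

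The strategy is to transport the problem across the adjunction $\Phi \dashv \Tr_\cC$ to $\cM$, where we already know $A$ is semisimple. Given $N \in \Mod_\cC(\Tr_\cC(A))$, applying $\Phi$ yields $\Phi(N) \in \cM$ with a natural $\Phi(\Tr_\cC(A))$-action coming from the $\Tr_\cC(A)$-action on $N$. Since $\Tr_\cC$ is lax monoidal (Lemmas \ref{lem:Associative} and \ref{lem:CreateUnit}), the counit component $\varepsilon_A \colon \Phi(\Tr_\cC(A)) \to A$ is an algebra map, and I will use it together with the $\Phi(\Tr_\cC(A))$-action on $\Phi(N)$ to put a natural $A$-module structure on an object built from $A$ and $\Phi(N)$ (for instance $A \otimes \Phi(N)$, or an appropriate quotient). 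Semisimplicity of $A$ then provides a splitting of the $A$-action on that module as an $A$-module map.

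The next step is to transport this splitting back: apply $\Tr_\cC$ and use the invertible isomorphism $\Tr_\cC(A) \otimes N \xrightarrow{\sim} \Tr_\cC(A \otimes \Phi(N))$ from Lemma \ref{lem:TrCSplitting} (invertible because $\cC$ is rigid) to obtain a candidate section $s \colon N \to \Tr_\cC(A) \otimes N$ in $\cC$. The transportation itself is a straightforward manipulation of the adjunction unit/counit and the explicit form of the Lemma \ref{lem:TrCSplitting} isomorphism.

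The main obstacle will be the final verification, namely that $s$ is $\Tr_\cC(A)$-linear and that $a_N \circ s = \id_N$. Both amount to diagrammatic identities combining the naturality of $\varepsilon$ (Lemma \ref{lem:MoveThroughNaturalMap}), the compatibility of $\mu$ with $\varepsilon$ (Lemma \ref{lem:MultiplicationCompatibleWithNatural}), the associativity of $\mu$ (Lemma \ref{lem:Associative}), and the $A$-linearity of the chosen splitting. A systematic way to organize this is to first lift the adjunction $\Phi \dashv \Tr_\cC$ to an induced adjunction between $\Mod_\cC(\Tr_\cC(A))$ and $\Mod_\cM(A)$, using the lax monoidal lift $\Tr_\cC^A$ of $\Tr_\cC$; then the two required compatibilities become instances of the triangle identities for that adjunction, and the conclusion follows from the standard fact that an adjunction whose right adjoint restricts to a functor out of a semisimple category and whose unit is split by a natural retraction transfers semisimplicity.
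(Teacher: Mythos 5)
Your overall strategy (make every $B:=\Tr_\cC(A)$-module $N$ a retract of the free module $B\otimes N$ by producing a $B$-linear section of the action map, transported across the adjunction) is genuinely different from the paper's proof, which instead shows that the lifted trace functor restricts to an equivalence between a summand-closed subcategory $\cN\subseteq \Mod_\cM(A)$ and $\Mod_\cC(B)$. Your route can be made to work, but as written it has a gap at the decisive step. The difficulty is that the action map $a_N\colon B\otimes N\to N$ is not itself in the image of the lifted trace: Lemma~\ref{lem:TrCSplitting} identifies $B\otimes N\cong \Tr_\cC(A\otimes \Phi(N))$ as $B$-modules, but $N$ is not a priori the trace of any $A$-module (that every $N$ is a retract of such a trace is essentially what is being proved). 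Hence splitting an epimorphism of $A$-modules by semisimplicity of $A$ and applying $\Tr_\cC$ can never, by itself, produce a section of $a_N$; one is forced through the unit $u_N\colon N\to \Tr_\cC\bigl(A\otimes_{\Phi(B)}\Phi(N)\bigr)$ of the lifted adjunction, and what is needed is a natural $B$-linear retraction of this unit. Your last sentence presupposes exactly this (``whose unit is split by a natural retraction'') and claims the required compatibilities are ``instances of the triangle identities'', but the triangle identities only split the unit at objects in the image of the lifted trace functor; for an arbitrary $B$-module $N$ the splitness of the unit is precisely the nontrivial content of the theorem, and your proposal nowhere constructs it.

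The gap is fillable, but not by formal nonsense: one can show that the transported action map $\Tr_\cC(A\otimes\Phi(N))\cong B\otimes N\to N$ coequalizes the traces of the two parallel maps $A\otimes\Phi(B)\otimes\Phi(N)\rightrightarrows A\otimes\Phi(N)$ defining $A\otimes_{\Phi(B)}\Phi(N)$; under the identifications coming from Lemma~\ref{lem:TrCSplitting}, Lemma~\ref{lem:EtaSplitting}, the associativity and unitality of $\mu$ (Lemmas~\ref{lem:Associative} and~\ref{lem:CreateUnit}) and naturality (Corollary~\ref{cor:MoveTensorThroughMultiplication}), these two maps become $m_B\otimes\id_N$ and $\id_B\otimes a_N$, so the coequalizing condition is exactly the associativity of the $B$-module $N$, and the identity $r_N\circ u_N=\id_N$ for the descended map $r_N$ is its unitality. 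With this natural retraction in hand your transfer argument does go through: given an epimorphism $g$ of $B$-modules, split $L(g)$ in the semisimple category $\Mod_\cM(A)$ and conjugate the splitting by $u$ and $r$. But these verifications are diagrammatic computations of essentially the same weight as the explicit inverse the paper constructs in its full-faithfulness step, so organizing the proof via the lifted adjunction relocates the work rather than eliminating it; you should either carry out that descent computation or follow the paper's equivalence-of-categories argument.
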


\begin{proof}
Let us write $B$ for $\Tr_\cC(A)$.  We prove the theorem by finding an equivalent, manifestly semisimple, description of the category $\Mod_\cC(B)$ of $B$-modules.

Let $\cN_0$ be the essential image of the functor $\cC\to \Mod_\cM(A): c\mapsto A\otimes \Phi(c)$, and let $\cN$ be the idempotent completion of $\cN_0$.
The category $\cN$ can equivalently be described as the full subcategory of $\Mod_\cM(A)$ generated by the simple objects that occur as summands of $A \otimes \Phi(c)$ for $c\in\cC$. 
\newcommand*{\longhookrightarrow}{\ensuremath{\lhook\joinrel\relbar\joinrel\relbar\joinrel\rightarrow}}
The trace functor induces a functor
\[
\Tr_\cC^{\Mod(A)}:\Mod_\cM(A) \to \Mod_\cC(B),
\] 
which in turn restricts to a functor
\[
\cN \longhookrightarrow \Mod_\cM(A) \xrightarrow{\Tr_\cC^{\Mod(A)}} \Mod_\cC(B).
\]
 We will prove that the latter is an equivalence of categories $\cN\cong \Mod_\cC(B)$.
 Since $\cN$ is semisimple, this will immediately imply the theorem.\medskip

$\bullet$ \emph{The functor $\Tr_\cC^{\Mod(A)}\!|_\cN$ is fully faithful.}
It suffices to show that the resitriction of $\Tr_\cC^{\Mod(A)}$ to $\cN_0$ is fully faithful.
By definition, any object of $\cN_0$ is of the form $A \otimes \Phi(c)$ for some $c \in \cC$.
We must prove that the map
\begin{equation*}
%\label{eqn:ToProveFullyFaithful}
\Tr_\cC^{\Mod(A)} : \cN_0\big(A \otimes \Phi(c), A \otimes \Phi(d)\big) \to \cC_{B}\big(\Tr_\cC(A\otimes \Phi(c)), \Tr_\cC(A\otimes \Phi(d))\big)
\end{equation*}
%Let us identify $\Tr_\cC(A\otimes \Phi(c))$ with $B \otimes c$ via the isomorphism
%\[
%\begin{tikzpicture}[baseline=.5cm, scale=.8]
%	\coordinate (a1) at (0,-.5);
%	\coordinate (b1) at (0,.4);
%	\coordinate (a2) at (1.4,0);
%	\coordinate (b2) at (1.4,.4);
%	%cylinder
%	\draw[thick] (a1) -- (b1);
%	\draw[thick] ($ (a1) + (.6,0) $) -- ($ (b1) + (.6,0) $);
%	\halfDottedEllipse{(a1)}{.3}{.1}
%	\draw[thick] (a2) -- (b2);
%	\draw[thick] ($ (a2) + (.6,0) $) -- ($ (b2) + (.6,0) $);
%	%pants
%	\topPairOfPants{(b1)}{}
%	%cap
%	\draw[thick] (a2) arc (-180:0:.3cm);
%	%string
%	\draw[thick, red] ($ (a1) + (.3,-.1) $) -- ($ (b1) + (.3,-.1) $) .. controls ++(90:.8cm) and ++(270:.8cm) .. ($ (b1) + (.95,1.41) $);
%	\draw[super thick, white] ($ (a2) + (0,-.6) $) --($ (a2) + (0,-.5) $) .. controls ++(90:.2cm) and ++(270:.2cm) .. ($ (a2) + (.3,0) $) -- ($ (b2) + (.3,-.05) $) .. controls ++(90:.8cm) and ++(270:.8cm) .. ($ (b1) + (1.05,1.45) $);
%	\draw[thick, cString] ($ (a2) + (0,-.6) $) --($ (a2) + (0,-.5) $) .. controls ++(90:.2cm) and ++(270:.2cm) .. ($ (a2) + (.3,0) $) -- ($ (b2) + (.3,-.05) $) .. controls ++(90:.8cm) and ++(270:.8cm) .. ($ (b1) + (1.05,1.45) $);
%\end{tikzpicture}
%\,\,:\, B \otimes c\to \Tr_\cC(A\otimes \Phi(c))
%\]
%provided by Lemma \ref{lem:TrCSplitting}.
%This identifies the map \eqref{eqn:ToProveFullyFaithful} with the map
%$$
%\cN_0(A\otimes \Phi(c), A\otimes \Phi(d)) \to \cC_{B-}(B\otimes c, B \otimes d)
%$$
given by
\begin{align}
\begin{tikzpicture}[baseline=.1cm, scale=.8]
	\plane{(-.4,-.8)}{2.8}{1.7}
	\draw[thick, AString] (.8,.2) -- (1.4,.2);
	\draw[thick, AString] (-1.4,.2) -- (-.6,.2);
	\CMbox{box}{(0,-.5)}{.8}{.8}{.8}{$f$}
	\draw[super thick, white] (-1.65,0) -- (-.2,0);
	\draw[thick, cString] (-1.8,0) -- (-.2,0);
	\draw[super thick, white] (1,0) -- (2.2,0);
	\draw[thick, dString] (.8,0) -- (2.2,0);
	\node[xshift=-5, yshift=8] at (-1.65,-.2) {\scriptsize{$c$}};
	\node at (2,.2) {\scriptsize{$d$}};
\end{tikzpicture}
&\,\longmapsto\,
\Tr_\cC
\left(
\begin{tikzpicture}[baseline=.1cm, scale=.8]
	\plane{(-.4,-.8)}{2.8}{1.7}
	\draw[thick, AString] (.8,.2) -- (1.4,.2);
	\draw[thick, AString] (-1.4,.2) -- (-.6,.2);
	\CMbox{box}{(0,-.5)}{.8}{.8}{.8}{$f$}
	\draw[super thick, white] (-1.65,0) -- (-.2,0);
	\draw[thick, cString] (-1.8,0) -- (-.2,0);
	\draw[super thick, white] (1,0) -- (2.2,0);
	\draw[thick, dString] (.8,0) -- (2.2,0);
	\node[xshift=-5, yshift=8] at (-1.65,-.2) {\scriptsize{$c$}};
	\node at (2,.2) {\scriptsize{$d$}};
\end{tikzpicture}
\,
\right)
\,=\,\,
\begin{tikzpicture}[baseline=.6cm, scale=1.1]
	\coordinate (a1) at (0,0);
	\coordinate (b1) at (0,1.4);
	%cylinder
	\draw[thick] (a1) -- (b1);
	\draw[thick] ($ (a1) + (.6,0) $) -- ($ (b1) + (.6,0) $);
	\draw[thick] ($ (b1) + (.3,0) $) ellipse (.3 and .1);
	\halfDottedEllipse{(a1)}{.3}{.1}
	\draw[thick, AString] (.3,-.1) -- (.3,1.3);
	\CMboxSize{box}{(.4,.3)}{.4}{.44}{.2}{$f$}{.9}
	\draw[super thick, white] (.5,-.15) -- (.5,0);
	\draw[thick, cString] (.5,-.15) -- (.5,.3);
	\draw[super thick, white] (.5,.82) -- (.5,1.4);
	\draw[thick, dString] (.5,.82) -- (.5,1.4);
\end{tikzpicture}
\label{map:TrC}
\end{align}
is an isomorphism.

We claim that its inverse is provided by
\begin{equation}
\begin{split}
\begin{tikzpicture}[baseline=.4cm, scale=.8]
	\CMbox{box}{(0,0)}{1.4}{.8}{.4}{$g$}
	\upTubeWithString{(0.1,1.1)}{.8}{.5}{AString}
	\downTubeWithString{(.1,0)}{.8}{.4}{AString}
	\draw[super thick, white] (.8,1.55) -- (.8,.9);
	\draw[thick, dString] (.8,1.55) -- (.8,.9);
	\draw[super thick, white] (.8,-.8) -- (.8,-.2);
	\draw[thick, cString] (.8,-.7) -- (.8,0);
	\node at (1.05,1.4) {\scriptsize{$d$}};
	\node at (1,-.7) {\scriptsize{$c$}};  %%don't move me
\end{tikzpicture}
\,\,\,\,\,\longmapsto\,\,\,&
\begin{tikzpicture}[baseline=.4cm, scale=.8]
	\plane{(-2.6,-.8)}{6}{2.5}
	\CMbox{box}{(0,-.5)}{.8}{1}{.8}{$\varepsilon$}
	\coordinate (a) at (1.6,.6);
	\filldraw[AString] (a) circle (.05cm);
	\draw[thick, AString] (-4.9,1.5) -- (-.2,1.5) .. controls ++(0:.6cm) and ++(180:.4cm) .. (a);
	\draw[thick, AString] (.8,.2) -- (1,.2) .. controls ++(0:.4cm) and ++(180:.2cm) .. (a);
	\draw[thick, AString] (a) -- (2,.6);
	\node at (3,.2) {\scriptsize{$d$}};
	\straightTubeWithString{(-.4,-.1)}{.2}{1.2}{AString}
	\draw[super thick, white] (-1.6,0) -- (-.2,0);
	\draw[thick, dString] (-1.8,0) -- (-.2,0);
	\draw[super thick, white] (1,0) -- (3.4,0);
	\draw[thick, dString] (.8,0) -- (3.4,0);
	\CMbox{box}{(-2,-.5)}{.8}{1}{.8}{$g$}
	\straightTubeWithCap{(-2.4,-.1)}{.2}{.5}
	\draw[thick, AString] (-2.8,.3) -- (-2.2,.3);
	\filldraw[AString] (-2.8,.3) circle (.05cm);
%	\draw[super thick, white] (-3.6,0) -- (-2.2,0);
%	\draw[thick, yellow] (-4,0) -- (-2.2,0);
%	
\def\sh{.2}
\draw[super thick, white] (-4.3+.1-\sh,.3-.1+\sh) -- (-3.4-\sh,.3-.1+\sh) .. controls ++(0:.2cm) and ++(180:.2cm) .. (-2.9,.1-.1) -- (-2.37+.2,.1-.1) ;
\draw[thick, cString] (-4.3+.1-\sh,.3-.1+\sh) -- (-3.4-\sh,.3-.1+\sh) .. controls ++(0:.2cm) and ++(180:.2cm) .. (-2.9,.1-.1) -- (-2.37+.2,.1-.1) ;
	\node[xshift=-6, yshift=8] at (-3.8-.1-\sh,\sh) {\scriptsize{$c$}};
\end{tikzpicture}
\label{map:TrCInverse}
\\\\
g\mapsto (m_A\otimes \id_{\Phi(d)})\circ &(\id_A \otimes [\varepsilon_{A\otimes \Phi(d)} \circ \Phi(g \circ \Tr_\cC(i_A\otimes \id_{\Phi(c)}) \circ \eta_c)])
\end{split}
\end{equation}
We show that \eqref{map:TrC} and \eqref{map:TrCInverse} compose both ways to the identity.
One direction goes as follows:
% using Lemma \ref{lem:MoveThroughNaturalMap}, Equation \eqref{eq:MAdjointRelation}, that $f$ is an $A$-algebra map, and the algebra unitality axiom \eqref{rel:Unit} for $A$:
\begin{align*}
\bigg(\eqref{map:TrCInverse}\circ \eqref{map:TrC}\bigg)(f)
&\,=\!\!\!\!
\begin{tikzpicture}[baseline=.1cm, scale=.8]
	\plane{(-2.4,-.8)}{5.8}{2.5}
	\CMbox{box}{(0,-.5)}{.8}{1}{.8}{$\varepsilon$}
	\coordinate (a) at (1.6,.6);
	\filldraw[AString] (a) circle (.05cm);
	\draw[thick, AString] (-4.7,1.5) -- (-.2,1.5) .. controls ++(0:.6cm) and ++(180:.4cm) .. (a);
	\draw[thick, AString] (.8,.2) -- (1,.2) .. controls ++(0:.4cm) and ++(180:.2cm) .. (a);
	\draw[thick, AString] (a) -- (2,.6);
	\straightTubeWithCap{(-.4,-.1)}{.2}{2.2}
	\draw[super thick, white] (-1.6,0) -- (-.15,0);
	\draw[thick, dString] (-1.8,0) -- (-.15,0);
	\draw[super thick, white] (1,0) -- (3.4,0);
	\draw[thick, dString] (.8,0) -- (3.4,0);
	\node at (3,.2) {\scriptsize{$d$}};
	\draw[thick, AString] (-2.8,.3) -- (-.2,.3);
	\filldraw[AString] (-2.8,.3) circle (.05cm);
	\CMbox{box}{(-2,-.3)}{.6}{.6}{.3}{$f$}
	\draw[super thick, white] (-3.6,0) -- (-2.1,0);
	\draw[thick, cString] (-3.8,0) -- (-2.1,0);
	\node[xshift=-8, yshift=8] at (-3.4,-.2) {\scriptsize{$c$}};
\useasboundingbox ($(current bounding box.north east)+(0,.2)$) rectangle ($(current bounding box.south west)+(0,-.6)$);
\end{tikzpicture}
=\!\!\!\!
\begin{tikzpicture}[baseline=.1cm, scale=.8]
	\plane{(-2.4,-.8)}{5.8}{2.4}
	\CMbox{box}{(.1,-.5)}{.8}{.8}{.8}{$f$}
	\coordinate (a) at (1.6,.6);
	\filldraw[AString] (a) circle (.05cm);
	\draw[thick, AString] (-4.6,1.4) -- (-.2,1.4) .. controls ++(0:.6cm) and ++(180:.4cm) .. (a);
	\draw[thick, AString] (.9,.2) -- (1,.2) .. controls ++(0:.4cm) and ++(180:.2cm) .. (a);
	\draw[thick, AString] (a) -- (2,.6);
	\draw[super thick, white] (-1.6,0) -- (-.2,0);
	\draw[thick, cString] (-1.8,0) -- (-.1,0);
	\draw[super thick, white] (1,0) -- (3.4,0);
	\draw[thick, dString] (.9,0) -- (3.4,0);
%
%	\node[gray, xshift=-8, yshift=8] at (-.8,-.2) {\scriptsize{$c$}};
	\node at (3,.2) {\scriptsize{$d$}};
	\CMbox{box}{(-2,-.5)}{.8}{.8}{.4}{$\varepsilon$}
	\straightTubeWithCap{(-2.2,-.2)}{.1}{.5}
	\draw[super thick, white] (-4,.2) -- (-3.1,.2) .. controls ++(0:.2cm) and ++(180:.2cm) .. (-2.6,0) -- (-2.07,0) ;
	\draw[thick, cString] (-4,.2) -- (-3.1,.2) .. controls ++(0:.2cm) and ++(180:.2cm) .. (-2.6,0) -- (-2.07,0) ;
	\node[xshift=-8, yshift=8] at (-3.6,0) {\scriptsize{$c$}};
	\draw[thick, AString] (-1,.2) -- (-.6,.2);
	\filldraw[AString] (-1,.2) circle (.05cm);
\end{tikzpicture}
\\
&=
\begin{tikzpicture}[baseline=.1cm, scale=.8]
	\plane{(-1,-.8)}{4.4}{2.4}
	\CMbox{box}{(.1,-.5)}{.8}{.8}{.8}{$f$}
	\coordinate (a) at (1.6,.6);
	\filldraw[AString] (a) circle (.05cm);
	\draw[thick, AString] (-3.2,1.4) -- (-.2,1.4) .. controls ++(0:.6cm) and ++(180:.4cm) .. (a);
	\draw[thick, AString] (.9,.2) -- (1,.2) .. controls ++(0:.4cm) and ++(180:.2cm) .. (a);
	\draw[thick, AString] (a) -- (2,.6);
	\draw[super thick, white] (-2.4,0) -- (-.2,0);
	\draw[thick, cString] (-2.4,0) -- (-.1,0);
	\draw[super thick, white] (1,0) -- (3.4,0);
	\draw[thick, dString] (.9,0) -- (3.4,0);
	\node[xshift=-8, yshift=8] at (-2,-.2) {\scriptsize{$c$}};
	\node at (3,.2) {\scriptsize{$d$}};
	\draw[thick, AString] (-1.4,.2) -- (-.6,.2);
	\filldraw[AString] (-1.4,.2) circle (.05cm);
\end{tikzpicture}
=
\begin{tikzpicture}[baseline=.1cm, scale=.8]
	\plane{(-1,-.8)}{3.4}{2.2}
	\CMbox{box}{(.1,-.5)}{.8}{.8}{.8}{$f$}
	\coordinate (a) at (-1,.2);
	\filldraw[AString] (a) circle (.05cm);
	\draw[thick, AString] (-2.8,1) -- (-2.4,1) .. controls ++(0:.6cm) and ++(180:.4cm) .. (a);
	\draw[thick, AString] (.9,.2) -- (1.4,.2);
	\draw[super thick, white] (-2.4,0) -- (-.2,0);
	\draw[thick, cString] (-2.4,0) -- (-.1,0);
	\draw[super thick, white] (1,0) -- (2.4,0);
	\draw[thick, dString] (.9,0) -- (2.4,0);
	\node[xshift=-8, yshift=8] at (-2,-.2) {\scriptsize{$c$}};
	\node at (2,.2) {\scriptsize{$d$}};
	\draw[thick, AString] (-1.6,.2) -- (-.6,.2);
	\filldraw[AString] (-1.6,.2) circle (.05cm);
\end{tikzpicture}
\,=\,\,
f.
\end{align*}
The second equality holds by the naturality of $\varepsilon$, the third equality is Equation \eqref{eq:MAdjointRelation}, and the fourth one follows from $f$ being an $A$-module map.

For the other direction, it is convenient to precompose both sides of the equation by the isomorphism
\begin{equation}\label{eq: recall some lemma}
\begin{tikzpicture}[baseline=.5cm, scale=.9]
	\coordinate (a1) at (0,-.5);
	\coordinate (b1) at (0,.4);
	\coordinate (a2) at (1.4,0);
	\coordinate (b2) at (1.4,.4);
	%cylinder
	\draw[thick] (a1) -- (b1);
	\draw[thick] ($ (a1) + (.6,0) $) -- ($ (b1) + (.6,0) $);
	\halfDottedEllipse{(a1)}{.3}{.1}
	\draw[thick] (a2) -- (b2);
	\draw[thick] ($ (a2) + (.6,0) $) -- ($ (b2) + (.6,0) $);
	%pants
	\topPairOfPants{(b1)}{}
	%cap
	\draw[thick] (a2) arc (-180:0:.3cm);
	%string
	\draw[thick, AString] ($ (a1) + (.3,-.1) $) -- ($ (b1) + (.3,-.1) $) .. controls ++(90:.8cm) and ++(270:.8cm) .. ($ (b1) + (.95,1.41) $);
	\draw[super thick, white] ($ (a2) + (0,-.6) $) --($ (a2) + (0,-.5) $) .. controls ++(90:.2cm) and ++(270:.2cm) .. ($ (a2) + (.3,0) $) -- ($ (b2) + (.3,-.05) $) .. controls ++(90:.8cm) and ++(270:.8cm) .. ($ (b1) + (1.1,1.45) $);
	\draw[thick, cString] ($ (a2) + (0,-.6) $) --($ (a2) + (0,-.5) $) .. controls ++(90:.2cm) and ++(270:.2cm) .. ($ (a2) + (.3,0) $) -- ($ (b2) + (.3,-.05) $) .. controls ++(90:.8cm) and ++(270:.8cm) .. ($ (b1) + (1.1,1.45) $);
\end{tikzpicture}
\,\,:\, B \otimes c\to \Tr_\cC(A\otimes \Phi(c))
\end{equation}
provided by Lemma \ref{lem:TrCSplitting}:
%with isomorphism \eqref{eqn:TrSplittingIso}.
\begin{align*}
\bigg(\eqref{map:TrC}\circ \eqref{map:TrCInverse}\bigg)(g)
\circ \eqref{eq: recall some lemma}
\,\,\,&=
\begin{tikzpicture}[baseline=-.1cm]
	\coordinate (a1) at (-.7,-2.4);
	\coordinate (a2) at (.7,-1.9);
	\coordinate (b1) at (-.7,-1.5);
	\coordinate (b2) at (.7,-1.5);
	\coordinate (c1) at (0,0);
	\coordinate (d1) at ($ (c1) + (0,2.5) $);
	\coordinate (z) at ($ (c1) + (.4,.4) $);
	\coordinate (y) at ($ (d1) + (.2,-.25) $);
%cylinders
	\draw[thick] (c1) -- (d1);
	\draw[thick] ($ (c1) + (.6,0) $) -- ($ (d1) + (.6,0) $);
	\draw[thick] ($ (d1) + (.3,0) $) ellipse (.3 and .1);
	\halfDottedEllipse{(c1)}{.3}{.1}
	\draw[thick] (a1) -- (b1);
	\draw[thick] ($ (a1) + (.6,0) $) -- ($ (b1) + (.6,0) $);
	\halfDottedEllipse{(a1)}{.3}{.1}
	\draw[thick] (a2) -- (b2);
	\draw[thick] ($ (a2) + (.6,0) $) -- ($ (b2) + (.6,0) $);
	\draw[thick] (a2) arc (-180:0:.3cm);
	\draw[thick,unshaded] (z) -- ($ (z) + (0,-.15) $) arc (-180:0:.13cm) -- ($ (z) + (.26,0) $);
	\CMbox{box}{(z)}{.44}{.44}{.2}{$g$}
	\downTube{($ (z) + (0,1) $)}{.26}{.45}
	\CMbox{box}{($ (z) + (0,1) $)}{.44}{.44}{.2}{$\varepsilon$}
	\pairOfPants{(b1)}{.3}{.1}
%middle strings
	\draw[thick, AString] ($ (z) + (.09,1) $) -- ($ (z) + (.09, .5) $);
	\draw[thick, AString] ($ (z) + (.09,0) $) -- ($ (z) + (.09, -.12) $);
	\filldraw[AString] ($ (z) + (.09,-.12) $) circle (.025cm);
	\draw[super thick, white] ($ (z) + (.17, -.15) $) .. controls ++(270:.2cm) and ++(90:.2cm) .. ($ (b1) + (1.15,1.35) $);
	\draw[thick, cString] ($ (z) + (.17,0) $) -- ($ (z) + (.17, -.15) $) .. controls ++(270:.2cm) and ++(90:.2cm) .. ($ (b1) + (1.15,1.35) $);
	\draw[super thick, white] ($ (z) + (.17,.47) $) -- ($ (z) + (.17,.95) $);
	\draw[thick, dString] ($ (z) + (.17,.48) $) -- ($ (z) + (.17,1) $);
%lower and top strings
	\draw[super thick, white] ($ (d1) + (.5,0) $) -- ($ (d1) + (.5,-.55) $);
	\draw[thick, dString] ($ (d1) + (.5,0) $) -- ($ (d1) + (.5,-.55) $);
	\filldraw[AString] (y) circle (.05cm);
	\draw[thick, AString] ($ (z) + (-.05,1.62) $) .. controls ++(90:.1cm) and ++(270:.1cm) .. (y) -- ($ (d1) + (.2,-.08) $);
	\draw[thick, AString] ($ (a1) + (.3,-.1) $) -- ($ (b1) + (.3,-.1) $) .. controls ++(90:.8cm) and ++(270:.8cm) .. ($ (c1) + (.12,0) $) -- ($ (c1) + (.12,2) $) .. controls ++(90:.1cm) and ++(270:.1cm) .. (y);
	\draw[super thick, white] ($ (a2) + (0,-.6) $) --($ (a2) + (0,-.5) $) .. controls ++(90:.4cm) and ++(270:.4cm) .. ($ (a2) + (.3,.2) $) -- ($ (b2) + (.3,-.05) $) .. controls ++(90:.8cm) and ++(270:.8cm) .. ($ (b1) + (1.15,1.35) $);
	\draw[thick, cString] ($ (a2) + (0,-.6) $) --($ (a2) + (0,-.5) $) .. controls ++(90:.4cm) and ++(270:.4cm) .. ($ (a2) + (.3,.2) $) -- ($ (b2) + (.3,-.05) $) .. controls ++(90:.8cm) and ++(270:.8cm) .. ($ (b1) + (1.15,1.35) $);
\end{tikzpicture}
=\,\,
\begin{tikzpicture}[baseline=-.1cm]
	\coordinate (a1) at (-.7,-2.4);
	\coordinate (a2) at (.7,-1.9);
	\coordinate (b1) at (-.7,1);
	\coordinate (b2) at (.7,-1.5);
	\coordinate (c1) at (.7,-1.5);
	\coordinate (d1) at ($ (c1) + (0,2.5) $);
	\coordinate (z) at ($ (c1) + (.4,.4) $);
	\coordinate (y) at ($ (b1) + (.9,1) $);
%cylinders
	\draw[thick] (c1) -- (d1);
	\draw[thick] ($ (c1) + (.6,0) $) -- ($ (d1) + (.6,0) $);
	\halfDottedEllipse{(c1)}{.3}{.1}
	\halfDottedEllipse{(d1)}{.3}{.1}
	\draw[thick] (a1) -- (b1);
	\draw[thick] ($ (a1) + (.6,0) $) -- ($ (b1) + (.6,0) $);
	\halfDottedEllipse{(a1)}{.3}{.1}
	\draw[thick] (a2) -- (b2);
	\draw[thick] ($ (a2) + (.6,0) $) -- ($ (b2) + (.6,0) $);
	\draw[thick] (a2) arc (-180:0:.3cm);
	\draw[thick,unshaded] (z) -- ($ (z) + (0,-.15) $) arc (-180:0:.13cm) -- ($ (z) + (.26,0) $);
	\CMbox{box}{(z)}{.44}{.44}{.2}{$g$}
	\downTube{($ (z) + (0,1) $)}{.26}{.45}
	\CMbox{box}{($ (z) + (0,1) $)}{.44}{.44}{.2}{$\varepsilon$}
	\topPairOfPants{(b1)}{.3}{.1}
%middle strings
	\draw[thick, AString] ($ (z) + (.09,1) $) -- ($ (z) + (.09, .5) $);
	\draw[thick, AString] ($ (z) + (.09,0) $) -- ($ (z) + (.09, -.12) $);
	\filldraw[AString] ($ (z) + (.09,-.12) $) circle (.025cm);
	\draw[super thick, white] ($ (z) + (.17,.47) $) -- ($ (z) + (.17,.95) $);
	\draw[thick, dString] ($ (z) + (.17,.48) $) -- ($ (z) + (.17,1) $);
%lower and top strings
	\draw[super thick, white] ($ (d1) + (.5,-.55) $)  -- ($ (d1) + (.5,0) $) .. controls ++(90:.5cm) and ++(270:.6cm) .. ($ (b1) + (1.1,1.5) $) ; 
	\draw[thick, dString] ($ (d1) + (.5,-.55) $)  -- ($ (d1) + (.5,0) $) .. controls ++(90:.5cm) and ++(270:.6cm) .. ($ (b1) + (1.1,1.5) $) ; 
	\filldraw[AString] (y) circle (.05cm);
	\draw[thick, AString] ($ (z) + (-.05,1.62) $) -- ($ (b1) + (1.75,-.1) $) .. controls ++(90:.5cm) and ++(-45:.6cm) .. (y);
	\draw[thick, AString] ($ (a1) + (.3,-.1) $) -- ($ (b1) + (.3,-.1) $) .. controls ++(90:.6cm) and ++(225:.6cm) .. (y) -- ($ (b1) + (.9,1.4) $);
	\draw[super thick, white] ($ (z) + (.17,-.05) $) -- ($ (z) + (.17, -.15) $) .. controls ++(270:.2cm) and ++(90:.4cm) .. ($ (a2) + (.3,.1) $) .. controls ++(270:.4cm) and ++(90:.4cm) .. ($ (a2) + (0,-.5) $) --($ (a2) + (0,-.6) $);
	\draw[thick, cString] ($ (z) + (.17,0) $) -- ($ (z) + (.17, -.15) $) .. controls ++(270:.2cm) and ++(90:.4cm) .. ($ (a2) + (.3,.1) $) .. controls ++(270:.4cm) and ++(90:.4cm) .. ($ (a2) + (0,-.5) $) --($ (a2) + (0,-.6) $);
\end{tikzpicture}
\,\,=\,\,
\begin{tikzpicture}[baseline=-.1cm]
	\coordinate (a1) at (-.7,-2.4);
	\coordinate (a2) at (.7,-2.4);
	\coordinate (b2) at (.7,-.3);
	\coordinate (b1) at (-.7,1);
	\coordinate (c1) at (.7,-2.5);
	\coordinate (d1) at ($ (c1) + (0,3.5) $);
	\coordinate (z) at ($ (c1) + (.4,.8) $);
	\coordinate (y) at ($ (b1) + (.9,1) $);
%cylinders
	\draw[thick] (b2) -- (d1);
	\draw[thick] ($ (b2) + (.6,0) $) -- ($ (d1) + (.6,0) $);
%	\draw[thick] ($ (d1) + (.3,0) $) ellipse (.3 and .1);
	\halfDottedEllipse{(d1)}{.3}{.1}
	\draw[thick] (a1) -- (b1);
	\draw[thick] ($ (a1) + (.6,0) $) -- ($ (b1) + (.6,0) $);
	\halfDottedEllipse{(a1)}{.3}{.1}
	\draw[thick] (b2) arc (-180:0:.3cm);
	\draw[thick,unshaded] (z) -- ($ (z) + (0,-.15) $) arc (-180:0:.13cm) -- ($ (z) + (.26,0) $);
	\CMbox{box}{(z)}{.44}{.44}{.2}{$g$}
	\downTube{($ (z) + (0,1.6) $)}{.26}{1.05}
	\CMbox{box}{($ (z) + (0,1.6) $)}{.44}{.44}{.2}{$\varepsilon$}
	\topPairOfPants{(b1)}{.3}{.1}
	\draw[thick, dotted] ($ (b2) + (-.2,-.4) $) rectangle ($ (b2) + (.9,0) $);
	\node at ($ (b2) + (1.85,-.2) $) {\scriptsize{$\eta_{\Tr_\cC(A\otimes \Phi(d))}$}};
%middle strings
	\draw[thick, AString] ($ (z) + (.09,1.6) $) -- ($ (z) + (.09, .5) $);
	\draw[thick, AString] ($ (z) + (.09,0) $) -- ($ (z) + (.09, -.12) $);
	\filldraw[AString] ($ (z) + (.09,-.12) $) circle (.025cm);
	\draw[super thick, white] ($ (z) + (.17,.47) $) -- ($ (z) + (.17,.95) $);
	\draw[thick, dString] ($ (z) + (.17,.48) $) -- ($ (z) + (.17,1.6) $);
%lower and top strings
	\draw[super thick, white] ($ (d1) + (.5,-.55) $)  -- ($ (d1) + (.5,0) $) .. controls ++(90:.5cm) and ++(270:.6cm) .. ($ (b1) + (1.1,1.5) $) ; 
	\draw[thick, dString] ($ (d1) + (.5,-.55) $)  -- ($ (d1) + (.5,0) $) .. controls ++(90:.5cm) and ++(270:.6cm) .. ($ (b1) + (1.1,1.5) $) ; 
	\filldraw[AString] (y) circle (.05cm);
	\draw[thick, AString] ($ (z) + (-.05,2.22) $) -- ($ (b1) + (1.75,-.1) $) .. controls ++(90:.5cm) and ++(-45:.6cm) .. (y);
	\draw[thick, AString] ($ (a1) + (.3,-.1) $) -- ($ (b1) + (.3,-.1) $) .. controls ++(90:.6cm) and ++(225:.6cm) .. (y) -- ($ (b1) + (.9,1.4) $);
	\draw[super thick, white] ($ (z) + (.17,-.05) $) -- ($ (z) + (.17, -.15) $) .. controls ++(270:.4cm) and ++(90:.4cm) .. ($ (a2) + (.4,-.1) $);
	\draw[thick, cString] ($ (z) + (.17,0) $) -- ($ (z) + (.17, -.15) $) .. controls ++(270:.4cm) and ++(90:.4cm) .. ($ (a2) + (.4,-.1) $);
\end{tikzpicture}
\\&=\,\,\,\,
\begin{tikzpicture}[baseline=-.1cm]
	\coordinate (a1) at (-.7,-2);
	\coordinate (a2) at (.7,-2);
	\coordinate (b2) at (.7,-1);
	\coordinate (c1) at (-.7,0);
	\coordinate (c2) at (.7,0);
	\coordinate (y) at ($ (c1) + (.9,1) $);
%cylinders
	\draw[thick] (a1) -- (c1);
	\draw[thick] ($ (a1) + (.6,0) $) -- ($ (c1) + (.6,0) $);
	\halfDottedEllipse{(a1)}{.3}{.1}
	\draw[thick] (b2)-- ($ (b2) + (0,-.4) $) arc (-180:0:.3cm) -- ($ (b2) + (.6,0) $);
	\CMbox{box}{(b2)}{1}{.8}{.4}{$g$}
	\fill[unshaded] ($ (b2) + (-.1,1.1) $) rectangle ($ (b2) + (.65,1.3) $);
	\topPairOfPants{(c1)}{.3}{.1}
%strings
	\draw[thick, AString] ($ (b2) + (.2,0) $) -- ($ (b2) + (.2, -.3) $);
	\filldraw[AString] ($ (b2) + (.2,-.3) $) circle (.05cm);
	\draw[super thick, white] ($ (b2) + (.4,-.1) $) -- ($ (b2) + (.4, -.3) $) .. controls ++(270:.4cm) and ++(90:.4cm) .. ($ (a2) + (0,-.1) $);
	\draw[thick, cString] ($ (b2) + (.4,0) $) -- ($ (b2) + (.4, -.3) $) .. controls ++(270:.4cm) and ++(90:.4cm) .. ($ (a2) + (0,-.1) $);
	\draw[super thick, white] ($ (b2) + (.4,.85) $)  -- ($ (b2) + (.4,.9) $) .. controls ++(90:.6cm) and ++(270:.6cm) .. ($ (c1) + (1.1,1.5) $) ; 
	\draw[thick, dString] ($ (b2) + (.4,.85) $)  -- ($ (b2) + (.4,.9) $) .. controls ++(90:.6cm) and ++(270:.6cm) .. ($ (c1) + (1.1,1.5) $) ; 
	\filldraw[AString] (y) circle (.05cm);
	\draw[thick, AString] ($ (b2) + (.2,.9) $) -- ($ (b2) + (.2,1) $)  .. controls ++(90:.5cm) and ++(-45:.6cm) .. (y);
	\draw[thick, AString] ($ (a1) + (.3,-.1) $) -- ($ (c1) + (.3,-.1) $) .. controls ++(90:.6cm) and ++(225:.6cm) .. (y) -- ($ (c1) + (.9,1.4) $);
\end{tikzpicture}
\,\,=
\begin{tikzpicture}[baseline=.1cm]
	\coordinate (a1) at (-.7,-2);
	\coordinate (a2) at (.7,-2);
	\coordinate (b1) at (-.7,-1);
	\coordinate (b2) at (.7,-1);
	\coordinate (c1) at (0,.3);
	\coordinate (y) at ($ (b1) + (.9,1) $);
%cylinders
	\draw[thick] (a1) -- (b1);
	\draw[thick] ($ (a1) + (.6,0) $) -- ($ (b1) + (.6,0) $);
	\halfDottedEllipse{(a1)}{.3}{.1}
	\draw[thick] (b2)-- ($ (b2) + (0,-.4) $) arc (-180:0:.3cm) -- ($ (b2) + (.6,0) $);
	\topPairOfPants{(b1)}{.3}{.1}
	\CMbox{box}{($ (c1) $)}{1}{.8}{.4}{$g$}
	\upTube{($ (c1) + (0,1.05) $)}{.6}{.5}
	\draw[thick, AString] ($ (b2) + (.2,0) $) -- ($ (b2) + (.2, -.3) $);
	\filldraw[AString] ($ (b2) + (.2,-.3) $) circle (.05cm);
	\draw[super thick, white] ($ (b2) + (.4,-.05) $) -- ($ (b2) + (.4, -.3) $) .. controls ++(270:.4cm) and ++(90:.4cm) .. ($ (a2) + (0,-.1) $);
	\draw[thick, cString] ($ (c1) + (.4,0) $) .. controls ++(270:.6cm) and ++(90:.6cm) .. ($ (b2) + (.4,0) $) -- ($ (b2) + (.4, -.3) $) .. controls ++(270:.4cm) and ++(90:.4cm) .. ($ (a2) + (0,-.1) $);
	\draw[super thick, white] ($ (c1) + (.4,.85) $) -- ($ (c1) + (.4,1.5) $);
	\draw[thick, dString] ($ (c1) + (.4,.85) $) -- ($ (c1) + (.4,1.5) $);
	\draw[thick, AString] ($ (c1) + (.2,.9) $) -- ($ (c1) + (.2,1.4) $);
	\filldraw[AString] (y) circle (.05cm);
	\draw[thick, AString] ($ (a2) + (.2,.9) $) -- ($ (a2) + (.2,1) $)  .. controls ++(90:.5cm) and ++(-45:.6cm) .. (y);
	\draw[thick, AString] ($ (a1) + (.3,-.1) $) -- ($ (b1) + (.3,-.1) $) .. controls ++(90:.6cm) and ++(225:.6cm) .. (y) -- ($ (c1) + (.2,0) $);
\end{tikzpicture}
=
\begin{tikzpicture}[baseline=.1cm]
	\coordinate (a1) at (-.7,-2);
	\coordinate (a2) at (.7,-2);
	\coordinate (b1) at (-.7,-1);
	\coordinate (b2) at (.7,-1);
	\coordinate (c1) at (0,.3);
	\coordinate (y) at ($ (b1) + (.9,1) $);
%cylinders
	\draw[thick] (a1) -- (b1);
	\draw[thick] ($ (a1) + (.6,0) $) -- ($ (b1) + (.6,0) $);
	\halfDottedEllipse{(a1)}{.3}{.1}
	\draw[thick] (b2)-- ($ (b2) + (0,-.4) $) arc (-180:0:.3cm) -- ($ (b2) + (.6,0) $);
	\topPairOfPants{(b1)}{.3}{.1}
	\CMbox{box}{($ (c1) $)}{1}{.8}{.4}{$g$}
	\upTube{($ (c1) + (0,1.05) $)}{.6}{.5}
	\draw[super thick, white] ($ (c1) + (.4,.85) $) -- ($ (c1) + (.4,1.5) $);
	\draw[thick, dString] ($ (c1) + (.4,.85) $) -- ($ (c1) + (.4,1.5) $);
	\draw[thick, AString] ($ (c1) + (.2,.9) $) -- ($ (c1) + (.2,1.4) $);
	\draw[thick, AString] ($ (a1) + (.3,-.1) $) -- ($ (b1) + (.3,-.1) $) .. controls ++(90:.6cm) and ++(270:.6cm) .. ($ (c1) + (.2,0) $);
	\draw[super thick, white] ($ (c1) + (.4,-.05) $) .. controls ++(270:.6cm) and ++(90:.6cm) .. ($ (b2) + (.3,0) $) -- ($ (b2) + (.3, -.3) $) .. controls ++(270:.4cm) and ++(90:.4cm) .. ($ (a2) + (0,-.1) $);
	\draw[thick, cString] ($ (c1) + (.4,0) $) .. controls ++(270:.6cm) and ++(90:.6cm) .. ($ (b2) + (.3,0) $) -- ($ (b2) + (.3, -.3) $) .. controls ++(270:.4cm) and ++(90:.4cm) .. ($ (a2) + (0,-.1) $);
\end{tikzpicture}
\,=\,\,g\circ \eqref{eq: recall some lemma}.
\end{align*}
Here, the second and third equations follows by the naturality of $\mu$ and $\eta$.
The fourth equality is the content of Equation \eqref{eq:CAdjointRelation}, and the fifth one holds because $g$ is a $B$-module map.

%We then apply the following in succession: Corollary \ref{cor:MoveTensorThroughMultiplication}, the naturality of $\eta$, Relation \eqref{eq:CAdjointRelation}, that $g$ is a $\Tr_\cC(A)$-module map, and finally Corollary \ref{cor:MoveTensorThroughMultiplication} together with the algebra unitality axiom \eqref{rel:Unit} for $A$.

$\bullet$ \emph{The functor $\Tr_\cC^{\Mod(A)}\!|_\cN$ is essentially surjective.}
Every $B$-module $c\in \cC$ fits in a coequalizer diagram $B\otimes B \otimes c \rightrightarrows B\otimes c \to c$:
$$
\begin{tikzpicture}[baseline=.4cm]
	\coordinate (a1) at (0,0);
	\coordinate (a2) at ($ (a1) + (.8,0) $);	
	\coordinate (b1) at ($ (a2) + (.8,0) $);	
	\coordinate (a3) at ($ (a1) + (5,0) $);
	\coordinate (b2) at ($ (a3) + (.8,0) $);
	\coordinate (b3) at ($ (a3) + (4,0) $);
	\coordinate (m1) at ($ (b1) + (2.5,-.95) $);
	\coordinate (m2) at ($ (b1) + (2.5,1) $);
	\coordinate (m3) at ($ (b2) + (2,1) $);
%cylinders
	\topCylinder{(a1)}{1}
	\halfDottedEllipse{(a1)}{.3}{.1}
	\topCylinder{(a2)}{1}
	\halfDottedEllipse{(a2)}{.3}{.1}
	\topCylinder{(a3)}{1}
	\halfDottedEllipse{(a3)}{.3}{.1}

%lines
	\draw[thick, AString] ($ (a1) + (.3,-.1) $) -- ($ (a1) + (.3,.9) $);
	\draw[thick, AString] ($ (a2) + (.3,-.1) $) -- ($ (a2) + (.3,.9) $);
	\draw[thick] ($ (b1) + (0,-.07) $) -- ($ (b1) + (0,1.07) $);
	\draw[thick, AString] ($ (a3) + (.3,-.1) $) -- ($ (a3) + (.3,.9) $);
	\draw[thick] ($ (b2) + (0,-.07) $) -- ($ (b2) + (0,1.07) $);
	\draw[thick] ($ (b3) + (0,-.07) $) -- ($ (b3) + (0,1.07) $);
%nodes
	\node at ($ (b1) + (.2,.5) $) {\scriptsize{$c$}};
	\node at ($ (b2) + (.2,.5) $) {\scriptsize{$c$}};
	\node at ($ (b3) + (.2,.5) $) {\scriptsize{$c$}};
%arrows
	\draw[end>] ($ (b1) + (.4,.4) $) --  ($ (a3) + (-.4,.4) $);
	\draw[end>] ($ (b1) + (.4,.6) $) --  ($ (a3) + (-.4,.6) $);
	\draw[end>] ($ (b2) + (.4,.5) $) --  ($ (b3) + (-.4,.5) $);
\pgftransformscale{.87}
	\draw[thick] ($ (m1) + (0,-.07) $) -- ($ (m1) + (0,1.07) $);
	\halfDottedEllipse{($ (m1) + (-.8,0) $)}{.3}{.1}
	\halfDottedEllipse{($ (m1) + (-1.6,0) $)}{.3}{.1}
	\draw[thick] ($ (m1) + (-.2,0) $) .. controls ++(90:.2cm) and ++(225:.2cm) .. ($ (m1) + (0,.6) $);
	\draw[thick, AString] ($ (m1) + (-.5,-.1) $) .. controls ++(90:.3cm) and ++(225:.2cm) .. ($ (m1) + (0,.6) $);
	\draw[thick] ($ (m1) + (-.8,0) $) .. controls ++(90:.3cm) and ++(225:.2cm) .. ($ (m1) + (0,.6) $);
\draw[thick] ($ (m1) + (-1,0) $) -- ++ (0,1);
\draw[thick] ($ (m1) + (-1.6,0) $) -- ++ (0,1);
\draw[thick] ($ (m1) + (-1.3,1) $) circle (.3 and .1) ;
\draw[thick, AString] ($ (m1) + (-1.3,-.1) $) -- ++ (0,1);
%	\draw[thick] ($ (m1) + (-1,0) $) .. controls ++(90:.6cm) and ++(225:.2cm) .. ($ (m1) + (0,1.2) $);
%	\draw[thick, AString] ($ (m1) + (-1.3,-.1) $) .. controls ++(90:.7cm) and ++(225:.2cm) .. ($ (m1) + (0,1.2) $);
%	\draw[thick] ($ (m1) + (-1.6,0) $) .. controls ++(90:.7cm) and ++(225:.2cm) .. ($ (m1) + (0,1.2) $);
%
	\draw[thick] ($ (m2) + (0,-.07) $) -- ($ (m2) + (0,1.27-.2) $);
	\halfDottedEllipse{($ (m2) + (-.8,0) $)}{.3}{.1}
	\halfDottedEllipse{($ (m2) + (-1.6,0) $)}{.3}{.1}
	\draw[thick] ($ (m2) + (-.2,0) $) .. controls ++(90:.4cm) and ++(270:.4cm) .. ($ (m2) + (-.6,.8) $);
	\draw[thick] ($ (m2) + (-1.6,0) $) .. controls ++(90:.4cm) and ++(270:.4cm) .. ($ (m2) + (-1.2,.8) $);
	\draw[thick] ($ (m2) + (-1,0) $) .. controls ++(90:.3cm) and ++(90:.3cm) .. ($ (m2) + (-.8,0) $);
\draw[thick] ($ (m2) + (-.6,.8) $) -- ++ (0,.2);
\draw[thick] ($ (m2) + (-1.2,.8) $) -- ++ (0,.2);
\draw[thick] ($ (m2) + (-.9,1) $) circle (.3 and .1) ;
\draw[thick, AString] ($ (m2) + (-.9,.53) $) -- ++ (0,.37);
	\draw[thick, AString] ($ (m2) + (-1.3,-.1) $) .. controls ++(90:.3cm) and ++(270:.2cm) .. ($ (m2) + (-.9,.53) $);
	\draw[thick, AString] ($ (m2) + (-.5,-.1) $) .. controls ++(90:.3cm) and ++(270:.2cm) .. ($ (m2) + (-.9,.53) $);
	\filldraw[AString] ($ (m2) + (-.9,.53) $) circle (.05cm);
	\draw[thick] ($ (m3) + (0,-.1) $) -- ($ (m3) + (0,1) $);
	\halfDottedEllipse{($ (m3) + (-.8,0) $)}{.3}{.1}
	\draw[thick] ($ (m3) + (-.2,0) $) .. controls ++(90:.2cm) and ++(225:.2cm) .. ($ (m3) + (0,.6) $);
	\draw[thick, AString] ($ (m3) + (-.5,-.1) $) .. controls ++(90:.3cm) and ++(225:.2cm) .. ($ (m3) + (0,.6) $);
	\draw[thick] ($ (m3) + (-.8,0) $) .. controls ++(90:.3cm) and ++(225:.2cm) .. ($ (m3) + (0,.6) $);
\end{tikzpicture}
$$
Using the isomorphism \eqref{eq: recall some lemma}, we see that
%By Lemma \ref{lem:TrCSplitting} and Remarks \ref{rem:TrCSplitting}, the $B$-modules 
$B\otimes B\otimes c$ and $B\otimes c$ are in the essential image of $\cN$.
We are now finished by Lemma \ref{lem:EssentiallySurjective} below. 
%with $\cN_1=\cN$, $\cN_2=\Mod_\cC(B)$, and $T=\Tr_\cC$.
\end{proof}

\begin{lem}
\label{lem:EssentiallySurjective}
Suppose $\cN_1,\cN_2$ are linear categories with $\cN_1$ semisimple.
Let  $T:\cN_1 \to \cN_2$ be a fully faithful linear functor.
Assume that every $b\in \cN_2$ sits in a coequalizer diagram $T(a_1)\rightrightarrows T(a_2)\to b$.
Then $T$ is essentially surjective (and thus an equivalence of categories).
\end{lem}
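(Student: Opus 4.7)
Given $b \in \cN_2$ together with its coequalizer presentation $T(a_1) \rightrightarrows T(a_2) \to b$, the goal is to exhibit an object $a \in \cN_1$ with $T(a) \cong b$. The first step is routine: since $T$ is fully faithful, the parallel pair in $\cN_2$ is the image under $T$ of a unique parallel pair $f, g : a_1 \rightrightarrows a_2$ in $\cN_1$. I would then form the coequalizer $a := \operatorname{coeq}(f,g)$ inside $\cN_1$, which exists because $\cN_1$ is semisimple, and try to argue that $T(a)$ is the coequalizer of $T(f)$ and $T(g)$ in $\cN_2$, so that $T(a) \cong b$ by the uniqueness of coequalizers.

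The substantive point is to verify that $T$ preserves this particular coequalizer; this is where semisimplicity of $\cN_1$ (not merely additivity) is essential. The idea is that the coequalizer of a parallel pair in a semisimple linear category is \emph{absolute} in the sense that it is constructed entirely from direct sum decompositions and idempotent splittings. Concretely, setting $h := f - g : a_1 \to a_2$, semisimplicity of $\cN_1$ implies that $h$ admits a kernel/image/cokernel decomposition which splits: there are direct sum decompositions $a_1 \cong \ker(h) \oplus a_1'$ and $a_2 \cong a_2' \oplus a$ such that $h$ restricts to an isomorphism $a_1' \xrightarrow{\sim} a_2'$ and vanishes on $\ker(h)$. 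The projection $p : a_2 \twoheadrightarrow a$ then realises $a$ as the coequalizer of $f$ and $g$, and it comes with a section $s : a \hookrightarrow a_2$.

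Any additive functor preserves direct sums and idempotent splittings, so applying $T$ yields the analogous decomposition $T(a_2) \cong T(a_2') \oplus T(a)$ with $T(f) - T(g) = T(h)$ an isomorphism onto $T(a_2')$ and zero on $T(\ker(h))$. It follows formally that $T(p) : T(a_2) \to T(a)$ is the coequalizer of $T(f)$ and $T(g)$ in $\cN_2$. Comparing with the given coequalizer $T(a_2) \to b$, we conclude $T(a) \cong b$, establishing essential surjectivity. Combined with full faithfulness, this makes $T$ an equivalence of categories, as required.

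The one delicate place in the argument is the absoluteness claim: one must be a bit careful, since coequalizers in a general linear category (even an abelian one) are not preserved by arbitrary additive functors. In our setting, however, the coequalizer is constructed from direct sums and idempotents, which \emph{are} preserved, so no real difficulty arises.
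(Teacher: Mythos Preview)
Your proof is correct and follows essentially the same approach as the paper: lift the parallel pair via full faithfulness, reduce to the cokernel of the single map $h=f-g$, use semisimplicity of $\cN_1$ to put $h$ in the normal form $\begin{pmatrix}1&0\\0&0\end{pmatrix}$ relative to direct sum decompositions of source and target, and observe that a linear functor preserves this decomposition and hence the cokernel. The paper's argument is slightly terser (it passes immediately to the exact sequence $T(a_1)\to T(a_2)\to b\to 0$ rather than discussing absoluteness of split coequalizers), but the substance is identical.
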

\begin{proof}
We may replace the coequalizer diagram by an exact sequence 
$$
T(a_1) \xrightarrow{\,f\,} T(a_2) \xrightarrow{\,\,\,} b \xrightarrow{\,\,\,} 0.
$$
Let $g:a_1\to a_2$ be the map such that $T(g) = f$.
Using semisimplicity, the map $g$ is isomorphic to one of the form
$$
\begin{pmatrix}
1 & 0
\\
0 & 0
\end{pmatrix}
:a_0\oplus a_1' \to a_0\oplus a_2'.
$$
The same holds after applying the functor $T$, since linear functors preserve direct sums.
But the cokernel of this map is clearly $T(a_2')$, which is in the image of $T$.
\end{proof}

\begin{cor}
\label{cor:FrobeniusToFrobenius}
Assume the hypotheses of Theorem \ref{thm:SemiSimple}. If $A \in \cM$ is a connected (i.e., $\mathrm{dim}\,\cC(1,A)=1$) semisimple Frobenius algebra, then $\Tr_\cC(A)$ is as well.
\end{cor}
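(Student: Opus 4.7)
\medskip

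The plan is to decompose the statement into its three constituent claims: semisimplicity, connectedness, and the Frobenius property of $\Tr_\cC(A)$. The semisimplicity is free: it is exactly the conclusion of Theorem \ref{thm:SemiSimple}, which applies under the standing hypotheses on $\cC$, $\cM$, and $\Phi$.

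Next, connectedness of $\Tr_\cC(A)$ will follow directly from the defining adjunction $\Phi \dashv \Tr_\cC$. Specifically, I would compute
\[
\cC(1_\cC, \Tr_\cC(A)) \,\cong\, \cM(\Phi(1_\cC), A) \,\cong\, \cM(1_\cM, A),
\]
where the second isomorphism uses that $\Phi$ is a tensor functor, so $\Phi(1_\cC) \cong 1_\cM$ canonically. By the hypothesis that $A$ is connected, the right hand side is one-dimensional, hence so is the left hand side. Thus $\Tr_\cC(A)$ is connected.

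Finally, for the Frobenius property, the parenthetical ``and thus Frobenius'' in the statement indicates that it suffices to invoke the standard fact that a connected semisimple algebra in a rigid semisimple tensor category is automatically Frobenius. I would either cite this directly or give a short argument as follows: since $\Tr_\cC(A)$ is semisimple and $\cC(1_\cC, \Tr_\cC(A))$ is one-dimensional, there is (up to scalar) a unique nonzero $\epsilon : \Tr_\cC(A) \to 1_\cC$ splitting the unit $i : 1_\cC \to \Tr_\cC(A)$. By Proposition \ref{prop   :Frobenius}, it remains only to check that the pairing $\epsilon \circ m_{\Tr_\cC(A)}$ is non-degenerate, i.e., that the induced map $\Tr_\cC(A) \to \Tr_\cC(A)^*$ is an isomorphism. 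Non-degeneracy follows from semisimplicity of the category of $\Tr_\cC(A)$-modules combined with connectedness: any nonzero map between simple modules is an isomorphism, and the pairing is forced to be nonzero on each simple summand of $\Tr_\cC(A)$ because the unit $i$ lands in the summand dual to the one on which $\epsilon$ is nonzero.

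The main obstacle, if any, is verifying the last non-degeneracy claim in enough generality to cover the hypotheses of Theorem \ref{thm:SemiSimple}, which do not assume characteristic zero or finitely many simples. However, since $\Tr_\cC(A)$ has already been shown to be a semisimple algebra in a rigid semisimple tensor category, the standard haploid-semisimple-implies-Frobenius argument applies verbatim, and no further structural input is needed.
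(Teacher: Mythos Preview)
Your proposal is correct and follows the same three-step outline as the paper: semisimplicity via Theorem~\ref{thm:SemiSimple}, connectedness via the adjunction and $\Phi(1_\cC)\cong 1_\cM$, and the Frobenius structure via Proposition~\ref{prop   :Frobenius} once a non-degenerate counit is in hand.

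The one genuine difference is in how the counit is produced. The paper uses the Frobenius hypothesis on $A$ directly, setting $\epsilon_{\Tr_\cC(A)} := \epsilon \circ \Tr_\cC(\epsilon_A)$ where $\epsilon:\Tr_\cC(1_\cM)\to 1_\cC$ is the left inverse of $i$, and then cites \cite[Prop.~3.1.ii]{MR1976459} for non-degeneracy. You instead ignore the Frobenius data on $A$ and take any splitting of the unit, relying on the general ``connected semisimple $\Rightarrow$ Frobenius'' principle. Both routes land on the same non-degeneracy statement (Ostrik's result is exactly that principle), so the difference is cosmetic; your version has the minor advantage of showing that the Frobenius hypothesis on $A$ is not really needed beyond connected plus semisimple.

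Your sketch of non-degeneracy is the one soft spot: the sentence about the unit landing ``in the summand dual to the one on which $\epsilon$ is nonzero'' does not by itself force non-degeneracy on every summand. The clean argument is that $\epsilon\circ m$ induces a nonzero map $\Tr_\cC(A)\to \Tr_\cC(A)^*$ of right $\Tr_\cC(A)$-modules, and connectedness makes $\Tr_\cC(A)$ simple over itself, so this map is injective (and hence an isomorphism by symmetry or dimension). That is precisely what the paper's citation to Ostrik encodes.
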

\begin{proof}
%\james{Just cite ostrik, don't reprove in Lemma cited. What about the Theorem?}
Using that $\Phi(1_\cC)=1_\cM$, it is easy to check that $\Tr_\cC(A)$ is connected.
It is semisimple by Theorem~\ref{thm:SemiSimple}. 
Define the counit by 
\[
\epsilon_{\Tr_\cC(A)} := \epsilon \circ \Tr_\cC(\epsilon_A):\Tr_\cC(A)\to 1_\cC,
\]
where $\epsilon:\Tr_\cC(1_\cM)\to 1_\cC$ is the left inverse of $i$.
The pairing $p_{\Tr_\cC(A)} :=\epsilon_{\Tr_\cC(A)}\circ m_{\Tr_\cC(A)}$ is nondegenerate by \cite[Prop. 3.1.ii]{MR1976459}, and so $\Tr_\cC(A)$ has a natural Frobenius algebra structure by Proposition~\ref{prop   :Frobenius}.
%By Theorem \ref{thm:SemiSimple}, $B$ is semisimple.
%Now the proof of \cite[Proposition 3.1.(ii)]{MR1976459} applies, so $p_B$ is nondegenerate.
%In more detail, the induced map $\psi : B\to B^*$ from Definition \ref{defn:Nondegenerate} is a non-zero map of right $B$-modules.
%But since $B$ is semisimple and connected, $B$ is irreducible and thus indecomposable as a right $B$-module.
%Hence $\psi$ is an isomorphism.
%We are finished by Theorem \ref{thm:Frobenius}.
\end{proof}

\begin{rem}
\label{rem:ThingsWeDon'tKnowHowToDo}
More generally, we would like to construct a comultiplication
$$
\Delta_{x,y} \,=
\begin{tikzpicture}[baseline=-.8cm]
\invertedPairOfPants{(0,0)}{}
\draw[thick] (.3,0) circle (.3 and .1);
\draw[thick] (1.7,0) circle (.3 and .1);
\pgftransformxscale{.9}
\pgftransformxshift{31.5}
\pgftransformyshift{-33}
\pgftransformrotate{180}
	\draw[thick, yString] (-.8,-1.07) .. controls ++(90:.6cm) and ++(270:1cm) .. (-.11,.42) node[below, xshift=.5, color=black]{$\scriptstyle y$};		
	\draw[thick, xString] (.8,-1.07) .. controls ++(90:.6cm) and ++(270:1cm) .. (.11,.42)node[below, xshift=-1, color=black]{$\scriptstyle x$};		
\end{tikzpicture}\,\,:\Tr_\cC(x\otimes y)\to\Tr_\cC(x)\otimes \Tr_\cC(y)
$$
which is compatible with the multiplication and all the other structures on $\Tr_\cC$.
The graphical calculus suggests that there should be one, and Corollary \ref{cor:FrobeniusToFrobenius} is a partial step in that direction.
The main missing ingredient is the non-degeneracy of the pairing
\[
\qquad
\begin{tikzpicture}[baseline=1.5cm]
	\coordinate (a1) at (0,0);
	\coordinate (a2) at (1.4,0);
	\coordinate (b1) at (0,1);
	\coordinate (b2) at (1.4,1);
	\coordinate (c1) at (.7,2.5);
	%cap
	\draw[thick] (c1) arc (180:0:.3cm);
	%pants
	\pairOfPants{(b1)}{}	
	\draw[thick, xString] ($ (b2) + (.3,-.1) $) node[below, color=black]{$\scriptstyle x$} to[in=90,out=90, looseness=2.3]($ (b1) + (.3,-.1) $) node[below, color=black]{$\scriptstyle x^*$};
\end{tikzpicture}
\,\,\,=\,\,
\epsilon
\circ
\Tr_\cC(\ev_x)
\circ
\mu_{x^*,x}
\]
(compare with Proposition \ref{prop   :Frobenius}).
\end{rem}

\subsection{The algebra \texorpdfstring{$\Tr_\cC(A \otimes B)$}{trace of A tensor B}}
\label{sec:AlgebrasAndTraciator}

\definecolor{dString}{named}{orange}

We now assume that $\cC$ and $\cM$ are pivotal tensor categories and that $\Phi^{\scriptscriptstyle \cZ}:\cC\to \cZ(\cM)$ is a tensor functor.
As always, $\Tr_\cC$ is the right adjoint of $\Phi$.
In the previous section, we saw that, under weaker hypotheses, if $A\in \cM$ is an algebra then so is $\Tr_\cC(A)$.
With our current assumptions, the same holds true for $\Tr_\cC(A \otimes B)$:

%Using the traciator, we can construct a second algebra structure on $\Tr_\cC(A)\in\cC$ from $A$ which is generally distinct. %(This is Corollary \ref{cor:TwoAlgebras}.) 
%We first prove the following stronger statement, without proof \textcolor{red}{\bf improve me - is this a prop?}.

\begin{prop}
\label{prop:TwoAlgebrasPartOne}
Let $\cC$ and $\cM$ be pivotal tensor categories, let $\Phi^{\scriptscriptstyle \cZ}:\cC\to \cZ(\cM)$ be a pivotal functor, and let $\Tr_\cC$ be the right adjoint of $\Phi=F\circ\Phi^{\scriptscriptstyle \cZ}$.
If $A$ and $B$ are algebra objects in $\cM$, then $\Tr_\cC(A\otimes B)$ is an algebra object in $\cC$.
\end{prop}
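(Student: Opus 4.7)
The plan is to define an algebra structure on $T := \Tr_\cC(A\otimes B)$ and then verify associativity and unitality. For the unit, set
\[
i_T := \Tr_\cC(i_A\otimes i_B)\circ i \,:\, 1_\cC \to T,
\]
where $i : 1_\cC \to \Tr_\cC(1_\cM)$ is the unit of the adjunction $\Phi \dashv \Tr_\cC$. For the multiplication $m_T : T\otimes T \to T$, I specify its mate under the adjunction to be
\[
\bar m := (m_A\otimes m_B)\circ(\id_A\otimes\varepsilon_{A\otimes B}\otimes\id_B)\circ(e_{\Phi(T),A}\otimes\id_B)\circ(\id_{\Phi(T)}\otimes\varepsilon_{A\otimes B}),
\]
after tacitly identifying $\Phi(T)\otimes\Phi(T)$ with $\Phi(T\otimes T)$ via the tensor structure on $\Phi$. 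Here $e_{\Phi(T),A}$ is the half-braiding supplied by $\Phi^\ssZ$. Graphically, $\bar m$ is exactly the flattened pair-of-pants with $A$-strands on the front and $B$-strands running around the back, as depicted in the introduction.

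Unitality is the easier axiom. To verify $m_T\circ(i_T\otimes\id_T)=\id_T$ I take mates: by Lemma \ref{lem:PushDownMorphism} the mate of the left-hand side is $\bar m\circ(\Phi(i_T)\otimes\id_{\Phi(T)})$. Using Lemma \ref{lem:MoveThroughNaturalMap} applied to $i_A\otimes i_B$ together with \eqref{eq:MAdjointRelation}, one shows $\varepsilon_{A\otimes B}\circ\Phi(i_T) = i_A\otimes i_B$. Naturality of the half-braiding combined with the fact that $e_{\Phi(T),1_\cM} = \id$ (which follows from the hexagon \eqref{eq: hexagon identity} with $b=1$) then collapses the expression to $(m_A\otimes m_B)\circ(i_A\otimes\id_A\otimes\id_B\otimes i_B)\circ\varepsilon_{A\otimes B}$. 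The unit axioms for $A$ and $B$ finally reduce this to $\varepsilon_{A\otimes B}$, which is the mate of $\id_T$. The other unitality law is entirely analogous.

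For associativity, I compare the mates of $m_T\circ(m_T\otimes\id_T)$ and $m_T\circ(\id_T\otimes m_T)$ as morphisms $\Phi(T)^{\otimes 3} \to A\otimes B$ in $\cM$. Expanding using the definition of $\bar m$ and iterating Lemma \ref{lem:PushDownMorphism}, both sides become composites built from three copies of $\varepsilon_{A\otimes B}$, two instances of the half-braiding, and the iterated multiplications $m_A\circ(m_A\otimes\id)$ and $m_B\circ(\id\otimes m_B)$. They are equated by associativity of $m_A$ and $m_B$, naturality of $\varepsilon$ and of the half-braiding, and crucially the \emph{monoidal} compatibility axiom
\[
e_{\Phi(c\otimes d),x} = (e_{\Phi(c),x}\otimes\id_{\Phi(d)})\circ(\id_{\Phi(c)}\otimes e_{\Phi(d),x}),
\]
which holds because $\Phi^\ssZ$ is a tensor functor.

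The main obstacle will be the bookkeeping in the associativity check: the two bracketings lead to different arrangements of half-braidings, and the monoidal axiom above is what bridges them. This is most transparent pictorially, where associativity amounts to the isotopy of two stackings of pairs of pants (cf.\ the right-most diagrams of Figure \ref{fig: big figure}); once the graphical calculus of Sections \ref{sec:Multiplication}--\ref{sec:InteractionBetweenTraciatorAndBraiding} is in hand, the identity reduces to a sequence of already-established moves.
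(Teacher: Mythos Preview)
Your argument is correct and proves the proposition, but it is packaged differently from the paper's own proof. The paper defines the multiplication on $T=\Tr_\cC(A\otimes B)$ in the language of traciators,
\[
m_T \;=\; \Tr_\cC(m_A\otimes m_B)\circ \tau^-_{B,\,A\otimes A\otimes B}\circ \mu_{B\otimes A,\,A\otimes B}\circ(\tau^+_{A,B}\otimes\id_T),
\]
and then leaves the axioms as an exercise, pointing to Lemma~\ref{lem:MultiplicationAssocaitive} for associativity. You instead specify $m_T$ directly through its mate $\bar m$, using only the half-braiding supplied by $\Phi^{\scriptscriptstyle\cZ}$, and carry out the axiom checks at the level of mates. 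Under the hood these are the same computation: the traciators are built from the half-braiding (Definition~\ref{defn:Traciator}), and the proof of Lemma~\ref{lem:MultiplicationAssocaitive} itself proceeds by passing to mates and reducing to exactly the kind of naturality/hexagon moves you describe. Your route has the virtue of making transparent that only the \emph{monoidal} central structure of $\Phi$ is used (no pivotality of $\cC$ or of $\Phi^{\scriptscriptstyle\cZ}$), which the paper observes separately in Remark~\ref{rem: pivotal not needed BIS}. One small point: with your particular ordering of the half-braiding, the $A$-factors end up multiplied as $m_A(A_2,A_1)$ rather than $m_A(A_1,A_2)$, so your algebra structure differs from the paper's by an ``opposite'' convention in the $A$-variable; this is harmless for the proposition as stated, but worth being aware of if you later want to match the structure used in Theorem~\ref{thm:TwoAlgebrasPartTwo}.
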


\begin{proof}
The structure morphisms of $\Tr_\cC(A\otimes B)$ are given by
$$
m_{\Tr_\cC(A\otimes B)}:=\Tr_\cC(m_A\otimes m_B) \circ \tau^-_{B,A\otimes A\otimes B}\circ \mu_{B\otimes A, A\otimes B}\circ (\tau^+_{A,B}\otimes \id_{\Tr_\cC(A\otimes B)})
\,=\!
\begin{tikzpicture}[baseline=.4cm]

	\pairOfPants{(0,-1)}{}
	\bottomCylinder{(0,-2)}{.3}{1}
	\bottomCylinder{(1.4,-2)}{.3}{1}
	\emptyCylinder{(.7,.5)}{.3}{1}
	\halfDottedEllipse{(.7,1.5)}{.3}{.1}
	\topCylinder{(.7,1.5)}{.3}{1}

	%pants
	\draw[thick, BString] (.2,-1.08) .. controls ++(90:.8cm) and ++(270:.8cm) .. (.8,.42);		
	\draw[thick, AString] (.4,-1.08) .. controls ++(90:.8cm) and ++(270:.8cm) .. (.9,.42);		
	\draw[thick, AString] (1.6,-1.08) .. controls ++(90:.8cm) and ++(270:.8cm) .. (1.1,.42);		
	\draw[thick, BString] (1.8,-1.08) .. controls ++(90:.8cm) and ++(270:.8cm) .. (1.2,.42);		
	
	%lower traciator
	\draw[thick, AString] (1.6,-1.08) -- (1.6,-2.08);		
	\draw[thick, BString] (1.8,-1.08) -- (1.8,-2.08);		
	\draw[thick, BString] (.4,-2.08) .. controls ++(90:.2cm) and ++(225:.2cm) .. (.6,-1.58);		
	\draw[thick, BString] (.2,-1.08) .. controls ++(270:.2cm) and ++(45:.2cm) .. (0,-1.48);
	\draw[thick, BString, dotted] (0,-1.48) -- (.6,-1.58);	
	\draw[thick, AString] (.2,-2.08) .. controls ++(90:.4cm) and ++(270:.4cm) .. (.4,-1.08);		

	%upper traciator
	\draw[thick, BString] (.8,.42) .. controls ++(90:.2cm) and ++(-45:.2cm) .. (.7,.92);		
	\draw[thick, BString] (1.2,1.42) .. controls ++(270:.2cm) and ++(135:.2cm) .. (1.3,1.02);
	\draw[thick, BString, dotted] (0.7,.92) -- (1.3,1.02);	
	\draw[thick, AString] (.9,.42) .. controls ++(90:.4cm) and ++(270:.4cm) .. (.8,1.42);		
	\draw[thick, AString] (1.1,.42) .. controls ++(90:.4cm) and ++(270:.4cm) .. (.9,1.42);		
	\draw[thick, BString] (1.2,.42) .. controls ++(90:.4cm) and ++(270:.4cm) .. (1.1,1.42);		

	%multiplications
	\filldraw[AString] (.85, 2) circle (.025cm);
	\filldraw[BString] (1.15, 2) circle (.025cm);
	\draw[thick, AString] (.8,1.42) .. controls ++(90:.2cm) and ++(225:.1cm) .. (.85,2);		
	\draw[thick, AString] (.9,1.42) .. controls ++(90:.2cm) and ++(-45:.1cm) .. (.85,2);		
	\draw[thick, BString] (1.1,1.42) .. controls ++(90:.2cm) and ++(225:.1cm) .. (1.15,2);		
	\draw[thick, BString] (1.2,1.42) .. controls ++(90:.2cm) and ++(-45:.1cm) .. (1.15,2);		
	\draw[thick, BString] (1.15,2) -- (1.15,2.42);
	\draw[thick, AString] (.85,2) -- (.85,2.42);
\end{tikzpicture}
$$
and
$$
i_{\Tr_\cC(A\otimes B)}:= \Tr_\cC(i_A\otimes i_B)\circ i
=\,
\begin{tikzpicture}[baseline=.4cm]
	\topCylinder{(.7,0)}{.3}{1}
	\draw[thick] (.7,0) arc (-180:0:.3cm);		
	\halfDottedEllipse{(.7,0)}{.3}{.1}

	%units
	\filldraw[AString] (.9, .3) circle (.025cm);
	\filldraw[BString] (1.1, .3) circle (.025cm);
	\draw[thick, AString] (.9,.92) -- (.9,.3);
	\draw[thick, BString] (1.1,.92) -- (1.1,.3);

\end{tikzpicture}
\,.
$$
We leave it as an exercise to check that these structure maps satisfy the necessary associativity and unitality axioms (Lemma~\ref{lem:MultiplicationAssocaitive} gets used for associativity).
\end{proof}

Similarly to Remark \ref{rem:BackOfTubes}, the pictures for $m_{\Tr_\cC(A\otimes B)}$ and $i_{\Tr_\cC(A\otimes B)}$ become more intuitive if we allow to draw strands on the back of the tubes.
The structure morphisms become:
\begin{equation}
\label{pic: back of tubes}
m_{\Tr_\cC(A\otimes B)} = 
\begin{tikzpicture}[baseline=.6cm]
	\topPairOfPants{(0,0)}{}

	%multiplications
\pgftransformxshift{1.3}
	\filldraw[AString] (.9, 1.1) circle (.025cm);
	\draw[thick, AString] (.2,-.09) .. controls ++(92:.8cm) and ++(270:.4cm) .. (.9,1.1);		
	\draw[thick, AString] (1.6,-.09) .. controls ++(90:.8cm) and ++(270:.4cm) .. (.9,1.1);		
	\draw[thick, AString] (.9,1.1) -- (.9,1.41);
\pgftransformxshift{-2.6}
	\filldraw[BString] (1.1, 1.1) circle (.025cm);
	\draw[more thick, BString, densely dotted] (.4,.08) .. controls ++(88:.6cm) and ++(270:.4cm) .. (1.1,1.1);		
	\draw[more thick, BString, densely dotted] (1.8,.08) .. controls ++(90:.6cm) and ++(270:.4cm) .. (1.1,1.1);		
	\draw[more thick, BString, densely dotted] (1.1,1.1) -- (1.1,1.58);
\end{tikzpicture}
\quad\,\,\text{ and }\quad\,\,
i_{\Tr_\cC(A\otimes B)}=
\begin{tikzpicture}[baseline=.2cm]
	\topCylinder{(.7,0)}{.3}{1}
	\draw[thick] (.7,0) arc (-180:0:.3cm);		
	\halfDottedEllipse{(.7,0)}{.3}{.1}

	%units
	\filldraw[AString] (.95, .3) circle (.025cm);
	\filldraw[BString] (1.05, .46) circle (.025cm);
	\draw[thick, AString] (.95,.91) -- (.95,.3);
	\draw[more thick, BString, densely dotted] (1.05,1.09) -- (1.05,.46);
	
\end{tikzpicture}\,\,.
\end{equation}
%\end{remark}

We now prove our main theorem, about the semisimplicity of $\Tr_\cC(A \otimes B)$. 
Recall that for an algebra object in a semisimple category, separability is a somewhat stronger condition than semisimplicity --- see Section~\ref{sec:Frobenius} for a discussion.

\begin{thm}
\label{thm:TwoAlgebrasPartTwo}
Let $\cC$ and $\cM$ be as in Proposition \ref{prop:TwoAlgebrasPartOne}, and let us assume that they are both semisimple.
Let $A,B\in \cM$ be algebra objects such that the category $\Bimod_\cM(A,B)$ of $A$-$B$-bimodules is semisimple.
Then $\Tr_\cC(A \otimes B)$ is semisimple.

In particular, if $A$ and $B$ are separable algebras (see Proposition \ref{prop: ostrik bimod semisimple}), then $\Tr_\cC(A \otimes B)$ is semisimple.
\end{thm}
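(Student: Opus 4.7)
The plan is to adapt the strategy of Theorem~\ref{thm:SemiSimple} to the two-algebra setting. Set $R := \Tr_\cC(A\otimes B)$, and let $\cN_0$ be the essential image in $\Bimod_\cM(A,B)$ of the functor $\cC \to \Bimod_\cM(A,B)$ sending $c \mapsto A\otimes \Phi(c)\otimes B$ with its obvious bimodule structure. Let $\cN$ be the idempotent completion of $\cN_0$ inside $\Bimod_\cM(A,B)$. Since $\Bimod_\cM(A,B)$ is semisimple by hypothesis, $\cN$ is semisimple as well. My goal is to exhibit an equivalence $\cN \simeq \Mod_\cC(R)$, from which the theorem follows immediately. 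The ``in particular'' clause then reduces to the main statement via Proposition~\ref{prop: ostrik bimod semisimple}.

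The first step is to construct a functor $T:\Bimod_\cM(A,B) \to \Mod_\cC(R)$. On objects, $T(M)=\Tr_\cC(M)$, equipped with the $R$-action given by
$$
R \otimes \Tr_\cC(M) \xrightarrow{\mu} \Tr_\cC(A\otimes B\otimes M) \xrightarrow{\Tr_\cC(\id_A\otimes\, \tau^-\text{-type shuffle})} \Tr_\cC(A\otimes M\otimes B) \xrightarrow{\Tr_\cC(a_M\otimes b_M)} \Tr_\cC(M),
$$
where $a_M$ and $b_M$ are the left and right actions on $M$. In the graphical calculus this is best depicted in the spirit of \eqref{pic: back of tubes}, drawing the $A$-strand on the front of the tube and the $B$-strand on the back, then using a pair of pants to merge with $\Tr_\cC(M)$ and acting on $M$ from both sides. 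Verifying that this truly defines an $R$-module structure uses Lemma~\ref{lem:MultiplicationAssocaitive} (generalised associativity of $\mu$ through the traciator), Lemma~\ref{lem:TwistMultiplicationAndTraciators}, and the bimodule axioms on $M$. Unitality uses Lemma~\ref{lem:CreateUnit} and Lemma~\ref{lem:StringOverCap}.

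Next, I would argue that the restriction $T|_{\cN}:\cN\to \Mod_\cC(R)$ is fully faithful. It suffices to check this on $\cN_0$: for $c,d\in\cC$, the map
$$
\Bimod_\cM(A,B)\big(A\otimes \Phi(c)\otimes B,\,A\otimes \Phi(d)\otimes B\big) \xrightarrow{T} \cC_R\big(\Tr_\cC(A\otimes \Phi(c)\otimes B),\,\Tr_\cC(A\otimes \Phi(d)\otimes B)\big)
$$
has an explicit inverse constructed analogously to \eqref{map:TrCInverse}: pre- and post-compose a given $R$-module map with the units/counits of the adjunction, attach an $i_A$ to the left and an $i_B$ to the right, and finally apply $m_A\otimes(-)\otimes m_B$. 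The verification that this is a two-sided inverse is a diagrammatic computation parallel to the one carried out in Theorem~\ref{thm:SemiSimple}, but where the single $A$-strand is replaced by an $A$-strand running along the front and a $B$-strand running along the back; bimodule linearity is what allows the two strands to be absorbed into the coupon at the end.

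For essential surjectivity, every $R$-module $c\in\cC$ sits in a coequalizer $R\otimes R\otimes c \rightrightarrows R\otimes c \to c$, and the analogue of Lemma~\ref{lem:TrCSplitting} (together with Lemma~\ref{lem:EtaSplitting}) identifies $R\otimes c$ and $R\otimes R\otimes c$ as objects in the essential image of $\cN$; Lemma~\ref{lem:EssentiallySurjective} then finishes the argument. The main obstacle I anticipate is the bookkeeping in the diagrammatic verification that $T(M)$ is genuinely an $R$-module and that the proposed inverse on hom-spaces really is two-sided. Both computations require simultaneously tracking the front ($A$) and back ($B$) strands through pants, traciators, and (co)units, and repeatedly invoking the interplay between the traciator, the braiding, and the twist established in Section~\ref{sec:InteractionBetweenTraciatorAndBraiding}. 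Once these diagrammatic lemmas are in place, the rest of the argument is formal.
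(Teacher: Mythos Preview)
Your proposal is correct and follows essentially the same route as the paper: define the functor $T=\Tr_\cC^{\Bimod(A,B)}$ with the front/back-strand module structure, prove full faithfulness on $\cN_0$ via an explicit inverse built from $\varepsilon$, $\eta$, $i_A$, $i_B$, $m_A$, $m_B$, and deduce essential surjectivity from the free-module coequalizer together with the analogue of Lemma~\ref{lem:TrCSplitting} and Lemma~\ref{lem:EssentiallySurjective}. One small caution: under the hypotheses of Proposition~\ref{prop:TwoAlgebrasPartOne} the category $\cC$ is only pivotal and $\Phi^{\scriptscriptstyle\cZ}$ only a pivotal tensor functor, so Lemma~\ref{lem:TwistMultiplicationAndTraciators} (which needs the braided setup) is not available---but you do not actually need it; Lemma~\ref{lem:MultiplicationAssocaitive} alone handles the associativity of the $R$-action, exactly as in the paper.
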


\begin{proof}
The proof follows the same outline as that of Theorem \ref{thm:SemiSimple}.
Let $\cN_0$ be the essential image of the functor $\cC \to \Bimod_\cM(A,B)$ given by $c\mapsto A\otimes \Phi(c)\otimes B$, and let $\cN$ be the idempotent completion of $\cN_0$.
Since $\Bimod_\cM(A,B)$ is semisimple, $\cN$ is the full subcategory of $\Bimod_\cM(A,B)$ generated by the simple objects that occur as direct summands of $A \otimes \Phi(c) \otimes B$ for $c \in \cC$. 
The categorified trace induces a functor
$$
\Tr_\cC^{\Bimod(A,B)}: \Bimod_\cM(A,B) \to \Mod_\cC(\Tr_\cC(A \otimes B)),
$$
where the $\Tr_\cC(A\otimes B)$-module structure on $\Tr_\cC(z)$ for $z\in \Bimod_\cM(A,B)$ is given by
$$
\qquad\quad\,\,
\begin{tikzpicture}[baseline = 1.9cm]
	\pgfmathsetmacro{\hoffset}{.15};
	\pgfmathsetmacro{\voffset}{.08};
	\coordinate (a1) at (0,0);
	\coordinate (a2) at (1.4,0);
	\coordinate (b1) at (0,1);
	\coordinate (b2) at (1.4,1);
	\coordinate (c1) at (.7,2.5);
	\coordinate (d1) at (.7,3.5);
	\coordinate (d2) at (1,4.05);
	\coordinate (e1) at (.7,4.5);
	\bottomCylinder{(a1)}{.3}{1}
	\bottomCylinder{(a2)}{.3}{1}
	\topCylinder{(d1)}{.3}%{.1}
	\halfDottedEllipse{(d1)}{.3}{.1}
	\pairOfPants{(b1)}{}
	\draw[thick] ($ (c1) + (.6,0) $) -- ($ (d1) + (.6,0) $);
	\draw[thick] ($ (c1) $) -- ($ (d1) $);

	%lower traciator%
	\draw[thick, AString] ($ (a1) + (\hoffset,0) + (0,-.1)$) .. controls ++(90:.4cm) and ++(270:.4cm) .. ($ (a1) + 3*(\hoffset,0) + (0,-\voffset) + (0,1) $);		
	\draw[thick, BString] ($ (a1) + 3*(\hoffset,0) + (0,-\voffset) $) .. controls ++(90:.2cm) and ++(225:.1cm) .. ($ (a1) + 4*(\hoffset,0) + (0,-\voffset) + (0,.45)$);
	\draw[thick, BString] ($ (a1) + (\hoffset,1) + (0,-\voffset) $) .. controls ++(270:.2cm) and ++(45:.1cm) .. ($ (a1) + (0,1) + (0,-\voffset) + (0,-.45)$);
	\draw[thick, BString, dotted] ($ (a1) + 4*(\hoffset,0) + (0,-\voffset) + (0,.45)$) -- ($ (a1) + (0,1) + (0,-\voffset) + (0,-.45)$);	

	%upper traciator
	\draw[thick, AString] ($ (c1) + 2*(\hoffset,0) + (0,-.1)$) .. controls ++(90:.4cm) and ++(270:.4cm) .. ($ (c1) + (\hoffset,0) + (0,-\voffset) + (0,1) $);
	\draw[thick, zString] ($ (c1) + 3*(\hoffset,0) + (0,-\voffset) $) .. controls ++(90:.4cm) and ++(270:.4cm) .. ($ (c1) + 2*(\hoffset,0) + (0,.9) $);		
	\draw[thick, BString] ($ (c1) + (\hoffset,0) + (0,-\voffset) $) .. controls ++(90:.2cm) and ++(-45:.1cm) .. ($ (c1) + (0,-\voffset) + (0,.45)$);
	\draw[thick, BString] ($ (c1) + 3*(\hoffset,0) + (0,1) + (0,-\voffset) $) .. controls ++(270:.2cm) and ++(135:.1cm) .. ($ (c1) + 4*(\hoffset,0) + (0,-\voffset) + (0,-.45) + (0,1)$);
	\draw[thick, BString, dotted] ($ (c1) + (0,-\voffset) + (0,.45)$) -- ($ (c1) + 4*(\hoffset,0) + (0,-\voffset) + (0,-.45) + (0,1)$);	

	%pants
	\draw[thick, zString] ($ (a2) + (.3,-.1) $) -- ($ (b2) + (.3,-.1) $) .. controls ++(90:.6cm) and ++(270:.6cm) .. ($ (c1) + 3*(\hoffset,0) + (0,-\voffset) $);
	\draw[thick, AString] ($ (b1) + 3*(\hoffset,0) + (0,-\voffset) $) .. controls ++(90:.8cm) and ++(270:.8cm) .. ($ (c1) + 2*(\hoffset,0) + (0,-.1)$);
	\draw[thick, BString] ($ (b1) + (\hoffset,0) + (0,-\voffset) $) .. controls ++(90:.8cm) and ++(270:.8cm) .. ($ (c1) + (\hoffset,0) + (0,-\voffset) $);

	%action on top
	\draw[thick, AString] ($ (c1) + (\hoffset,0) + (0,-\voffset) + (0,1) $) .. controls ++(90:.4cm) and ++(225:.2cm) .. (d2);
	\draw[thick, BString] ($ (c1) + 3*(\hoffset,0) + (0,-\voffset) + (0,1) $) .. controls ++(90:.4cm) and ++(-45:.2cm) .. (d2);
	\draw[thick, zString] ($ (c1) + 2*(\hoffset,0) + (0,.9) $) .. controls ++(90:.4cm) and ++(270:.4cm) .. ($ (e1) + (.3,-.1) $); 
	\filldraw[zString] (d2) circle (.05cm);
	\node at ($ (a2) + (.3,-.25) $) {\scriptsize{$z$}};
	\node at ($ (a1) + (\hoffset,-.25) $) {\scriptsize{$A$}};
	\node at ($ (a1) + 3*(\hoffset,0) + (0,-.25) $) {\scriptsize{$B$}};
	
\end{tikzpicture}
\,\,\,\,\,\,\,\,\,
\bigg(\text{or equivalently}\,\,
\begin{tikzpicture}[baseline = .9cm]
	\coordinate (a1) at (0,0);
	\coordinate (a2) at (1,0);
	\coordinate (b1) at (1,.4);
	\coordinate (b2) at (1.5,1.6);
	\coordinate (c1) at (1,2);
	\halfDottedEllipse{(a1)}{.3}{.1}
	\halfDottedEllipse{(a2)}{.3}{.1}
	\draw[thick] ($ (c1) + (.3,0) $) ellipse (.3 and .1);
	\draw[thick] ($ (a2) + (.6,0) $) -- ($ (c1) + (.6,0) $);
	\draw[thick] ($ (a2) $) -- ($ (a2) + (0,.4) $);
	\draw[thick] ($ (a1) $) .. controls ++(90:.8cm) and ++(270:.8cm) .. ($ (c1) $);
	\draw[thick] ($ (a1) + (.6,0) $) .. controls ++(90:.6cm) and ++(90:.2cm) .. ($ (b1) $);
	\draw[thick, AString] ($ (a1) + (.25,-.09) $) .. controls ++(90:.8cm) and ++(225:.8cm) .. (b2);
	\draw[more thick, BString, densely dotted] ($ (a1) + (.37,.09) $) .. controls ++(90:.6cm) and ++(225:.6cm) .. ($ (b2) + (.1,-.1) $);
	\draw[thick, BString] ($ (b2) + (.1,-.1) $) -- (b2);
	\draw[thick, zString] ($ (a2) + (.5,-.07) $) -- ($ (c1) + (.5,-.07) $);
	\filldraw[thick, zString] (b2) circle (.05cm); 
	\node at ($ (a2) + (.5,-.2) $) {\scriptsize{$z$}};
	\node at ($ (a1) + (.25,-.25) $) {\scriptsize{$A$}};
	\node at ($ (a2) + (0,.8) $) {\scriptsize{$B$}};
\end{tikzpicture}\,\,\,\,
\text{if one uses the pictures \eqref{pic: back of tubes}}\bigg).
$$
%Similar to \eqref{pic: back of tubes}, we think of the $B$-string as on the back of the tube, and the $A-B$ bimodule $z$ as on the side of the tube.
%It is straightforward to verify that this map is compatible with the multiplication and unit from Proposition \ref{prop:TwoAlgebrasPartOne}.
%Again, Lemma \ref{lem:MultiplicationAssocaitive} is crucial for the associativity type relation.
We will show that the composite
\newcommand*{\longhookrightarrow}{\ensuremath{\lhook\joinrel\relbar\joinrel\relbar\joinrel\rightarrow}}
$$
\cN \longhookrightarrow \Bimod_\cM(A,B) \xrightarrow{\Tr_\cC^{\Bimod(A,B)}} \Mod_\cC(\Tr_\cC(A \otimes B))
$$
is an equivalence of categories.
As $\cN$ is semisimple, this will complete the proof.\medskip

$\bullet$ \emph{The functor $\Tr_\cC^{\Bimod(A,B)}\!|_\cN$ is fully faithful.}
It is enough to show that the restriction to $\cN_0$ is fully faithful,
so we must show that the map
$$
\cN_0\big(A \otimes \Phi(c) \otimes B, A \otimes \Phi(d) \otimes B\big) \to \cC_{\Tr_\cC(A \otimes B)}\big(\Tr_\cC(A \otimes \Phi(c) \otimes B), \Tr_\cC(A \otimes \Phi(d) \otimes B)\big)
$$
\begin{align}
\begin{tikzpicture}[baseline=.1cm, scale=.8]
	\plane{(-.4,-.8)}{2.8}{1.7}
	\draw[thick, AString] (.8,.2) -- (1.4,.2);
	\draw[thick, AString] (-1.4,.2) -- (-.6,.2);
	\draw[thick, BString] (.8,-.3) -- (1.9,-.3);
	\draw[thick, BString] (-.9,-.3) -- (-.1,-.3);
	\CMbox{box}{(0,-.5)}{.8}{.8}{.8}{$f$}
	\draw[super thick, white] (-1.6,0) -- (-.2,0);
	\draw[thick, cString] (-1.8,0) -- (-.25,0);
	\draw[super thick, white] (1,0) -- (2.2,0);
	\draw[thick, dString] (.8,0) -- (2.2,0);
	\node at (-1.6,-.2+.04) {\scriptsize{$c$}};
	\node at (2,.2) {\scriptsize{$d$}};
	\node at (-1.2,.4) {\scriptsize{$A$}};
	\node at (1.4,-.5) {\scriptsize{$B$}};
\end{tikzpicture}
&\,\longmapsto\,
\Tr_\cC
\left(
\begin{tikzpicture}[baseline=.1cm, scale=.8]
	\plane{(-.4,-.8)}{2.8}{1.7}
	\draw[thick, AString] (.8,.2) -- (1.4,.2);
	\draw[thick, AString] (-1.4,.2) -- (-.6,.2);
	\draw[thick, BString] (.8,-.3) -- (1.9,-.3);
	\draw[thick, BString] (-.9,-.3) -- (-.1,-.3);
	\CMbox{box}{(0,-.5)}{.8}{.8}{.8}{$f$}
	\draw[super thick, white] (-1.6,0) -- (-.2,0);
	\draw[thick, cString] (-1.8,0) -- (-.25,0);
	\draw[super thick, white] (1,0) -- (2.2,0);
	\draw[thick, dString] (.8,0) -- (2.2,0);
	\node at (-1.6,-.2+.04) {\scriptsize{$c$}};
	\node at (2,.2) {\scriptsize{$d$}};
	\node at (-1.2,.4) {\scriptsize{$A$}};
	\node at (1.4,-.5) {\scriptsize{$B$}};
\end{tikzpicture}
\,
\right)
\,=\,\,
\begin{tikzpicture}[baseline=.6cm, scale=1.1]
	\coordinate (a1) at (0,0);
	\coordinate (b1) at (0,1.4);
	%cylinder
	\draw[thick] (a1) -- (b1);
	\draw[thick] ($ (a1) + (1,0) $) -- ($ (b1) + (1,0) $);
	\draw[thick] ($ (b1) + (.5,0) $) ellipse (.5 and .15);
	\halfDottedEllipse{(a1)}{.5}{.15}
	\draw[thick, BString] (.65,-.13) -- (.65,1.25);
	\draw[thick, AString] (.3,-.13) -- (.3,1.25);
	\CMboxSize{box}{(.4,.3)}{.55}{.44}{.2}{$f$}{.9}
	\draw[super thick, white] (.5,-.22) -- (.5,0);
	\draw[thick, cString] (.5,-.22) -- (.5,.3);
	\draw[super thick, white] (.5,.82) -- (.5,1.4);
	\draw[thick, dString] (.5,.82) -- (.5,1.33);
\end{tikzpicture}
\label{map:TrCBimod}
\end{align}
is an isomorphism.
The inverse of \eqref{map:TrCBimod} is given by
\begin{equation}
%\begin{split}
\begin{tikzpicture}[baseline=.4cm, scale=.8]
	\downTube{(.1,0)}{.8}{.4}
	\CMbox{box}{(0,0)}{1.4}{.8}{.4}{$g$}
	\upTube{(0.1,1.1)}{.8}{.5}
	\draw[thick, AString] (.2,.97) -- (.2,1.47);
	\draw[thick, AString] (.2,0) -- (.2,-.53);
	\draw[thick, BString] (.8,.97) -- (.8,1.47);
	\draw[thick, BString] (.8,0) -- (.8,-.53);
	\draw[super thick, white] (.5,1.55) -- (.5,.86);
	\draw[thick, dString] (.5,1.55) -- (.5,.86);
	\draw[super thick, white] (.5,-.8) -- (.5,-.2);
	\draw[thick, cString] (.5,-.7) -- (.5,0);
	\node at (.37,1.6) {\scriptsize{$d$}};
	\node at (.5,-.85) {\scriptsize{$c$}};
	\node at (.2,-.8) {\scriptsize{$A$}};
	\node at (.8,-.8) {\scriptsize{$B$}};
\end{tikzpicture}
\,\,\,\,\,\longmapsto\,\,\,
%&
\begin{tikzpicture}[baseline=.4cm, scale=.8]
	\plane{(-2.4,-1)}{6.4}{2.7}
	\CMbox{box}{(0,-.5)}{.8}{1}{.8}{$\varepsilon$}
	\coordinate (a) at (1.6,.6);
	\filldraw[AString] (a) circle (.05cm);
	\draw[thick, AString] (-4.9,1.5) -- (-.2,1.5) .. controls ++(0:.6cm) and ++(180:.4cm) .. (a);
	\draw[thick, AString] (.8,.2) -- (1,.2) .. controls ++(0:.4cm) and ++(180:.2cm) .. (a);
	\draw[thick, AString] (a) -- (2.4,.6);
	\coordinate (b) at (2,-.6);
	\filldraw[BString] (b) circle (.05cm);
	\draw[thick, BString] (-2.6,-.8) -- (.4,-.8) .. controls ++(0:.6cm) and ++(180:.2cm) .. (b);
	\draw[thick, BString] (.8,-.2) -- (1,-.2) .. controls ++(0:.4cm) and ++(180:.2cm) .. (b);
	\draw[thick, BString] (b) -- (3.6,-.6);
	\node at (3.4,.2) {\scriptsize{$d$}};
	\node at (-4,1.3) {\scriptsize{$A$}};
	\node at (2.8,-.8) {\scriptsize{$B$}};
	\straightTubeNoString{(-.4,-.1)}{.2}{1.2}
	\draw[super thick, white] (-1.6,.3) -- (-.15,.3);
	\draw[thick, dString] (-1.8,.3) -- (-.15,.3);
	\draw[super thick, white] (1,0) -- (3.8,0);
	\draw[thick, dString] (.8,0) -- (3.8,0);
	\draw[thick, AString] (-2,.5) -- (-.23,.5);
	\draw[thick, BString] (-2,.1) -- (-.23,.1);
	\CMbox{box}{(-2,-.5)}{.8}{1}{.8}{$g$}
	\straightTubeWithCap{(-2.4,-.1)}{.2}{.5}
	\draw[thick, AString] (-2.8,.5) -- (-2.23,.5);
	\filldraw[AString] (-2.8,.5) circle (.05cm);
	\draw[thick, BString] (-2.8,.1) -- (-2.23,.1);
	\filldraw[BString] (-2.8,.1) circle (.05cm);
%	\draw[super thick, white] (-3.6,0) -- (-2.2,0);
%	\draw[thick, yellow] (-4,0) -- (-2.2,0);
%	
\def\sh{.2}
\draw[super thick, white] (-4.5+.1-\sh,.3+\sh+\sh) -- (-3.4-\sh,.3+\sh+\sh) .. controls ++(0:.2cm) and ++(180:.2cm) .. (-2.9,.1+\sh) -- (-2.33+.2,.1+\sh) ;
\draw[thick, cString] (-4.5+.1-\sh,.3+\sh+\sh) -- (-3.4-\sh,.3+\sh+\sh) .. controls ++(0:.2cm) and ++(180:.2cm) .. (-2.9,.1+\sh) -- (-2.33+.2,.1+\sh) ;
	\node at (-4-\sh-\sh,\sh+\sh+.1+.04) {\scriptsize{$c$}};
\end{tikzpicture}
\label{map:TrCInverse TWO}
\end{equation}
One can check the equation \eqref{map:TrCInverse TWO}$\circ$\eqref{map:TrCBimod}$=\!\id$ directly as in the proof of Theorem~\ref{thm:SemiSimple}.
On the other hand, the equation \eqref{map:TrCBimod}$\circ$\eqref{map:TrCInverse TWO}$=\!\id$ is best checked after precomposition by the isomorphism
\begin{equation}\label{eq: Trace of free bimodule}
\begin{tikzpicture}[baseline = 1.9cm]
	\pgfmathsetmacro{\hoffset}{.15};
	\pgfmathsetmacro{\voffset}{.08};
	\coordinate (a1) at (0,0);
	\coordinate (a2) at (1.4,.6);
	\coordinate (b1) at (0,1);
	\coordinate (b2) at (1.4,1);
	\coordinate (c1) at (.7,2.5);
	\coordinate (d1) at (.7,3.5);
%	\coordinate (d2) at (1,4.05);
%	\coordinate (e1) at (.7,4.5);
%
	\bottomCylinder{(a1)}{.3}{1}
	\draw[thick] (a2) -- (b2);
	\draw[thick] ($ (a2) + (.6,0) $) -- ($ (b2) + (.6,0) $);
%	\bottomCylinder{(a2)}{.3}{1}
	\topCylinder{(c1)}{.3}%{.1}
%	\halfDottedEllipse{(d1)}{.3}{.1}
	\pairOfPants{(b1)}{}
%	\draw[thick] ($ (c1) + (.6,0) $) -- ($ (d1) + (.6,0) $);
%	\draw[thick] ($ (c1) $) -- ($ (d1) $);

	%lower traciator%
	\draw[thick, AString] ($ (a1) + (\hoffset,0) + (0,-.1)$) .. controls ++(90:.4cm) and ++(270:.4cm) .. ($ (a1) + 3*(\hoffset,0) + (0,-\voffset) + (0,1) $);		
	\draw[thick, BString] ($ (a1) + 3*(\hoffset,0) + (0,-\voffset) $) .. controls ++(90:.2cm) and ++(225:.1cm) .. ($ (a1) + 4*(\hoffset,0) + (0,-\voffset) + (0,.45)$);
	\draw[thick, BString] ($ (a1) + (\hoffset,1) + (0,-\voffset) $) .. controls ++(270:.2cm) and ++(45:.1cm) .. ($ (a1) + (0,1) + (0,-\voffset) + (0,-.45)$);

	%upper traciator
	\draw[thick, AString] ($ (c1) + 2*(\hoffset,0) + (0,-.1)$) .. controls ++(90:.4cm) and ++(270:.4cm) .. ($ (c1) + (\hoffset,0) + (0,-\voffset) + (0,1) $);
%	\draw[thick, cString] ($ (c1) + 3*(\hoffset,0) + (0,-\voffset) $) .. controls ++(90:.4cm) and ++(270:.4cm) .. ($ (c1) + 2*(\hoffset,0) + (0,.9) $);		
	\draw[thick, BString] ($ (c1) + (\hoffset,0) + (0,-\voffset) $) .. controls ++(90:.2cm) and ++(-45:.1cm) .. ($ (c1) + (0,-\voffset) + (0,.45)$);
	\draw[thick, BString] ($ (c1) + 3*(\hoffset,0) + (0,1) + (0,-\voffset) $) .. controls ++(270:.2cm) and ++(135:.1cm) .. ($ (c1) + 4*(\hoffset,0) + (0,-\voffset) + (0,-.45) + (0,1)$);

	%pants
%	\draw[thick, cString] ($ (a2) + (.3,-.1) $) -- ($ (b2) + (.3,-.1) $) .. controls ++(90:.6cm) and ++(270:.6cm) .. ($ (c1) + 3*(\hoffset,0) + (0,-\voffset) $);
	\draw[thick, AString] ($ (b1) + 3*(\hoffset,0) + (0,-\voffset) $) .. controls ++(90:.8cm) and ++(270:.8cm) .. ($ (c1) + 2*(\hoffset,0) + (0,-.1)$);
	\draw[thick, BString] ($ (b1) + (\hoffset,0) + (0,-\voffset) $) .. controls ++(90:.8cm) and ++(270:.8cm) .. ($ (c1) + (\hoffset,0) + (0,-\voffset) $);

	%action on top
%	\draw[thick, AString] ($ (c1) + (\hoffset,0) + (0,-\voffset) + (0,1) $) .. controls ++(90:.4cm) and ++(225:.2cm) .. (d2);
%	\draw[thick, BString] ($ (c1) + 3*(\hoffset,0) + (0,-\voffset) + (0,1) $) .. controls ++(90:.4cm) and ++(-45:.2cm) .. (d2);
%	\draw[thick, dString] ($ (c1) + 2*(\hoffset,0) + (0,.9) $) .. controls ++(90:.4cm) and ++(270:.4cm) .. ($ (e1) + (.3,-.1) $); 
%	\filldraw[dString] (d2) circle (.05cm);
%
	\node at ($ (a2) + (0,-.85) $) {\scriptsize{$c$}};
	\node at ($ (a1) + (\hoffset,-.25) $) {\scriptsize{$A$}};
	\node at ($ (a1) + 3*(\hoffset,0) + (0,-.25) $) {\scriptsize{$B$}};
%
	%cap
	\draw[thick] (a2) arc (-180:0:.3cm);
	\draw[super thick, white] ($ (a2) + (0,-.7) $) --($ (a2) + (0,-.6) $) .. controls ++(90:.3cm) and ++(270:.3cm) .. ($ (a2) + (.3,0) $) -- ($ (b2) + (.3,-.05) $) .. controls ++(90:.8cm) and ++(270:.8cm) .. ($ (c1) + 3*(\hoffset,0) + (0,-\voffset) $) .. controls ++(90:.4cm) and ++(270:.4cm) .. ($ (d1) + 2*(\hoffset,0) $);
	\draw[thick, cString] ($ (a2) + (0,-.7) $) --($ (a2) + (0,-.6) $) .. controls ++(90:.3cm) and ++(270:.3cm) .. ($ (a2) + (.3,0) $) -- ($ (b2) + (.3,-.05) $) .. controls ++(90:.8cm) and ++(270:.8cm) .. ($ (c1) + 3*(\hoffset,0) + (0,-\voffset) $) .. controls ++(90:.4cm) and ++(270:.4cm) .. ($ (d1) + 2*(\hoffset,0) $);
\end{tikzpicture}
\,\,:\, \Tr_\cC(A\otimes B) \otimes c\to \Tr_\cC(A\otimes \Phi(c)\otimes B).
\end{equation}
Overall, the argument is very similar to the one used in the proof of Theorem~\ref{thm:SemiSimple}.\medskip

$\bullet$ \emph{The functor $\Tr_\cC^{\Bimod(A,B)}\!|_\cN$ is essentially surjective.}
Once again, the argument is completely parallel to the one in Theorem~\ref{thm:SemiSimple}.
Every $\Tr_\cC(A\otimes B)$-module $c$ fits in a coequalizer diagram 
$$
\Tr_\cC(A\otimes B)\otimes \Tr_\cC(A\otimes B) \otimes c \rightrightarrows \Tr_\cC(A\otimes B)\otimes c \to c.
$$
Using the isomorphism \eqref{eq: Trace of free bimodule}, we see that
%By Lemma \ref{lem:TrCSplitting} and Remarks \ref{rem:TrCSplitting}, the $B$-modules 
$\Tr_\cC(A\otimes B)\otimes \Tr_\cC(A\otimes B)\otimes c$ and $\Tr_\cC(A\otimes B)\otimes c$ are in the essential image of $\cN$.
We are finished by Lemma \ref{lem:EssentiallySurjective}. 
%
% \textcolor{red}{$\Tr_\cC(A \otimes \Phi(c) \otimes B)$ is sent to $\Tr_\cC(A \otimes B) \otimes c$ by the big composition above. Write out how it's the same as previous argument, then trim}
\end{proof}

\begin{rem}
At this time, we do not know whether the algebra $\Tr(A \otimes B)$ is separable under the hypotheses of Theorem \ref{thm:TwoAlgebrasPartTwo}.
The issues are similar to the ones encountered in Remark~\ref{rem:ThingsWeDon'tKnowHowToDo}.
\end{rem}

\begin{rem}\label{rem: pivotal not needed BIS}
The proofs of \ref{prop:TwoAlgebrasPartOne} and \ref{thm:TwoAlgebrasPartTwo} go through if we only assume $\cC$ and $\cM$ to be rigid, as opposed to pivotal.
When we encounter a traciator
(as for example in the definition of $m_{\Tr_\cC(A\otimes B)}$, or in that of the $\Tr_\cC(A\otimes B)$-algebra structure on $\Tr_\cC(z)$ for $z\in \Bimod_\cM(A,B)$, or in the isomorphism \eqref{eq: Trace of free bimodule}) we need to insert a double dual at the appropriate place, as explained in Remark~\ref{rem:DoubleDual}.
It so happens that every such double duals gets undone by an inverse traciator later on.
\end{rem}

Combining Theorem \ref{thm:TwoAlgebrasPartTwo} with Remarks \ref{rem:WhenSimpleImpliesSeparable} and \ref{rem: pivotal not needed BIS}, we get the following

\begin{cor}\label{CORL our main thm}
Let $\cC$ and $\cM$ be fusion categories over a field of characteristic zero, and let $\Phi^\ssZ: \cC \to \cZ(\cM)$ be a tensor functor. 
If $A,B \in \cM$ are semisimple algebras, then so is $\Tr_\cC(A \otimes B)$.
\end{cor}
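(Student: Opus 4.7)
The plan is simply to assemble the three ingredients cited in the hint. First, since $\cC$ and $\cM$ are fusion categories over a field of characteristic zero, Remark~\ref{rem:WhenSimpleImpliesSeparable} tells us that a semisimple algebra object in $\cM$ is the same as a separable algebra object. Thus from the hypothesis that $A$ and $B$ are semisimple, I would conclude (after decomposing each into its simple summands, to which the remark directly applies) that $A$ and $B$ are separable algebras in $\cM$.

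Next, I would invoke Proposition~\ref{prop: ostrik bimod semisimple}: the separability of $A$ and $B$ together with the semisimplicity of $\cM$ gives that the category $\Bimod_\cM(A,B)$ of $A$-$B$-bimodules is semisimple. This is exactly the hypothesis required on the algebras in Theorem~\ref{thm:TwoAlgebrasPartTwo}.

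The only remaining issue is that Theorem~\ref{thm:TwoAlgebrasPartTwo} is stated under the assumption that $\cC$, $\cM$, and $\Phi^\ssZ$ are pivotal, whereas a general fusion category need not come equipped with a pivotal structure and $\Phi^\ssZ$ is only assumed to be a tensor functor. This is where Remark~\ref{rem: pivotal not needed BIS} enters: it records that the proofs of Proposition~\ref{prop:TwoAlgebrasPartOne} and Theorem~\ref{thm:TwoAlgebrasPartTwo} go through under the weaker assumption that $\cC$ and $\cM$ are merely rigid (not pivotal), provided one inserts double duals in the appropriate places in the definitions of the multiplication on $\Tr_\cC(A\otimes B)$ and of the bimodule-structure morphisms used in the proof (as in Remark~\ref{rem:DoubleDual}), and one observes that each such inserted double dual is subsequently undone by an inverse traciator.

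The main potential obstacle, then, is not the logical structure of the argument but the bookkeeping in Remark~\ref{rem: pivotal not needed BIS}: one has to verify that after systematically inserting $(\,\cdot\,)^{**}$'s on the appropriate strands, the fully-faithful/essentially-surjective argument that compares $\cN$ with $\Mod_\cC(\Tr_\cC(A\otimes B))$ still works verbatim. Assuming this verification (which is essentially routine given the form of $\tau^\pm$ in the non-pivotal case), applying the generalized Theorem~\ref{thm:TwoAlgebrasPartTwo} to our separable $A$ and $B$ yields that $\Tr_\cC(A\otimes B)$ is semisimple, completing the proof.
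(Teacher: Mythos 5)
Your proposal is correct and follows exactly the paper's route: the paper obtains the corollary by combining Remark~\ref{rem:WhenSimpleImpliesSeparable} (semisimple $=$ separable in characteristic-zero fusion categories), the separability-to-semisimple-bimodule-category step (Proposition~\ref{prop: ostrik bimod semisimple}, already built into Theorem~\ref{thm:TwoAlgebrasPartTwo}), and Remark~\ref{rem: pivotal not needed BIS} to drop the pivotality hypothesis. Nothing essential is missing.
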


\begin{ex}\label{example: E_7}
Let $\cC:=SU(2)_{16}$, with simple objects $\bf1,\ldots, \bf17$.
As explained in \cite{MR1976459} this tensor category has three indecomposable module categories, denoted $A_{17}$, $D_{10}$, and $E_7$.
The first one, $A_{17}$, is just $\cC$ acting on itself.
$A_{17}$ and $D_{10}$ are module tensor categories, whereas $E_7$ is just a module category.
The simple objects of $A_{17}$, $D_{10}$, and $E_7$ correspond to the vertices of the Dynkin diagrams
\[
A_{17}:\,\tikz[scale=.3, baseline=-.05cm]{\draw (-6,0) -- (-5,0) -- (-4,0) -- (-3,0) -- (-2,0) -- (-1,0) -- (0,0) -- (1,0) -- (2,0) -- (3,0) -- (4,0) -- (5,0) -- (6,0) -- (7,0) -- (8,0) -- (9,0) -- (10,0);
\filldraw[fill=white] (-6,0) circle(.17);
\filldraw[fill=white] (-5,0) circle(.17);
\filldraw[fill=white] (-4,0) circle(.17);
\filldraw[fill=white] (-3,0) circle(.17);
\filldraw[fill=white] (-2,0) circle(.17);
\filldraw[fill=white] (-1,0) circle(.17);
\filldraw[fill=white] (0,0) circle(.17);
\filldraw[fill=white] (1,0) circle(.17);
\filldraw[fill=white] (2,0) circle(.17);
\filldraw[fill=white] (3,0) circle(.17);
\filldraw[fill=white] (4,0) circle(.17);
\filldraw[fill=white] (5,0) circle(.17);
\filldraw[fill=white] (6,0) circle(.17);
\filldraw[fill=white] (7,0) circle(.17);
\filldraw[fill=white] (8,0) circle(.17);
\filldraw[fill=white] (9,0) circle(.17);
\filldraw[fill=white] (10,0) circle(.17);}
\qquad\quad
D_{10}:\,\tikz[scale=.3, baseline=-.05cm]{\draw (-6,0) -- (-5,0) -- (-4,0) -- (-3,0) -- (-2,0) -- (-1,0) -- (0,0) -- (1,0) -- (2,0) (1,0) -- (1,1);
\filldraw[fill=white] (-6,0) circle(.17);
\filldraw[fill=white] (-5,0) circle(.17);
\filldraw[fill=white] (-4,0) circle(.17);
\filldraw[fill=white] (-3,0) circle(.17);
\filldraw[fill=white] (-2,0) circle(.17);
\filldraw[fill=white] (-1,0) circle(.17);
\filldraw[fill=white] (0,0) circle(.17);
\filldraw[fill=white] (1,0) circle(.17);
\filldraw[fill=white] (2,0) circle(.17);
\filldraw[fill=white] (1,1) circle(.17);}
\qquad\quad
E_7:\,\tikz[scale=.3, baseline=-.05cm]{\draw (-2,0) -- (-1,0) -- (0,0) -- (1,0) -- (2,0) -- (3,0) (1,0) -- (1,1);
\filldraw[fill=white] (-2,0) circle(.17);
\filldraw[fill=white] (-1,0) circle(.17);
\filldraw[fill=white] (0,0) circle(.17);
\filldraw[fill=white] (1,0) circle(.17);
\filldraw[fill=white] (2,0) circle(.17);
\filldraw[fill=white] (3,0) circle(.17);
\filldraw[fill=white] (1,1) circle(.17);}
\]
where the edges encode the action of $\bf 2\in \cC$.

Let $\underline{\mathrm{Hom}}_{\cC}$ denote the $\cC$-valued internal hom, explained at the beginning of the introduction.
The simple algebra objects in $\cC$ are all of the form $\underline{\mathrm{End}}_{\cC}(m):=\underline{\mathrm{Hom}}_{\cC}(m,m)$, for
some (not necessarily simple) object $m$ in one of the above module categories.

Let us write $\underline 1,\ldots,\underline 9, \underline 9'$ for the simple objects of $D_{10}$.
Triality is an action\footnote{Here, an `action' is a homomorphism to the group of isomorphism classes of tensor auto-equivalences.} of the symmetric group $S_3$ on the subcategory of $D_{10}$ spanned by $\underline 1$, $\underline 3$, $\underline 5$, $\underline 7$, $\underline 9$, $\underline 9'$ \cite[Thm.\,4.3]{MR2783128}.
The objects $\underline 1$, $\underline 5$, $\underline 7$ are fixed, whereas the objects $\underline 3$, $\underline 9$, $\underline 9'$ are permuted:
\[
\qquad\qquad\qquad\qquad\qquad
\tikz[scale=.3, baseline=-.05cm]{\draw (-6,0) -- (-5,0) -- (-4,0) -- (-3,0) -- (-2,0) -- (-1,0) -- (0,0) -- (1,0) -- (2,0) (1,0) -- (1,1);
\filldraw[fill=white] (-6,0) circle(.17);
\filldraw[fill=white] (-4,0) circle(.17);
\filldraw[fill=white] (-2,0) circle(.17);
\filldraw[fill=white] (0,0) circle(.17);
\filldraw[fill=white] (2,0) circle(.17);
\filldraw[fill=white] (1,1) circle(.17);
\draw[<->, shorten >=5, shorten <=5] (-4,0) to[out=90,in=180-45] (1,1);
\draw[<->, shorten >=5, shorten <=5] (-4,0) to[out=-90,in=-90] (2,0);
\draw[<->, shorten >=5, shorten <=5] (1,1) to[out=45,in=45, looseness=3] (2,0);
}
\qquad\qquad\qquad\text{(Triality)}
\]
The algebra $\underline{\mathrm{End}}_{D_{10}}(\underline 2)=\underline 1\oplus\underline 3$ yields, under triality, the interesting algebras
$A:=\underline 1\oplus\underline 9$ and $B:=\underline 1\oplus\underline 9'$.
Let $\Tr_\cC:D_{10}\to\cC$ be our categorified trace functor.
At the level of underlying objects, one easily computes
\[\begin{split}
\quad&\Tr_\cC(A) \cong \mathbf1\oplus\mathbf9\oplus\mathbf{17} \\
&\Tr_\cC(A\otimes A) \cong \mathbf1\oplus\mathbf1\oplus\mathbf5\oplus\mathbf9\oplus\mathbf9\oplus\mathbf9\oplus\mathbf{13}\oplus\mathbf{17}\oplus\mathbf{17} \\
&\Tr_\cC(A\otimes B) \cong \mathbf1\oplus\mathbf3\oplus\mathbf7\oplus\mathbf9\oplus\mathbf9\oplus\mathbf{11}\oplus\mathbf{15}\oplus\mathbf{17}
\end{split}\]
By compiling a list of semisimple algebra objects in $\cC$,        
one notes that the above objects have a unique such structure.
They are given by:
\[
\begin{split}
&\Tr_\cC(A)\,\cong\,\underline{\mathrm{End}}_\cC\big(\tikz[scale=.3, baseline=-.05cm]{\draw (-2,0) -- (-1,0) -- (0,0) -- (1,0) -- (2,0) -- (3,0) (1,0) -- (1,1);
\filldraw[fill=black] (-2,0) circle(.17);
\filldraw[fill=white] (-1,0) circle(.17);
\filldraw[fill=white] (0,0) circle(.17);
\filldraw[fill=white] (1,0) circle(.17);
\filldraw[fill=white] (2,0) circle(.17);
\filldraw[fill=white] (3,0) circle(.17);
\filldraw[fill=white] (1,1) circle(.17);}
\big)
\\
&\Tr_\cC(A\otimes A)\,\cong\,\underline{\mathrm{End}}_\cC\big(\underline{1}\oplus\underline{9}\big)
\\
&\Tr_\cC(A\otimes B)\,\cong\,\underline{\mathrm{End}}_\cC\big(\tikz[scale=.3, baseline=-.05cm]{\draw (-2,0) -- (-1,0) -- (0,0) -- (1,0) -- (2,0) -- (3,0) (1,0) -- (1,1);
\filldraw[fill=white] (-2,0) circle(.17);
\filldraw[fill=black] (-1,0) circle(.17);
\filldraw[fill=white] (0,0) circle(.17);
\filldraw[fill=white] (1,0) circle(.17);
\filldraw[fill=white] (2,0) circle(.17);
\filldraw[fill=white] (3,0) circle(.17);
\filldraw[fill=white] (1,1) circle(.17);}
\big).\quad
\end{split}
\]
The algebras $\Tr_\cC(A)$ and $\Tr_\cC(A\otimes B)$ lie in the Morita equivalence class $E_7$,
whereas the algebra $\Tr_\cC(A\otimes A)$ lies in the Morita equivalence class $D_{10}$.
\end{ex}

Given an algebra $A$ and an object $z$ in $\cM$, then $z\otimes A\otimes z^*$ has a canonical algebra structure given by `protecting' the multiplication and unit maps of $A$ by $z$-strands as in \cite[Proof of Prop. 5.4, p. 20]{1501.06869}:
$$
m_{z\otimes A\otimes z^*} 
= 
\begin{tikzpicture}[baseline=-.1cm]
	\filldraw[AString] (0,0) circle (.05cm);
	\draw[thick, AString] (-.4,-.4) -- (0,0) -- (.4,-.4);
	\draw[thick, AString] (0,0) -- (0,.4);
	\draw[thick, zString] (-.2,-.4)  .. controls ++(90:.15cm) and ++(90:.15cm) .. (.2,-.4); 
	\draw[thick, zString] (-.6,-.4) .. controls ++(90:.2cm) and ++(270:.4cm) .. (-.2,.4); 
	\draw[thick, zString] (.6,-.4) .. controls ++(90:.2cm) and ++(270:.4cm) .. (.2,.4); 
\end{tikzpicture}
\quad\text{ and }\quad
i_{z\otimes A\otimes z^*} 
=
\begin{tikzpicture}[baseline=-.1cm]
	\filldraw[AString] (0,0) circle (.05cm);
	\draw[thick, AString] (0,0) -- (0,.4);
	\draw[thick, zString] (-.2,.4)  -- (-.2,0) arc (-180:0:.2cm) -- (.2,.4); 
\end{tikzpicture}
$$
%\end{rem}
A straightforward calculation shows the following:

\begin{prop}
Given two algebras $A,B$ in $\cM$ and an object $z\in \cM$, the two algebras $\Tr_\cC((z\otimes A\otimes z^*)\otimes B)$ and $\Tr_\cC(A\otimes (z^*\otimes B\otimes z))$ agree up to conjugation by the traciator.
\end{prop}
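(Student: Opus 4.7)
The plan is to produce an explicit isomorphism of algebras
$\phi: \Tr_\cC((z\otimes A\otimes z^*)\otimes B) \to \Tr_\cC(A\otimes(z^*\otimes B\otimes z))$
implemented by the traciator. Setting $c := A\otimes z^*\otimes B$, the object $(z\otimes A\otimes z^*)\otimes B$ equals, up to associators, $z\otimes c$, while $A\otimes(z^*\otimes B\otimes z)$ equals $c\otimes z$, so the natural candidate is $\phi := \Tr_\cC(\alpha)\circ\tau^+_{z,\,c}\circ\Tr_\cC(\alpha^{-1})$ for the evident associators $\alpha$. This is already an isomorphism in $\cC$ with inverse built from $\tau^-$, so the only content to verify is that $\phi$ preserves unit and multiplication.

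For the unit, both $i_{\Tr_\cC((z\otimes A\otimes z^*)\otimes B)}$ and $i_{\Tr_\cC(A\otimes(z^*\otimes B\otimes z))}$ factor through $i:1_\cC\to\Tr_\cC(1_\cM)$, and the unit $i_{z\otimes A\otimes z^*}$ of the protected algebra is just $i_A$ with an adjacent cap $1_\cM\to z\otimes z^*$. Drawing both pictures as strings on a cylinder, applying $\phi$ to the left unit simply drags this $z$-cap from the front of the tube around to the back. Lemma~\ref{lem:StringOverCap}, which exactly says that a capped string can be threaded across a traciator, then identifies the image with the right unit.

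The multiplicativity $\phi\circ m_1=m_2\circ(\phi\otimes\phi)$ is the real content. Expanding using the formula for $m_{\Tr_\cC(-\otimes -)}$ from Section~\ref{sec:AlgebrasAndTraciator}, both sides are composites of $\mu$, traciators $\tau^\pm$, and $\Tr_\cC$ applied to the protected multiplications $m_{z\otimes A\otimes z^*}$ and $m_{z^*\otimes B\otimes z}$. The strategy is to transport all $z,z^*$-strands from the front to the back of every tube appearing in $m_1$, using: naturality of $\tau$ (Lemma~\ref{lem:MoveThroughTraciator}) to slide protecting strands past $\Tr_\cC$ of any morphism; the traciator composition axiom \eqref{eq: axiom for traciator} to combine consecutive traciators; Lemma~\ref{lem:TwistMultiplicationAndTraciators} to slide a strand across a pair-of-pants; and the four-object variant Lemma~\ref{lem:MultiplicationAssocaitive} to handle the nested $\tau^-\circ\mu\circ(\tau^+\otimes\id)$ appearing in $m$. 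After all such slides are carried out, the resulting morphism matches $m_2\circ(\phi\otimes\phi)$ on the nose.

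The main obstacle is purely combinatorial: the formula for $m$ already involves two traciators and one pair-of-pants, and inserting a protecting $(z,z^*)$-pair into $m_{z\otimes A\otimes z^*}$ doubles the number of strands one must keep track of. A clean execution either proceeds by a careful picture-level manipulation justified strand-by-strand via the above lemmas, or, far more efficiently, invokes the full isotopy invariance of the strings-on-tubes graphical calculus to be established in the sequel \cite{PAinBTC}, whereupon the identity becomes the manifest statement that the protecting $z$-loop can be freely isotoped around the back of every tube in the diagram defining $m$.
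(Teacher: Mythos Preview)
Your approach is sound and, in fact, considerably more detailed than the paper's own treatment: the paper says only ``A straightforward calculation shows the following'' and then displays the suggestive picture with the $A$-strand on the front, the $B$-strand on the back, and the protecting $z$-strands on the sides of the tubes. Your identification of the traciator $\tau^+_{z,\,A\otimes z^*\otimes B}$ as the conjugating isomorphism and your outline of the unit and multiplication checks are exactly what the paper has in mind.

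One caution on hypotheses. This proposition sits in Section~\ref{sec:AlgebrasAndTraciator}, where $\cC$ is only assumed pivotal and $\Phi^{\scriptscriptstyle\cZ}$ a pivotal tensor functor; the braided assumption is only added afterwards, for Proposition~\ref{prop:TwoAlgebrasAreOpposite}. Your invocation of Lemma~\ref{lem:TwistMultiplicationAndTraciators} therefore overshoots: that lemma uses the braiding and twist of $\cC$. The move you describe as ``sliding a strand across a pair-of-pants'' is instead exactly the content of Lemma~\ref{lem:MultiplicationAssocaitive} (see Remark~\ref{rem:BackOfTubes}), which does hold under the weaker hypotheses in force. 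Combined with naturality of $\tau$ (Lemma~\ref{lem:MoveThroughTraciator}), the traciator composition axiom~\eqref{eq: axiom for traciator}, and Lemma~\ref{lem:StringOverCap} for the unit, this is enough to carry out the verification without ever touching the braiding of $\cC$. Your closing remark that the sequel's isotopy invariance would make the identity manifest is correct, but is a forward reference the paper does not itself rely on here.
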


%\begin{rem}
\noindent
The structure morphisms can be drawn as in \eqref{pic: back of tubes}, with the $A$-strand on the front, the $B$-strand on the back, and 
the `protecting' $z$-strands on the sides:
$$
m_{\Tr_\cC(z\otimes A\otimes z^*\otimes B)} = 
\begin{tikzpicture}[baseline=.6cm]
	\topPairOfPants{(0,0)}{}

	%multiplications
\pgftransformxshift{1.3}
	\filldraw[AString] (.9, 1.1) circle (.025cm);
	\draw[thick, AString] (.2,-.09) .. controls ++(92:.8cm) and ++(270:.4cm) .. (.9,1.1);		
	\draw[thick, AString] (1.6,-.09) .. controls ++(90:.8cm) and ++(270:.4cm) .. (.9,1.1);		
	\draw[thick, AString] (.9,1.1) -- (.9,1.41);
\pgftransformxshift{-2.6}
	\filldraw[BString] (1.1, 1.1) circle (.025cm);
	\draw[more thick, BString, densely dotted] (.4,.08) .. controls ++(88:.6cm) and ++(270:.4cm) .. (1.1,1.1);		
	\draw[more thick, BString, densely dotted] (1.8,.08) .. controls ++(90:.6cm) and ++(270:.4cm) .. (1.1,1.1);		
	\draw[more thick, BString, densely dotted] (1.1,1.1) -- (1.1,1.58);
\pgftransformxshift{1.3}
	
	%Green strings
	\draw[thick, zString] (.05,-.05) .. controls ++(90:.85cm) and ++(270:.8cm) .. (.75, 1.45);
	\draw[thick, zString] (1.95,-.05) .. controls ++(90:.85cm) and ++(270:.8cm) .. (1.25, 1.45);
	\draw[thick, zString] (.55-.017,-.055) .. controls ++(90:.95cm) and ++(90:.95cm) .. (1.45+.017, -.055);
\end{tikzpicture}
\quad\text{ and }\quad
i_{\Tr_\cC(z\otimes A\otimes z^*\otimes B)}=
\begin{tikzpicture}[baseline=.2cm]
	\topCylinder{(.7,0)}{.3}{1}
	\draw[thick] (.7,0) arc (-180:0:.3cm);		
	\halfDottedEllipse{(.7,0)}{.3}{.1}

	%units
	\filldraw[AString] (.95, .3) circle (.025cm);
	\filldraw[BString] (1.05, .46) circle (.025cm);
	\draw[thick, AString] (.95,.91) -- (.95,.3);
	\draw[more thick, BString, densely dotted] (1.05,1.09) -- (1.05,.46);

	%Green
	\draw[thick, zString] (.75,.95) -- (.75,0) arc (-180:0:.25cm) -- (1.25,.95);

\end{tikzpicture}
$$

The reader may wonder whether the two algebras $\Tr_\cC(A\otimes B)$ and $\Tr_\cC(B\otimes A)$ are related.
We show that they are, under the assumption that $\cC$ is braided pivotal and $\Phi^{\scriptscriptstyle \cZ}:\cC\to \cZ(\cM)$ is a braided pivotal functor.
Let us define the \emph{opposite algebra} of $A=(A,m,i)$ to be the algebra $A^\mathrm{op}:=(A,m \circ \beta,i)$.

\begin{prop}
\label{prop:TwoAlgebrasAreOpposite}
The traciator $\tau_{B,A}:\Tr_\cC(B\otimes A) \to \Tr_\cC(A\otimes B)$ induces an algebra isomorphism $\Tr_\cC(B\otimes A)\cong \Tr_\cC(A\otimes B)^{\mathrm{op}}$.
\end{prop}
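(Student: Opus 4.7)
The traciator $\tau := \tau^+_{B,A}$ is an isomorphism by Lemma~\ref{lem:TraciatorsComposeToIdentity}, so the only content is to verify that it is an algebra map $\Tr_\cC(B\otimes A)\to\Tr_\cC(A\otimes B)^{\mathrm{op}}$, i.e.\ that
\[
\tau\circ i_{\Tr_\cC(B\otimes A)}=i_{\Tr_\cC(A\otimes B)},\qquad \tau\circ m_{\Tr_\cC(B\otimes A)}=m_{\Tr_\cC(A\otimes B)}\circ\beta\circ(\tau\otimes\tau).
\]

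The first equation is immediate: by naturality of the traciator (Lemma~\ref{lem:MoveThroughTraciator}),
\[
\tau\circ\Tr_\cC(i_B\otimes i_A)=\Tr_\cC(i_A\otimes i_B)\circ\tau^+_{1_\cM,1_\cM},
\]
and $\tau^+_{1_\cM,1_\cM}=\id$ by Lemma~\ref{lem: Ponto-Shulman axiom redundant}. Composing with the unit $i:1_\cC\to\Tr_\cC(1_\cM)$ on the right gives $\tau\circ i_{\Tr_\cC(B\otimes A)}=i_{\Tr_\cC(A\otimes B)}$.

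For the multiplication, the picture is very suggestive: with the convention of Remark~\ref{rem:BackOfTubes} (solid strand on the front, dotted strand on the back), $m_{\Tr_\cC(B\otimes A)}$ is a pair of pants with $B$ running along the fronts of all three tubes and $A$ along the backs, joined at a front trivalent vertex (labeled $m_B$) and a back trivalent vertex (labeled $m_A$). Post-composing with $\tau$ rotates the output tube by $180^\circ$, bringing the back to the front. Geometrically, this is the same as rotating each input tube by $180^\circ$ (giving the factor $\tau\otimes\tau$) and then swapping the two legs of the pair of pants (giving the braiding $\beta$), which produces the multiplication of $\Tr_\cC(A\otimes B)$ with the order of its inputs reversed. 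My plan is to convert this geometric picture into an algebraic identity by taking mates under the adjunction $\Phi\dashv\Tr_\cC$: both sides of the desired equation become morphisms $\Phi(\Tr_\cC(B\otimes A)\otimes\Tr_\cC(B\otimes A))\to A\otimes B$ in $\cM$. Using Lemma~\ref{lem:MultiplicationCompatibleWithNatural} to distribute $\varepsilon$ through each $\mu$, using \eqref{pic: twopics - a}/\eqref{pic: twopics - b} to rewrite each traciator in terms of $e_{\Phi(-),-}$ and $\ev/\coev$ of the relevant objects, and using the fact that $\Phi^\ssZ$ is a braided functor to replace $\Phi(\beta_{\Tr_\cC(B\otimes A),\Tr_\cC(B\otimes A)})$ by a half-braiding, both sides reduce to the same diagram in $\cM$, namely two copies of $\varepsilon_{B\otimes A}$ attached to the input tubes, their outputs rearranged by the pivotal/central data, and then fed into $m_A\otimes m_B$.

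Alternatively, and perhaps more cleanly, one can argue purely from the algebraic identities already established. Using \eqref{eq: axiom for traciator}, Lemma~\ref{lem:MoveThroughTraciator}, and Lemma~\ref{lem:TwistMultiplicationAndTraciators} (which says exactly how $\tau$ converts a multiplication into a braided/twisted multiplication), one can push the $\tau$ on the left of $m_{\Tr_\cC(B\otimes A)}$ through the traciators and $\mu$ that appear in its definition; the twist contributions produced by Lemma~\ref{lem:TwistMultiplicationAndTraciators} cancel against the twist hidden in the composition $\tau^-\circ\mu\circ(\tau^+\otimes\id)$ used to build $m_{\Tr_\cC(A\otimes B)}$. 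The main obstacle is bookkeeping: tracking the several instances of $\tau^\pm$ and verifying that all twists cancel leaves exactly a single braiding $\beta_{\Tr_\cC(B\otimes A),\Tr_\cC(B\otimes A)}$, which is the content of the opposite algebra structure. Lemma~\ref{lem:MultiplicationAssocaitive} plays an essential role here, since it is the precise identity that lets us reorganize one $\mu$ together with an outer traciator into two $\mu$'s with an inner traciator and braiding, matching the shape of the right-hand side.
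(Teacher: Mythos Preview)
Your unit computation is complete and matches what the paper leaves to the reader. Your second approach to the multiplication identity is exactly the route the paper takes: one first rewrites $m_{\Tr_\cC(A\otimes B)}$ by splitting the traciators via Lemma~\ref{lem:TraciatorComposition}, then applies Lemma~\ref{lem:TwistMultiplicationAndTraciators} to trade a traciator-on-$\mu$ for a braiding plus twist, arriving at a form of $m_{\Tr_\cC(A\otimes B)}$ that already contains an explicit braiding of the two legs. After that, composing with $\tau^{-}_{A,B}$ on top, $\beta$, and $\tau^+_{B,A}\otimes\tau^+_{B,A}$ on the bottom, the various traciators and twists cancel (via Corollary~\ref{cor:ComposeTraciatorsToGetBigTheta} and Lemma~\ref{lem:TraciatorComposition}), leaving precisely $m_{\Tr_\cC(B\otimes A)}$.

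One correction: Lemma~\ref{lem:MultiplicationAssocaitive} is \emph{not} the essential ingredient here and is not invoked in the paper's argument. That lemma concerns three inputs with a traciator on the middle one, whereas the present identity involves only two inputs. The actual reorganization is accomplished entirely by the traciator composition axiom (Lemma~\ref{lem:TraciatorComposition}) together with Lemma~\ref{lem:TwistMultiplicationAndTraciators}; the twist introduced by the latter is absorbed by combining adjacent traciators via $\tau_{y,x}\circ\tau_{x,y}=\theta$. Your first proposed approach (taking mates and reducing to a picture in $\cM$) would likely also work, but the paper's direct diagrammatic route is shorter once the rewritten form of $m_{\Tr_\cC(A\otimes B)}$ is in hand.
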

\begin{proof}
We need to show that
\(
m_{\Tr_\cC(B\otimes A)} = \tau^- \circ m_{\Tr_\cC(A\otimes B)} \circ \beta \circ (\tau^+\otimes \tau^+).
\)
We rewrite $m_{\Tr_\cC(A\otimes B)}$ as follows
$$
m_{\Tr_\cC(A\otimes B)}
=
\begin{tikzpicture}[baseline=.4cm]

	\pairOfPants{(0,-1)}{}
	\bottomCylinder{(0,-2)}{.3}{1}
	\bottomCylinder{(1.4,-2)}{.3}{1}
	\emptyCylinder{(.7,.5)}{.3}{1}
	\halfDottedEllipse{(.7,1.5)}{.3}{.1}
	\topCylinder{(.7,1.5)}{.3}{1}

	%pants
	\draw[thick, BString] (.2,-1.08) .. controls ++(90:.8cm) and ++(270:.8cm) .. (.8,.42);		
	\draw[thick, AString] (.4,-1.08) .. controls ++(90:.8cm) and ++(270:.8cm) .. (.9,.42);		
	\draw[thick, AString] (1.6,-1.08) .. controls ++(90:.8cm) and ++(270:.8cm) .. (1.1,.42);		
	\draw[thick, BString] (1.8,-1.08) .. controls ++(90:.8cm) and ++(270:.8cm) .. (1.2,.42);		
	
	%lower traciator
	\draw[thick, AString] (1.6,-1.08) -- (1.6,-2.08);		
	\draw[thick, BString] (1.8,-1.08) -- (1.8,-2.08);		
	\draw[thick, BString] (.4,-2.08) .. controls ++(90:.2cm) and ++(225:.2cm) .. (.6,-1.58);		
	\draw[thick, BString] (.2,-1.08) .. controls ++(270:.2cm) and ++(45:.2cm) .. (0,-1.48);
	\draw[thick, BString, dotted] (0,-1.48) -- (.6,-1.58);	
	\draw[thick, AString] (.2,-2.08) .. controls ++(90:.4cm) and ++(270:.4cm) .. (.4,-1.08);		

	%upper traciator
	\draw[thick, BString] (.8,.42) .. controls ++(90:.2cm) and ++(-45:.2cm) .. (.7,.92);		
	\draw[thick, BString] (1.2,1.42) .. controls ++(270:.2cm) and ++(135:.2cm) .. (1.3,1.02);
	\draw[thick, BString, dotted] (0.7,.92) -- (1.3,1.02);	
	\draw[thick, AString] (.9,.42) .. controls ++(90:.4cm) and ++(270:.4cm) .. (.8,1.42);		
	\draw[thick, AString] (1.1,.42) .. controls ++(90:.4cm) and ++(270:.4cm) .. (.9,1.42);		
	\draw[thick, BString] (1.2,.42) .. controls ++(90:.4cm) and ++(270:.4cm) .. (1.1,1.42);		

	%multiplications
	\filldraw[AString] (.85, 2) circle (.025cm);
	\filldraw[BString] (1.15, 2) circle (.025cm);
	\draw[thick, AString] (.8,1.42) .. controls ++(90:.2cm) and ++(225:.1cm) .. (.85,2);		
	\draw[thick, AString] (.9,1.42) .. controls ++(90:.2cm) and ++(-45:.1cm) .. (.85,2);		
	\draw[thick, BString] (1.1,1.42) .. controls ++(90:.2cm) and ++(225:.1cm) .. (1.15,2);		
	\draw[thick, BString] (1.2,1.42) .. controls ++(90:.2cm) and ++(-45:.1cm) .. (1.15,2);		
	\draw[thick, BString] (1.15,2) -- (1.15,2.42);
	\draw[thick, AString] (.85,2) -- (.85,2.42);
\end{tikzpicture}
=
\begin{tikzpicture}[baseline=.7cm]

	\pairOfPants{(0,-1)}{}
	\bottomCylinder{(0,-2)}{.3}{1}
	\bottomCylinder{(1.4,-2)}{.3}{1}
	\emptyCylinder{(.7,.5)}{.3}{1}
	\halfDottedEllipse{(.7,1.5)}{.3}{.1}
	\emptyCylinder{(.7,1.5)}{.3}{1}
	\halfDottedEllipse{(.7,2.5)}{.3}{.1}
	\topCylinder{(.7,2.5)}{.3}{1}

	%pants
	\draw[thick, BString] (.2,-1.08) .. controls ++(90:.8cm) and ++(270:.8cm) .. (.8,.42);		
	\draw[thick, AString] (.4,-1.08) .. controls ++(90:.8cm) and ++(270:.8cm) .. (.9,.42);		
	\draw[thick, AString] (1.6,-1.08) .. controls ++(90:.8cm) and ++(270:.8cm) .. (1.1,.42);		
	\draw[thick, BString] (1.8,-1.08) .. controls ++(90:.8cm) and ++(270:.8cm) .. (1.2,.42);		
	
	%lower traciator
	\draw[thick, AString] (1.6,-1.08) -- (1.6,-2.08);		
	\draw[thick, BString] (1.8,-1.08) -- (1.8,-2.08);		
	\draw[thick, BString] (.4,-2.08) .. controls ++(90:.2cm) and ++(225:.2cm) .. (.6,-1.58);		
	\draw[thick, BString] (.2,-1.08) .. controls ++(270:.2cm) and ++(45:.2cm) .. (0,-1.48);
%	\draw[thick, BString, dotted] (0,-1.48) -- (.6,-1.58);	
	\draw[thick, AString] (.2,-2.08) .. controls ++(90:.4cm) and ++(270:.4cm) .. (.4,-1.08);		

	%upper traciator
	\draw[thick, BString] (.8,.42) .. controls ++(90:.15cm) and ++(-45:.15cm) .. (.7,.82);		
	\draw[thick, AString] (1.2,1.42) .. controls ++(270:.15cm) and ++(135:.15cm) .. (1.3,1.07);
	\draw[thick, AString] (.9,.42) .. controls ++(90:.2cm) and ++(-45:.2cm) .. (.7,.92);		
	\draw[thick, BString] (1.1,1.42) .. controls ++(270:.2cm) and ++(135:.2cm) .. (1.3,.97);
%	\draw[thick, BString, dotted] (0.7,.92) -- (1.3,1.02);	
	\draw[thick, AString] (1.1,.42) .. controls ++(90:.4cm) and ++(270:.4cm) .. (.8,1.42);		
	\draw[thick, BString] (1.2,.42) .. controls ++(90:.4cm) and ++(270:.4cm) .. (.9,1.42);		

	%upper upper traciator
	\draw[thick, AString] (1.2,1.42) .. controls ++(90:.15cm) and ++(225:.15cm) .. (1.3,1.82);
	\draw[thick, AString] (.8,2.42) .. controls ++(270:.2cm) and ++(45:.2cm) .. (.7,1.97);		
%	\draw[thick, BString, dotted] (0.7,.92) -- (1.3,1.02);	
	\draw[thick, AString] (.8,1.42) .. controls ++(90:.4cm) and ++(270:.4cm) .. (.9,2.42);		
	\draw[thick, BString] (.9,1.42) .. controls ++(90:.4cm) and ++(270:.4cm) .. (1.1,2.42);		
	\draw[thick, BString] (1.1,1.42) .. controls ++(90:.4cm) and ++(270:.4cm) .. (1.2,2.42);		

	%multiplications
	\filldraw[AString] (.85, 3) circle (.025cm);
	\filldraw[BString] (1.15, 3) circle (.025cm);
	\draw[thick, AString] (.8,2.42) .. controls ++(90:.2cm) and ++(225:.1cm) .. (.85,3);		
	\draw[thick, AString] (.9,2.42) .. controls ++(90:.2cm) and ++(-45:.1cm) .. (.85,3);		
	\draw[thick, BString] (1.1,2.42) .. controls ++(90:.2cm) and ++(225:.1cm) .. (1.15,3);		
	\draw[thick, BString] (1.2,2.42) .. controls ++(90:.2cm) and ++(-45:.1cm) .. (1.15,3);		
	\draw[thick, BString] (1.15,3) -- (1.15,3.42);
	\draw[thick, AString] (.85,3) -- (.85,3.42);
\end{tikzpicture}
=
\begin{tikzpicture}[baseline=1cm]

	\bottomCylinder{(0,-2)}{.3}{1}
	\bottomCylinder{(1.4,-2)}{.3}{1}
	
	%strings on braid behind
	\draw[thick, BString] (.2,-1.08) .. controls ++(90:.8cm) and ++(270:.7cm) .. (1.6,.92);		
	\draw[thick, AString] (.4,-1.08) .. controls ++(90:.8cm) and ++(270:.7cm) .. (1.8,.92);		

	\inverseBraid{(0,-1)}{.3}{2}
	
	\pairOfPants{(0,1)}{}
	\emptyCylinder{(.7,2.5)}{.3}{1}
	\halfDottedEllipse{(0,-1)}{.3}{.1}
	\halfDottedEllipse{(1.4,-1)}{.3}{.1}
	\halfDottedEllipse{(.7,2.5)}{.3}{.1}
	\halfDottedEllipse{(.7,3.5)}{.3}{.1}
	\topCylinder{(.7,3.5)}{.3}{1}

	%pants
	\draw[thick, AString] (.2,.92) .. controls ++(90:.8cm) and ++(270:.8cm) .. (.8,2.42);		
	\draw[thick, BString] (.4,.92) .. controls ++(90:.8cm) and ++(270:.8cm) .. (.9,2.42);		
	\draw[thick, BString] (1.6,.92) .. controls ++(90:.8cm) and ++(270:.8cm) .. (1.1,2.42);		
	\draw[thick, AString] (1.8,.92) .. controls ++(90:.8cm) and ++(270:.8cm) .. (1.2,2.42);		

	%lower traciator
	\draw[thick, AString] (1.6,-1.08) -- (1.6,-2.08);		
	\draw[thick, BString] (1.8,-1.08) -- (1.8,-2.08);		
	\draw[thick, AString] (.2,-2.08) .. controls ++(90:.2cm) and ++(-45:.2cm) .. (0,-1.58);		
	\draw[thick, AString] (.4,-1.08) .. controls ++(270:.2cm) and ++(135:.2cm) .. (.6,-1.48);
%	\draw[thick, BString, dotted] (0,-1.48) -- (.6,-1.58);	
	\draw[thick, BString] (.4,-2.08) .. controls ++(90:.4cm) and ++(270:.4cm) .. (.2,-1.08);		

	%strings above braid
	\draw[thick, AString] (1.6,-1.08) .. controls ++(90:.8cm) and ++(270:.7cm) .. (.2,.92);		
	\draw[thick, BString] (1.8,-1.08) .. controls ++(90:.8cm) and ++(270:.7cm) .. (.4,.92);		

	%upper traciator
	\draw[thick, AString] (1.2,2.42) .. controls ++(90:.15cm) and ++(225:.15cm) .. (1.3,2.82);
	\draw[thick, AString] (.8,3.42) .. controls ++(270:.2cm) and ++(45:.2cm) .. (.7,2.97);		
%	\draw[thick, BString, dotted] (0.7,.92) -- (1.3,1.02);	
	\draw[thick, AString] (.8,2.42) .. controls ++(90:.4cm) and ++(270:.4cm) .. (.9,3.42);		
	\draw[thick, BString] (.9,2.42) .. controls ++(90:.4cm) and ++(270:.4cm) .. (1.1,3.42);		
	\draw[thick, BString] (1.1,2.42) .. controls ++(90:.4cm) and ++(270:.4cm) .. (1.2,3.42);		

	%multiplications
	\filldraw[AString] (.85, 4) circle (.025cm);
	\filldraw[BString] (1.15, 4) circle (.025cm);
	\draw[thick, AString] (.8,3.42) .. controls ++(90:.2cm) and ++(225:.1cm) .. (.85,4);		
	\draw[thick, AString] (.9,3.42) .. controls ++(90:.2cm) and ++(-45:.1cm) .. (.85,4);		
	\draw[thick, BString] (1.1,3.42) .. controls ++(90:.2cm) and ++(225:.1cm) .. (1.15,4);		
	\draw[thick, BString] (1.2,3.42) .. controls ++(90:.2cm) and ++(-45:.1cm) .. (1.15,4);		
	\draw[thick, BString] (1.15,4) -- (1.15,4.42);
	\draw[thick, AString] (.85,4) -- (.85,4.42);
	
\end{tikzpicture}
$$
and compute:
%Now we apply the maps in question to the right hand side of the above equation to get
$$
\tau^{-}_{A,B} \circ m_{\Tr_\cC(A\otimes B)} \circ \beta \circ (\tau^+_{B,A}\otimes \tau^+_{B,A})
=
\begin{tikzpicture}[baseline=.2cm]

	\pairOfPants{(0,-1)}{}
	\bottomCylinder{(0,-3)}{.3}{1}
	\bottomCylinder{(1.4,-3)}{.3}{1}
	\emptyCylinder{(0,-2)}{.3}{1}
	\emptyCylinder{(1.4,-2)}{.3}{1}
	\halfDottedEllipse{(0,-2)}{.3}{.1}
	\halfDottedEllipse{(1.4,-2)}{.3}{.1}
	\emptyCylinder{(.7,.5)}{.3}{1}
	\halfDottedEllipse{(.7,1.5)}{.3}{.1}
	\emptyCylinder{(.7,1.5)}{.3}{1}
	\halfDottedEllipse{(.7,2.5)}{.3}{.1}
	\topCylinder{(.7,2.5)}{.3}{1}

	%pants
	\draw[thick, AString] (.2,-1.08) .. controls ++(90:.8cm) and ++(270:.8cm) .. (.8,.42);		
	\draw[thick, BString] (.4,-1.08) .. controls ++(90:.8cm) and ++(270:.8cm) .. (.9,.42);		
	\draw[thick, BString] (1.6,-1.08) .. controls ++(90:.8cm) and ++(270:.8cm) .. (1.1,.42);		
	\draw[thick, AString] (1.8,-1.08) .. controls ++(90:.8cm) and ++(270:.8cm) .. (1.2,.42);		
	
	%lower lower traciator
	\draw[thick, AString] (.4,-3.08) .. controls ++(90:.2cm) and ++(225:.2cm) .. (.6,-2.58);		
	\draw[thick, AString] (.2,-2.08) .. controls ++(270:.2cm) and ++(45:.2cm) .. (0,-2.48);
	\draw[thick, BString] (.2,-3.08) .. controls ++(90:.4cm) and ++(270:.4cm) .. (.4,-2.08);		
	\draw[thick, AString] (1.8,-3.08) .. controls ++(90:.2cm) and ++(225:.2cm) .. (2,-2.58);		
	\draw[thick, AString] (1.6,-2.08) .. controls ++(270:.2cm) and ++(45:.2cm) .. (1.4,-2.48);
	\draw[thick, BString] (1.6,-3.08) .. controls ++(90:.4cm) and ++(270:.4cm) .. (1.8,-2.08);		

	%lower traciator
	\draw[thick, AString] (.2,-1.08) -- (.2,-2.08);		
	\draw[thick, BString] (.4,-1.08) -- (.4,-2.08);		
	\draw[thick, AString] (1.6,-2.08) .. controls ++(90:.2cm) and ++(-45:.2cm) .. (1.4,-1.58);		
	\draw[thick, AString] (1.8,-1.08) .. controls ++(270:.2cm) and ++(135:.2cm) .. (2,-1.48);
%	\draw[thick, BString, dotted] (0,-1.48) -- (.6,-1.58);	
	\draw[thick, BString] (1.8,-2.08) .. controls ++(90:.4cm) and ++(270:.4cm) .. (1.6,-1.08);		

	%upper  traciator
	\draw[thick, AString] (1.2,.42) .. controls ++(90:.15cm) and ++(225:.15cm) .. (1.3,.82);
	\draw[thick, AString] (.8,1.42) .. controls ++(270:.2cm) and ++(45:.2cm) .. (.7,.97);		
%	\draw[thick, BString, dotted] (0.7,.92) -- (1.3,1.02);	
	\draw[thick, AString] (.8,.42) .. controls ++(90:.4cm) and ++(270:.4cm) .. (.9,1.42);		
	\draw[thick, BString] (.9,.42) .. controls ++(90:.4cm) and ++(270:.4cm) .. (1.1,1.42);		
	\draw[thick, BString] (1.1,.42) .. controls ++(90:.4cm) and ++(270:.4cm) .. (1.2,1.42);		

	%multiplications
	\filldraw[AString] (.85, 2) circle (.025cm);
	\filldraw[BString] (1.15, 2) circle (.025cm);
	\draw[thick, AString] (.8,1.42) .. controls ++(90:.2cm) and ++(225:.1cm) .. (.85,2);		
	\draw[thick, AString] (.9,1.42) .. controls ++(90:.2cm) and ++(-45:.1cm) .. (.85,2);		
	\draw[thick, BString] (1.1,1.42) .. controls ++(90:.2cm) and ++(225:.1cm) .. (1.15,2);		
	\draw[thick, BString] (1.2,1.42) .. controls ++(90:.2cm) and ++(-45:.1cm) .. (1.15,2);		
	\draw[thick, BString] (1.15,2) -- (1.15,2.42);
	\draw[thick, AString] (.85,2) -- (.85,2.42);

	%upper upper traciator
	\draw[thick, AString] (.85,2.42) .. controls ++(90:.15cm) and ++(-45:.15cm) .. (.7,2.82);
	\draw[thick, AString] (1.15,3.42) .. controls ++(270:.2cm) and ++(135:.2cm) .. (1.3,2.97);		
%	\draw[thick, BString, dotted] (0.7,.92) -- (1.3,1.02);	
	\draw[thick, BString] (1.15,2.42) .. controls ++(90:.4cm) and ++(270:.4cm) .. (.85,3.42);		
\end{tikzpicture}
=
\begin{tikzpicture}[baseline=.4cm]

	\pairOfPants{(0,-1)}{}
	\bottomCylinder{(0,-2)}{.3}{1}
	\bottomCylinder{(1.4,-2)}{.3}{1}
	\emptyCylinder{(.7,.5)}{.3}{1}
	\halfDottedEllipse{(.7,1.5)}{.3}{.1}
	\topCylinder{(.7,1.5)}{.3}{1}

	%pants
	\draw[thick, AString] (.2,-1.08) .. controls ++(90:.8cm) and ++(270:.8cm) .. (.8,.42);		
	\draw[thick, BString] (.4,-1.08) .. controls ++(90:.8cm) and ++(270:.8cm) .. (.9,.42);		
	\draw[thick, BString] (1.6,-1.08) .. controls ++(90:.8cm) and ++(270:.8cm) .. (1.1,.42);		
	\draw[thick, AString] (1.8,-1.08) .. controls ++(90:.8cm) and ++(270:.8cm) .. (1.2,.42);		
	
	%lower traciator
	\draw[thick, BString] (1.6,-1.08) -- (1.6,-2.08);		
	\draw[thick, AString] (1.8,-1.08) -- (1.8,-2.08);		
	\draw[thick, AString] (.4,-2.08) .. controls ++(90:.2cm) and ++(225:.2cm) .. (.6,-1.58);		
	\draw[thick, AString] (.2,-1.08) .. controls ++(270:.2cm) and ++(45:.2cm) .. (0,-1.48);
	\draw[thick, BString] (.2,-2.08) .. controls ++(90:.4cm) and ++(270:.4cm) .. (.4,-1.08);		

	%upper traciator
	\draw[thick, AString] (.8,.42) .. controls ++(90:.2cm) and ++(-45:.2cm) .. (.7,.92);		
	\draw[thick, AString] (1.2,1.42) .. controls ++(270:.2cm) and ++(135:.2cm) .. (1.3,1.02);
	\draw[thick, BString] (.9,.42) .. controls ++(90:.4cm) and ++(270:.4cm) .. (.8,1.42);		
	\draw[thick, BString] (1.1,.42) .. controls ++(90:.4cm) and ++(270:.4cm) .. (.9,1.42);		
	\draw[thick, AString] (1.2,.42) .. controls ++(90:.4cm) and ++(270:.4cm) .. (1.1,1.42);		

	%multiplications
	\filldraw[BString] (.85, 2) circle (.025cm);
	\filldraw[AString] (1.15, 2) circle (.025cm);
	\draw[thick, BString] (.8,1.42) .. controls ++(90:.2cm) and ++(225:.1cm) .. (.85,2);		
	\draw[thick, BString] (.9,1.42) .. controls ++(90:.2cm) and ++(-45:.1cm) .. (.85,2);		
	\draw[thick, AString] (1.1,1.42) .. controls ++(90:.2cm) and ++(225:.1cm) .. (1.15,2);		
	\draw[thick, AString] (1.2,1.42) .. controls ++(90:.2cm) and ++(-45:.1cm) .. (1.15,2);		
	\draw[thick, AString] (1.15,2) -- (1.15,2.42);
	\draw[thick, BString] (.85,2) -- (.85,2.42);
\end{tikzpicture}
=
m_{\Tr_\cC(B\otimes A)}.
$$
The remaining statement $i_{\Tr_\cC(B\otimes A)}= \tau^-\circ i_{\Tr_\cC(A\otimes B)}$ is straightforward, and left to the reader.
\end{proof}

\begin{rem}
\label{rem:TwistingAlgebraStructures}
Unless $\cC$ is symmetric, there are actually two opposite algebras: $(A, m\circ \beta^+,i)$ and $(A, m\circ \beta^-,i)$, the $(+)$-opposite and the $(-)$-opposite algebras.
However, if $\cC$ is balanced then the twist map $\theta_A:A\to A$ provides an isomorphism between $(-)$-opposite and the $(+)$-opposite algebras,
so that there is, in fact, only one opposite algebra.
\end{rem}

\appendix								%PUT ME BACK
%auto-ignore
%this ensures the arxiv doesn't try to start TeXing here.
%!TEX root =../InternalTrace.tex

%%%%%%%%%%%%%%%%%%%%%%%%%%%%%%%%%%%%%%%%%%%%%%%%%%
%%%%%%%%%%%%%%%%%%%%%%%%%%%%%%%%%%%%%%%%%%%%%%%%%%
%%%%%%%%%%%%%%%%%%%%%%%%%%%%%%%%%%%%%%%%%%%%%%%%%%

\section{Appendix}\label{App:Appendix}

\subsection{Commutative algebras in braided tensor categories}
\label{sec:Examples}
%\settocdepth{chapter}

In this appendix, we expand on and fill in the details of Example \ref{ex: the short version of the thing in the appendix}.

To begin with, we introduce a convenient graphical notation for tensor products over algebra objects.
Let $a$ be an algebra object in a tensor category $\cC$, and let $x,y\in \cC$ be right and left $a$-modules.
Denoting $a$ by an orange strand and $x,y$ by green and blue strands, the tensor product over $a$ is the coequalizer $x\otimes_a y$ of the two morphisms
$$
\begin{tikzpicture}[baseline=-.1cm]
	\draw[thick, orange] (.25,-.5)  .. controls ++(90:.5cm) and ++(220:.25cm) .. (.5,.1);
	\draw[thick, DarkGreen] (0,.5) -- (0,-.5);
	\draw[thick, blue] (.5,.5) -- (.5,-.5);
\end{tikzpicture}
\,\,,\,\,\,\,
\begin{tikzpicture}[baseline=-.1cm, xscale=-1]
	\draw[thick, orange] (.25,-.5)  .. controls ++(90:.5cm) and ++(220:.25cm) .. (.5,.1);
	\draw[thick, DarkGreen] (0,.5) -- (0,-.5);
	\draw[thick, blue] (.5,.5) -- (.5,-.5);
\end{tikzpicture}\,\,:\,x\otimes a\otimes y \to x\otimes y.
$$
(we assume that $\cC$ has all the necessary colimits).
It is convenient to denote $x\otimes_a y$ by an orange ribbon bordered by green and blue edges:
$$
\begin{tikzpicture}[baseline=-.1cm]
	\filldraw[orange!25] (0,.48) rectangle (.5,-.48);
	\draw[thick, DarkGreen] (0,.5) -- (0,-.5);
	\draw[thick, blue] (.5,.5) -- (.5,-.5);
	\node[scale=.9] at (.25,-.8) {$x\otimes_a y$};
\end{tikzpicture}
$$
We draw the natural projection
$
\pi_{x,y}=\begin{tikzpicture}[baseline=-.1cm, scale=.8]
	\filldraw[orange!25] (0,-.2) -- (0,.48) -- (.5,.48) -- (.5,-.2) to[out=90+10,in=90-10, looseness=1.5] (0,-.2);
	\draw[orange] (.5,-.2) to[out=90+10,in=90-10, looseness=1.5] (0,-.2);
	\draw[thick, DarkGreen] (0,.5) -- (0,-.5);
	\draw[thick, blue] (.5,.5) -- (.5,-.5);
\end{tikzpicture}
:x\otimes y\to x\otimes_a y
$.
It makes the equation
$$
\begin{tikzpicture}[baseline=-.4cm]
	\clip (-.1,.4) rectangle (.6,-.8);
	\draw[thick, orange] (.25,-.5-.5)  .. controls ++(90:.5cm) and ++(220:.25cm) .. (.5,.1-.5);
	\filldraw[orange!25] (0,-.2) -- (0,.48) -- (.5,.48) -- (.5,-.2) to[out=90+10,in=90-10, looseness=1.5] (0,-.2);
	\draw[orange] (.5,-.2) to[out=90+10,in=90-10, looseness=1.5] (0,-.2);
	\draw[thick, DarkGreen] (0,.5) -- (0,-.5-.5);
	\draw[thick, blue] (.5,.5) -- (.5,-.5-.5);
\end{tikzpicture}
\,=\,
\begin{tikzpicture}[baseline=-.4cm, xscale=-1]
	\clip (-.1,.4) rectangle (.6,-.8);
	\draw[thick, orange] (.25,-.5-.5)  .. controls ++(90:.5cm) and ++(220:.25cm) .. (.5,.1-.5);
	\filldraw[orange!25] (0,-.2) -- (0,.48) -- (.5,.48) -- (.5,-.2) to[out=90+10,in=90-10, looseness=1.5] (0,-.2);
	\draw[orange] (.5,-.2) to[out=90+10,in=90-10, looseness=1.5] (0,-.2);
	\draw[thick, DarkGreen] (0,.5) -- (0,-.5-.5);
	\draw[thick, blue] (.5,.5) -- (.5,-.5-.5);
\end{tikzpicture}
$$
graphically motivated as
the orange strand (which we might as well have represented by an orange ribbon) is allowed to slide along the orange edge of the surface.
As in usual algebra, there are canonical isomorphisms $x\otimes_a a\cong x$ and $a\otimes_a y\cong y$.

When $\cC$ is a braided tensor category and $a\in \cC$ is a commutative algebra object
\[
\qquad\qquad\quad\begin{tikzpicture}[baseline=.3cm, thick, orange, scale=.9]
	\coordinate (a) at (-.3,.85);
	\coordinate (b) at (-.3,1.1);
	\filldraw (a) circle (.02cm);
	\draw (a) -- (b);
	\draw 	(a) +(-.3,-.5) -- +(-.3,-.33) to[in=-90-65, out =90] (a)
			(a) +(.3,-.5) -- +(.3,-.32) to[in=-90+65, out =90] (a);
	\braiding{(-.6,.18)}{.6}{.35}
\end{tikzpicture}
\,\,=\,\,
\begin{tikzpicture}[baseline=-.2cm, thick, orange, scale=.9]
	\coordinate (a) at (0,0);
	\coordinate (b) at (0,.5);
	\filldraw (a) circle (.02cm);
	\draw (a) -- (b);
	\draw (a) +(-.3,-.5) to[in=-90-45, out =90] (a)   (a) +(.3,-.5) to[in=-90+45, out =90] (a);
\end{tikzpicture}
\qquad\qquad\quad\text{(Commutativity)}
\]
then it makes sense to tensor two left $a$-modules, and the result is again an $a$-module.
Specifically, if $x$, $y$ are left $a$-modules, then $x\otimes_a y$ is the coequalizer of the two morphisms
$$
\begin{tikzpicture}[baseline=-.1cm]
	\draw[thick, orange] (.25,-.6)  .. controls ++(90:.5cm) and ++(220:.25cm) .. (.5,.1);
	\draw[thick, DarkGreen] (0,.6) -- (0,-.6);
	\draw[thick, blue] (.5,.6) -- (.5,-.6);
\end{tikzpicture}
\,\,\,\,,
\begin{tikzpicture}[baseline=-.1cm]
	\draw[thick, orange] (.25,-.6)  .. controls ++(90:.4cm) and ++(220:.5cm) .. (0,.1);
	\draw[super thick, white] (0,-.1) -- (0,-.6);
	\draw[thick, DarkGreen] (0,.6) -- (0,-.6);
	\draw[thick, blue] (.5,.6) -- (.5,-.6);
\end{tikzpicture}
\,\,:\,x\otimes a\otimes y \to x\otimes y
$$
Instead of using a ribbon as above, it is preferable in this situation to represent $x\otimes_a y$ by a thick filled orange `rope' 
with green and blue strands on its surface:
$$
\begin{tikzpicture}[baseline=-.1cm, yscale=1.2]
	\fill [orange!25] (0,-.6) ++(-.4,0)  arc (-180:0:.4 and .06) -- ++(0,1.2) arc (0:-180:.4 and .06) -- ++(0,-1.2);
	\draw [semithick, orange!50] (0,-.6) ++(-.4,0) -- ++ (0,1.2);
	\draw [semithick, orange!50] (0,-.6) ++(.4,0) -- ++ (0,1.2);
	\fill [orange!17, draw=orange, very thin] (0,.6)  circle (.4 and .06);
	\draw [orange, very thin] (0,-.6) ++(-.4,0)  arc (-180:0:.4 and .06);
	\draw [orange, densely dotted] (0,-.6) ++(-.4,0)  arc (180:0:.4 and .06);
	\draw[thick, DarkGreen] (0,.6) ++ (-85:.4 and .06) ++(0,.005) -- ++(0,-1.21);
	\draw[thick, blue] (0,.6) ++ (-50:.4 and .06) ++(0,.005) -- ++(0,-1.21);
	\node[scale=.9] at (0,-.9) {$x\otimes_a y$};
\end{tikzpicture}
$$
Extrapolating the notation, we can also denote a single $a$-module $x$ by\,\,$
\begin{tikzpicture}[yscale=.8, xscale=.5]
	\useasboundingbox (-.55,-.3) rectangle (.55,.3);
	\fill [orange!25] (0,-.6) ++(-.4,0)  arc (-180:0:.4 and .06) -- ++(0,1.2) arc (0:-180:.4 and .06) -- ++(0,-1.2);
	\draw [semithick, orange!50] (0,-.6) ++(-.4,0) -- ++ (0,1.2);
	\draw [semithick, orange!50] (0,-.6) ++(.4,0) -- ++ (0,1.2);
	\fill [orange!17, draw=orange, very thin] (0,.6)  circle (.4 and .06);
	\draw [orange, very thin] (0,-.6) ++(-.4,0)  arc (-180:0:.4 and .06);
	\draw [orange, densely dotted] (0,-.6) ++(-.4,0)  arc (180:0:.4 and .06);
	\draw[thick, DarkGreen] (0,.6) ++ (-65:.4 and .06) ++(0,.005) -- ++(0,-1.21);
	\node[scale=1] at (.1,-.85) {$\scriptstyle x$};
\end{tikzpicture}
$,
where the presence of the orange rope is just an indication that $x$ is an $a$-module.

Let us now assume that $a$ is commutative and separable, also known as \'etale \cite[\S3]{MR3039775} (such an 
algebra is automatically Frobenius -- combine \cite[Rem.\,3.2.ii]{MR3039775}, \cite[Prop.\,3.1.ii]{MR1976459}, and Proposition \ref{prop   :Frobenius} for a proof).
The dual of an $a$-module $x$ is then naturally also an $a$-module.
The evaluation $\ev_x:x^*\otimes x\to 1$ induces a corresponding evaluation morphism $\bar\ev_x:x^*\otimes_a x\to a$ in the category of $a$-modules, and the same holds for
coevaluations \cite[\S3.3]{MR3039775}.
We write
\[
\cM := \Mod_\cC(a)
\]
for the category of $a$-modules in $\cC$.
This is a rigid tensor category under the operation $x,y\mapsto x\otimes_a y$.

Let $\Phi:\cC\to \cM$ be the free module functor, given by $\Phi(x):=a\otimes x$ on objects and $\Phi(f: x\to y) = \id_a\otimes f : a\otimes x\to a\otimes y$ on morphisms.
We then have a functor $\Phi^{\scriptscriptstyle \cZ}: \cC\to \cZ(\cM)$ given by $\Phi^{\scriptscriptstyle \cZ}(x):=(\Phi(x),e_{\Phi(x)})$, where the half-braiding $e_{\Phi(x)}$ is given by
$$
e_{\Phi(x),y}:\Phi(x)\otimes_a y\cong x\otimes y\xrightarrow{\,\,\,\beta_{x,y}\,\,\,}y\otimes x\cong y\otimes_a\Phi(x).
$$
In diagrams, this is denoted
\begin{equation}\label{eq:   e_Phi(x)(y)}
e_{\Phi(x),y}\,=\,\,
\begin{tikzpicture}[baseline=-.1cm, yscale=1.2]
	\fill [orange!25] (0,-.6) ++(-.4,0)  arc (-180:0:.4 and .06) -- ++(0,1.2) arc (0:-180:.4 and .06) -- ++(0,-1.2);
	\draw [semithick, orange!50] (0,-.6) ++(-.4,0) -- ++ (0,1.2);
	\draw [semithick, orange!50] (0,-.6) ++(.4,0) -- ++ (0,1.2);
	\fill [orange!17, draw=orange, very thin] (0,.6)  circle (.4 and .06);
	\draw [orange, very thin] (0,-.6) ++(-.4,0)  arc (-180:0:.4 and .06);
	\draw [orange, densely dotted] (0,-.6) ++(-.4,0)  arc (180:0:.4 and .06);

	\def\shiftby{(.04,-.1)}
	\path (0,.6) ++ (-100:.4 and .06) coordinate (A);
	\path (0,-.6) ++ \shiftby ++ (-100:.4 and .06) ++ (0,-.05) coordinate (B);
	\path (0,-.6) ++ (-50:.4 and .06) coordinate (C);
	\path (0,.6) ++ \shiftby ++ (-50:.4 and .06) coordinate (D);

	\draw[thick, blue] (C) .. controls ++(90:.8cm) and ++(270:.8cm) .. (A);
	\draw[line width=4, white, line cap=rect] (B) ++ (0,.05) .. controls ++(90:.8cm) and ++(270:.8cm) .. (D) -- ++ (0,.25);
	\draw[thick, DarkGreen] (B) -- ++ (0,.05) .. controls ++(90:.8cm) and ++(270:.8cm) .. (D) -- ++ (0,.28);
\end{tikzpicture}
\end{equation}
The blue strand represents $y\in \cM$, and lies on the surface of the rope.
The green strand represents $x\in \cC$, and doesn't touch the surface.
The above map is visibly invertible, and natural in $y$.
The hexagon axiom \eqref{eq: hexagon identity} for the half-braiding is the equation
\begin{equation}\label{eq: 1/2-braiding axiom check}
\begin{tikzpicture}[baseline=.5cm, yscale=.9]
	\def\posit{(.39,-.65)}
	\def\heightofcylinder{2.8}
	\def\widthof{.6}
	\def\widthofcylinder{.6 and .1}
	\fill [orange!25] \posit ++(-\widthof,0)  arc (-180:0:\widthofcylinder) -- ++(0,\heightofcylinder) arc (0:-180:\widthofcylinder) -- ++(0,-\heightofcylinder);
	\draw [semithick, orange!50] \posit ++(-\widthof,0) -- ++ (0,\heightofcylinder);
	\draw [semithick, orange!50] \posit ++(\widthof,0) -- ++ (0,\heightofcylinder);
	\fill [orange!17, draw=orange, very thin] \posit ++ (0,\heightofcylinder)  circle (.6 and .1);
	\draw [orange, very thin] \posit ++(-\widthof,0)  arc (-180:0:\widthofcylinder);
	\draw [orange, densely dotted] \posit ++(-\widthof,0)  arc (180:0:\widthofcylinder);

	\draw[thick, blue] (.5,-.74) .. controls ++(90:.8cm) and ++(270:.8cm) .. (.1,.7) -- (.1,2.065);
	\draw[thick, red] (.9,-.7) -- (.9,.7) .. controls ++(90:.8cm) and ++(270:.8cm) .. (.5,2.055);
	\draw[line width=4, white] (.1,-.87) -- (.1,-.7) .. controls ++(90:.8cm) and ++(270:.8cm) .. (.5,.7)  .. controls ++(90:.8cm) and ++(270:.8cm) .. (.9,2.1) -- (.9,2.35);
	\draw[thick, DarkGreen] (.1,-.87) -- (.1,-.7) .. controls ++(90:.8cm) and ++(270:.8cm) .. (.5,.7) .. controls ++(90:.8cm) and ++(270:.8cm) .. (.9,2.1) -- (.9,2.35);
\end{tikzpicture}
\,\,=\,\,
\begin{tikzpicture}[baseline=-.1cm]
	\def\posit{(.245,-.95)}
	\def\heightofcylinder{2}
	\def\widthof{.5}
	\def\widthofcylinder{.5 and .1}
	\fill [orange!25] \posit ++(-\widthof,0)  arc (-180:0:\widthofcylinder) -- ++(0,\heightofcylinder) arc (0:-180:\widthofcylinder) -- ++(0,-\heightofcylinder);
	\draw [semithick, orange!50] \posit ++(-\widthof,0) -- ++ (0,\heightofcylinder);
	\draw [semithick, orange!50] \posit ++(\widthof,0) -- ++ (0,\heightofcylinder);
	\fill [orange!17, draw=orange, very thin] \posit ++ (0,\heightofcylinder)  circle (.5 and .1);
	\draw [orange, very thin] \posit ++(-\widthof,0)  arc (-180:0:\widthofcylinder);
	\draw [orange, densely dotted] \posit ++(-\widthof,0)  arc (180:0:\widthofcylinder);

	\draw[thick, blue] (.5,-1.035) -- (.5,-.7) .. controls ++(90:.8cm) and ++(270:.8cm) .. (.1,.7) -- (.1,.955);
	\draw[thick, red] (.6,-1.02) -- (.6,-.7) .. controls ++(90:.8cm) and ++(270:.8cm) .. (.2,.7) -- (.2,.95);
	\draw[line width=4, white]  (.1,-1.18) -- (.1,-.7) .. controls ++(90:.8cm) and ++(270:.8cm) .. (.6,.7) -- (.6,1.25);
	\draw[thick, DarkGreen] (.1,-1.18) -- (.1,-.7) .. controls ++(90:.8cm) and ++(270:.8cm) .. (.6,.7) -- (.6,1.25);
\end{tikzpicture}
\end{equation}
It holds as both sides of \eqref{eq: 1/2-braiding axiom check} fit into the same commutative diagram
\[
\xymatrix{
y\otimes z\otimes x \ar@{->>}[rr]^(.33){\pi_{yz}\otimes 1}\ar@{<-}[d]_{(1\otimes\beta_{x,z})(\beta_{x,y}\otimes 1)\,=\,\beta_{x,y\otimes z}} && y\otimes_a z\otimes x\cong  y\otimes_a z\otimes_a \Phi(x) \ar@{<-}@<30pt>[d]^{\eqref{eq: 1/2-braiding axiom check}}
\\
x\otimes y\otimes z	\ar@{->>}[rr]^(.33){1\otimes \pi_{yz}} && x \otimes  y\otimes_a z\cong \Phi(x) \otimes_a  y\otimes_a z
}
\qquad\quad
\]
with surjective horizontal maps.
The isomorphism $(a\otimes x)\otimes_a (a\otimes y) \cong a\otimes x\otimes y$ 
endows $\Phi^{\scriptscriptstyle \cZ}$ with the structure of a tensor functor.
Indeed, the half braidings $e_{\Phi(x)\otimes_a \Phi(y)}=(e_{\Phi(x)}\otimes_a 1)\circ(1\otimes_a e_{\Phi(y)})$ and $e_{\Phi(x\otimes y)}$ are naturally isomorphic,
as required for $\Phi^{\scriptscriptstyle \cZ}$ to be a tensor functor:
$$
\begin{tikzpicture}[baseline=.6cm]
	\def\posit{(.395,-.65)}
	\def\heightofcylinder{2.8}
	\def\widthof{.6}
	\def\widthofcylinder{.6 and .1}
	\fill [orange!25] \posit ++(-\widthof,0)  arc (-180:0:\widthofcylinder) -- ++(0,\heightofcylinder) arc (0:-180:\widthofcylinder) -- ++(0,-\heightofcylinder);
	\draw [semithick, orange!50] \posit ++(-\widthof,0) -- ++ (0,\heightofcylinder);
	\draw [semithick, orange!50] \posit ++(\widthof,0) -- ++ (0,\heightofcylinder);
	\fill [orange!17, draw=orange, very thin] \posit ++ (0,\heightofcylinder)  circle (.6 and .1);
	\draw [orange, very thin] \posit ++(-\widthof,0)  arc (-180:0:\widthofcylinder);
	\draw [orange, densely dotted] \posit ++(-\widthof,0)  arc (180:0:\widthofcylinder);

	\draw[thick, DarkGreen] (.9,-.705) .. controls ++(90:.8cm) and ++(270:.8cm) .. (.5,.7) .. controls ++(90:.8cm) and ++(270:.8cm) .. (.1,2.065);
	\draw[line width=4, white] (.1,-.9) -- (.1,.7) .. controls ++(90:.8cm) and ++(270:.8cm) .. (.5,2.4);
	\draw[thick, red] (.1,-.9)-- (.1,.7)  .. controls ++(90:.8cm) and ++(270:.8cm) .. (.5,2.4);
	\draw[line width=4, white] (.5,-.9) .. controls ++(90:.8cm) and ++(270:.8cm) .. (.9,.7) -- (.9,2.4);
	\draw[thick, blue] (.5,-.9) .. controls ++(90:.8cm) and ++(270:.8cm) .. (.9,.7) -- (.9,2.4);
\end{tikzpicture}
\,\,=\,\,
\begin{tikzpicture}[baseline=-.1cm]
	\def\posit{(.245,-.95)}
	\def\heightofcylinder{2}
	\def\widthof{.5}
	\def\widthofcylinder{.5 and .1}
	\fill [orange!25] \posit ++(-\widthof,0)  arc (-180:0:\widthofcylinder) -- ++(0,\heightofcylinder) arc (0:-180:\widthofcylinder) -- ++(0,-\heightofcylinder);
	\draw [semithick, orange!50] \posit ++(-\widthof,0) -- ++ (0,\heightofcylinder);
	\draw [semithick, orange!50] \posit ++(\widthof,0) -- ++ (0,\heightofcylinder);
	\fill [orange!17, draw=orange, very thin] \posit ++ (0,\heightofcylinder)  circle (.5 and .1);
	\draw [orange, very thin] \posit ++(-\widthof,0)  arc (-180:0:\widthofcylinder);
	\draw [orange, densely dotted] \posit ++(-\widthof,0)  arc (180:0:\widthofcylinder);

\pgftransformxscale{.92}
	\draw[thick, DarkGreen] (.7,-1.01) -- (.7,-.7) .. controls ++(90:.8cm) and ++(270:.8cm) .. (0,.7) -- (0,.965);
	\draw[line width=4, white] (0,-1.2) -- (0,-.7) .. controls ++(90:.8cm) and ++(270:.7cm) .. (.6,.7) -- (.6,1.3);
	\draw[line width=4, white] (.1,-1.2) -- (.1,-.7) .. controls ++(90:.8cm) and ++(270:.8cm) .. (.7,.7) -- (.7,1.3);
	\draw[thick, red] (0,-1.2) -- (0,-.7) .. controls ++(90:.8cm) and ++(270:.7cm) .. (.6,.7) -- (.6,1.3);
	\draw[thick, blue] (.1,-1.2) -- (.1,-.7) .. controls ++(90:.8cm) and ++(270:.8cm) .. (.7,.7) -- (.7,1.3);
\end{tikzpicture}
$$
Finally, the functor $\Phi^{\scriptscriptstyle \cZ}$ is braided because the morphism\,\,$
\begin{tikzpicture}[baseline=-.1cm, xscale=.7, yscale=.8]
	\useasboundingbox (-.55,-.3) rectangle (.55,.3);
	\fill [orange!25] (0,-.6) ++(-.4,0)  arc (-180:0:.4 and .06) -- ++(0,1.2) arc (0:-180:.4 and .06) -- ++(0,-1.2);
	\draw [semithick, orange!50] (0,-.6) ++(-.4,0) -- ++ (0,1.2);
	\draw [semithick, orange!50] (0,-.6) ++(.4,0) -- ++ (0,1.2);
	\fill [orange!17, draw=orange, very thin] (0,.6)  circle (.4 and .06);
	\draw [orange, very thin] (0,-.6) ++(-.4,0)  arc (-180:0:.4 and .06);
	\draw [orange, densely dotted] (0,-.6) ++(-.4,0)  arc (180:0:.4 and .06);

	\def\shiftby{(0,0)}
	\path (0,.6) ++ (-100:.4 and .06) coordinate (A);
	\path (0,-.6) ++ \shiftby ++ (-100:.4 and .06) ++ (0,-.1) coordinate (B);
	\path (0,-.6) ++ (-50:.4 and .06) ++ (0,-.1) coordinate (C);
	\path (0,.6) ++ \shiftby ++ (-50:.4 and .06) coordinate (D);

	\draw[line width=3.5, white] (C) -- ++ (0,.1) .. controls ++(90:.8cm) and ++(270:.8cm) .. (A) -- ++ (0,.2);
	\draw[thick, blue] (C) -- ++ (0,.1) .. controls ++(90:.8cm) and ++(270:.8cm) .. (A) -- ++ (0,.2);
	\draw[line width=3.5, white] (B) ++ (0,.05) .. controls ++(90:.8cm) and ++(270:.8cm) .. (D) -- ++ (0,.25);
	\draw[thick, DarkGreen] (B) -- ++ (0,.1) .. controls ++(90:.8cm) and ++(270:.8cm) .. (D) -- ++ (0,.2);
\end{tikzpicture}
$
is both the image of $\beta_{x,y}$ under $\Phi^{\scriptscriptstyle \cZ}$, and a special case of \eqref{eq:   e_Phi(x)(y)}.
%$$
%\begin{tikzpicture}[baseline=-.1cm]
%%
%	\draw[thick, orange] (0,-1.5) -- (0,1.5);
%	\draw[thick, blue] (.5,-.7) .. controls ++(90:.8cm) and ++(270:.8cm) .. (.1,.7);
%	\draw[super thick, white] (.1,-.7) .. controls ++(90:.8cm) and ++(270:.8cm) .. (.5,.7);
%	\draw[thick, red] (.1,-.7) .. controls ++(90:.8cm) and ++(270:.8cm) .. (.5,.7);
%%
%	\draw[thick, orange] (.4,-1.5) -- (.4,-1.3) .. controls ++(90:.2cm) and ++(270:.2cm) .. (0,-.7);
%	\draw[super thick, white] (.1,-1.5) -- (.1,-.7);
%	\draw[thick, red] (.1,-1.5) -- (.1,-.7);
%	\draw[thick, blue] (.5,-1.5) -- (.5,-.7);
%%
%	\draw[thick, orange] (.4,1.5) -- (.4,1.3) .. controls ++(270:.2cm) and ++(90:.2cm) .. (0,.7);
%	\draw[super thick, white] (.1,1.5) -- (.1,.7);
%	\draw[thick, blue] (.1,1.5) -- (.1,.7);
%	\draw[thick, red] (.5,1.5) -- (.5,.7);
%%
%	\nbox{}{(.3,-1)}{.3}{.2}{.1}{};
%	\nbox{}{(.3,1)}{.3}{.2}{.1}{};
%\end{tikzpicture}
%=
%\begin{tikzpicture}[baseline=-.1cm]
%	\draw[thick, orange] (0,-.7) .. controls ++(90:.3cm) and ++(225:.2cm) .. (.475,-.4);
%	\draw[thick, orange] (.5,.7) .. controls ++(270:.4cm) and ++(225:.6cm) .. (.125,.4);
%	\fill[white] (.21,.28) circle (.1cm);
%	\draw[thick, orange] (.5,-.7) .. controls ++(90:.8cm) and ++(270:.8cm) .. (.1,.7);
%	\draw[thick, blue] (.6,-.7) .. controls ++(90:.8cm) and ++(270:.8cm) .. (.2,.7);
%	\draw[super thick, white] (.1,-.7) .. controls ++(90:.8cm) and ++(270:.8cm) .. (.6,.7);
%	\draw[thick, red] (.1,-.7) .. controls ++(90:.8cm) and ++(270:.8cm) .. (.6,.7);
%\end{tikzpicture}
%$$

Let us now furthermore assume that $\cC$ is pivotal, and that $\theta_a=1$,
where the twist maps are given by \eqref{def: theta1 -- PRE}. %related by \eqref{eq: varphi_a^(1)} and \eqref{def: theta1}.
Then for any $a$-module $x$, the pivotal map $\varphi_x:x\to x^{**}$ is a map of $a$-modules \cite[Thm. 1.17]{MR1936496}\footnote{To match our conventions, one should replace all over-crossings by under-crossings in \cite{MR1936496}.}.
%\textsuperscript{,}\footnote{The results of \cite{MR1936496} apply to our situation as every \'etale algebra is automatically Frobenius (combine \cite[Rem.\,3.2.ii]{MR3039775}, \cite[Prop.\,3.1.ii]{MR1976459}, and Proposition \ref{prop   :Frobenius} for a proof).}.
This equips the category $\Mod_\cC(a)$ of $a$-modules with the structure of a pivotal tensor category.

We wish to check that $\Phi^{\scriptscriptstyle \cZ}:\cC\to\cZ(\cM)$ is a pivotal functor, i.e., that the equation $\delta_x^* \circ \varphi_{\Phi^{\scriptscriptstyle \cZ}(x)} =  \delta_{x^*} \circ \Phi^{\scriptscriptstyle \cZ}(\varphi_x)$ holds.
Let $\omega:a\to a^*$ be the isomorphism induced by the Frobenius pairing $\epsilon\circ\mu:a\otimes a\to 1$.
The canonical isomorphism $\delta_x:\Phi^{\scriptscriptstyle \cZ}(x^*)\to \Phi^{\scriptscriptstyle \cZ}(x)^*$ is then given by
\[
\Phi(x^*)= a\otimes x^* \xrightarrow{\,\beta^-_{a,x^*}\,} x^*\otimes a \xrightarrow{\,1\otimes \omega\,} x^*\otimes a^* \cong (a\otimes x)^*=\Phi(x)^*.
\]
We need to check that
\[
\begin{tikzpicture}[baseline=-.1cm]
	\draw (-.6,-1) -- (-.6,1.35) (0,1.35) -- (0,-1);
	\braidingInverse{(-.6,.2)}{.6}{.35}
	\roundNbox{unshaded}{(-.3,-.5)}{.3}{.3}{.3}{$\varphi_{a\otimes x}$}
	\roundNbox{unshaded}{(0,.85)}{.28}{.1}{.1}{$\omega^*$}
\end{tikzpicture}
\,=\,\,
\begin{tikzpicture}[baseline=-.1cm]
	\draw (-.6,-1) -- (-.6,1.35) (0,1.35) -- (0,-1);
	\braidingInverse{(-.6,.2)}{.6}{.35}
	\roundNbox{unshaded}{(0,-.5)}{.3}{.05}{.05}{$\varphi_x$}
	\roundNbox{unshaded}{(0,.85)}{.27}{.04}{.04}{$\omega$}
\end{tikzpicture}
\]
holds.
By the monoidal property of $\varphi$, this is equivalent to the equation $\varphi_a=(\omega^*)^{-1}\circ\omega$, which is itself a consequence of $\theta_a=1$ \cite[Lem. 1.13]{MR1936496}.
This finishes the proof that $\Phi^{\scriptscriptstyle \cZ}$ is a pivotal functor.

%===========================
% Second appendix =============
%===========================

\subsection{Braided pivotal categories}
\label{sec:Lemmas}

In this second appendix, we provide an overview of some basic properties of braided pivotal categories, since these have not received as much attention as many related notions.
Some of these results can be found in the literature \cite[\S 4.6 and 4.7]{MR2767048} \cite[Prop. 2.11]{MR1187296} (see also \cite[\S 2.2]{MR1797619} and \cite[\S 2.8.2]{MR2609644}).
We provide them here along with short proofs for the convenience of the reader.

\begin{lem}
\label{lem:RigidBalancedPivotal}
Let $\cC$ be balanced rigid category.
Then there are two ways of identifying each object with its double dual,
each one making $\cC$ into a pivotal category.
\end{lem}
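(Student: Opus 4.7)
The plan is to invert formula~\eqref{def: theta1 -- PRE}, which expresses $\theta$ in terms of a hypothetical $\varphi$ using the positive braiding, and to note that the same inversion works equally well with the negative braiding. Writing \eqref{def: theta1 -- PRE} as $\theta_a = L^+_a\circ\varphi_a$ with
$$L^+_a \;:=\; (\id_a\otimes\ev_{a^*})\circ(\beta_{a^{**},a}\otimes\id_{a^*})\circ(\id_{a^{**}}\otimes\coev_a)\;:\;a^{**}\to a,$$
and letting $L^-_a$ be the same composition with $\beta_{a^{**},a}$ replaced by $\beta^{-1}_{a,a^{**}}$, each $L^\pm_a$ is manifestly invertible (its inverse is the opposite loop running in the other direction), so we may define
$$\varphi^\pm_a \;:=\; (L^\pm_a)^{-1}\circ\theta_a\;:\;a\to a^{**}.$$

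First I would check that $\varphi^\pm$ is a natural isomorphism from $\id_\cC$ to the double dual functor. This is a short graphical verification: any morphism $f:a\to b$ slides past $\theta$ by the naturality of the twist and past $L^\pm$ by the naturality of the braiding and of the duality morphisms; invertibility is immediate since each factor in the definition is invertible.

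The substance of the argument is verifying the monoidality condition
$$\varphi^\pm_{a\otimes b}\;=\;\varphi^\pm_a\otimes\varphi^\pm_b,$$
taken with respect to the canonical isomorphisms $(a\otimes b)^{*}\cong b^*\otimes a^*$ and $(a\otimes b)^{**}\cong a^{**}\otimes b^{**}$ that exist in any rigid category. This reduces to a preliminary graphical identity asserting that $L^+_{a\otimes b}$ equals $L^+_a\otimes L^+_b$ composed with a double braiding of $a$ and $b$ (and the mirror statement for $L^-$ with the opposite braiding). When one substitutes this into the definition of $\varphi^\pm_{a\otimes b}$ and invokes the balancing axiom $\theta_{a\otimes b}=\beta_{b,a}\circ\beta_{a,b}\circ(\theta_a\otimes\theta_b)$, the double braidings arising from the loop and from the balancing axiom cancel exactly, leaving $\varphi^\pm_a\otimes\varphi^\pm_b$. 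The sign of the braiding in the loop must match the sign appearing in the balancing axiom for this cancellation to occur, which is precisely the reason the construction yields two genuinely distinct pivotal structures.

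The main obstacle is the preparatory identity relating $L^\pm_{a\otimes b}$ to $L^\pm_a\otimes L^\pm_b$. This is not deep but requires a careful pictorial argument: a single loop around the composite strand $a\otimes b$ can be split into two individual loops around $a$ and $b$ by a Reidemeister-II-style isotopy, at the cost of producing an extra double braiding where the two separated loops would otherwise have crossed one another. The only nontrivial bookkeeping is to ensure that the placement and sign of this braiding are exactly right, so that the subsequent cancellation with the balancing axiom goes through cleanly; once that is done, the remainder of the proof is formal composition of diagrams.
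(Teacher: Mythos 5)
Your construction of the first structure is fine and is essentially the paper's: your $\varphi^+_a=(L^+_a)^{-1}\circ\theta_a$ is exactly the paper's $\varphi^{\scriptscriptstyle(1)}_a$, since the inverse of the positively-braided loop $L^+_a$ is the negatively-braided loop $a\to a^{**}$, and the monoidality check via the splitting identity and the balance axiom goes through as you describe. The gap is in the second structure: you keep the \emph{same} twist $\theta$ and only flip the crossing in the loop, whereas the paper's second pivotal structure is $\varphi^{\scriptscriptstyle(2)}_a=(L^-_a)^{-1}\circ\theta_a^{-1}$ (see \eqref{two pivotal structures} and compare \eqref{def: theta1}, \eqref{def: theta2}). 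The cancellation you invoke cannot happen for both signs with a fixed $\theta$. Concretely, measure the ``monoidal defect'' of each ingredient: by the balance axiom, $\theta_{a\otimes b}\circ(\theta_a\otimes\theta_b)^{-1}=\beta_{b,a}\circ\beta_{a,b}$, a fixed positive double braiding; the two loops $(L^+)^{-1}$ and $(L^-)^{-1}$ are mirror images of one another under $\beta\leftrightarrow\beta^{-1}$, so their defects are \emph{mutually inverse} double braidings. Hence with the fixed $\theta$, exactly one choice of crossing cancels; for the other choice the defects compound instead of cancelling, and one finds $\varphi^-_{a\otimes b}=(\varphi^-_a\otimes\varphi^-_b)\circ(\beta_{b,a}\circ\beta_{a,b})^{2}$. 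Monoidality of your $\varphi^-$ would therefore force $(\beta_{b,a}\circ\beta_{a,b})^{2}=\id$ for all $a,b$, which fails in any braided category that is not close to symmetric (e.g.\ in $SU(2)_k$, where the double braiding on the tensor square of the generating object has distinct eigenvalues whose squares are still distinct). Equivalently, $\varphi^-_a=\varphi^{\scriptscriptstyle(2)}_a\circ\theta_a^{2}$, and twisting a pivotal structure by $\theta^2$ destroys monoidality precisely because $\theta^2$ is not a monoidal natural transformation.

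The repair is exactly the paper's prescription, and it is the one point your sketch needs to absorb: the two pivotal structures are obtained by flipping the crossing in the loop \emph{and simultaneously} replacing $\theta$ by $\theta^{-1}$, i.e.\ $\varphi^{\scriptscriptstyle(1)}_a=(L^+_a)^{-1}\circ\theta_a$ and $\varphi^{\scriptscriptstyle(2)}_a=(L^-_a)^{-1}\circ\theta_a^{-1}$, so that the defect of the loop (negative, resp.\ positive, double braiding) always meets the opposite defect coming from $\theta$, resp.\ $\theta^{-1}$. With that correction, the rest of your argument (naturality, invertibility, and the splitting identity for a loop around a composite strand) is sound and matches the paper's proof.
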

\begin{proof}
For $a\in \cC$, define $\varphi_a^{\scriptscriptstyle (1)}: a\to a^{**}$ by 
\begin{equation}\label{eq: varphi_a^(1)}
\varphi_a^{\scriptscriptstyle (1)}
=
(\ev_{a}\otimes \id_{a^{**}})
\circ 
(\id_{a^*}\otimes \beta^-_{a^{**},a})
\circ
(\coev_{a^*}\otimes \id_a)
\circ 
\theta_a
=
\begin{tikzpicture}[baseline=-.1cm]
	\draw (0,-1) -- (0,1);
%	\twist{(0,-.4)}
	\roundNbox{unshaded}{(0,-.35)}{.35}{.02}{.02}{$\theta_a$}	
	\loopIsoReverse{(0,.4)}
	\node at (-.2,-.9) {\scriptsize{$a$}};
	\node at (-.3,.9) {\scriptsize{$a^{**}$}};
\end{tikzpicture}\,.
\end{equation}
This is clearly invertible with inverse
$
\theta_a^{-1}
\circ
(\id_a \otimes \ev_{a^*})
\circ
(\beta_{a^{**},a}\otimes \id_{a^*})
\circ
(\id_{a^{**}}\otimes \coev_a)
$.
The naturality of $\varphi_a^{\scriptscriptstyle (1)}$ is left as an exercise, and it is straightforward to show
$\varphi_{a\otimes b}^{\scriptscriptstyle (1)}=\varphi_a^{\scriptscriptstyle (1)}\otimes \varphi_b^{\scriptscriptstyle (1)}$ using that $\theta_{a\otimes b} = \beta_{b,a}\circ \beta_{a,b}\circ (\theta_a\otimes \theta_b)$.

The other pivotal structure is given by
\[
\varphi_a^{\scriptscriptstyle (2)}
=
(\ev_{a}\otimes \id_{a^{**}})
\circ 
(\id_{a^*}\otimes \beta^+_{a^{**},a})
\circ
(\coev_{a^*}\otimes \id_a)
\circ 
\theta_a^{-1}
=
\begin{tikzpicture}[baseline=-.1cm]
	\draw (0,-1) -- (0,1);
	\roundNbox{unshaded}{(0,-.35)}{.35}{.05}{.05}{$\theta^{\scriptscriptstyle-1}_a$}	
	\loopIso{(0,.4)}
	\node at (-.2,-.9) {\scriptsize{$a$}};
	\node at (-.3,.9) {\scriptsize{$a^{**}$}};
\end{tikzpicture}\,.\qedhere
\]
\end{proof}

\noindent
The two pivotal structures are related by
\begin{equation}\label{two pivotal structures}
\varphi^{\scriptscriptstyle (2)}_a
\,=\,
\begin{tikzpicture}[baseline=-.1cm]
	\draw (0,-1.6) -- (0,1.6);
	\loopIso{(0,1)}
	\loopIsoReverse{(0,-1)}
	\roundNbox{unshaded}{(.1,0)}{.35}{.4}{.4}{\,$(\varphi^{\scriptscriptstyle (1)}_a)^{\scriptscriptstyle -1}$}	
	\node at (-.2,.6) {\scriptsize{$a$}};
	\node at (-.2,-1.4) {\scriptsize{$a$}};
	\node at (-.3,-.6) {\scriptsize{$a^{**}$}};
	\node at (-.3,1.4) {\scriptsize{$a^{**}$}};
\end{tikzpicture}
\,=\,
\begin{tikzpicture}[baseline=-.1cm]
	\draw (0,-1.6) -- (0,1.6);
	\loopIsoReverse{(0,1)}
	\loopIso{(0,-1)}
	\roundNbox{unshaded}{(.1,0)}{.35}{.4}{.4}{\,$(\varphi^{\scriptscriptstyle (1)}_a)^{\scriptscriptstyle -1}$}	
	\node at (-.2,.6) {\scriptsize{$a$}};
	\node at (-.2,-1.4) {\scriptsize{$a$}};
	\node at (-.3,-.6) {\scriptsize{$a^{**}$}};
	\node at (-.3,1.4) {\scriptsize{$a^{**}$}};
\end{tikzpicture}
\qquad\text{and}\qquad
\varphi^{\scriptscriptstyle (1)}_a
\,=\,
\begin{tikzpicture}[baseline=-.1cm]
	\draw (0,-1.6) -- (0,1.6);
	\loopIso{(0,1)}
	\loopIsoReverse{(0,-1)}
	\roundNbox{unshaded}{(.2,0)}{.35}{.4}{.4}{\,$(\varphi^{\scriptscriptstyle (2)}_a)^{\scriptscriptstyle -1}$}	
	\node at (-.2,.6) {\scriptsize{$a$}};
	\node at (-.2,-1.4) {\scriptsize{$a$}};
	\node at (-.3,-.6) {\scriptsize{$a^{**}$}};
	\node at (-.3,1.4) {\scriptsize{$a^{**}$}};
\end{tikzpicture}
\,=\,
\begin{tikzpicture}[baseline=-.1cm]
	\draw (0,-1.6) -- (0,1.6);
	\loopIsoReverse{(0,1)}
	\loopIso{(0,-1)}
	\roundNbox{unshaded}{(.2,0)}{.35}{.4}{.4}{\,$(\varphi^{\scriptscriptstyle (2)}_a)^{\scriptscriptstyle -1}$}	
	\node at (-.2,.6) {\scriptsize{$a$}};
	\node at (-.2,-1.4) {\scriptsize{$a$}};
	\node at (-.3,-.6) {\scriptsize{$a^{**}$}};
	\node at (-.3,1.4) {\scriptsize{$a^{**}$}};
\end{tikzpicture}\,.
\end{equation}

\begin{lem}
\label{lem:CenterPivotalBalanced}
A braided pivotal category has two sets of twists, each one making it a balanced category.
\end{lem}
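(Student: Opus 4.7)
The plan is to mirror the construction of Lemma~\ref{lem:RigidBalancedPivotal} in reverse. Given a braided pivotal category $(\cC,\beta,\varphi)$, I would define two candidate twist families by the formulas
\[
\theta_a^{\scriptscriptstyle (1)} := (\id_a\otimes \ev_{a^*})\circ(\beta_{a^{**},a}\otimes\id_{a^*})\circ(\id_{a^{**}}\otimes \coev_a)\circ\varphi_a
\]
and
\[
\theta_a^{\scriptscriptstyle (2)} := (\id_a\otimes \ev_{a^*})\circ(\beta^-_{a^{**},a}\otimes\id_{a^*})\circ(\id_{a^{**}}\otimes \coev_a)\circ\varphi_a,
\]
which are exactly the two diagrammatic morphisms displayed at the end of the excerpt and specialize to~\eqref{def: theta1 -- PRE}. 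That these are mutually inverse to the pivotal structures~\eqref{eq: varphi_a^(1)} of the previous lemma should follow by cancelling zig-zags, so invertibility of $\theta_a^{\scriptscriptstyle (i)}$ reduces to invertibility of $\varphi_a$.

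Next I would verify naturality of each $\theta^{\scriptscriptstyle (i)}$, which reduces to naturality of $\beta^\pm$, naturality of $\ev$ and $\coev$, and the standard fact that $f^{**}\circ\varphi_a=\varphi_b\circ f$ for any $f:a\to b$, combined with the duality formulas that allow one to slide a coupon across a $(\coev,\ev)$-cap. Both $\theta^{\scriptscriptstyle (1)}$ and $\theta^{\scriptscriptstyle (2)}$ are handled identically; I would write out only one and leave the other as a mirror image.

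The main step is the balanced axiom
\[
\theta_{a\otimes b}=\beta_{b,a}\circ\beta_{a,b}\circ(\theta_a\otimes\theta_b).
\]
Here I would expand $\theta_{a\otimes b}$ using monoidality of the pivotal structure, $\varphi_{a\otimes b}=\varphi_a\otimes\varphi_b$, together with the standard identification $(a\otimes b)^*\cong b^*\otimes a^*$ coming from the duality data; then repeatedly apply the hexagon axiom for the braiding to push the single crossing over $(a\otimes b)^{**}$ past $a\otimes b$ into two crossings acting separately on each factor. This is purely a tangle manipulation: draw both sides as planar diagrams with crossings and coupons labelled $\varphi_a,\varphi_b$, and use the Reidemeister-II/III moves legitimized by the braided pivotal graphical calculus described in Section~\ref{sec:  Tensor categories}, together with rigidity, to bring one side to the other. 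The $\theta^{\scriptscriptstyle (2)}$ case is verified by the same calculation with $\beta$ replaced by $\beta^-$.

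The main obstacle is the last step: it is entirely a graphical identity, but one has to be careful that the two pivotal structures of Lemma~\ref{lem:RigidBalancedPivotal} produced from $\theta^{\scriptscriptstyle (1)}$ and $\theta^{\scriptscriptstyle (2)}$ agree with the original $\varphi$ (so that the correspondence between balanced and pivotal structures is genuinely inverse to each other, not merely one-sided), and that the hexagon is applied in the correct direction for each choice of sign. The relations~\eqref{two pivotal structures} give a useful sanity check, as they record precisely how switching between $\theta^{\scriptscriptstyle (1)}$ and $\theta^{\scriptscriptstyle (2)}$ corresponds to modifying the pivotal structure by the ``double twist'' $\beta_{a,a^{**}}\circ\beta_{a^{**},a}$, and this is exactly the ambiguity predicted by the monoidal automorphism group of the identity functor generated by $\theta_a^{\scriptscriptstyle (1)}(\theta_a^{\scriptscriptstyle (2)})^{-1}$.
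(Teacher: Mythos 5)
Your first family is exactly the paper's $\theta^{(1)}$, and your outline for invertibility, naturality, and the balance axiom (monoidality of $\varphi$, then separating the curl of the doubled strand at the cost of a double braiding) is the intended argument. The genuine problem is your second family: flipping the sign of the crossing in the \emph{same} formula does not produce the other twist. Write $c^{\pm}_a=(\id_a\otimes \ev_{a^*})\circ(\beta^{\pm}_{a^{**},a}\otimes\id_{a^*})\circ(\id_{a^{**}}\otimes \coev_a)$ and $e^{\pm}_a=(\ev_a\otimes \id_{a^{**}})\circ(\id_{a^*}\otimes \beta^{\pm}_{a^{**},a})\circ(\coev_{a^*}\otimes \id_a)$. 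The same zig-zag/naturality cancellation that identifies the inverse of $\varphi^{(1)}$ in the proof of Lemma~\ref{lem:RigidBalancedPivotal} (applied to the reverse braiding) gives $(e^{+}_a)^{-1}=c^{-}_a$, so your $\theta^{(2)}_a=c^{-}_a\circ\varphi_a$ is precisely the \emph{inverse} of the paper's second twist \eqref{def: theta2}, namely of $\varphi_a^{-1}\circ e^{+}_a$. But the inverse of a twist is not a twist for the same braiding: inverting the balance axiom and using naturality of the double braiding gives $\theta^{-1}_{a\otimes b}=\beta^{-}_{b,a}\circ\beta^{-}_{a,b}\circ(\theta^{-1}_a\otimes\theta^{-1}_b)$, i.e.\ $\theta^{-1}$ balances $(\cC,\beta^{-})$, which agrees with the required axiom only when the double braiding is involutive. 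Concretely, when you split the negative-crossing curl of the doubled strand $a\otimes b$ into two curls you pick up $\beta^{-}_{b,a}\circ\beta^{-}_{a,b}$, not $\beta_{b,a}\circ\beta_{a,b}$; and in a ribbon category such as $SU(2)_k$ your second family is just $\theta^{-1}$, which fails the axiom since the double braiding there is not an involution. So the ``Reidemeister manipulation'' you defer to in the main step cannot succeed for your $\theta^{(2)}$.

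The correct second twist keeps the \emph{positive} braiding but uses the mirrored loop together with $\varphi^{-1}$: $\theta^{(2)}_a=\varphi_a^{-1}\circ(\ev_a\otimes\id_{a^{**}})\circ(\id_{a^*}\otimes\beta_{a^{**},a})\circ(\coev_{a^*}\otimes\id_a)$, and its balance axiom is verified using monoidality of $\varphi^{-1}$. This also matches the pattern of Lemma~\ref{lem:RigidBalancedPivotal}, where the two pivotal structures differ by flipping the braiding \emph{and} replacing $\theta$ by $\theta^{-1}$; your ``mirror in reverse'' flips only the braiding while keeping $\varphi$. Relatedly, your two formulas are not ``exactly the two diagrammatic morphisms'' of Section~\ref{sec:  Tensor categories}: those are curls on opposite sides of the strand, i.e.\ the paper's $\theta^{(1)}$ and $\theta^{(2)}$, whereas your pair consists of $\theta^{(1)}$ together with the inverse of $\theta^{(2)}$. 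Finally, the compatibility with Lemma~\ref{lem:RigidBalancedPivotal} that worries you in the last paragraph is not needed for this lemma; it is only relevant for the bijectivity statement of Corollary~\ref{cor: pivotal <--> balanced}.
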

\begin{proof}
For $a\in \cC$, let $\varphi_a : a \to a^{**}$ be the natural isomorphism, and define
\begin{equation}\label{def: theta1}
\theta_{a}^{\scriptscriptstyle (1)}
= 
(\id_a\otimes \ev_{a^*})
\circ
(\beta_{a^{**},a}\otimes\id_{a^*})
\circ
(\id_{a^{**}}\otimes \coev_a)
\circ
\varphi_a
=
\begin{tikzpicture}[rotate=180, baseline=.35cm]
	\draw (0,-1.6) -- (0,.8);
	\loopIso{(0,-1)}
	\roundNbox{unshaded}{(0,0)}{.35}{0}{0}{$\varphi_a$}	
	\node at (-.2+.4,.6) {\scriptsize{$a$}};
	\node at (-.2+.4,-1.4) {\scriptsize{$a$}};
	\node at (-.3+.55,-.55) {\scriptsize{$a^{**}$}};
\end{tikzpicture}\,.
\end{equation}
The other alternative is
\begin{equation}\label{def: theta2}
\theta_{a}^{\scriptscriptstyle (2)} 
= 
\varphi_a^{-1} 
\circ
(\ev_a\otimes \id_{a^{**}})
\circ
(\id_{a^*}\otimes \beta_{a^{**},a})
\circ
(\coev_{a^*}\otimes \id_a)
=
\begin{tikzpicture}[baseline=-.5cm]
	\draw (0,-1.6) -- (0,.8);
	\loopIso{(0,-1)}
	\roundNbox{unshaded}{(0,0)}{.4}{0}{0}{$\varphi_a^{\scriptscriptstyle-1}$}	
	\node at (-.2,.6) {\scriptsize{$a$}};
	\node at (-.2,-1.4) {\scriptsize{$a$}};
	\node at (-.3,-.6) {\scriptsize{$a^{**}$}};
\end{tikzpicture}\,.
\end{equation}
The naturality of $\theta^{\scriptscriptstyle (1)}$ and $\theta^{\scriptscriptstyle (2)}$ follows from that of $\varphi$.
Using the monoidal property of $\varphi$, respectively $\varphi^{-1}$, one verifies that the balance axiom holds between these twist maps and the braiding.
\end{proof}

\noindent
The two twists are related by
\begin{equation}
\theta^{\scriptscriptstyle (2)}_a
\,=\,
\begin{tikzpicture}[baseline=-.1cm]
	\draw (.6,-1) -- (.6,.4) arc (0:180:.3cm) -- (0,-.4) arc (0:-180:.3cm) -- (-.6,1);
	\roundNbox{unshaded}{(0,0)}{.4}{0}{0}{$\theta^{\scriptscriptstyle (1)}_{a^*}$}	
\end{tikzpicture}
\,=\,
\begin{tikzpicture}[baseline=-.1cm]\pgftransformxscale{-1}
	\draw (.6,-1) -- (.6,.4) arc (0:180:.3cm) -- (0,-.4) arc (0:-180:.3cm) -- (-.6,1);
	\roundNbox{unshaded}{(0,0)}{.4}{0}{0}{$\theta^{\scriptscriptstyle (1)}_{{}^*\hspace{-.2mm}a}$}	
\end{tikzpicture}
\qquad\text{and}\qquad
\theta^{\scriptscriptstyle (1)}_a
\,=\,
\begin{tikzpicture}[baseline=-.1cm]
	\draw (.6,-1) -- (.6,.4) arc (0:180:.3cm) -- (0,-.4) arc (0:-180:.3cm) -- (-.6,1);
	\roundNbox{unshaded}{(0,0)}{.4}{0}{0}{$\theta^{\scriptscriptstyle (2)}_{a^*}$}	
\end{tikzpicture}
\,=\,
\begin{tikzpicture}[baseline=-.1cm]\pgftransformxscale{-1}
	\draw (.6,-1) -- (.6,.4) arc (0:180:.3cm) -- (0,-.4) arc (0:-180:.3cm) -- (-.6,1);
	\roundNbox{unshaded}{(0,0)}{.4}{0}{0}{$\theta^{\scriptscriptstyle (2)}_{{}^*\hspace{-.2mm}a}$}	
\end{tikzpicture}\,.
\label{eq:RelateTwoTwists}
\end{equation}

Lemmas \ref{lem:RigidBalancedPivotal} and \ref{lem:CenterPivotalBalanced} now combine to give the following corollary:

\begin{cor}\label{cor: pivotal <--> balanced}
Let $\cC$ be a braided rigid category. Then there are two ways\footnote
{In \cite[Rem. 4.22]{MR2767048}, it is incorrectly stated that there are $\Z$ many ways of establishing such a correspondence.
In fact, the construction only produces $\Z/2\Z$ many distinct correspondences.}
 of establishing a one-to-one correspondence between pivotal structures on $\cC$,
and twists making $\cC$ into a balanced category (see Figure~\ref{fig:SynopticChart} in Section~\ref{sec: chart of categories}).
\end{cor}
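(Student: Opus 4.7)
The plan is to combine Lemmas \ref{lem:RigidBalancedPivotal} and \ref{lem:CenterPivotalBalanced}, which produce two assignments $\Psi_1,\Psi_2$ from balanced structures to pivotal structures, and two assignments $\Theta_1,\Theta_2$ from pivotal structures to balanced structures, respectively. The claim of the corollary is that, among the four composites $\Psi_i\circ\Theta_j$, exactly two pairings $(\Psi_i,\Theta_j)$ are mutually inverse bijections, yielding the promised two one-to-one correspondences. First I would match the candidates by their crossing signs and by whether they use $\varphi$ or $\varphi^{-1}$ (respectively $\theta$ or $\theta^{-1}$): the assignment $\Psi_1$ in \eqref{eq: varphi_a^(1)}, which uses a negative crossing (\texttt{loopIsoReverse}) together with $\theta_a$, is the natural candidate to be inverse to $\Theta_1$ in \eqref{def: theta1}, which uses a positive crossing (\texttt{loopIso}) together with $\varphi_a$; analogously, $\Psi_2$ (positive crossing with $\theta_a^{-1}$) should be inverse to $\Theta_2$ (positive crossing with $\varphi_a^{-1}$).

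Next I would verify $\Theta_1\circ\Psi_1=\id$ by a direct string-diagram computation. Starting from a twist $\theta$ one forms $\varphi_a^{(1)}$ via \eqref{eq: varphi_a^(1)}, substitutes this into \eqref{def: theta1}, and obtains a diagram with a \texttt{loopIso} stacked on top of a \texttt{loopIsoReverse} and the coupons $\varphi_a$, $\varphi_a^{-1}$ sitting between them. Using naturality of $\varphi$ to slide $\varphi_a^{-1}$ past the cap and cup, these coupons cancel, and the remaining braided loop $\beta^+\!\circ\beta^-$ clamped by evaluation/coevaluation collapses to the identity on $a$ by the zig-zag equations and invertibility of the braiding; what is left is precisely $\theta$. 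The reverse composition $\Psi_1\circ\Theta_1=\id$ is obtained by the same manipulation applied to the mirror-reflected picture. The pair $(\Psi_2,\Theta_2)$ is handled identically after exchanging $\theta\leftrightarrow\theta^{-1}$, $\varphi\leftrightarrow\varphi^{-1}$, and the sign of each crossing.

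Finally, to argue that there are no further correspondences, I would invoke the identities \eqref{two pivotal structures} and \eqref{eq:RelateTwoTwists}: the two pivotal structures produced from a given twist differ by conjugation by the double braiding \texttt{loopIso}\,$\circ$\,\texttt{loopIsoReverse}, and symmetrically for the two twists produced from a given pivotal structure. Consequently the mixed composites $\Psi_1\circ\Theta_2$ and $\Psi_2\circ\Theta_1$ are typically the nontrivial automorphism given by conjugation with the double braiding, rather than the identity, so that only two of the four possible pairings are actual bijections.

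The main obstacle will be the diagrammatic bookkeeping in verifying $\Theta_i\circ\Psi_i=\id$: although each step (naturality of $\varphi$, zig-zag, cancellation of $\beta^+\circ\beta^-$) is standard, the orientations of the (co)evaluations and the placement of double duals must be tracked carefully, since swapping a $\varphi$ for a $\varphi^{-1}$ or mismatching the crossings produces instead the \emph{other} correspondence and thus a different composite.
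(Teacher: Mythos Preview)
Your overall strategy is correct and is exactly what the paper does: the proof there is a one-liner, simply asserting that Lemmas~\ref{lem:RigidBalancedPivotal} and~\ref{lem:CenterPivotalBalanced} combine to give the result. The key observation, already recorded in the proof of Lemma~\ref{lem:RigidBalancedPivotal}, is that the explicit inverse of $\varphi_a^{(1)}$ written there is precisely $\theta_a^{-1}$ composed with the loop morphism appearing in~\eqref{def: theta1}; this immediately gives $\Theta_1\circ\Psi_1=\id$ and $\Psi_1\circ\Theta_1=\id$ without further diagram manipulation.

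There is one slip in your description of the computation. When you compute $\Theta_1\circ\Psi_1$ starting from a twist $\theta$, the resulting diagram contains only a single $\theta_a$ coupon together with the two loops; no $\varphi_a$ or $\varphi_a^{-1}$ coupons appear, and there is nothing to slide past caps and cups via naturality of~$\varphi$. The cancellation you need is purely that the loop in~\eqref{eq: varphi_a^(1)} (built with $\beta^-$) is the inverse of the loop in~\eqref{def: theta1} (built with $\beta^+$), which is exactly the invertibility statement in the proof of Lemma~\ref{lem:RigidBalancedPivotal}. Once you correct this, your argument goes through; the remaining remarks about the mixed composites and the $\mathbb{Z}/2\mathbb{Z}$ count are fine and flesh out what the paper leaves implicit.
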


%\begin{lem}
%\label{lem:RotateTheta1}
%For all $c\in\cC$, $(\theta^{(1)}_c)^*=\theta_{c^*}^{(2)}$ and $(\theta^{(2)}_c)^*=\theta_{c^*}^{(1)}$. \textcolor{red}{AH: remove lemma?}
%\end{lem}
%\begin{proof}
%The first equality is
%$
%\begin{tikzpicture}[baseline=-.1cm]
%	\draw (0,0) -- (-0,1) arc (0:180:.3cm) -- (-.6,-1.2);
%	\draw (0,0) -- (0,-.8) arc (-180:0:.4cm) -- (.8,1.3);
%	\loopIsoInverseReverse{(0,.8)}
%	\roundNbox{unshaded}{(0,0)}{.4}{0}{0}{$\varphi$}
%\end{tikzpicture}
%=
%\begin{tikzpicture}[baseline=-.5cm]
%	\draw (0,-1.6) -- (0,.8);
%	\loopIso{(0,-1)}
%	\roundNbox{unshaded}{(0,0)}{.4}{0}{0}{$\varphi^{-1}$}	
%%	\node at (-.2,.6) {\scriptsize{$c$}};
%%	\node at (-.2,-1.4) {\scriptsize{$c$}};
%%	\node at (-.3,-.6) {\scriptsize{$c^{**}$}};
%\end{tikzpicture}
%$.
%The second is similar.
%\end{proof}

\begin{prop}\label{prop: A braided pivotal category is spherical if and only if it is ribbon.}
Let $(\cC,\beta,\varphi^{\scriptscriptstyle (1)})$ be a braided pivotal category.
Using $\varphi^{\scriptscriptstyle (1)}$, let us define $\varphi^{\scriptscriptstyle (2)}$, $\theta^{\scriptscriptstyle (1)}$, $\theta^{\scriptscriptstyle (2)}$
by means of Equations \eqref{two pivotal structures}, \eqref{def: theta1}, \eqref{def: theta2}, respectively.
Then the following three properties are equvalent
\begin{enumerate}[$\quad$(1)]
\item $\theta^{\scriptscriptstyle (1)} = \theta^{\scriptscriptstyle (2)}$
\item $(C,\beta,\theta^{\scriptscriptstyle (i)})$ is ribbon for either $i=1,2$
\item $\varphi^{\scriptscriptstyle (1)} = \varphi^{\scriptscriptstyle (2)}$
\end{enumerate}
and imply this fourth one:
\begin{enumerate}[$\quad$(1)]
\setcounter{enumi}{3}
\item $(C,\beta,\varphi^{\scriptscriptstyle (i)})$ is spherical for either $i=1,2$.
\end{enumerate}
If moreover $\cC$ is semisimple, then the fourth property is equivalent to the first three.
\end{prop}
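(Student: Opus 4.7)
The plan is to prove the cycle (1) $\iff$ (2) $\iff$ (3) $\implies$ (4) unconditionally, then close the loop (4) $\implies$ (3) under semisimplicity.

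For (1) $\iff$ (2), I would unpack the ribbon axiom. By Lemma \ref{lem:CenterPivotalBalanced}, $(\cC,\beta,\theta^{\scriptscriptstyle (i)})$ is already balanced, so ``ribbon'' amounts to the single extra axiom $\theta_{a^*}=(\theta_a)^*$. The two sides of Equation \eqref{eq:RelateTwoTwists} are precisely the pivotal-category recipe for the dual of a morphism: the displayed diagram for $\theta^{\scriptscriptstyle (2)}_a$ reads as $(\theta^{\scriptscriptstyle (1)}_{a^*})^*$, and the other displayed equality reads as $\theta^{\scriptscriptstyle (1)}_a=(\theta^{\scriptscriptstyle (2)}_{a^*})^*$. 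Consequently the ribbon axiom for $\theta^{\scriptscriptstyle (1)}$ becomes $(\theta^{\scriptscriptstyle (1)}_a)^*=(\theta^{\scriptscriptstyle (2)}_a)^*$, equivalently $\theta^{\scriptscriptstyle (1)}_a=\theta^{\scriptscriptstyle (2)}_a$; the case $i=2$ is symmetric.

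For (1) $\iff$ (3) I would compute both ``defects'' $\theta^{\scriptscriptstyle (2)}_a\circ(\theta^{\scriptscriptstyle (1)}_a)^{-1}$ and $\varphi^{\scriptscriptstyle (2)}_a\circ(\varphi^{\scriptscriptstyle (1)}_a)^{-1}$ as endomorphisms of $a$. Substituting \eqref{def: theta1} and \eqref{def: theta2} into the first, and using \eqref{two pivotal structures} for the second, both collapse to the same double-loop morphism (a positive curl composed with a negative curl, conjugated by $\varphi^{\scriptscriptstyle (1)}_a$) after a straightforward diagrammatic simplification using the zig-zag identities and the naturality of $\varphi$. Hence one is the identity iff the other is.

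For (3) $\implies$ (4), the decisive identity is that in \emph{any} braided pivotal category one has $\tr^{\scriptscriptstyle (1)}_L(f)=\tr^{\scriptscriptstyle (2)}_R(f)$ for every $f:a\to a$: rotating the left trace diagram by inserting a braiding/inverse-braiding pair, pulling the $f$-coupon around the loop, and reorganising, turns $\tr^{\scriptscriptstyle (1)}_L$ into $\tr^{\scriptscriptstyle (2)}_R$ by precisely the transformation that converts $\varphi^{\scriptscriptstyle (1)}$ into $\varphi^{\scriptscriptstyle (2)}$ via \eqref{two pivotal structures}. Granting this identity, (3) gives $\tr^{\scriptscriptstyle (1)}_L=\tr^{\scriptscriptstyle (1)}_R$, which is sphericality; the case $i=2$ is identical.

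The hardest step is the converse (4) $\implies$ (3) in the semisimple case. Set $u:=\varphi^{\scriptscriptstyle (2)}\circ(\varphi^{\scriptscriptstyle (1)})^{-1}$, which is a monoidal natural automorphism of $\id_\cC$. On each simple $a$, Schur's lemma makes $u_a$ a scalar, and the identity $\tr^{\scriptscriptstyle (1)}_L=\tr^{\scriptscriptstyle (2)}_R$ from the previous step combined with sphericality of $\varphi^{\scriptscriptstyle (1)}$ yields $\dim^{\scriptscriptstyle (1)}(a)=u_a\cdot\dim^{\scriptscriptstyle (1)}(a)$. The obstacle is that a semisimple pivotal category may admit simples of vanishing pivotal dimension, so this equation alone does not immediately pin down $u_a=1$. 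The fix I have in mind is to exploit monoidality: $u_{\mathbf 1}=\id$ and $u_{a\otimes b}=u_a\cdot u_b$, so from any fusion relation $\mathbf 1\hookrightarrow a\otimes a^*$ one obtains $u_a\cdot u_{a^*}=1$, and combining this with the trace identities applied to both $a$ and $a^*$ should force $u_a=1$ on every simple. Once $u=\id$ on simples it extends by semisimplicity to all of $\cC$, giving $\varphi^{\scriptscriptstyle (1)}=\varphi^{\scriptscriptstyle (2)}$.
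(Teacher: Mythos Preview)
Your treatment of $(1)\Leftrightarrow(2)$, $(1)\Leftrightarrow(3)$, and $(3)\Rightarrow(4)$ is essentially correct and close to the paper's argument; in particular your ``decisive identity'' $\tr^{\scriptscriptstyle(1)}_L(f)=\tr^{\scriptscriptstyle(2)}_R(f)$ is exactly what the paper's diagram chain establishes (the paper phrases the middle equivalence as $(2)\Leftrightarrow(3)$ via a cap computation rather than your $(1)\Leftrightarrow(3)$ defect comparison, but both are fine).

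The gap is in your proposed $(4)\Rightarrow(3)$. You correctly isolate the issue: with $u:=\varphi^{\scriptscriptstyle(2)}\circ(\varphi^{\scriptscriptstyle(1)})^{-1}$ a monoidal automorphism of $\id_\cC$, your trace identity plus sphericality gives $\dim^{\scriptscriptstyle(1)}(a)\,(1-u_a)=0$ on each simple $a$. But your proposed workaround via monoidality does \emph{not} close the argument. From $u_a\,u_{a^*}=1$ together with the analogous equation for $a^*$ you learn nothing new when $\dim^{\scriptscriptstyle(1)}(a)=\dim^{\scriptscriptstyle(1)}(a^*)=0$ (and sphericality forces these to vanish together). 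There is no algebraic constraint in sight that rules out, say, $u_a=-1=u_{a^*}$ on such a pair.

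The paper resolves this by proving directly that the obstacle never occurs: in any \emph{rigid semisimple} tensor category, every simple object has nonzero pivotal dimension (Lemma~\ref{lem: semisimple + rigid ==> q-dims neq 0}). The proof is short: for simple $a$ one has $\cC(1,a\otimes a^*)\cong\cC(a,a)\cong k$, so decompose $a\otimes a^*\cong u\oplus x$ with $u$ the image of a nonzero $c:1\to a\otimes a^*$; semisimplicity then forces any nonzero $e:a\otimes a^*\to 1$ to restrict to an isomorphism on $u$, whence $e\circ c\neq 0$. With this in hand, the paper runs $(4)\Rightarrow(2)$ directly: on a simple $a$ one computes $\tr_L(\theta_a^{\,*})=\tr_R(\theta_{a^*})=\tr_L(\theta_{a^*})$ (the last step by sphericality) and divides by $\dim(a^*)\neq 0$. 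Your argument becomes correct once you replace the monoidality heuristic by this nonvanishing lemma.
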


\begin{proof}
\mbox{}

\noindent
\underline{$(1)\Leftrightarrow (2)$:}
By definition, $(\cC,\beta,\theta^{\scriptscriptstyle(1)})$ is ribbon if and only if $(\theta_a^{\scriptscriptstyle(1)})^* = \theta_{a^*}^{\scriptscriptstyle(1)}$ holds for every $a\in\cC$.
By Equation \eqref{eq:RelateTwoTwists}, we have $(\theta_{a}^{\scriptscriptstyle(1)})^*=\theta^{\scriptscriptstyle(2)}_{a^*}$.
Therefore $(\cC,\beta,\theta^{\scriptscriptstyle(1)})$ is ribbon 
iff $\theta_{a^*}^{\scriptscriptstyle(1)}= \theta_{a^*}^{\scriptscriptstyle(2)}$ holds $\forall a$
iff
$\theta_{a}^{\scriptscriptstyle(1)}= \theta_{a}^{\scriptscriptstyle(2)}$ holds $\forall a$.
The same argument works with $(\cC,\beta,\theta^{\scriptscriptstyle(2)})$ in place of $(\cC,\beta,\theta^{\scriptscriptstyle(1)})$.

\noindent
\underline{$(2)\Leftrightarrow (3)$:}
Letting $\theta:=\theta^{\scriptscriptstyle(1)}$, then for every $a\in \cC$, we have
%This follows from the following two equations, which hold for all $c\in \cC$:
$$
\begin{tikzpicture}[baseline=-.1cm]
\pgftransformxscale{-1}
	\draw (-.2,-.6) -- (-.2,.3) arc (180:0:.35cm) -- (.5,-.6);
	\roundNbox{unshaded}{(.5,0)}{.32}{.05}{.05}{$\theta_a^*$}
	\node[below] at (-.2,-.6) {$\scriptstyle a$}; 
	\node[below] at (.5,-.53) {$\scriptstyle a^*$}; 
\end{tikzpicture}
\!=\,
\begin{tikzpicture}[baseline=-.1cm]
	\draw (-.2,-.6) -- (-.2,.3) arc (180:0:.35cm) -- (.5,-.6);
	\roundNbox{unshaded}{(.5,0)}{.32}{.05}{.05}{$\theta_a$}
\end{tikzpicture}
\,=\,
\begin{tikzpicture}[baseline=.4cm]
	\draw (0,-.6) -- (0,1.5) arc (180:0:.3) -- (.6,-.6);
	\roundNbox{unshaded}{(0,.5)}{.32}{.05}{.05}{$\theta_a$}
	\braidingInverse{(0,-.3)}{.6}{.4}
	\loopIsoReverse{(0,1.2)}
\end{tikzpicture}
\,=\,
\begin{tikzpicture}[baseline=.4cm]
	\draw (0,-.6) -- (0,1) arc (180:0:.3) -- (.6,-.6);
	\roundNbox{unshaded}{(0,.5)}{.32}{.1}{.1}{$\varphi^{\scriptscriptstyle(1)}_a$}
	\braidingInverse{(0,-.3)}{.6}{.4}
\end{tikzpicture}
$$
and 
$$
\,\,\,\,\begin{tikzpicture}[baseline=-.1cm]
\pgftransformxscale{-1}
	\draw (-.2,-.6) -- (-.2,.3) arc (180:0:.35cm) -- (.5,-.6);
	\roundNbox{unshaded}{(.5,0)}{.32}{.05}{.05}{$\theta_{a^*}$}
	\node[below] at (-.2,-.6) {$\scriptstyle a$}; 
	\node[below] at (.5,-.53) {$\scriptstyle a^*$}; 
\end{tikzpicture}
=
\begin{tikzpicture}[baseline=-.1cm]
	\draw (0,-1.2) -- (0,.95) arc (180:0:.35cm) -- (.7,-1.2);
	\loopIsoInverse{(0,0)}
	\loopIso{(0,-.6)}
	\roundNbox{unshaded}{(0,.63)}{.32}{.05}{.05}{$\theta_{a^*}$}
\end{tikzpicture}
\,=
\begin{tikzpicture}[baseline=-.1cm]
	\draw (0,-1.2) -- (0,.9) arc (180:0:.4cm) -- (.8,-1.2);
	\loopIso{(0,-.5)}
	\roundNbox{unshaded}{(0,.5)}{.32}{.37}{.37}{$(\varphi_{a^*}^{\scriptscriptstyle(2)})^{\scriptscriptstyle-1}$}	
\end{tikzpicture}
\,=\,
\begin{tikzpicture}[baseline=.4cm]
	\draw (0,-.6) -- (0,1) arc (180:0:.3) -- (.6,-.6);
	\roundNbox{unshaded}{(0,.5)}{.32}{.1}{.1}{$\varphi^{\scriptscriptstyle(2)}_a$}
	\braidingInverse{(0,-.3)}{.6}{.4}
\end{tikzpicture}\,.
$$
It follows that $(\theta_a^{\scriptscriptstyle(1)})^*=\theta_{a^*}^{\scriptscriptstyle(1)}$ holds if and only if $\varphi^{\scriptscriptstyle(1)}_a=\varphi^{\scriptscriptstyle(2)}_a$ holds.
%that $(\theta_{a}^{\scriptscriptstyle(1)})^*=\theta^{\scriptscriptstyle(2)}_{a^*}$,
Using Equations \eqref{eq:RelateTwoTwists},
the former is also easily seen to be equivalent to $(\theta_a^{\scriptscriptstyle(2)})^*=\theta_{a^*}^{\scriptscriptstyle(2)}$.

\noindent
\underline{$(3)\Rightarrow (4)$:}
%\textcolor{red}{$(3\Rightarrow 2)$}
%First, by Equation \eqref{eq:RelateTwoTwists}, 
%$$
%\begin{tikzpicture}[baseline=-.1cm]
%	\draw (0,-1.6) -- (0,1.6);
%	\loopIsoInverse{(0,1)}
%	\loopIso{(0,-1)}
%	\roundNbox{unshaded}{(0,0)}{.4}{0}{0}{$\theta^{(1)}_{c^{**}}$}	
%\end{tikzpicture}
%=
%\begin{tikzpicture}[baseline=-.1cm]
%	\draw (-.9,1) -- (-.9,-.3) arc (-180:0:.75cm) -- (.6,.4) arc (0:180:.3cm) -- (0,-.4) arc (0:-180:.3cm) -- (-.6,.3) arc (180:0:.75cm) -- (.9,-1); 
%	\roundNbox{unshaded}{(0,0)}{.4}{0}{0}{$\theta^{(1)}_{c^{**}}$}	
%\end{tikzpicture}
%=
%\begin{tikzpicture}[baseline=-.1cm]
%	\draw (0,.8) -- (0,-.8); 
%	\roundNbox{unshaded}{(0,0)}{.4}{0}{0}{$\theta^{(1)}_{c}$}	
%\end{tikzpicture}
%\Longrightarrow
%\begin{tikzpicture}[baseline=.4cm, rotate =180]
%	\draw (0,-1.6) -- (0,.8);
%	\loopIsoReverse{(0,-1)}
%	\roundNbox{unshaded}{(0,0)}{.4}{0}{0}{$\theta^{(1)}_{c^{**}}$}	
%\end{tikzpicture}
%=
%\begin{tikzpicture}[baseline=-.4cm]
%	\draw (0,-1.6) -- (0,.8);
%	\loopIsoInverse{(0,-1)}
%	\roundNbox{unshaded}{(0,0)}{.4}{0}{0}{$\theta^{(1)}_{c}$}	
%\end{tikzpicture}
%\,.
%$$
%We now use this equality to prove the result:
Assuming $\varphi:=\varphi^{\scriptscriptstyle(1)}=\varphi^{\scriptscriptstyle(2)}$, we show that $(\cC,\beta,\varphi)$ is spherical:
$$
\tr_R(f)
= 
\begin{tikzpicture}[baseline=-.1cm]
	\draw (0,-.9) -- (0,.9) arc (180:0:.3) -- (.6,-.9) arc (0:-180:.3);
	\roundNbox{unshaded}{(0,-.5)}{.32}{0}{0}{$f$}
	\roundNbox{unshaded}{(0,.5)}{.32}{0}{0}{$\varphi$}
\end{tikzpicture}
\,=\, 
\begin{tikzpicture}[baseline=-.1cm]
	\draw (0,-.6) -- (0,.6) arc (180:0:.45) -- (.9,-.6) arc (0:-180:.45);
	\roundNbox{unshaded}{(0,0)}{.32}{0}{0}{$f$}
	\roundNbox{unshaded}{(.9,0)}{.32}{.1}{.1}{$\varphi^{\scriptscriptstyle-1}$}
\end{tikzpicture}
\,=\, 
\begin{tikzpicture}[baseline=-.1cm]
	\draw (0,-1) -- (0,1) arc (180:0:.4) -- (.8,-1) arc (0:-180:.4);
	\roundNbox{unshaded}{(0,0)}{.32}{0}{0}{$f$}
	\roundNbox{unshaded}{(.8,0)}{.32}{0}{0}{$\varphi$}
	\loopIsoInverseReverse{(.8,-.75)}
	\loopIsoInverse{(.8,.75)}
\end{tikzpicture}
=\,
\begin{tikzpicture}[baseline=-.1cm]
	\draw (0,-.6) -- (0,.6) arc (180:0:.4) -- (.8,-.6) arc (0:-180:.4);
	\roundNbox{unshaded}{(.8,0)}{.32}{0}{0}{$f$}
	\roundNbox{unshaded}{(0,0)}{.32}{0}{0}{$\varphi$}
\end{tikzpicture}
= 
\begin{tikzpicture}[baseline=-.1cm, rotate = 180]
	\draw (0,-.9) -- (0,.9) arc (180:0:.3) -- (.6,-.9) arc (0:-180:.3);
	\roundNbox{unshaded}{(0,-.5)}{.32}{0}{0}{$f$}
	\roundNbox{unshaded}{(0,.5)}{.32}{.1}{.1}{$\varphi^{\scriptscriptstyle-1}$}
\end{tikzpicture}
=
\tr_L(f).
$$
We have used Equation \eqref{two pivotal structures} for the third equality above.

Let now $\cC$ be semisimple.

\noindent
\underline{$(4)\Rightarrow (2)$:}
We again write $\varphi:=\varphi^{\scriptscriptstyle(1)}$, and assume that $(\cC,\beta,\varphi)$ is spherical.
By naturality of the twist, it is enough to verify $\theta^*_a=\theta_{a^*}$ on simple objects.
Let $\vartheta^*_a$ and $\vartheta_{a^*}$ be the scalars defined by $\theta^*_a=\vartheta^*_a\id_{a^*}$ and $\theta_{a^*}=\vartheta_{a^*}\id_{a^*}$.
By Lemma \ref{lem: semisimple + rigid ==> q-dims neq 0}, the quantum dimension $\dim(a):=\tr_L(1_a)$ of a simple object is always non-zero.
So we have $\theta^*_a=\theta_{a^*}$ if and only if $\tr_L(\theta^*_a)=\dim(a^*)\vartheta^*_a$ equals $\tr_L(\theta_{a^*})=\dim(a^*)\vartheta_{a^*}$:
\begin{align*}
\tr_L(\theta^*_a)
&=\,
\begin{tikzpicture}[baseline=-.1cm]
	\draw (0,-.9) -- (0,.85) arc (180:0:.25cm) -- (.5,.15) arc (-180:0:.25cm) -- (1,.85) arc (0:180:.8cm) -- (-.6,-.9) arc (-180:0:.3cm);
	\roundNbox{unshaded}{(.5,.5)}{.3}{0}{0}{$\theta$}
	\roundNbox{unshaded}{(0,-.6)}{.32}{.1}{.1}{$\varphi^{\scriptscriptstyle-1}$}
\end{tikzpicture}
\,=\,
\begin{tikzpicture}[baseline=-.1cm]
	\draw (0,-.9) -- (0,.8) arc (0:180:.3cm) -- (-.6,-.5) arc (0:-180:.15cm) -- (-.9,.8) arc (180:0:.75cm) -- (.6,-.9) arc (0:-180:.3);
	\roundNbox{unshaded}{(0,-.5)}{.3}{0}{0}{$\theta$}
	\roundNbox{unshaded}{(0,.5)}{.3}{0}{0}{$\varphi$}
\end{tikzpicture}
=
\begin{tikzpicture}[baseline=-.1cm]
	\draw (0,-.9) -- (0,.9) arc (180:0:.3) -- (.6,-.9) arc (0:-180:.3);
	\roundNbox{unshaded}{(0,-.5)}{.3}{0}{0}{$\theta$}
	\roundNbox{unshaded}{(0,.5)}{.3}{0}{0}{$\varphi$}
\end{tikzpicture}
=
\tr_R(\theta_{a^*})
=
\tr_L(\theta_{a^*}).
\qedhere
\end{align*}

\end{proof}

\begin{lem}\label{lem: semisimple + rigid ==> q-dims neq 0}
Let $\cC$ be a rigid semisimple tensor category (linear over some field $k$).
Then for every simple object $a$ and non-zero morphisms $c:1\to a\otimes a^*$ and $e:a\otimes a^*\to 1$, we have $e\circ c\not =0$.
\end{lem}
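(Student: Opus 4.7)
My approach is to combine semisimplicity of $\cC$ with simplicity of $a$: semisimplicity lets me split $e$ as an epimorphism, simplicity of $a$ forces $c$ to be a scalar multiple of the resulting section, and the identity $e\circ s=\id_1$ then does all the real work.

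I would carry out three steps in this order. First, I would apply Schur's lemma to identify $\End(a)$ with a division algebra over $k$, and use the rigidity adjunction $\hom(1,a\otimes a^*)\cong\End(a)$ (via $c\mapsto(\id_a\otimes\ev_a)\circ(c\otimes\id_a)$) to conclude that, in the intended setting where $\End(a)=k$, this hom-space is one-dimensional, so the non-zero $c$ spans it. Second, I would exploit that $1$ is simple: the non-zero $e:a\otimes a^*\to 1$ is then automatically an epimorphism, and semisimplicity of $\cC$ produces a section $s:1\to a\otimes a^*$ with $e\circ s=\id_1$; note that this $s$ is itself a non-zero element of $\hom(1,a\otimes a^*)$. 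Third, I would combine the two steps: since $c$ and $s$ both lie in the one-dimensional space $\hom(1,a\otimes a^*)$, one has $c=\lambda s$ for some $\lambda\in k^{\times}$, whence $e\circ c=\lambda(e\circ s)=\lambda\cdot\id_1\neq 0$.

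The main obstacle is really just the dimension count in the first step: the argument genuinely needs $\End(a)=k$ (equivalently, $a$ split simple) to force $c$ proportional to $s$. This is automatic in the context in which the lemma is applied --- e.g.\ over an algebraically closed field, or whenever the simples of $\cC$ are split simple --- and once this hypothesis is in place the rest of the argument is essentially formal; outside of it, the statement becomes more subtle and requires additional structure (such as a pivotal structure with non-vanishing quantum dimensions).
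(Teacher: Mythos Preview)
Your overall strategy matches the paper's: both arguments use the adjunction $\cC(1, a\otimes a^*) \cong \End(a) \cong k$ to pin down $c$ up to scalar, and then invoke semisimplicity. You are right to flag the hypothesis $\End(a)=k$ explicitly --- the paper's proof uses it on its first line without comment.

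There is one gap, however: you assert that $1$ is simple and use this to conclude that $e$ is an epimorphism. Under the paper's conventions, a ``tensor category'' is merely a linear monoidal category, and $1$ need not be simple (the condition $\cC(1,1)\cong k$ is imposed only in the definition of \emph{fusion} categories). The paper's proof handles this by decomposing both $a\otimes a^* = u \oplus x$ (with $u$ the image of $c$) and $1 = 1' \oplus 1''$ (with $1'$ the coimage of $c$), and then arguing via the one-dimensionality of $\cC(1,a\otimes a^*)$ that $e$ restricts to an isomorphism $u \to 1'$. Your argument is easily repaired along the same lines: instead of demanding a section with $e\circ s=\id_1$, take $s$ to be a section of the epimorphism $a\otimes a^* \twoheadrightarrow \operatorname{im}(e)$, precomposed with the projection $1\twoheadrightarrow \operatorname{im}(e)$; then $e\circ s$ is a non-zero idempotent in $\End(1)$ rather than $\id_1$, and your third step goes through unchanged.
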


\begin{proof}
By the adjunction $\cC(1,a\otimes a^*)\cong \cC(a,a)\cong k$, any morphism $1\to a\otimes a^*$ is a multiple of $c$.
Decompose $a\otimes a^*$ as $u\oplus x$ with $u$ the image of $c$,
and decompose $1$ as $1'\oplus 1''$ with $1'$ the coimage of $c$.
We have $\cC(1',x)=\cC(1'',x)=\cC(1'',u)=0$, $\cC(1',u)=k$, and $c$ restricts to an isomorphism $1'\to u$.

By semisimplicity, we also have $\cC(x,1')=\cC(x,1'')=\cC(u,1'')=0$, $\cC(u,1')=k$,
and $e$ restricts to an isomorphism $u\to 1'$.
The composite $e\circ c$ is now visibly non-zero.
\end{proof}

In Lemma \ref{lem:CenterPivotalBalanced}, we have discussed the two ways of equipping a braided pivotal category with a balanced structure.
We provide the corresponding result at the level of functors:

\begin{lem}\label{lem: it is a balanced functor}
Let $\cC$, $\cD$ be braided pivotal categories, equipped with the balanced structure~\eqref{def: theta1}.
Then a braided pivotal functor $F:\cC\to \cD$ is automatically balanced.
The same result holds using the other balanced structure~\eqref{def: theta2}.
\end{lem}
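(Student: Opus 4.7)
The plan is to apply $F$ to the defining formula \eqref{def: theta1} for $\theta^{\scriptscriptstyle(1)}_a$ and verify, piece by piece, that each structural constraint on $F$ (tensor, braided, pivotal) converts the image into the analogous formula for $\theta^{\scriptscriptstyle(1)}_{F(a)}$ in $\cD$. Concretely, $\theta^{\scriptscriptstyle(1)}_a$ is the composite of four basic pieces: the pivotal structure $\varphi_a$, the coevaluation $\coev_a$, the braiding $\beta_{a^{**},a}$, and the evaluation $\ev_{a^*}$ (together with identity maps). I would first use the tensor structure on $F$ to move $F$ through this composite, picking up copies of the coherence isomorphism $\nu$ and the canonical duality isomorphism $\delta_x\colon F(x^*)\to F(x)^*$ recalled in Section~2.1.

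The key substitutions are then: (i) by pivotality of $F$, $F(\varphi_a)=\delta_{a^*}^{-1}\circ\delta_a^*\circ\varphi_{F(a)}$; (ii) by the defining property of $\delta$, $F(\ev_{a^*})$ and $F(\coev_a)$ express in terms of $\ev_{F(a^*)}$, $\coev_{F(a)}$ and $\delta_a$, $\delta_{a^*}$ (up to $\nu$); and (iii) by the braided condition, $F(\beta_{a^{**},a})=\nu\circ\beta_{F(a^{**}),F(a)}\circ\nu^{-1}$. After inserting these, I would then transport the resulting $\beta_{F(a^{**}),F(a)}$ across the $\delta$'s by using that, because $F$ is a tensor functor compatible with duals, the isomorphism $\delta_{a^*}\colon F(a^{**})\to F(a)^{**}$ (obtained by iterating $\delta$) intertwines $\beta_{F(a^{**}),F(a)}$ with $\beta_{F(a)^{**},F(a)}$ by naturality of the braiding. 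The remaining $\delta$'s then cancel against each other and against the matching factors coming from $F(\coev_a)$ and $F(\ev_{a^*})$, leaving precisely
\[
(\id\otimes \ev_{F(a)^*})\circ(\beta_{F(a)^{**},F(a)}\otimes\id)\circ(\id\otimes\coev_{F(a)})\circ\varphi_{F(a)}=\theta^{\scriptscriptstyle(1)}_{F(a)}.
\]

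The main obstacle will be bookkeeping for the canonical isomorphisms $\delta_x$ and $\nu_{x,y}$, especially tracking $\delta_{a^*}\colon F(a^{**})\to F(a)^{**}$ versus $\delta_a^*\colon F(a^*)^{**}$-style factors that appear from applying duals to $\delta$. I would organize the computation as a single large commutative diagram (or equivalently a sequence of string-diagram moves in the spirit of Section~2.1) in which each rectangle is either the tensor-functor axiom, the pivotality axiom for $F$, or naturality. Doing this carefully avoids sign/orientation mistakes and makes the cancellations transparent.

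Finally, the claim for the second balanced structure $\theta^{\scriptscriptstyle(2)}$ follows either by rerunning the same argument using the analogous formula \eqref{def: theta2}, or, more economically, by invoking Equation~\eqref{eq:RelateTwoTwists}, which expresses $\theta^{\scriptscriptstyle(2)}_a$ in terms of $\theta^{\scriptscriptstyle(1)}_{a^*}$ and a zig-zag of (co)evaluations. Since $F$ is already known to commute with $\theta^{\scriptscriptstyle(1)}$ by the first part, and commutes with (co)evaluations up to the $\delta_x$'s (which cancel in the zig-zag expression), the equality $F(\theta^{\scriptscriptstyle(2)}_a)=\theta^{\scriptscriptstyle(2)}_{F(a)}$ is immediate.
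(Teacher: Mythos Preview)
Your proposal is correct and follows essentially the same approach as the paper: apply $F$ to the defining formula \eqref{def: theta1}, substitute the braided, pivotal, and duality-compatibility axioms for $F$ (expressing $F(\varphi_a)$, $F(\beta)$, $F(\ev)$, $F(\coev)$ in terms of $\varphi_{F(a)}$, $\beta$, $\ev$, $\coev$ and the isomorphisms $\delta$, $\nu$, $i$), and then observe that the $\delta$'s cancel, leaving $\theta^{\scriptscriptstyle(1)}_{F(a)}$. The paper carries this out explicitly as a chain of string diagrams and simply declares the $\theta^{\scriptscriptstyle(2)}$ case ``similar''; your alternative of deducing the $\theta^{\scriptscriptstyle(2)}$ case from the first via \eqref{eq:RelateTwoTwists} is a perfectly valid shortcut.
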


\begin{proof}
Recall from Section~\ref{sec:  Tensor categories} that the isomorphism $\delta_a:F(a^*)\to F(a)^*$ is defined by requiring that $$F(\ev_a) = i \circ \ev_{F(a)}\circ(\delta_a\otimes \id_{F(a)})\circ \nu_{a^*,a}^{-1}.$$
It follows that $\ev_{F(a)}=i^{-1} \circ F(\ev_a)\circ \nu_{a^*,a}\circ(\delta_a^{-1}\otimes \id_{F(a)})$.
The map $\coev_{F(a)}$ is characterised by the zig-zag equation $(\id\otimes\ev_{F(a)})\circ(\coev_{F(a)}\otimes\id) = \id_{F(a)}$.
As the morphism $(\id_{F(a)}\otimes \delta_a) \circ\nu_{a,a^*}^{-1} \circ F(\coev_a)\circ i$ satisfies that equation, it must be equal to $\coev_{F(a)}$.
So we get that $$F(\coev_a) = \nu_{a,a^*} \circ(\id_{F(a)}\otimes\, \delta_a^{-1}) \circ\coev_{F(a)}\circ\, i^{-1}.$$
We can now prove that $F:(\cC,\theta^{\scriptscriptstyle (1)})\to (\cD, \theta^{\scriptscriptstyle (1)})$ is a balanced functor:
\[
F(\theta^{\scriptscriptstyle (1)}_a)
\,=\,
F\Big(\!
	\begin{tikzpicture}[baseline=.35cm]
	\draw (0,1.6) -- (0,-.8);
	\loopIsoInverseReverse{(0,1)}
	\roundNboxSize{unshaded}{(0,0)}{.32}{0}{0}{$\varphi_a$}{.9}	
	\node at (-.2,-.6) {\scriptsize{$a$}};
	\node at (-.2,1.4) {\scriptsize{$a$}};
	\node at (-.25,.55) {\scriptsize{$a^{**}$}};
\end{tikzpicture}
\,\,\Big)
=\,
\begin{tikzpicture}[baseline=.9cm, yscale=1.5]
\draw (0,2.9) -- (0,-1.5);
\draw (1.1,0) -- (1.1,2) (.3,1) to[in=90,out=-90] (.8,0) (.3,1) to[in=-90,out=90] (.8,2);
\draw[densely dotted] (1,0) -- +(0,-.5);
\draw[densely dotted] (1,2) -- +(0,.5);
	\roundNboxSize{unshaded}{(.2,1)}{.2}{.4}{.4}{$\scriptstyle F(\beta)$}{1.1}
	\roundNboxSize{unshaded}{(0,-1)}{.2}{.32}{.32}{$\scriptstyle F(\varphi_a)$}{1.1}
	\roundNboxSize{unshaded}{(1,0)}{.2}{.45}{.45}{$\scriptstyle F(\coev)$}{1.1}	
	\roundNboxSize{unshaded}{(1,2)}{.2}{.45}{.45}{$\scriptstyle F(\ev)$}{1.1}
\def \shi {.05}	
	\draw[very thin, fill=white, rounded corners=1.5] (-.6-\shi,-.4) rectangle (1.95-\shi,-.6);
	\draw[very thin, fill=white, rounded corners=1.5] (-.6-\shi,.6) rectangle (1.95-\shi,.4);
	\draw[very thin, fill=white, rounded corners=1.5] (-.6-\shi,1.6) rectangle (1.95-\shi,1.4);
	\draw[very thin, fill=white, rounded corners=1.5] (-.6-\shi,2.6) rectangle (1.95-\shi,2.4);
	\node[scale=.8] at (.675-\shi,-.5) {$\scriptstyle \id\otimes\, i$};
	\node[scale=.8] at (.675-\shi,.5) {$\scriptstyle (\nu\;\!\otimes\;\!\id)\circ (\id\otimes\;\! \nu^{-1})$};
	\node[scale=.8] at (.675-\shi,1.5) {$\scriptstyle (\id\otimes\;\! \nu)\circ(\nu^{-1}\otimes\;\!\id)$};
	\node[scale=.8] at (.675-\shi,2.5) {$\scriptstyle \id\otimes\, i^{-1}$};
\end{tikzpicture}
\,\,=\,
\begin{tikzpicture}[baseline=-.95cm, scale=1.05]
\draw (0,1.5) -- (0,-3.1);
\draw (.6,-.3) -- +(0,-.7) arc(-180:0:.3) -- +(0,2) arc(0:180:.3);
	\braiding{(0,0)}{.6}{.4}

	\roundNboxSize{unshaded}{(.6,.7)}{.3}{.05}{.05}{$\scriptstyle \delta_{a^*}$}{1.1}
	\roundNboxSize{unshaded}{(0,-.7)}{.3}{.05}{.05}{$\scriptstyle \delta_{a^*}^{-1}$}{1.1}	
	\roundNboxSize{unshaded}{(0,-1.6)}{.3}{.05}{.05}{$\scriptstyle \delta_a^*$}{1.1}	
	\roundNboxSize{unshaded}{(0,-2.5)}{.32}{.15}{.15}{$\scriptstyle \varphi_{F(a)}$}{1.1}
	\roundNboxSize{unshaded}{(1.2,-.7)}{.3}{.05}{.05}{$\scriptstyle \delta_a^{-1}$}{1.1}
\end{tikzpicture}
\,\,=\!\!\!
\begin{tikzpicture}[baseline=-.7cm, scale=1.05]
\draw (0,1.3) -- (0,-2.2);
\draw (.6,-.3) -- +(0,-.5) arc(-180:0:.3) -- +(0,1.6) arc(0:180:.3)  -- +(0,-.4);
	\braiding{(0,0)}{.6}{.4}

	\roundNboxSize{unshaded}{(1.2,.45)}{.3}{.05}{.05}{$\scriptstyle \delta_{a}$}{1.1}
	\roundNboxSize{unshaded}{(1.2,-.45)}{.3}{.05}{.05}{$\scriptstyle \delta_a^{-1}$}	{1.1}
	\roundNboxSize{unshaded}{(0,-1.5)}{.32}{.17}{.17}{$\scriptstyle \varphi_{F(a)}$}{1.1}
\end{tikzpicture}
\,\,=\,
\begin{tikzpicture}[baseline=.35cm, scale=1.05]
	\draw (0,1.6) -- (0,-.8);
	\loopIsoInverseReverse{(0,1)}
	\roundNbox{unshaded}{(0,0)}{.3}{.17}{.17}{$\scriptstyle \varphi_{F(a)}$}	
\end{tikzpicture}
\,=\,\,
\theta^{\scriptscriptstyle (1)}_{F(a)}
\]
The proof that $F:(\cC,\theta^{\scriptscriptstyle (2)})\to (\cD, \theta^{\scriptscriptstyle (2)})$ is a balanced functor is similar.
\end{proof}

					%PUT ME BACK

%%%%%%%%%%%%%%%%%%%%%%%%%%%%%%%%%%%%%%%%%%%%%%%%%
\bibliographystyle{amsalpha}
{\footnotesize{
\bibliography{../../../bibliography}
}}
\end{document}